\numberwithin{equation}{section}
\newtheorem{thm}{Theorem}[section]
\newtheorem{lem}[thm]{Lemma}
\newtheorem{prop}[thm]{Proposition}
\newtheorem{cor}[thm]{Corollary}
\newtheorem{remark}[thm]{Remark}
\theoremstyle{definition}
\newtheorem{definition}[thm]{Definition}
\newtheorem{example}[thm]{Example}
\newenvironment{rem}{\begin{remark}\rm}{\end{remark}}
\def\address#1#2{\begingroup
\noindent\parbox[t]{12cm}{%
\small{\scshape\ignorespaces#1}\par\vskip1ex
\noindent\small{\itshape E-mail address}%
\/: #2\par\vskip4ex}\hfill%
\endgroup}%
\title{Cohomogeneity One Coassociative Submanifolds 
in the Bundle of Anti-self-dual 2-forms over the 4-sphere}
\author{Kotaro Kawai
\footnote{The author is supported by Grant-in-Aid for JSPS fellows (26-7067).}}
\date{}
\begin{document}

\maketitle

\begin{abstract}

Coassociative submanifolds are 4-dimensional calibrated submanifolds in $G_{2}$-manifolds. 
In this paper, we construct explicit examples of coassociative submanifolds 
in $\Lambda^{2}_{-} S^{4}$, which is the complete $G_{2}$-manifold 
constructed by Bryant and Salamon. 
Classifying the Lie groups which have 3- or 4-dimensional orbits, 
we show that the only homogeneous coassociative submanifold is the
zero section of $\Lambda^{2}_{-} S^{4}$ up to the automorphisms 
and construct many cohomogeneity one examples explicitly. 
In particular, we obtain examples of non-compact coassociative submanifolds 
with conical singularities and their desingularizations. 
\end{abstract}

\section{Introduction}

In 1996, Strominger, Yau and Zaslow  \cite{SYZ} 
presented a conjecture 
explaining mirror symmetry of compact Calabi-Yau 3-folds 
in terms of 
dual fibrations by special Lagrangian 3-tori, including singular fibers. 
Analogously, fibrations of coassociative 4-folds in compact $G_{2}$-manifolds 
are expected to play the same role 
as special Lagrangian fibrations in Calabi-Yau manifolds. 
In this paper, we focus on the construction of 
coassociative 4-folds in a non-compact $G_{2}$-manifold. 
By constructing these examples, 
we will gain a greater understanding of coassociative geometry and 
local models for coassociative submanifolds in compact $G_{2}$-manifolds.

In $\mathbb{R}^{7}$, 
Harvey and Lawson gave ${\rm SU}(2)$-invariant 
coassociative submanifolds in their pioneering paper \cite{Harvey Lawson}.
Lotay \cite{Lotay1, Lotay2}
constructed 2-ruled examples and  ones with the $T^{2} \times \mathbb{R}_{>0}$ symmetries 
using evolution equations. 
Fox \cite{Fox} obtained a family of non-2-ruled, non-conical examples 
from a 2-ruled coassociative cone. 
Ionel, Karigiannis and Min-Oo \cite{Ionel Kari Min} 
gave examples in $\Lambda^{2}_{-} \mathbb{R}^{4} \cong \mathbb{R}^{7}$, 
which are the total spaces of certain rank 2 subbundles 
over immersed surfaces in $\mathbb{R}^{4}$. 
Karigiannis and Leung \cite{Kari_Leung}
generalized this method by twisting the bundles by a special section 
of a complementary bundle.
Karigiannis and Min-Oo \cite{Kari_Min} applied the method in \cite{Ionel Kari Min} 
to $\Lambda^{2}_{-} S^{4}$ and 
$\Lambda^{2}_{-} \mathbb{C}P^{2}$
and obtained some examples. 
Here, $\Lambda^{2}_{-} S^{4}$ and $\Lambda^{2}_{-} \mathbb{C}P^{2}$ admit  
complete $G_{2}$-metrics constructed by Bryant and Salamon \cite{BryantSalamon}. 

In this paper, 
we focus on the case of $\Lambda^{2}_{-} S^{4}$ 
and construct many explicit examples of coassociative submanifolds in $\Lambda^{2}_{-} S^{4}$. 
There exists a family of 
torsion-free 
$G_{2}$-structures $\{ (\varphi_{\lambda}, g_{\lambda}) \}_{\lambda > 0}$
on $\Lambda^{2}_{-} S^{4}$ (Proposition \ref{BS_G2str}). 
For each $\lambda > 0$, 
the automorphism group of $(\Lambda^{2}_{-} S^{4}, \varphi_{\lambda}, g_{\lambda})$ 
is ${\rm SO}(5)$
acting on $\Lambda^{2}_{-} S^{4}$ by the lift of the standard action on $S^{4}$ (\cite{Sal book}). 

First, by classifying the Lie subgroups of ${\rm SO}(5)$ 
which have 4-dimensional orbits in $\Lambda^{2}_{-} S^{4}$, 
we obtain the following result.

\begin{thm} \label{homog coasso}
Let  $\{ (\varphi_{\lambda}, g_{\lambda}) \}_{\lambda > 0}$ be 
the family of torsion-free $G_{2}$-structures on $\Lambda^{2}_{-} S^{4}$ in Proposition \ref{BS_G2str}. 
For each $\lambda > 0$, 
every homogeneous coassociative submanifold in 
 $(\Lambda^{2}_{-} S^{4}, \varphi_{\lambda}, g_{\lambda})$ 
is congruent under the action of ${\rm SO}(5)$ 
to the zero section $S^{4} \subset \Lambda^{2}_{-} S^{4}$.
\end{thm}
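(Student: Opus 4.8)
The plan is to classify homogeneous coassociative submanifolds by classifying which closed subgroups $G \subseteq \mathrm{SO}(5)$ can act on $\Lambda^2_- S^4$ with a 4-dimensional orbit that happens to be coassociative, and then checking case-by-case which of these orbits are actually coassociative. A homogeneous submanifold $L = G \cdot p$ is coassociative if and only if $\varphi_\lambda|_L \equiv 0$; since $\varphi_\lambda$ is $\mathrm{SO}(5)$-invariant, this is a condition that can be checked at a single point $p$ using the isotropy representation.

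**Key steps**

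First I would recall the structure of the action: $\mathrm{SO}(5)$ acts on $S^4$ transitively with isotropy $\mathrm{SO}(4)$ at a point $x_0$, and the induced action on the fiber $(\Lambda^2_-)_{x_0} \cong \mathbb{R}^3$ is the standard $\mathrm{SO}(3) = \mathrm{SO}(4)/\mathrm{SU}(2)_-$ action (the irreducible 3-dimensional representation coming from the splitting $\mathfrak{so}(4) = \mathfrak{su}(2)_+ \oplus \mathfrak{su}(2)_-$). So a point of $\Lambda^2_- S^4$ is, up to $\mathrm{SO}(5)$, of the form $(x_0, v)$ with $v \in \mathbb{R}^3$, and $v$ may be taken to lie on a ray, i.e. $v = r\,e_1$ with $r \geq 0$. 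The isotropy group of $(x_0, v)$ is the stabilizer of $e_1$ in the $\mathrm{SO}(3)$-part, pulled back to $\mathrm{SO}(4)$; for $r>0$ this is (the preimage of) $\mathrm{SO}(2) \cong \mathrm{SU}(2)_+ \cdot \mathrm{U}(1)_-$, while for $r=0$ it is all of $\mathrm{SO}(4)$.

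Second, I would enumerate the subgroups $G \subseteq \mathrm{SO}(5)$ admitting a 4-dimensional orbit. By the orbit–stabilizer theorem a 4-dimensional orbit forces $\dim G - \dim(G \cap \mathrm{Stab}) = 4$; combined with the list of connected subgroups of $\mathrm{SO}(5)$ (which is short: $\mathrm{SO}(5)$, $\mathrm{SO}(4)$, $\mathrm{SU}(2)\times\mathrm{SU}(2)$ and its diagonal/twisted subgroups, $\mathrm{SO}(3)\times\mathrm{SO}(2)$, $\mathrm{SO}(3)$ in its irreducible or reducible embeddings, $\mathrm{SU}(2)$'s, tori, etc.) one gets finitely many candidates. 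For each, the orbit through $(x_0, r e_1)$ must be analyzed: the orbit is automatically contained in the zero section only if $G$ acts trivially on fibers, which pins down one family; otherwise the orbit genuinely "moves in the fiber direction." This is the bookkeeping-heavy part of the argument.

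Third, for each candidate 4-dimensional orbit $L = G\cdot p$ I would test the coassociative condition $\varphi_\lambda|_L = 0$. Using the explicit formula for $\varphi_\lambda$ from Proposition~\ref{BS_G2str} (which, in the frame adapted to the horizontal/vertical splitting of $T\Lambda^2_- S^4$, is a combination of the horizontal volume-type terms and mixed horizontal-vertical terms weighted by functions of $r$), the restriction of $\varphi_\lambda$ to a $G$-invariant 4-plane is a $G$-invariant 3-form on that 4-plane, hence a multiple of a fixed $G$-invariant 3-form; the coefficient is an explicit function of $r$ and of the "slope" of the orbit. I expect that for every candidate with $r>0$ this coefficient is nonzero, ruling it out, and the only surviving case is $r=0$, $G \supseteq \mathrm{SO}(4)$, giving exactly the zero section $S^4$.

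**The main obstacle**

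The hardest part will be the subgroup enumeration together with the orbit analysis: one must be careful not to miss low-dimensional or non-obvious subgroups (e.g. the various $\mathrm{SU}(2)$'s, the twisted diagonal $\mathrm{SU}(2)$, reducible $\mathrm{SO}(3)$'s) and, for each, determine whether a 4-dimensional orbit exists and where it sits relative to the zero section. Once the finite list is in hand, the coassociativity test is a short computation with the Bryant–Salamon form. I would organize this as: (i) reduce to the model point $(x_0, r e_1)$; (ii) list subgroups with $\geq 4$-dimensional orbits and identify the 4-dimensional orbits; (iii) compute $\varphi_\lambda$ restricted to each; (iv) conclude that only $r=0$ survives, yielding the zero section, which is indeed coassociative since its tangent space is the horizontal $\mathbb{R}^4$ on which $\varphi_\lambda$ vanishes by construction.
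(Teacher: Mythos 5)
Your proposal follows essentially the same route as the paper: classify the connected closed subgroups of $\mathrm{SO}(5)$ admitting a $4$-dimensional orbit in $\Lambda^{2}_{-}S^{4}$ (the paper finds these are exactly $\mathrm{SO}(5)$, $\mathrm{SO}(3)\times\mathrm{SO}(2)$ and $\mathrm{U}(2)$), then evaluate $\varphi_{\lambda}$ on the generating vector fields at a representative point of each $4$-dimensional orbit and check that the vanishing conditions force the orbit to degenerate below dimension four, leaving only the zero section. The only caveat is that the correct conclusion of the case check is not literally ``$r=0$ survives'' but ``whenever $\varphi_{\lambda}$ vanishes on the orbit, the orbit fails to be $4$-dimensional''; this does not affect the overall argument.
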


Next, we prove that the Lie subgroup of ${\rm SO}(5)$ 
which have 3-dimensional orbits in $\Lambda^{2}_{-} S^{4}$ is 
one of the following (Proposition \ref{Lie grp 3 4-dim orbit}). 
\begin{align*}
{\rm SO}(4) &= {\rm SO}(4) \times \{ 1 \},  \qquad
& &{\rm SO}(3) \times {\rm SO}(2), \qquad
{\rm U}(2), \ {\rm SU}(2) \subset {\rm SO}(4) \times \{ 1 \},  \\
{\rm SO}(3) &= {\rm SO}(3) \times \{ I_{2} \}, \qquad
& &{\rm SO}(3) \mbox{ acting irreducibly on } \mathbb{R}^{5}.
\end{align*}
We derive O.D.E.s 
which give coassociative submanifolds 
by the cohomogeneity one method 
of Hsiang and Lawson \cite{Hsiang_Lawson} 
in each case. 
In many cases, O.D.E.s are solved explicitly and we obtain the following new examples.

Let $(x_{1}, x_{2}, x_{3}, x_{4}, x_{5})$ be standard coordinates of $\mathbb{R}^{5}$ 
and regard $S^{4}$ as the unit sphere in $\mathbb{R}^{5}$. 
Let $(a_{1}, a_{2}, a_{3})$ be the local fiber coordinates  of $\Lambda^{2}_{-} S^{4}$
by choosing a local frame for $\Lambda^{2}_{-} S^{4}$ 
as in Section \ref{second local frame}.

\begin{thm}[Case of ${\rm SO}(3) \times {\rm SO}(2)$] \label{SO3SO2 thm}
Let ${\rm SO}(3) \times {\rm SO}(2)$ act on $\Lambda^{2}_{-} S^{4}$ 
by the lift of the standard ${\rm SO}(3) \times {\rm SO}(2)$-action on $S^{4}$. 
For any $C \in \mathbb{R}$, set 
\begin{align*}
M_{C} = 
{\rm SO}(3) \times {\rm SO}(2) \cdot 
\left \{ 
\left( {}^t\! (x_{1}, 0, 0, \sqrt{1-x_{1}^{2}}, 0), {}^t\! (a_{1}, 0, 0) \right); 
\begin{array}{c}
G(a_{1} ,x_{1}) = C, \\
a_{1} \in \mathbb{R}, 0 < x_{1} \leq 1
\end{array}
\right \}, 
\end{align*}
where $G(a_{1} ,x_{1})$ is defined in (\ref{def of G}). 
Then $M_{C}$ is coassociative and 
it is homeomorphic to 
\begin{align*}
\left\{
\begin{array}{ll}
(S^{2} \times \mathbb{R}^{2}) \sqcup (S^{2} \times S^{1} \times \mathbb{R}_{> 0}) & 
\qquad \mbox{for }\  C \neq 0, \\
S^{4} \sqcup (S^{2} \times S^{1} \times \mathbb{R}_{> 0}) \sqcup (S^{2} \times S^{1} \times \mathbb{R}_{> 0}) &
\qquad \mbox{for }\  C=0,
\end{array}
\right.
\end{align*}
where $S^{4}$ is the zero section of $\Lambda^{2}_{-} S^{4}$. 
\end{thm}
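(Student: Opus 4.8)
The plan is to apply the cohomogeneity one method of Hsiang–Lawson in this specific case, following the general strategy already set up in the paper. First I would describe the orbit space: the principal orbits of the $\mathrm{SO}(3)\times\mathrm{SO}(2)$-action on $\Lambda^2_- S^4$ are 3-dimensional, so a cohomogeneity one coassociative submanifold is swept out by a curve in the 3-dimensional orbit space. Using the slice $\{({}^t(x_1,0,0,\sqrt{1-x_1^2},0),{}^t(a_1,0,0))\}$, which meets every orbit near the generic stratum, I would parametrize this slice by the two real variables $(a_1,x_1)$ (with $0<x_1\le 1$), together with the remaining fiber directions that the stabilizer acts on; the point is that the coassociative condition, being the vanishing of the restriction of $\varphi_\lambda$ (equivalently, $M$ is coassociative iff $\varphi_\lambda|_M\equiv 0$), reduces on the slice to a single first-order ODE in $(a_1,x_1)$ because the group action forces most components of $\varphi_\lambda|_M$ to vanish automatically by equivariance.

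Next I would carry out the reduction explicitly. Using the local frame of Section "second local frame" and the formula for $\varphi_\lambda$ from Proposition \ref{BS_G2str}, I would compute $\varphi_\lambda$ restricted to the cone over a principal orbit through a slice point, and extract the one nontrivial scalar equation. This equation will be exact, i.e. of the form $\partial_{a_1}G\, da_1 + \partial_{x_1}G\, dx_1 = 0$ for the function $G(a_1,x_1)$ defined in (\ref{def of G}); hence the integral curves are exactly the level sets $G(a_1,x_1)=C$, which is why $M_C$ is defined as the $\mathrm{SO}(3)\times\mathrm{SO}(2)$-orbit of that level set. I would then verify that sweeping such a curve by the group action indeed produces a smooth 4-manifold (away from the special orbits) and that $\varphi_\lambda$ vanishes on all of it, not merely on the slice — this follows because $\varphi_\lambda$ is $\mathrm{SO}(5)$-invariant, so its vanishing on one slice point of each orbit propagates.

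The remaining, and most delicate, part is the topological identification. Here I would analyze the level sets $\{G(a_1,x_1)=C\}$ in the $(a_1,x_1)$-region $\{a_1\in\mathbb R,\ 0<x_1\le 1\}$, paying attention to: (i) the boundary behavior as $x_1\to 0$, where the principal orbit $\mathrm{SO}(3)\times\mathrm{SO}(2)/H$ degenerates (the $\mathrm{SO}(2)$-factor or part of the $\mathrm{SO}(3)$-factor develops extra stabilizer), so the $S^2$ or $S^1$ factors in the orbit may collapse; (ii) the point $x_1=1$, where the slice point lies on a singular orbit (here the $\mathrm{SO}(3)$ orbit through the pole), giving rise to the $S^2\times\mathbb R^2$ type ends; and (iii) the case $C=0$, where the level set passes through $a_1=0,\ x_1=1$, i.e. through the zero section, so one component of $M_0$ degenerates to the zero section $S^4$ itself while the curve also splits into two noncompact branches. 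Concretely, for $C\ne 0$ the level curve has one branch reaching $x_1=1$ (contributing a copy of $S^2\times\mathbb R^2$ by filling in the collapsed orbit) and one branch with $x_1\to 0$ on both ends along which an $S^1$ survives (contributing $S^2\times S^1\times\mathbb R_{>0}$); for $C=0$ the curve through the origin forces the zero section to appear and the two half-branches each give $S^2\times S^1\times\mathbb R_{>0}$. I expect the main obstacle to be precisely this case analysis of how the abstract orbit type changes along the level curve and correctly matching each collapsing circle or 2-sphere with the claimed homeomorphism type; the ODE itself is trivial once $G$ is known to be a first integral.
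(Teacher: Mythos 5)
Your proposal follows essentially the same route as the paper: reduce the coassociative condition on the slice to a single exact first-order ODE whose first integral is $G$, so the level sets $G(a_{1},x_{1})=C$ give the invariant coassociative 4-folds, and then read off the topology by tracking how the principal orbit $S^{2}\times S^{1}$ degenerates along the level curves (collapse of the $S^{1}$ at $x_{1}=1$, appearance of the zero section when $C=0$). One minor descriptive slip: for $C\neq 0$ the branch contributing $S^{2}\times S^{1}\times\mathbb{R}_{>0}$ does not have $x_{1}\to 0$ at both ends (one end runs off to $a_{1}\to\mp\infty$ with $x_{1}$ bounded away from $0$), but since the orbit type is constant along that open branch the topological conclusion is unaffected.
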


\begin{thm} [Case of ${\rm SU}(2) \subset {\rm SO}(4) \times \{ 1 \}$] \label{SU2 thm}
Let ${\rm SU}(2)$ act on $\Lambda^{2}_{-} S^{4}$ 
by the lift of the standard action of ${\rm SU}(2) \subset {\rm SO}(4) \times \{ 1 \}$ on $S^{4}$. 
For any $C \in \mathbb{R}$ and 
$v \in S^{2} \subset \mathbb{R}^{3}$, set 
\begin{align*}
M_{C, v} := {\rm SU}(2) \cdot \left \{ 
\left({}^t\! (\sqrt{1-x_{5}^{2}}, 0, 0, 0, x_{5}), r v \right) ; 
                                 F(r, x_{5}) = C, r\geq 0, -1 \leq x_{5} \leq 1 \right \}, 
\end{align*}
where $F(r, x_{5})$ is defined in (\ref{def of F}). 
Then $M_{C, v}$ is coassociative and 
it is homeomorphic to 
\begin{align*}
\left \{
\begin{array}{ll}
\mathbb{R}^{4} & \qquad \mbox{for }\ C>0, \\
S^{4} \sqcup (S^{3} \times \mathbb{R}_{>0}) & \qquad \mbox{for }\ C=0, \\
\mathcal{O}_{\mathbb{C} P^{1}} (-1) & \qquad \mbox{for }\  C<0,
\end{array}
\right.
\end{align*}
where $S^{4}$ is the zero section of $\Lambda^{2}_{-} S^{4}$
and $\mathcal{O}_{\mathbb{C} P^{1}} (-1)$ is 
the tautological line bundle over $\mathbb{C} P^{1} \cong S^{2}$. 
\end{thm}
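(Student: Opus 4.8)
The plan is to run the cohomogeneity one method of Hsiang and Lawson \cite{Hsiang_Lawson} for the subgroup ${\rm SU}(2) \subset {\rm SO}(4) \times \{1\} \subset {\rm SO}(5)$, in the same way as in the cases already treated, so I will concentrate on what is special here. First I would record the action: ${\rm SU}(2)$ fixes the two poles $x_5 = \pm 1$ of $S^4$ and acts simply transitively on each $3$-sphere $\{x_5 = c\} \subset S^4$ with $c \in (-1,1)$, so the isotropy group of the slice point $p_{x_5} := {}^t(\sqrt{1-x_5^2},0,0,0,x_5)$ is trivial for $x_5\in(-1,1)$ and its lift transports the fibre $\Lambda^2_- \cong \mathbb{R}^3$ along. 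Hence the orbit of $(p_{x_5}, rv)$ is a $3$-sphere for every $x_5 \in (-1,1)$ and every $r \ge 0$, and $(r,x_5)$ parametrises the space of these orbits; over a pole the base orbit is a point and ${\rm SU}(2)$ acts on the fibre $\mathbb{R}^3$ as ${\rm SO}(3)$, so an orbit there is a round $2$-sphere of radius $r$, a point when $r=0$. I would also note that ${\rm SO}(4)\subset{\rm SO}(5)$ normalises ${\rm SU}(2)$ and induces the full ${\rm SO}(3)$ on the fibre over a pole while fixing $(r,x_5)$; consequently all $M_{C,v}$ with the same $C$ are congruent under the ${\rm SO}(5)$-action and I may fix $v$ once and for all.

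Next I would carry out the symmetry reduction. A cohomogeneity one submanifold $M$ is swept out by the $3$-sphere orbits along a curve $t \mapsto (r(t), x_5(t))$ in the orbit space. Pulling $\varphi_\lambda$ (written in the frame of Section \ref{second local frame} from the formula of Proposition \ref{BS_G2str}) back to the total space of this family and decomposing into ${\rm SU}(2)$-invariant components on $S^3 \times \mathbb{R}$, one gets a multiple of the invariant $3$-form along the orbit directions plus three terms of the type (invariant $2$-form)$\,\wedge\, dt$, each of the latter linear in $\dot r, \dot x_5$. The key fact, which is in any case forced by the existence of the family and which I would verify by direct computation, is that the purely tangential component vanishes identically — every orbit of this family is already $\varphi_\lambda$-isotropic — so coassociativity imposes no algebraic constraint on $(r,x_5)$ and reduces to the vanishing of the mixed terms; after using the reflection symmetries of the slice these collapse to a single first order ODE $P(r,x_5)\,\dot r + Q(r,x_5)\,\dot x_5 = 0$. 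Finally I would check that $P\,dr + Q\,dx_5$ is, up to an integrating factor, $dF$ for the function $F$ of (\ref{def of F}); then the integral curves are exactly the level sets $\{F(r,x_5) = C\}$, and $M_{C,v}$ is coassociative.

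For the homeomorphism type I would analyse $\{F = C\}$ inside the rectangle $\{r \ge 0,\ -1 \le x_5 \le 1\}$, using that $F(0,\cdot)\equiv 0$ (so $\{r=0\}$, the zero section $S^4$, is the level $C=0$, in agreement with Theorem \ref{homog coasso}) and that the unit sphere bundle collapsing onto an $S^2$ along an edge $x_5 = \pm 1$ is the Hopf fibration $S^3 \to S^2$. For $C > 0$ the curve runs from a corner $(0,\pm1)$, where the $S^3$-orbit collapses smoothly to a point, out to $r\to\infty$, so $M_{C,v}$ is a cap glued to $S^3\times\mathbb{R}_{>0}$, i.e. $\mathbb{R}^4$. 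For $C < 0$ it runs from an interior point of an edge $x_5 = \pm1$, where an $S^3$-orbit Hopf-collapses onto an $S^2$, out to $r\to\infty$, so $M_{C,v}$ is a disk bundle over $S^2 \cong \mathbb{C}P^1$ of Euler number $-1$, namely $\mathcal{O}_{\mathbb{C} P^1}(-1)$. For $C = 0$ the level set splits into the bottom edge — the zero section $S^4$ — and a branch on which the $S^3$-orbit never collapses but only limits to a cone vertex, giving $S^3 \times \mathbb{R}_{>0}$; removing the vertex is precisely what makes the $C=0$ members singular and the $C \neq 0$ members their desingularizations. The delicate points, as usual, are the two computational claims above — the vanishing of the tangential component and the recognition of $F$ — and, on the topological side, verifying that the collapse at a corner gives a genuinely smooth point for $C\neq0$ but only a cone point for $C=0$, and pinning down the Euler number $-1$ of the disk bundle; the remainder is the explicit integration of the ODE and routine identification of the orbit unions.
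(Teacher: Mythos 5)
Your reduction to an O.D.E.\ is essentially the paper's argument: the tangential component $\varphi_{\lambda}(\tilde{E_{1}^{*}}, \tilde{E_{2}^{*}}, \tilde{E_{3}^{*}})$ does vanish identically (the paper gets this more cheaply from Cartan's formula plus the non-freeness of the action, rather than by direct computation), the three mixed terms collapse to one equation once the fibre part is the ray $rv$, and the level sets of $F$ are the integral curves. Restricting to the ansatz $rv$ from the outset is legitimate here since the theorem only asks you to verify that the given family is coassociative; the paper additionally derives $\dot{\mathbf a}=f(t)\mathbf a$ from the three equations to justify the ansatz, but that is not needed for the statement.

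The genuine gap is in the topological part, where your mechanism for distinguishing $C>0$ from $C<0$ is wrong. You assert that over \emph{each} pole ${\rm SU}(2)$ acts on the fibre $\mathbb{R}^{3}$ as ${\rm SO}(3)$. In fact the action is asymmetric: by Lemma \ref{orbit SU2} (and the computation of $\varpi'$ in the proof of Lemma \ref{orbit U2}), over $x_{5}=+1$ the induced action on the fibre is the standard ${\rm SO}(3)$-action (orbits through nonzero fibre points are $2$-spheres, via the Hopf collapse $S^{3}\to S^{2}$), whereas over $x_{5}=-1$ the induced action on the fibre is \emph{trivial}, so every point of that fibre is a fixed point and the $S^{3}$-orbits degenerate to single points there even when $r>0$. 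Your criterion --- corner $(r,x_{5})=(0,\pm1)$ versus interior edge point --- then misfires: since $F(0,\cdot)\equiv 0$, the curve $\{F=C\}$ for $C>0$ never reaches $r=0$; it meets the boundary at $(\alpha_{C},-1)$ with $\alpha_{C}>0$, an interior point of the edge $x_{5}=-1$. Under your symmetric picture this endpoint would be a Hopf collapse onto an $S^{2}$ and you would conclude $M_{C,v}\cong\mathcal{O}_{\mathbb{C}P^{1}}(-1)$ for $C>0$ as well, contradicting the theorem. The correct reason $M_{C,v}\cong\mathbb{R}^{4}$ for $C>0$ is that the entire $S^{3}$-orbit collapses to a point at $(\alpha_{C},-1)$ because the fibre over the south pole is fixed pointwise, giving the cone on $S^{3}$; the $\mathcal{O}(-1)$ topology occurs only for $C<0$, where the curve instead terminates at $(\beta_{C},+1)$ and the Hopf collapse takes place. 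The paper makes this precise by passing to the two stereographic charts and writing $M_{C,v}$ explicitly in each; you need the pole asymmetry (or an equivalent computation) to make your picture correct.
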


By using the stereographic local coordinates, 
the ${\rm SU}(2)$-action is described as 
in the case of $\mathbb{R}^{7}$. See (\ref{SU2 action stereo graphic coor}). 
In this sense, 
the above example is an analogue of 
an ${\rm SU}(2)$-invariant coassociative submanifold in $\mathbb{R}^{7}$ 
given by Harvey and Lawson \cite{Harvey Lawson}.

\begin{thm}[Case of ${\rm SO}(3) = {\rm SO}(3) \times \{ I_{2} \}$] \label{SO3 thm}
Let ${\rm SO}(3)$ act on $\Lambda^{2}_{-} S^{4}$ 
by the lift of the standard ${\rm SO}(3)  = {\rm SO}(3) \times \{ I_{2} \}$-action on $S^{4}$. 
For $C, D \geq 0$ and $E \in \mathbb{R}$, set 
\begin{align*}
M_{C, D, E} = 
{\rm SO}(3) \cdot 
\left \{ 
({}^t\! (x_{1}, 0, 0, x_{4}, x_{5}), {}^t\! (a_{1}, a_{2}, 0)); 
\begin{array}{c}
x_{4}^{4} (\lambda + r^{2}) = C,\\
x_{5}^{4} (\lambda + r^{2}) = D,\\
a_{1} x_{1} = E
\end{array}
\right \}. 
\end{align*}
Then $M_{C, D, E}$ is coassociative and 
the topology of $M_{C, D, E}$ 
is given in Lemma \ref{topology of M CDE}. 
In particular, we obtain examples of non-compact coassociative submanifolds 
with conical singularities 
for 
$(C, D) \neq (0, 0), E = 0, \sqrt{C} + \sqrt{D} = \sqrt{\lambda}$ 
and their desingularizations. 
\end{thm}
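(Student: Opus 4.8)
The plan is to carry out the Hsiang--Lawson cohomogeneity one reduction for the ${\rm SO}(3) = {\rm SO}(3) \times \{I_{2}\}$-action and then to exhibit the three displayed functions as a complete independent set of first integrals of the resulting first-order system. First I would pin down the orbit structure. Since this ${\rm SO}(3)$ acts on $S^{4} \subset \mathbb{R}^{5}$ through the first three coordinates, the point ${}^t\!(x_{1},0,0,x_{4},x_{5})$ with $x_{1} \geq 0$ meets every ${\rm SO}(3)$-orbit in $S^{4}$, its base-isotropy being the ${\rm SO}(2)$ rotating $(x_{2},x_{3})$. Lifting to $\Lambda^{2}_{-}S^{4}$, this ${\rm SO}(2)$ fixes the $a_{1}$-axis of the fibre and rotates the $(a_{2},a_{3})$-plane in the frame of Section \ref{second local frame}, so ${}^t\!(a_{1},a_{2},0)$ meets every ${\rm SO}(2)$-orbit in the fibre. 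Hence the set of all $({}^t\!(x_{1},0,0,x_{4},x_{5}),{}^t\!(a_{1},a_{2},0))$ with $x_{1} \geq 0$, $x_{1}^{2}+x_{4}^{2}+x_{5}^{2}=1$ is a four-dimensional slice, and for $x_{1} \neq 0$, $a_{2} \neq 0$ the orbit through a slice point is principal of dimension $3$. Thus, over the locus where these inequalities hold, $M_{C,D,E}$ is the union of the principal orbits lying over the curve cut out in the slice by the three equations, and this curve is one-dimensional, as it must be for a cohomogeneity one submanifold.

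Next I would derive the O.D.E. Coassociativity is equivalent to $\varphi_{\lambda}|_{M} \equiv 0$, and by ${\rm SO}(3)$-invariance it suffices to test this at slice points. At such a point $p$, $T_{p}M$ is spanned by the values $X_{1},X_{2},X_{3}$ of the three generators of $\mathfrak{so}(3)$ together with the tangent $T$ to the curve. Using the explicit Bryant--Salamon form $\varphi_{\lambda}$ of Proposition \ref{BS_G2str} written in the frame of Section \ref{second local frame}, together with the formula for the lift of the ${\rm SO}(3)$-action (the fibre transforms by the induced action on $2$-forms, and splitting each $X_{i}$ into horizontal and vertical parts brings in the Levi-Civita connection terms), one computes $\varphi_{\lambda}(X_{i},X_{j},X_{k})$ and $\varphi_{\lambda}(X_{i},X_{j},T)$. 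The point is that because the slice was chosen adapted to the ${\rm SO}(2)$-isotropy (in particular $a_{3}=0$), the value $\varphi_{\lambda}(X_{1},X_{2},X_{3})$ vanishes identically on the slice; the three remaining conditions $\varphi_{\lambda}(X_{i},X_{j},T)=0$ then say precisely that $T$ is annihilated by the three $1$-forms obtained by contracting $\varphi_{\lambda}$ with the pairs $(X_{i},X_{j})$, i.e. that $T$ spans a prescribed line field on the four-dimensional slice. This is the first-order system.

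It then remains to solve it, which I would do by verifying that $d\bigl(x_{4}^{4}(\lambda+r^{2})\bigr)$, $d\bigl(x_{5}^{4}(\lambda+r^{2})\bigr)$ and $d(a_{1}x_{1})$ all annihilate that line field; since these three differentials are generically independent on the slice, their common level sets are exactly the integral curves, so $M_{C,D,E}$ is coassociative wherever its orbits are principal. Finally one checks that $M_{C,D,E}$ remains coassociative, and reads off its topology, across the singular orbits: these occur where $x_{1}=0$ (the base orbit collapses to a point while ${\rm SO}(3)$ acts on the fibre as its standard representation, producing an $S^{2}$) and where $a_{2}=0$ (the fibre point becomes ${\rm SO}(2)$-fixed, producing an $S^{2} = {\rm SO}(3)/{\rm SO}(2)$). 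A local analysis of the level-set curve near these loci identifies the smooth closing-up points, the cone points, and the non-compact ends, yielding the homeomorphism type recorded in Lemma \ref{topology of M CDE} and exhibiting the conical singularities together with the smoothings obtained by varying $C$ and $D$.

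I expect the main obstacle to be twofold. The first is the bookkeeping in the second step: getting the lifted ${\rm SO}(3)$-action and the connection-dependent part of $\varphi_{\lambda}$ exactly right in the chosen frame, so that the vanishing of $\varphi_{\lambda}(X_{1},X_{2},X_{3})$ and the $(\lambda+r^{2})$-weights in the first integrals come out as stated. The second, and more delicate, is the case analysis near the singular orbits in the last step, where whether $M_{C,D,E}$ closes up smoothly, acquires a conical singularity, or becomes non-compact depends sensitively on the signs and the vanishing of $C$, $D$, $E$.
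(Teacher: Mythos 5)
Your proposal is correct and follows essentially the same route as the paper: the Hsiang--Lawson reduction on the slice $({}^t\!(x_{1},0,0,x_{4},x_{5}),{}^t\!(a_{1},a_{2},0))$, automatic vanishing of $\varphi_{\lambda}(X_{1},X_{2},X_{3})$, reduction to the three conditions $\varphi_{\lambda}(X_{i},X_{j},T)=0$, and identification of $x_{4}^{4}(\lambda+r^{2})$, $x_{5}^{4}(\lambda+r^{2})$, $a_{1}x_{1}$ as first integrals (the paper integrates the resulting O.D.E.s directly rather than verifying the integrals a posteriori, and obtains the vanishing of $\varphi_{\lambda}(X_{1},X_{2},X_{3})$ from invariance plus non-freeness of the action rather than from the choice $a_{3}=0$, but these are inessential differences).
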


\begin{thm}[Case of irreducible ${\rm SO}(3)$] \label{irr SO3 thm}
Let ${\rm SO}(3)$ act on $\Lambda^{2}_{-} S^{4}$ 
by the lift of the irreducible ${\rm SO}(3)$-action on $S^{4}$. 
Let $x_{1}(t), x_{5}(t), a_{1}(t), a_{2}(t), a_{3}(t)$ be 
smooth functions on an open interval $I \subset \mathbb{R}$ 
satisfying $0 \leq x_{1}(t) \leq \sqrt{3}/2, |x_{5}(t)| < 1/2, x_{1}^{2}(t) +x_{5}^{2}(t) = 1$, 
\begin{align*}
4 \left \{
(2 \sqrt{3} x_{1} + 4 x_{5} + 1) \dot{x}_{5} + \sqrt{3} (2 x_{5} -1) \dot{x}_{1} 
\right \} a_{1}
+ 8 x_{1} (- x_{1} + \sqrt{3} x_{5}) \dot{a}_{1} &\\
- (\sqrt{3} x_{1} + x_{5} +1)(1- 2 x_{5}) a_{1} \frac{d}{dt} \log (\lambda + r^{2}) &= 0, \\
4 \left \{
(2 \sqrt{3} x_{1} - 4 x_{5} - 1) \dot{x}_{5} + \sqrt{3} (2 x_{5} -1) \dot{x}_{1} 
\right \} a_{2}
+ 8 x_{1} (x_{1} + \sqrt{3} x_{5}) \dot{a}_{2} &\\
+ (- \sqrt{3} x_{1} + x_{5} + 1)(1- 2 x_{5}) a_{2} \frac{d}{dt} \log (\lambda + r^{2}) &= 0, \\
4 \left \{
 - (x_{5} + 1) \dot{x}_{5} + 3 x_{1} \dot{x}_{1} 
\right \} a_{3} 
+ 2 (x_{1}^{2} - 3 x_{5}^{2}) \dot{a}_{3} &\\
+ (1 + x_{5})(1- 2 x_{5}) a_{3} \frac{d}{dt} \log (\lambda + r^{2}) &= 0, 
\end{align*}
where 
$r^{2}(t) = \sum_{j=1}^{3} a_{j}^{2}(t)$ and 
$\dot{x}_{1} = d x_{1}/ dt$, etc.
Then 
\begin{align*}
{\rm SO}(3) \cdot \left \{ 
({}^t\! (x_{1}(t), 0, 0, 0, x_{5}(t)), {}^t\! (a_{1}(t), a_{2}(t), a_{3}(t))); t \in I \right \}, 
\end{align*}
is a coassociative submanifold invariant under the irreducible ${\rm SO}(3)$-action.
\end{thm}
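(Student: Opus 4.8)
The plan is to apply the cohomogeneity one method of Hsiang and Lawson in the setting of the irreducible $\mathrm{SO}(3)$-action on $\Lambda^2_- S^4$. First I would describe the principal orbit type and pick a slice: the orbit through a point $p = ({}^t(x_1,0,0,0,x_5), {}^t(a_1,a_2,a_3))$ with the stated normalization $x_1^2 + x_5^2 = 1$ should be a 3-dimensional principal orbit for generic parameters, so that a curve $\gamma(t)$ in a 2-dimensional slice sweeps out a 4-manifold $M = \mathrm{SO}(3)\cdot\gamma(I)$. The constraints $0 \le x_1 \le \sqrt3/2$, $|x_5| < 1/2$ single out the region where the slice representation has the right structure; these numerical bounds presumably come from where certain isotropy subgroups jump, i.e.\ where the coefficients in the ODEs vanish. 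I would set up a local $\mathrm{SO}(3)$-adapted coframe on $\Lambda^2_- S^4$ (using the second local frame of Section \ref{second local frame} and the coordinates $a_1,a_2,a_3$), express $\varphi_\lambda$ in this coframe restricted to a neighborhood of the slice, and identify the tangent space $T_p M$ as spanned by $\dot\gamma(t)$ together with the three Killing fields generating the orbit.

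The core computation is the coassociativity condition. Since $M$ is coassociative iff $\varphi_\lambda|_M \equiv 0$ (equivalently $*\varphi_\lambda|_M$ is the induced volume form), and $M$ is $\mathrm{SO}(3)$-invariant, it suffices to check $\varphi_\lambda(\dot\gamma, X_1, X_2, X_3)$-type components and the mixed components $\varphi_\lambda(\dot\gamma, X_i, X_j, \cdot)$ vanish along the curve, where $X_1,X_2,X_3$ are the fundamental vector fields. By $\mathrm{SO}(3)$-equivariance, many of these components are forced to vanish identically on the slice, and the remaining ones reduce to a small number of scalar equations in $x_1, x_5, a_1, a_2, a_3$ and their $t$-derivatives. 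I expect that after choosing $t$ appropriately (e.g.\ so the curve moves in the $(x_1, x_5)$-circle in a controlled way) and using $x_1^2 + x_5^2 = 1$ to eliminate one of $\dot x_1, \dot x_5$, the four-form condition collapses to exactly the three displayed first-order ODEs, one for each of $a_1, a_2, a_3$, each of the schematic form ``(linear in $\dot x_1,\dot x_5$)$\cdot a_j$ + (coefficient)$\cdot\dot a_j$ + (coefficient)$\cdot a_j \frac{d}{dt}\log(\lambda+r^2) = 0$''; note the appearance of $\frac{d}{dt}\log(\lambda+r^2)$ is exactly what one expects from the conformal factors in the Bryant--Salamon $G_2$-form, since $r^2 = \sum a_j^2$ and the fiber metric carries powers of $\lambda + r^2$.

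The main obstacle, and the step requiring genuine care rather than bookkeeping, is getting the coefficient functions exactly right: this means writing $\varphi_\lambda$ explicitly enough in the $\mathrm{SO}(3)$-adapted frame that the algebraic structure constants of the irreducible $\mathrm{SO}(3)$-representation on $\mathbb{R}^5$ (the $\mathrm{sl}(2)$-module structure on harmonic quadratics) feed correctly into how $d x_1, d x_5$ and the connection 1-forms pair against the $a_j$. The irreducible action mixes the sphere directions and the fiber directions in a nontrivial way — unlike the reducible cases — so the weights $\sqrt3 x_1 \pm x_5$, $x_1^2 - 3x_5^2$, etc.\ appearing in the ODEs must be tracked through the moving frame. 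Once those coefficients are pinned down, verifying that the three equations are precisely the vanishing of the independent components of $\varphi_\lambda|_M$ is a finite check. Finally I would note that the resulting $M$ is automatically a smooth submanifold wherever the orbit is principal (away from the boundary values $x_1 = 0, \sqrt3/2$ and $x_5 = \pm 1/2$), since $\gamma$ is then an immersed curve transverse to the orbits, which justifies restricting $t$ to an open interval $I$ in the statement.
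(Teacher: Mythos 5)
Your proposal follows essentially the same route as the paper: the Hsiang--Lawson cohomogeneity one reduction, evaluating $\varphi_\lambda$ on the three fundamental vector fields $\tilde{E_{i}^{*}}$ of the irreducible ${\rm SO}(3)$-action together with the velocity $\dot{c}$ of a transverse path in the slice $\{ {}^t\!(x_1,0,0,0,x_5)\}$, the three displayed ODEs being exactly the vanishing of $\varphi_\lambda(\tilde{E_{i}^{*}},\tilde{E_{j}^{*}},\dot{c})$ once the pairings $\pi^*\omega_k(\tilde{E_{i}^{*}},\tilde{E_{j}^{*}})$ and $b_k(\tilde{E_{j}^{*}})$ (the paper's Lemma \ref{data irrSO3}) are computed. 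The one point to tighten is the fourth component $\varphi_\lambda(\tilde{E_{1}^{*}},\tilde{E_{2}^{*}},\tilde{E_{3}^{*}})$ --- note $\varphi_\lambda$ is a $3$-form, so your four-argument expressions are slips --- whose vanishing the paper gets not from equivariance alone but from the combination of ${\rm SO}(3)$-invariance, $d\varphi_\lambda=0$, and the non-freeness of the action (the function is constant and vanishes at a non-principal point).
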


This paper is organized as follows. 
In Section 2, 
we review the fundamental facts of 
calibrated geometry and 
$G_{2}$ geometry 
and introduce the cohomogeneity one method 
of Hsiang and Lawson \cite{Hsiang_Lawson}. 
In Section 3, we introduce the $G_{2}$-structure on $\Lambda^{2}_{-} S^{4}$
given by Bryant and Salamon \cite{BryantSalamon}. 
In Section 4, we classify the connected closed subgroups of 
${\rm SO}(5)$, which is the automorphism group of 
the $G_{2}$-manifold $\Lambda^{2}_{-} S^{4}$, 
and study their orbits. 
Classifying Lie subgroups which have 3- or 4-dimensional orbits, 
we prove Theorem \ref{homog coasso}. 
In Section 5, 
according to the classification in Section 4, 
we construct cohomogeneity one coassociative submanifolds 
and prove Theorems \ref{SO3SO2 thm}, \ref{SU2 thm}, \ref{SO3 thm} and \ref{irr SO3 thm}.


\noindent{{\bf Acknowledgements}}: 
The author is very grateful to Professor Katsuya Mashimo 
for his useful advice about the representation theory. 
He would like to thank the referees for the careful
reading of an earlier version of this paper and useful comments on it.


\section{Preliminaries}

\begin{definition}
Define a 3-form $\varphi_{0}$ on $\mathbb{R}^{7}$ by
\begin{align}\label{def of G2 str}
\varphi_{0} = e^{123} + e^{145} - e^{167} + e^{246} + e^{257} + e^{347} - e^{356}, 
\end{align}
where  $(e^{1}, \cdots, e^{7})$ is the standard dual basis 
on $\mathbb{R}^{7}$ 
and wedge signs are omitted. 
The stabilizer of $\varphi_{0}$ is the exceptional Lie group $G_{2}$:
\begin{align*} 
G_{2} = \{ g \in GL(7, \mathbb{R})  ;  g^{*}\varphi_{0} = \varphi_{0} \}.
\end{align*}
This is a 14-dimensional compact simply-connected simple Lie group.
\end{definition}

The Lie group $G_{2}$ also fixes 
the standard metric $g_{0} = \sum^7_{i=1} (e^{i})^{2}$, the orientation on $\mathbb{R}^{7}$ 
and the 4-form 
\begin{eqnarray*}
*\varphi_{0} = e^{4567} + e^{2367} - e^{2345} + e^{1357} + e^{1346} + e^{1256} - e^{1247}, 
\end{eqnarray*}
where $*$ means the Hodge dual.
They are uniquely determined by $\varphi_{0}$ via 
\begin{eqnarray}\label{g varphi}
- 6 g_{0}(v_{1}, v_{2}) {\rm vol}_{g_{0}} = i(v_{1})\varphi_{0} \wedge i(v_{2})\varphi_{0} \wedge \varphi_{0}, 
\end{eqnarray}
where ${\rm vol}_{g_{0}}$ is the volume form of $g_{0}$, 
$i( \cdot )$ is the interior product, and $v_{i} \in T(\mathbb{R}^{7})$.

\begin{definition}
Let $Y$ be a 7-dimensional oriented manifold and 
$\varphi$ be a 3-form on $Y$. 
A 3-form $\varphi$ is called a {\bf $G_{2}$-structure} on $Y$ if 
for each $y \in Y$, there exists an oriented isomorphism between $T_{y}Y$ and $\mathbb{R}^{7}$ 
identifying $\varphi_{y}$ with $\varphi_{0}$. 
From (\ref{g varphi}), $\varphi$ induces the metric $g$ on $Y$, 
volume form on $Y$ and $*\varphi \in \Omega^{4}(Y)$.

A  $G_{2}$-structure $\varphi$ is called {\bf torsion-free} if 
$\varphi$ is closed and coclosed: $ d\varphi = d*\varphi = 0$.
We call a triple $(Y, \varphi, g)$ a {\bf $G_{2}$-manifold} if 
$\varphi \in \Omega^{3}(Y)$ is a torsion-free $G_{2}$-structure on $Y$ 
and $g$ is the associated metric. 
\end{definition}

\begin{lem}[\cite{FG}]
Let $(Y, \varphi, g)$ be a manifold with a $G_{2}$-structure. 
Then the holonomy group of $g$ is contained in $G_{2}$ 
if and only if 
$d\varphi = d * \varphi = 0$.
\end{lem}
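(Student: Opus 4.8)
The plan is to establish the two implications separately, dispatching the forward direction quickly and concentrating the effort on the converse. Assuming $\mathrm{Hol}(g) \subset G_{2}$, I would invoke the holonomy principle: since $G_{2}$ is by definition the stabilizer of $\varphi_{0}$, a reduction of the holonomy group to $G_{2}$ is equivalent to the existence of a $\nabla$-parallel tensor of the corresponding algebraic type, which here is $\varphi$ itself, so $\nabla\varphi = 0$ with $\nabla$ the Levi-Civita connection of $g$. Because the exterior derivative is the skew-symmetrization of $\nabla$ on forms, $\nabla\varphi = 0$ immediately yields $d\varphi = 0$; and since $\nabla g = 0$ the connection commutes with the Hodge star, so $*\varphi$ is parallel as well and $d*\varphi = 0$.

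The converse is the heart of the matter, and the strategy is to show that $d\varphi$ and $d*\varphi$ together already detect the full failure of $\varphi$ to be parallel. I would first reduce to a pointwise, linear-algebraic statement via the intrinsic torsion. The metric $g$ reduces the frame bundle to a $G_{2}$-subbundle, but the Levi-Civita connection need not be a $G_{2}$-connection; the obstruction is measured by $\nabla\varphi$, which at each point lies in $T^{*}Y \otimes \mathfrak{g}_{2}^{\perp}$, where $\mathfrak{g}_{2}^{\perp} \subset \mathfrak{so}(7)$ is the orthogonal complement of the Lie algebra of $G_{2}$. As a $G_{2}$-representation $\mathfrak{g}_{2}^{\perp} \cong \mathbb{R}^{7}$, so the torsion lives in $\mathbb{R}^{7} \otimes \mathbb{R}^{7}$, which decomposes into the four irreducible summands of dimensions $1$, $7$, $14$, $27$ — the Fern\'andez--Gray torsion classes.

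Next I would compute $d\varphi$ and $d*\varphi$ by skew-symmetrizing $\nabla\varphi$ and $\nabla*\varphi$, expressing the results through these four components. The decisive point is that the linear map sending $\nabla\varphi$ to the pair $(d\varphi, d*\varphi)$ is injective: each of the four irreducible torsion classes appears with a nonzero coefficient in either $d\varphi$ (the classes of dimension $1$, $7$, $27$, landing in $\Omega^{4}(Y) \cong \Omega^{3}(Y)$) or $d*\varphi$ (the classes of dimension $7$, $14$, landing in $\Omega^{5}(Y) \cong \Omega^{2}(Y)$), so that the union of classes detected is all four. Schur's lemma, applied summand by summand to these $G_{2}$-equivariant maps, reduces the verification to checking that no structure constant vanishes, which one confirms by an explicit model computation at a single point using the standard $\varphi_{0}$ of $(\ref{def of G2 str})$. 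Granting injectivity, $d\varphi = 0$ and $d*\varphi = 0$ force all torsion classes to vanish, hence $\nabla\varphi = 0$, and the holonomy principle then gives $\mathrm{Hol}(g) \subset G_{2}$.

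The main obstacle is precisely this injectivity computation: one must identify the $G_{2}$-equivariant projections of $\mathbb{R}^{7} \otimes \mathbb{R}^{7}$ onto the four irreducibles, track how the alternation underlying $d$ and $d*$ acts on each summand, and pin down the nonvanishing of the coefficients. This is the genuinely representation-theoretic content of the theorem; the remaining steps are formal consequences of the holonomy principle and of $\nabla g = 0$.
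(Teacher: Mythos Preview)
Your sketch is a faithful outline of the Fern\'andez--Gray argument and is essentially correct, but note that the paper does not supply its own proof of this lemma: it is stated with a citation to \cite{FG} and used as a black box. So there is no ``paper's proof'' to compare against; you have reconstructed the standard proof from the cited reference, which is precisely the intrinsic-torsion decomposition into the four $G_{2}$-irreducible classes and the verification that the pair $(d\varphi, d*\varphi)$ detects all of them.
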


Recall the notion of a calibration introduced by Harvey and Lawson \cite{Harvey Lawson}. 
\begin{definition}
Let $(Y, g)$ be an $n$-dimensional Riemannian manifold. 
A closed $k$-form $\varphi$ on $Y$, where $1 \leq k \leq n$, 
is called a {\bf calibration} on $Y$ if 
$
\varphi|_{V} \leq {\rm vol}_{V} 
$
for each point $p \in Y$ and 
every oriented $k$-dimensional subspace $V \subset T_{p}Y$. 
We say that an oriented $k$-dimensional submanifold $L$ of $Y$ 
is a 
{\bf calibrated submanifold} 
of $Y$ (or calibrated by $\varphi$) 
if 
$\varphi|_{TL} = {\rm vol}_{L}.$
\end{definition}

There are canonical calibrations on a $G_{2}$-manifold.

\begin{lem}[\cite{Harvey Lawson}]
 Let $(Y, \varphi, g)$ be a $G_{2}$-manifold.
Then 
the $G_{2}$-structure $\varphi$ and 
its Hodge dual $*\varphi$ define calibrations on Y.
\end{lem}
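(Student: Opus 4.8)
The plan is to use the standard characterization of a calibration as a closed differential form of comass at most $1$: a closed $p$-form $\alpha$ is a calibration precisely when $\alpha(e_{1}, \dots, e_{p}) \leq 1$ for every oriented orthonormal $p$-frame $(e_{1}, \dots, e_{p})$. With this formulation the statement splits into a global closedness part and a pointwise comass part, and the closedness is immediate: by definition a $G_{2}$-manifold carries a torsion-free $G_{2}$-structure, so $d\varphi = d*\varphi = 0$. Thus the entire content of the lemma lies in the two comass inequalities.

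For the comass bound I would first reduce to the flat model. At each $y \in Y$ the $G_{2}$-structure supplies an oriented isomorphism $T_{y}Y \cong \mathbb{R}^{7}$ carrying $\varphi_{y}$ to $\varphi_{0}$; since $g_{y}$ and $*\varphi_{y}$ are determined algebraically from $\varphi_{y}$ through (\ref{g varphi}), this same isomorphism is simultaneously an isometry and carries $*\varphi_{y}$ to $*\varphi_{0}$. Hence the comass of $\varphi_{y}$ (resp.\ $*\varphi_{y}$) agrees with that of $\varphi_{0}$ (resp.\ $*\varphi_{0}$), and it suffices to verify the two pointwise inequalities $\varphi_{0}(\xi) \leq 1$ and $(*\varphi_{0})(\eta) \leq 1$ on $\mathbb{R}^{7}$ for every oriented orthonormal $3$-frame $\xi$ and $4$-frame $\eta$.

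For $\varphi_{0}$ I would introduce the associated vector cross product $\times$ on $\mathbb{R}^{7}$, characterized by $g_{0}(u \times v, w) = \varphi_{0}(u, v, w)$, which satisfies the identity $|u \times v|^{2} = |u|^{2}|v|^{2} - g_{0}(u,v)^{2}$. For an orthonormal triple $(u,v,w)$ this yields $|u \times v| = 1$, so by Cauchy--Schwarz $\varphi_{0}(u,v,w) = g_{0}(u \times v, w) \leq |u \times v|\,|w| = 1$, with equality exactly when $w = u \times v$, i.e.\ on associative $3$-planes. This settles the comass of $\varphi_{0}$.

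The main obstacle is the comass bound for $*\varphi_{0}$ on $4$-planes, where no single Cauchy--Schwarz applies. Here I would exploit the orthogonal duality between $3$- and $4$-planes: to a $4$-plane $P$ I associate its orthogonal complement $P^{\perp}$ and aim to show $(*\varphi_{0})(P) \leq 1$, with equality precisely when $\varphi_{0}|_{P} = 0$, i.e.\ when $P$ is coassociative. Concretely I would diagonalize the restriction $\varphi_{0}|_{P}$ by choosing an adapted orthonormal basis, thereby reducing the evaluation of $*\varphi_{0}$ to a finite list of products of the structure constants in (\ref{def of G2 str}) and bounding the outcome by $1$. This pointwise estimate is the one genuinely computational step; once it is secured, combining it with the closedness $d*\varphi = 0$ completes the proof that both $\varphi$ and $*\varphi$ define calibrations on $Y$.
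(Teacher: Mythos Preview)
The paper does not supply its own proof of this lemma; it simply quotes the result from \cite{Harvey Lawson}. Your argument is the standard one and is correct in outline.

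One remark on your final step: you can bypass the ``genuinely computational'' estimate for $*\varphi_{0}$ entirely, since it follows immediately from the comass bound for $\varphi_{0}$ that you have already established. Given an oriented orthonormal $4$-frame $(e_{1},e_{2},e_{3},e_{4})$ in $\mathbb{R}^{7}$, extend it to an oriented orthonormal basis $(e_{1},\dots,e_{7})$. By the definition of the Hodge star,
\[
(*\varphi_{0})(e_{1},e_{2},e_{3},e_{4}) \;=\; \varphi_{0}(e_{5},e_{6},e_{7}) \;\leq\; 1,
\]
using your Cauchy--Schwarz bound from the previous paragraph. Thus the comass of $*\varphi_{0}$ on $4$-planes equals the comass of $\varphi_{0}$ on $3$-planes, and no separate diagonalization or case analysis is needed. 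This duality also makes the equality characterization transparent: $(*\varphi_{0})(P)=1$ iff $P^{\perp}$ is associative, which (by the associator identity in \cite{Harvey Lawson}) is equivalent to $\varphi_{0}|_{P}=0$.
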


\begin{definition}[\cite{Harvey Lawson}]
An oriented 3-dimensional submanifold is called an {\bf associative submanifold} of $Y$ 
if it is calibrated by $\varphi$. 
An oriented 4-dimensional submanifold is called a {\bf coassociative submanifold} of $Y$ 
if it is calibrated by $*\varphi$. 
\end{definition}

\begin{lem}[\cite{Harvey Lawson}]
If $L \subset Y$ is an oriented 4-dimensional submanifold,  
then
$L$ is a coassociative submanifold of Y up to a possible change of orientation for $L$
if and only if 
$\varphi|_{TL} = 0$.
\end{lem}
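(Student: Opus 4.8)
The plan is to apply the Harvey--Lawson criterion from the final lemma above: the submanifold $L = {\rm SO}(3) \cdot \gamma(I)$, where $\gamma(t) = ({}^t(x_{1}(t),0,0,0,x_{5}(t)), {}^t(a_{1}(t),a_{2}(t),a_{3}(t)))$, is coassociative if and only if $\varphi_{\lambda}|_{TL} = 0$. Since ${\rm SO}(3)$ acts as automorphisms of the $G_{2}$-structure (it sits inside the automorphism group ${\rm SO}(5)$) and $L$ is ${\rm SO}(3)$-invariant by construction, the $3$-form $\varphi_{\lambda}|_{TL}$ is itself ${\rm SO}(3)$-invariant. Hence it suffices to check $\varphi_{\lambda}|_{T_{\gamma(t)}L} = 0$ for every $t \in I$, i.e.\ only along the slice curve $\gamma$. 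This is precisely the reduction that the cohomogeneity one method of Hsiang and Lawson provides, and it turns the coassociativity condition into finitely many scalar conditions along the single parameter $t$.

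First I would set up the irreducible ${\rm SO}(3)$-representation on $\mathbb{R}^{5}$ explicitly (as traceless symmetric $3 \times 3$ matrices, equivalently as degree-two harmonic polynomials) and compute the induced action on $S^{4}$ together with its lift to the fibres of $\Lambda^{2}_{-} S^{4}$. Fixing a basis of $\mathfrak{so}(3)$, I would evaluate the three fundamental vector fields at the slice point $\gamma(t)$; this produces a basis $\xi_{1}, \xi_{2}, \xi_{3}$ of the $3$-dimensional tangent space to the principal orbit. Together with $\dot{\gamma}(t)$ these span $T_{\gamma(t)}L$, the constraint $x_{1}^{2}(t) + x_{5}^{2}(t) = 1$ ensuring that the base part of $\dot{\gamma}$ is tangent to $S^{4}$. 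The fibre part of each $\xi_{i}$ must be extracted from the derivative of the isotropy action on anti-self-dual $2$-forms, expressed in the frame $(a_{1},a_{2},a_{3})$ of Section \ref{second local frame}.

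Next I would substitute these vectors into the Bryant--Salamon $3$-form $\varphi_{\lambda}$ of Proposition \ref{BS_G2str}. Because $\dim T_{\gamma(t)}L = 4$, the vanishing of the restricted $3$-form amounts to the $\binom{4}{3} = 4$ scalar equations $\varphi_{\lambda}(\xi_{1},\xi_{2},\xi_{3}) = 0$ and $\varphi_{\lambda}(\dot{\gamma}, \xi_{i}, \xi_{j}) = 0$ for the three pairs $(i,j)$. I expect the purely tangential term $\varphi_{\lambda}(\xi_{1},\xi_{2},\xi_{3})$ to vanish identically along the slice, owing to the special coordinate placement (only $x_{1}, x_{5}$ nonzero in the base) and the vanishing pattern of $\varphi_{0}$ in (\ref{def of G2 str}); verifying this is the consistency condition showing the orbit $3$-plane is compatible with coassociativity and accounts for why only three equations remain. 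The three conditions $\varphi_{\lambda}(\dot{\gamma}, \xi_{i}, \xi_{j}) = 0$, after inserting the explicit warping factors of the Bryant--Salamon metric, should reproduce the three displayed ODEs, with the factor $\frac{d}{dt}\log(\lambda + r^{2})$ arising from differentiating the conformal factor $\lambda + r^{2}$ along $\gamma$ and the polynomial coefficients in $x_{1}, x_{5}$ coming from the irreducible embedding.

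The hard part will be the explicit computation of the fundamental vector fields for the irreducible ${\rm SO}(3)$-action together with their lift to the fibre coordinates $a_{1}, a_{2}, a_{3}$. Unlike the standard-action cases, the irreducible embedding ${\rm SO}(3) \hookrightarrow {\rm SO}(5)$ produces generators whose action mixes the base coordinates through a nontrivial quadratic pattern, and one must carefully track how the induced action on $\Lambda^{2}_{-}$ rotates the frame $(a_{1},a_{2},a_{3})$. Organizing this bookkeeping so that the three pairings $\varphi_{\lambda}(\dot{\gamma}, \xi_{i}, \xi_{j})$ decouple into one equation per fibre coordinate $a_{j}$ --- matching exactly the stated polynomial coefficients --- is the crux of the argument.
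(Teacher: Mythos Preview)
Your proposal does not address the stated lemma at all. The statement in question is the Harvey--Lawson characterization of coassociative $4$-folds: a $4$-dimensional oriented submanifold $L$ is coassociative if and only if $\varphi|_{TL}=0$. This is a cited result from \cite{Harvey Lawson}; the paper does not supply a proof. What you have written is instead a plan to prove Theorem~\ref{irr SO3 thm} (the irreducible ${\rm SO}(3)$ case), using the Harvey--Lawson lemma as an input. You are applying the lemma, not proving it.

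If you were asked to prove the lemma itself, the argument is a pointwise linear-algebra statement about $4$-planes in $\mathbb{R}^{7}$ under the $G_{2}$-structure: since $G_{2}$ acts transitively on coassociative $4$-planes, one checks on a single model plane (say $\mathrm{span}\{e_{4},e_{5},e_{6},e_{7}\}$) that $*\varphi_{0}$ restricts to the volume form while $\varphi_{0}$ restricts to zero; conversely one shows that $\varphi_{0}|_{V}=0$ forces $|*\varphi_{0}|_{V}|=1$, using for instance the identity $|\varphi_{0}|_{V^{\perp}}|^{2}+|*\varphi_{0}|_{V}|^{2}=1$ for any oriented $4$-plane $V$. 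None of this appears in your proposal. If the intended target was actually Theorem~\ref{irr SO3 thm}, then your outline is broadly consistent with the paper's approach in Section~\ref{cohomo1 section irr SO3}, but that is a different statement.
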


This description is often more useful and easier to work with.

\subsection{Cohomogeneity one method} \label{general method}

Let $L$ be a coassociative submanifold of 
a $G_{2}$-manifold $(Y, \varphi, g)$. 
The symmetry group $K$ of $L$ is defined to
be the Lie subgroup of the automorphism group 
which fixes $L$. 
If the principal orbits of $K$ are of codimension one in $L$, 
we call $L$ a {\bf cohomogeneity one} coassociative submanifold.
The action of $K$ on $L$ is called a {\bf cohomogeneity one action}.

Coassociative submanifolds are defined by 
first order nonlinear P.D.E.s, 
which are difficult to solve in general. 
By the cohomogeneity one action of the Lie group, 
we reduce the P.D.E.s of the coassociative condition to nonlinear O.D.E.s, 
which are easier to solve. 
This method was introduced in \cite{Hsiang_Lawson} for minimal submanifolds. 
We give a summary in our coassociative settings 
based on \cite{Hashimoto_Sakai}.

\begin{lem}
Let $(Y, \varphi, g)$ be a $G_{2}$-manifold and 
$G$ be a Lie subgroup of the automorphism group of $(Y, \varphi, g)$. 
Let $\Sigma \subset Y$ be 
a subset which is transverse to the $G$-orbits and satisfies $G \cdot \Sigma = Y$. 
Suppose that $G$ has 3-dimensional orbits on $Y$. 

Then the solution of the first order nonlinear O.D.E.s $\varphi|_{G \cdot {\rm Image}(c)} = 0$, 
where $c : I \rightarrow \Sigma$ is a path and $I \subset \mathbb{R}$ is an open interval, 
gives a $G$-invariant coassociative submanifold $G \cdot {\rm Image}(c)$. 
\end{lem}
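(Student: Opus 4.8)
The plan is to show that $L := G\cdot\mathrm{Image}(c)$ is a smooth $G$-invariant $4$-dimensional submanifold on which $\varphi$ restricts to zero, and then invoke the Harvey--Lawson criterion stated above (an oriented $4$-submanifold is coassociative iff $\varphi$ vanishes on its tangent spaces).

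First I would check that $L$ is a $4$-manifold. Since $G$ has a $3$-dimensional orbit through each point of $\Sigma$ and $\Sigma$ is transverse to the $G$-orbits, for every $t\in I$ the orbit $G\cdot c(t)$ is a $3$-dimensional submanifold, and the curve $c$ meets the orbits transversally; by the slice theorem (or concretely by trivializing the orbit foliation along a local section through $c(t)$) the saturation $\bigcup_{t\in I} G\cdot c(t)$ is, near $\mathrm{Image}(c)$, an immersed $4$-manifold — embedded on the open stratum where $G$ acts properly with principal orbits — with tangent space at $p=a\cdot c(t)$, $a\in G$, equal to $(da)\bigl(\mathfrak{g}\cdot c(t)\oplus\mathbb{R}\,\dot c(t)\bigr)$, the sum being direct and $4$-dimensional by transversality. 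By construction $L$ is a union of $G$-orbits, hence $G$-invariant, and it is orientable (either because the orbit foliation is coorientable along $c$, or a posteriori because $*\varphi$ restricts to a volume form once $\varphi|_{TL}=0$).

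Next I would reduce the coassociative condition $\varphi|_{TL}=0$ to the stated equations along $c$. Because $G$ lies in the automorphism group, $a^*\varphi=\varphi$ for all $a\in G$, and $L$ is $G$-invariant; hence the section $\varphi|_{TL}$ of $\Lambda^3 T^*L$ is $G$-invariant and is determined by its restriction along $\mathrm{Image}(c)$. Choosing fundamental vector fields $\xi_1,\xi_2,\xi_3$ spanning $\mathfrak{g}\cdot c(t)$ and completing with $\dot c(t)$, the condition $\varphi|_{T_{c(t)}L}=0$ is equivalent to the four scalar equations $\varphi(\xi_1,\xi_2,\xi_3)=0$ and $\varphi(\xi_i,\xi_j,\dot c)=0$ for $1\le i<j\le 3$ — the first a pointwise constraint on $c(t)$, the remaining three linear in $\dot c(t)$, i.e. first-order O.D.E.s for $c$. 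These are precisely what is meant by $\varphi|_{G\cdot\mathrm{Image}(c)}=0$, and $G$-equivariance ensures that a solution along $c$ forces $\varphi|_{TL}=0$ at every point of $L$, with nothing further to propagate. With $L$ then an oriented $4$-submanifold satisfying $\varphi|_{TL}=0$, the Harvey--Lawson lemma gives that $L$ is coassociative.

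The step requiring the most care is the first one: verifying that $G\cdot\mathrm{Image}(c)$ is genuinely a manifold, which is exactly where the hypotheses that the generic orbit is $3$-dimensional and that $c$ is transverse to the orbits are used, and where one restricts attention to the open set on which the orbits have maximal dimension so that the O.D.E. analysis makes sense (passage through singular orbits being treated separately in the applications). The remaining content is the conceptual translation of $G$-invariance of $\varphi$ and of $L$ into the statement that it suffices to impose the coassociative equations along the single curve $c$.
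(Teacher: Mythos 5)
Your argument is correct, and it is exactly the justification the paper relies on: the paper states this lemma without proof (as a summary of the Hsiang--Lawson method), but its computations in Section 5 implement precisely your reduction, checking $\varphi(\xi_1,\xi_2,\xi_3)=0$ together with the three equations $\varphi(\xi_i,\xi_j,\dot c)=0$ along the transverse curve and using $G$-invariance of $\varphi$ to propagate the vanishing over the whole orbit of the curve. Your remarks on the two delicate points --- that $G\cdot\mathrm{Image}(c)$ is a $4$-manifold only on the stratum of $3$-dimensional orbits, and that orientability follows a posteriori from the Harvey--Lawson criterion --- are also consistent with how the paper handles the singular orbits case by case.
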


Note that there is a similar construction by using evolution equations. 
This method was introduced by Lotay \cite{Lotay2} for 
associative, coassociative and Cayley submanifolds.


\section{Geometry in $\Lambda^{2}_{-} S^{4}$}

\subsection{$G_{2}$-structure on $\Lambda^{2}_{-} S^{4}$}

We introduce the complete metric on the bundle $\Lambda^{2}_{-} S^{4}$ 
of anti-self-dual 2-forms over the 4-sphere $S^{4}$
obtained by 
Bryant and Salamon \cite{BryantSalamon}. 
We also refer to \cite{Kari_Min, Sal G2}. 
Since $\Lambda^{2}_{-} S^{4}$ has a connection induced by the 
Levi Civita connection on $S^{4}$, 
the tangent space $T_{\omega} (\Lambda^{2}_{-} S^{4})$ has a canonical splitting 
$T_{\omega} (\Lambda^{2}_{-} S^{4}) \cong \mathcal{H}_{\omega} \oplus \mathcal{V}_{\omega}$ 
into horizontal and vertical subspaces 
for each $\omega \in \Lambda^{2}_{-} S^{4}$.

\begin{prop} [Bryant and Salamon \cite{BryantSalamon}]  \label{BS_G2str}
For $\lambda > 0$, define the 3-form $\varphi_{\lambda} \in \Omega^{3}(\Lambda^{2}_{-} S^{4})$ 
and the metric $g_{\lambda}$ on $\Lambda^{2}_{-} S^{4}$ by 
\begin{eqnarray*}
\varphi_{\lambda} = 2 s_{\lambda} d \tau 
+ \frac{1}{s_{\lambda}^{3}} {\rm vol_{\mathcal{V}}}, \qquad 
g_{\lambda} = 2 s_{\lambda}^{2} g_{\mathcal{H}} 
+ \frac{1}{s_{\lambda}^{2}} g_{\mathcal{V}}, 
\end{eqnarray*}
where 
$s_{\lambda} = ( \lambda + r^{2})^{1/4}$, 
$r$ is the distance function measured by the fiber metric 
induced by that on $S^{4}$, 
$\tau$ is a tautological 2-form and 
${\rm vol_{\mathcal{V}}}$ is the volume form of $g_{\mathcal{V}}$ on the vertical fiber. 

Then for each $\lambda > 0$, $(\Lambda^{2}_{-} S^{4}, \varphi_{\lambda}, g_{\lambda})$ is a $G_{2}$-manifold 
and $g_{\lambda}$ 
is the complete metric with holonomy equal to $G_{2}$.
\end{prop}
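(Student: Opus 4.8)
The plan is to reduce the whole statement to one computation in a coframe adapted to the bundle projection $\pi\colon\Lambda^{2}_{-}S^{4}\to S^{4}$, followed by two routine verifications: completeness of $g_{\lambda}$ and maximality of ${\rm Hol}(g_{\lambda})$. I expect the torsion-free check to be the only real work.

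\emph{An adapted coframe.} First I would fix an open set $U\subset S^{4}$ with an oriented orthonormal coframe $(e^{1},e^{2},e^{3},e^{4})$ of $g_{S^{4}}$ and its Levi--Civita connection forms. The one structural input from the base is that $S^{4}$ is self-dual Einstein of constant curvature, so the induced ${\rm SO}(3)$-connection on $\Lambda^{2}_{-}S^{4}$ has curvature of a simple explicit form (proportional to the natural identity tensor); this gives the vertical--horizontal splitting $T(\Lambda^{2}_{-}S^{4})\cong\mathcal{H}\oplus\mathcal{V}$. Choosing a local orthonormal frame $(\omega^{1},\omega^{2},\omega^{3})$ of $\Lambda^{2}_{-}|_{U}$ with fibre coordinates $(a_{1},a_{2},a_{3})$ and writing $\theta^{j}:=Da_{j}$ for the covariant differentials (the $\theta^{j}$ span $\mathcal{V}^{*}$, the $\pi^{*}e^{i}$ span $\mathcal{H}^{*}$), one obtains the coframe $(\sqrt{2}\,s_{\lambda}\,\pi^{*}e^{i},\ s_{\lambda}^{-1}\theta^{j})$. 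In this coframe $g_{\lambda}=\sum(\cdot)^{2}$, and $\varphi_{\lambda}$ is pointwise the model form $\varphi_{0}$ of \eqref{def of G2 str} (up to the relabelling forced by the self-dual structure of $S^{4}$): the term $s_{\lambda}^{-3}\,{\rm vol}_{\mathcal{V}}$ is the product of the three vertical coframe elements, and $2 s_{\lambda}\,d\tau$, where $\tau=\sum_{j}a_{j}\,\pi^{*}\omega^{j}$ is the tautological $2$-form, reproduces the sum of the ``$(\text{vertical})\wedge(\text{anti-self-dual }2\text{-form})$'' terms of $\varphi_{0}$. This already shows that $\varphi_{\lambda}$ is a genuine $G_{2}$-structure and that $g_{\lambda}$ is the metric it induces through \eqref{g varphi}, and it yields an explicit formula for $*\varphi_{\lambda}$ (a multiple of $s_{\lambda}^{4}\,{\rm vol}_{\mathcal{H}}$ plus mixed terms).

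\emph{Torsion-freeness.} This is the heart of the matter, and the step I expect to be the main obstacle. From the coframe description, $d\varphi_{\lambda}=2\,ds_{\lambda}\wedge d\tau+d\!\left(s_{\lambda}^{-3}{\rm vol}_{\mathcal{V}}\right)$, and similarly for $d*\varphi_{\lambda}$. Expanding $d\tau$, $d\,{\rm vol}_{\mathcal{V}}$ and $dr$ with the structure equations of $S^{4}$ (which turn $d\omega^{j}$ into connection forms wedged with the $\omega$'s, with the curvature inserted as above) converts both equations into identities among the $e^{i}$ and $\theta^{j}$ whose coefficients are built from $s_{\lambda}$, $ds_{\lambda}/dr$ and $r$. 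Imposing that these vanish collapses to a single first-order O.D.E.\ for $s_{\lambda}(r)$, and one checks directly that $s_{\lambda}=(\lambda+r^{2})^{1/4}$ is a solution, the numerical factor $2$ and the exponent $3$ in $\varphi_{\lambda}$ being precisely what the O.D.E.\ forces. This is the calculation of Bryant and Salamon \cite{BryantSalamon} (see also \cite{Kari_Min}, \cite{Sal G2}); I would cite it rather than reproduce every line, the one genuine subtlety being careful bookkeeping of the self-dual structure equations of $S^{4}$.

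\emph{Completeness and holonomy.} Since $s_{\lambda}(0)=\lambda^{1/4}>0$, the adapted coframe extends smoothly across the zero section, so $g_{\lambda}$ is a smooth metric on all of $\Lambda^{2}_{-}S^{4}$, restricting on the zero section to $2\sqrt{\lambda}\,g_{S^{4}}$. For completeness, note that each disc bundle $\{r\le R\}$ is compact, while the function $x\mapsto\int_{0}^{r(x)}s_{\lambda}^{-1}(t)\,dt$ is $1$-Lipschitz for $g_{\lambda}$ and tends to $\infty$ with $r$ (its integrand behaves like $t^{-1/2}$); hence $g_{\lambda}$-bounded sets are relatively compact and $g_{\lambda}$ is complete. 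Finally, $\Lambda^{2}_{-}S^{4}$ is simply connected (an $\mathbb{R}^{3}$-bundle over $S^{4}$), so the torsion-free condition and the cited lemma of \cite{FG} give ${\rm Hol}(g_{\lambda})\subseteq G_{2}$. For equality: a proper reduction would, since every proper closed subgroup of $G_{2}$ acts reducibly on $\mathbb{R}^{7}$, make the holonomy representation reducible, hence force (by de Rham, using completeness and simple connectivity) a Riemannian product decomposition of $\Lambda^{2}_{-}S^{4}$ into at least two positive-dimensional factors. But at a zero-section point $z$ the isotropy group ${\rm SO}(4)\subset{\rm SO}(5)$ acts on $T_{z}(\Lambda^{2}_{-}S^{4})=\mathcal{H}_{z}\oplus\mathcal{V}_{z}\cong\mathbb{R}^{4}\oplus\mathbb{R}^{3}$ as the sum of exactly two non-isomorphic irreducibles, so the only candidate splitting is $M^{4}\times N^{3}$; in either ordering the $3$-dimensional factor yields a contradiction (if flat it produces an ${\rm SO}(5)$-invariant parallel vector field, impossible since ${\rm SO}(4)$ fixes no nonzero vector in $T_{z}$; if non-flat, ${\rm SO}(5)$ would have to embed in its isometry group, of dimension $\le 6$, which is impossible). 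Hence no splitting exists and ${\rm Hol}(g_{\lambda})=G_{2}$, as asserted by Bryant and Salamon.
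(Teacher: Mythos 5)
The paper does not prove this proposition at all: it is quoted from Bryant--Salamon \cite{BryantSalamon} (with \cite{Kari_Min}, \cite{Sal G2} as further references), and the only thing the paper adds is the local-coframe expression (\ref{def of G2 str S4}). Your overall strategy --- adapted coframe, deferral of the torsion-free O.D.E.\ computation to \cite{BryantSalamon}, a Lipschitz-function argument for completeness --- is the standard route and is consistent with how the paper treats the statement; the coframe normalisation, the identification of $\varphi_{\lambda}$ with the model form, and the completeness argument (the radial function $\int_{0}^{r}(\lambda+t^{2})^{-1/4}\,dt$ is $1$-Lipschitz for $g_{\lambda}$ and proper) are all correct.

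There is, however, a genuine flaw in your argument that ${\rm Hol}(g_{\lambda})=G_{2}$. You assert that every proper closed subgroup of $G_{2}$ acts reducibly on $\mathbb{R}^{7}$, and deduce that a proper holonomy reduction forces a de Rham splitting. This is false: the principal ${\rm SO}(3)\subset G_{2}$ acts irreducibly on $\mathbb{R}^{7}$ (as the $7$-dimensional irreducible representation of ${\rm SO}(3)$ --- the same irreducible ${\rm SO}(3)\subset{\rm SO}(5)\subset{\rm SO}(7)$ phenomenon that this paper exploits in Section \ref{section irr SO3} is available inside $G_{2}$ as well). So reducibility of the holonomy representation does not follow from properness, and your case analysis does not cover all possibilities. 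The standard repair is to invoke Berger's classification together with Ricci-flatness: a torsion-free $G_{2}$-structure is Ricci-flat, so if ${\rm Hol}(g_{\lambda})$ acted irreducibly it would either appear on Berger's list inside $G_{2}$ (only $G_{2}$ itself does) or the metric would be locally symmetric, hence flat (Ricci-flat symmetric spaces are flat), which is excluded since the curvature is nonzero along the zero section. Your de Rham argument then disposes of the reducible case (though note that a flat $3$-dimensional factor gives parallel vector fields that are not canonically ${\rm SO}(5)$-invariant individually; one should instead observe that the span of all parallel vector fields is an isotropy-invariant subspace of $T_{z}$, which must be $\mathcal{V}_{z}$, and derive the contradiction from there). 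Alternatively, one can avoid all of this by computing the curvature at one point and exhibiting enough curvature operators to generate $\mathfrak{g}_{2}$, which is what a self-contained proof would do.
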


\begin{rem}
A complete holonomy $G_{2}$ metric is constructed 
not only on $\Lambda^{2}_{-} S^{4}$ but also on $\Lambda^{2}_{-} \mathbb{C}P^{2}$ 
in \cite{BryantSalamon}. 
Of course, we can also 
apply the method 
in Section \ref{general method}
to $\Lambda^{2}_{-} \mathbb{C}P^{2}$ 
and construct examples 
in theory. 
\end{rem}

By using a local frame, 
$\varphi_{\lambda}$ 
is described 
as follows. 
Let $\{ e^{1}, e^{2}, e^{3}, e^{4} \}$ be a local oriented orthonormal coframe 
with respect to the standard metric and the standard orientation on $S^{4}$. 
Define  
2-forms $\omega_{i}$ on $S^{4}$ by  
\begin{eqnarray*}
\omega_{1} = e^{12} - e^{34}, \qquad
\omega_{2} = e^{13} - e^{42}, \qquad
\omega_{3} = e^{14} - e^{23}. 
\end{eqnarray*}
Then 
$\{ \omega_{1}, \omega_{2}, \omega_{3} \}$ 
is a local oriented coframe of $\Lambda^{2}_{-} S^{4}$, 
which is orthogonal but not normalized to unit length, 
and induces the local fiber coordinates $(a_{1}, a_{2}, a_{3})$ of $\Lambda^{2}_{-} S^{4}$. 
Write $\nabla \omega_{i} = \sum_{j = 1}^{3} \gamma_{i j} \otimes \omega_{j}$, 
where $\nabla$ is the induced connection from 
the Levi-Civita connection of the standard metric on $S^{4}$ 
and $\gamma_{i j}$ is a local 1-form.
Let $\pi : \Lambda^{2}_{-} S^{4} \rightarrow S^{4}$ be the projection. 
Denoting $b_{i} = d a_{i} + \sum_{j=1}^{3} a_{j} \pi^{*} \gamma_{j i}$, we have 
\begin{align*}
r^{2} = \sum_{i =1}^{3} a_{i}^{2},  \qquad
\tau = \sum_{i =1}^{3} a_{i} \pi^{*} \omega_{i}, \qquad
d \tau = \sum_{i = 1}^{3} b_{i} \wedge \pi^{*} \omega_{i},  \qquad
{\rm vol}_{\mathcal{V}} = b_{123}, 
\end{align*}
where $b_{123} = b_{1} \wedge b_{2} \wedge b_{3}$. 
Thus the $G_{2}$-structure $\varphi_{\lambda}$ is described as
\begin{align}\label{def of G2 str S4}
\varphi_{\lambda} = 2 s_{\lambda} 
\sum_{i = 1}^{3} b_{i} \wedge \pi^{*} \omega_{i} 
+ \frac{1}{s_{\lambda}^{3}} b_{123}.
\end{align}

\begin{rem} \label{cone_of_CP3}
For $\lambda = 0$, the metric $g_{0}$ is 
a cone metric on $\Lambda^{2}_{-} S^{4} - \{ \mbox{zero section} \} \cong 
\mathbb{C} P^{3} \times \mathbb{R}_{>0}$. 
The metric $g_{\mathbb{C} P^{3}}$ on $\mathbb{C}P^{3}$
induced from $g_{0}$
is not the standard metric, but a 3-symmetric Einstein, non-K\"{a}hler metric.
The metric $g_{0}$ is not complete because of the singularity at 0, while 
its holonomy group is equal to $G_{2}$. 
\end{rem}

%
%
%

\subsection{Local frames of $\Lambda^{2}_{-} S^{4}$}

We use the following local frames of $\Lambda^{2}_{-} S^{4}$ 
for the convenience of computations. 
\subsubsection{Local frame on $S^{4} - \{ x_{5} = \pm 1 \}$} \label{second local frame}
Define a local oriented orthonormal frame $\{ e_{1}, e_{2}, e_{3}, e_{4} \}$ 
on $S^{4} - \{ x_{5} = \pm 1 \}$ by 
\begin{align*} 
(e_{1}, e_{2}, e_{3}, e_{4})
= 
\frac{1}{\sqrt{1-x_{5}^{2}}}
\left(
\left(
\begin{array}{c}
-x_{2} \\
x_{1} \\
-x_{4} \\
x_{3}\\
0
\end{array} 
\right),
\left(
\begin{array}{c}
-x_{3} \\
x_{4} \\
x_{1} \\
-x_{2}\\
0
\end{array} 
\right),
\left(
\begin{array}{c}
-x_{4} \\
-x_{3} \\
x_{2} \\
x_{1}\\
0
\end{array} 
\right),
\left(
\begin{array}{c}
-x_{1} x_{5} \\
-x_{2} x_{5} \\
-x_{3} x_{5}\\
-x_{4} x_{5}\\
1- x_{5}^{2}
\end{array} 
\right)
\right).
\end{align*}

Let $\{ e^{i} \}$ be the dual coframe of $\{ e_{i} \}$. 
Set the local orthogonal trivialization 
$\{ \omega_{1}, \omega_{2}, \omega_{3} \} = \{ e^{12}-e^{34}, e^{13}-e^{42}, e^{14}-e^{23} \}$ 
of $\Lambda^{2}_{-} S^{4}$ 
and denote by  $(a_{1}, a_{2}, a_{3})$ local fiber coordinates 
with respect to $\{ \omega_{1}, \omega_{2}, \omega_{3} \}$.
Recall 1-forms $\gamma_{i j}$ and $b_{i}$ are defined by 
$\nabla \omega_{i} = \sum_{j=1}^{3} \gamma_{i j} \otimes \omega_{j}$, 
$b_{i} = d a_{i} + \sum_{j=1}^{3} a_{j} \pi^{*} \gamma_{j i}$. 
Denote by $\nabla^{S^{4}}$ the Levi-Civita connection 
of the standard metric on $S^{4}$.
Then we see the following by a straightforward computation.

\begin{lem}
\begin{align*}
(\nabla_{e_{i}}^{S^{4}} e^{j}) &= 
\frac{1}{\sqrt{1-x_{5}^{2}}}
\left(
\begin{array}{cccc}
x_{5} e^{4} & -e^{3}       & e^{2}        & -x_{5} e^{1} \\
e^{3}        & x_{5} e^{4}  & -e^{1}      & -x_{5} e^{2}\\
-e^{2}      & e^{1}         & x_{5} e^{4} & -x_{5} e^{3}\\
0            &0               &  0           &  0\\
\end{array} 
\right), \\
( 
\gamma_{i j} 
)
&=
\frac{1 + x_{5} }{\sqrt{1-x_{5}^{2}}}
\left( 
\begin{array}{ccc}
0     &   -e^{1}  & e^{2}\\
e^{1} &     0     & e^{3} \\
-e^{2} & -e^{3} &0
\end{array}
\right), \\
\left( 
\begin{array}{c}
b_{1}\\
b_{2} \\
b_{3}
\end{array}
\right) 
&=
\left( 
\begin{array}{c}
da_{1}\\
da_{2} \\
da_{3}
\end{array}
\right) 
+
\frac{1+x_{5}}{\sqrt{1-x_{5}^{2}}} 
\left( 
\begin{array}{c}
a_{2} e^{1} - a_{3} e^{2} \\
-a_{1} e^{1} - a_{3} e^{3} \\
a_{1} e^{2} + a_{2} e^{3} 
\end{array}
\right).
\end{align*}
\end{lem}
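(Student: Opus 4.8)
The statement is declared to hold ``by a straightforward computation,'' so the plan is simply to organize that computation into the three pieces in the order they are needed, exploiting that $S^4 \subset \mathbb{R}^5$ carries the induced round metric. First I would compute the connection on the frame. For the unit sphere the Gauss formula gives $\nabla^{S^4}_X Y = D_X Y + \langle X, Y\rangle\, p$, where $D$ is the flat derivative on $\mathbb{R}^5$ and $p = {}^t(x_1,\dots,x_5)$ is the position vector, which is the outward unit normal. Since $\{e_i\}$ is orthonormal, the coefficients $\Gamma^k_{ij}$ defined by $\nabla^{S^4}_{e_i} e_j = \sum_k \Gamma^k_{ij} e_k$ are simply $\Gamma^k_{ij} = \langle \nabla^{S^4}_{e_i} e_j, e_k\rangle = \langle D_{e_i} e_j, e_k\rangle$, because the normal term $\langle e_i,e_j\rangle p$ pairs to zero against the tangent vector $e_k$. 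Thus everything reduces to differentiating the explicit components of $e_j$ from Section \ref{second local frame} in the direction $e_i$ and pairing with $e_k$, keeping in mind that the scalar factor $1/\sqrt{1-x_5^2}$ depends on $x_5$ and that all derivatives are along $S^4$, so $\sum_k x_k^2 = 1$.

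Next I would dualize to produce the first displayed matrix. Differentiating $e^j(e_k)=\delta^j_k$ gives $(\nabla^{S^4}_{e_i}e^j)(e_k) = -\Gamma^j_{ik}$, while metric compatibility of the orthonormal frame forces $\Gamma^k_{ij}=-\Gamma^j_{ik}$; hence $\nabla^{S^4}_{e_i}e^j = -\sum_k\Gamma^j_{ik}e^k = \sum_k \Gamma^k_{ij}e^k$, which is exactly the claimed matrix $(\nabla^{S^4}_{e_i}e^j)$, with the $\frac{x_5}{\sqrt{1-x_5^2}}$ factors appearing on the diagonal-type entries.

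To get the $\gamma_{ij}$, I would write $\nabla^{S^4} e^j = \sum_k \theta^j_k\otimes e^k$ with the $1$-forms $\theta^j_k$ read off from that matrix, and apply the Leibniz rule to $\omega_1 = e^{12}-e^{34}$, $\omega_2 = e^{13}-e^{42}$, $\omega_3 = e^{14}-e^{23}$. Because the Levi-Civita connection on an oriented Riemannian $4$-manifold commutes with the Hodge star (both the metric and the orientation are parallel), it preserves $\Lambda^2_-$, so each $\nabla^{S^4}\omega_i$ must re-expand in $\{\omega_1,\omega_2,\omega_3\}$ alone; reading off the coefficients in $\nabla^{S^4}\omega_i = \sum_j \gamma_{ij}\otimes\omega_j$ yields the stated antisymmetric matrix $(\gamma_{ij})$ with prefactor $\frac{1+x_5}{\sqrt{1-x_5^2}}$. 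Finally, the formula for $b_i = da_i + \sum_j a_j \pi^*\gamma_{ji}$ is pure substitution of the $\gamma_{ji}$ just computed.

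There is no conceptual obstacle here; the only real difficulty is bookkeeping. The delicate point is tracking the $x_5$-dependence through the product rule, in particular the term from differentiating $1/\sqrt{1-x_5^2}$ against the $e_4$-direction, so that the coefficients collapse to the closed forms $\frac{x_5}{\sqrt{1-x_5^2}}$ and $\frac{1+x_5}{\sqrt{1-x_5^2}}$ rather than a messier combination. The cleanest safeguard is the forced vanishing of the self-dual part of each $\nabla^{S^4}\omega_i$: any arithmetic slip in the first two steps will generically produce a nonzero $e^{ab}+e^{cd}$ contribution, so demanding that it cancel simultaneously confirms the $\Gamma^k_{ij}$ and pins down the $\gamma_{ij}$.
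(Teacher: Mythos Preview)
Your proposal is correct and is exactly the ``straightforward computation'' the paper invokes without details: use the Gauss formula on $S^4\subset\mathbb{R}^5$ to extract the connection coefficients, dualize via metric compatibility to obtain $(\nabla^{S^4}_{e_i}e^j)$, apply the Leibniz rule to the anti-self-dual basis $\omega_i$ to read off $\gamma_{ij}$, and then substitute into the definition of $b_i$. The self-dual vanishing check you suggest is a sensible safeguard, though not strictly needed for the argument.
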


\subsubsection{Frame at $\underline{p_{0}} = {}^t\! (0, 0, 0, 0, \pm 1)$} \label{frame at pole}

Set an oriented orthonormal basis $\{ f_{1}, f_{2}, f_{3}, f_{4} \}$ of $T_{\underline{p_{0}}} S^{4}$ by 
\begin{align*}
f_{1} &= {}^t\! (1, 0, 0, 0, 0), \qquad
f_{2} = {}^t\! (0, 1, 0, 0, 0), \\
f_{3} &= {}^t\! (0, 0, 1, 0, 0), \qquad
f_{4} = {}^t\! (0, 0, 0, \pm 1, 0).
\end{align*}
Note that the induced orientation on $T_{{}^t\! (0, 0, 0, 0,1)} S^{4}$ 
is opposite to that of $T_{{}^t\! (0, 0, 0, 0,-1)} S^{4}$.
Let $\{ f^{i} \}$ be the dual coframe of $\{ f_{i} \}$. 
Then a basis 
$\{ \Omega_{1}, \Omega_{2}, \Omega_{3} \} = \{ f^{12}-f^{34}, f^{13}-f^{42}, f^{14}-f^{23} \}$ 
of $\Lambda^{2}_{-} S^{4}|_{\underline{p_{0}}}$ 
gives fiber coordinates $(A_{1}, A_{2}, A_{3})$ of $\Lambda^{2}_{-} S^{4}|_{\underline{p_{0}}}$.

\section{Orbits of closed Lie subgroups of ${\rm SO}(5)$} \label{section orbits of Lie subgrp}

For each $\lambda > 0$, 
the automorphism group of $(\Lambda^{2}_{-} S^{4}, \varphi_{\lambda}, g_{\lambda})$ is 
${\rm SO}(5)$ 
acting on $\Lambda^{2}_{-} S^{4}$ as the lift of the standard action on $S^{4}$ (\cite{Sal book}). 
We study the Lie subgroups of ${\rm SO}(5)$ to 
obtain homogeneous and cohomogeneity one coassociative submanifolds. 
By the classification of compact Lie groups, we obtain the following.

\begin{lem} \label{Lie subgrp of SO5}
The $k$-dimensional connected closed Lie subgroup of ${\rm SO}(5)$, 
where $3 \leq k \leq 10$, 
is one of the following. 
\begin{align*}
\begin{array}{ll}
{\rm SO}(5),\\
{\rm SO}(4) = {\rm SO}(4) \times \{ 1 \}, \\
{\rm SO}(3) \times {\rm SO}(2), \\
{\rm U}(2) \subset {\rm SO}(4) \times \{ 1 \},\\
\end{array}
\hspace{1cm}
\begin{array}{ll}
{\rm SU}(2) \subset {\rm SO}(4) \times \{ 1 \}, \\
{\rm SO}(3) =  {\rm SO}(3) \times \{ I_{2} \},\\
{\rm SO}(3) \mbox{ acting irreducibly on } \mathbb{R}^{5}. 
\end{array}
\end{align*}
\end{lem}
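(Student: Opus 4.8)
The plan is to classify the $k$-dimensional connected closed subgroups of $\mathrm{SO}(5)$ for $3 \le k \le 10$ by combining the structure theory of compact connected Lie groups with low-dimensional representation theory. The relevant dimensions are $\dim \mathrm{SO}(5) = 10$, $\dim \mathrm{SO}(4) = 6$, $\dim(\mathrm{SO}(3)\times\mathrm{SO}(2)) = 4$, $\dim \mathrm{U}(2) = 4$, $\dim \mathrm{SU}(2) = 3$, and $\dim \mathrm{SO}(3) = 3$, so the list comprises exactly one group in each relevant conjugacy class. The first step is to reduce to the possible Lie algebras: a compact connected Lie group $G$ of dimension between $3$ and $10$ has semisimple part whose complexified Lie algebra is a product of simple factors of type $A_1, A_2, B_2$ (the only simple compact Lie algebras of dimension $\le 10$ are $\mathfrak{su}(2)$ of dimension $3$, $\mathfrak{su}(3)$ of dimension $8$, and $\mathfrak{so}(5)=\mathfrak{sp}(2)$ of dimension $10$), together with a central torus factor. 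Enumerating the abstract compact Lie algebras of each dimension $k$ in the range and discarding those that cannot embed in $\mathfrak{so}(5)$ for rank reasons (the rank of $\mathrm{SO}(5)$ is $2$, so no subgroup of rank $\ge 3$ occurs, which immediately kills e.g. $\mathrm{SU}(2)^3$, $T^3$, $\mathrm{SU}(2)^2\times T^1$) leaves a short list of candidate algebras.

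Next I would realize each surviving Lie algebra as a subalgebra of $\mathfrak{so}(5)$ and determine its embeddings up to conjugacy, using the representation theory of the standard $5$-dimensional representation. For $\mathfrak{so}(4)$ the embedding is the obvious block one $\mathrm{SO}(4)\times\{1\}$; $\mathfrak{so}(5)$ acts irreducibly on $\mathbb{R}^5$ and is rank $2$, so the subalgebra exhausts $\mathfrak{so}(5)$; for the rank-$2$ semisimple option $\mathfrak{su}(3)$ one checks it has no $5$-dimensional real representation, so it does not embed. For $\mathfrak{su}(2)=\mathfrak{so}(3)$, the irreducible real representations have dimensions $1, 3, 5, 7, \dots$, so a $5$-dimensional action of $\mathrm{SO}(3)$ must decompose as either $3+1+1$ (giving the block $\mathrm{SO}(3)\times\{I_2\}$) or $5$ (the irreducible action), while $\mathrm{SU}(2)$ itself can also act as $4+1$ (i.e. $\mathbb{C}^2 \oplus \mathbb{R}$, landing in $\mathrm{SO}(4)\times\{1\}$) — this is the $\mathrm{SU}(2)$ in the list, distinct from the $\mathrm{SO}(3)=\mathrm{PSU}(2)$ cases. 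For $k = 4$, the algebra is $\mathfrak{su}(2)\oplus\mathbb{R}$, and its faithful $5$-dimensional real representations decompose as $3+2$, giving $\mathrm{SO}(3)\times\mathrm{SO}(2)$, or as $4+1$ with the torus acting inside the $\mathrm{SU}(2)$-block as the center of $\mathrm{U}(2)$, giving $\mathrm{U}(2)\subset\mathrm{SO}(4)\times\{1\}$. Dimensions $5, 7, 9$ are ruled out because no compact Lie algebra of those dimensions (that is not already excluded by rank) embeds in $\mathfrak{so}(5)$ — e.g. dimension $5$ would force $\mathfrak{su}(2)\oplus T^2$ (rank $3$) or no semisimple part (abelian of rank $5$), both impossible; dimension $9$ has no semisimple Lie algebra and cannot be a product of $\mathfrak{su}(3)$ with a circle for rank reasons either ($\mathrm{S}(\mathrm{U}(3)\times\mathrm{U}(1))$-type subgroups would need a $6$-dimensional complex representation).

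The main obstacle will be the bookkeeping of conjugacy classes: showing that within a given isomorphism type of subgroup there is a unique conjugacy class in $\mathrm{SO}(5)$, and more delicately, that the ``obvious'' candidates in the list are pairwise non-conjugate and that no exotic embedding has been missed. The two genuinely distinct $\mathrm{SU}(2)$'s (the one covering $\mathrm{SO}(3)\times\{I_2\}$ versus the one in $\mathrm{U}(2)\subset\mathrm{SO}(4)$) and the two $\mathrm{SO}(3)$'s (reducible block versus irreducible $\mathbb{R}^5$) must be carefully distinguished by their branching of the standard representation. I would handle this by the standard fact that a compact subgroup of $\mathrm{SO}(n)$ is determined up to conjugacy by the isomorphism class of the restricted $n$-dimensional representation together with the orthogonal structure, then invoke that real representations of $\mathrm{SU}(2)$ and its $1$- and $2$-dimensional-center extensions are classified, so the decomposition types listed above are exhaustive. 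Finally I would observe that $\mathrm{SO}(2)=T^1$ itself and $\mathrm{SU}(2)\times\mathrm{SU}(2)$-type proper subgroups of $\mathrm{SO}(4)$ other than those listed either have dimension outside the range or are conjugate into one of the listed groups, completing the enumeration.
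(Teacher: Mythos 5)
Your argument is correct and shares the overall skeleton of the paper's proof in Appendix \ref{proof of classification subgrp SO(5)}: first enumerate the abstract compact Lie algebras of dimension $3\le k\le 10$ (a central torus plus simple factors drawn from $\mathfrak{su}(2)$, $\mathfrak{su}(3)$, $\mathfrak{so}(5)$), then decide via the restriction of the standard $5$-dimensional representation which of them embed in $\mathfrak{so}(5)$ and in which ways. The genuine difference lies in how the product algebras are excluded. The paper does this with the real representation theory of ${\rm SU}(2)^{2}$ and ${\rm SU}(2)^{3}$ (Lemmas \ref{R-rep su2 su2} and \ref{R-rep su2 su2 su2}) followed by explicit centralizer computations in $\mathfrak{so}(5)$ for the embedded $\mathfrak{so}(4)$ and for each of the three embedded copies of $\mathfrak{su}(2)$; you instead dispose of every candidate of rank at least $3$ (namely $\mathfrak{su}(2)^{3}$, $\mathfrak{su}(2)^{2}\oplus\mathbb{R}^{j}$, $\mathfrak{su}(2)\oplus\mathbb{R}^{2}$, $\mathfrak{su}(3)\oplus\mathbb{R}$, and the abelian tori of dimension $\ge 3$, which the paper's case list does not even mention) in a single stroke via the bound ${\rm rank}\le{\rm rank}\,{\rm SO}(5)=2$. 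That is cleaner and covers some cases the paper leaves implicit. What the paper's centralizer computation buys in exchange is a direct identification of which circles can be adjoined to each embedded $\mathfrak{su}(2)$, which is how it manufactures ${\rm SO}(3)\times{\rm SO}(2)$ and ${\rm U}(2)$ and rules out a circle extension of the irreducible ${\rm SO}(3)$; you reach the same conclusions by decomposing the faithful $5$-dimensional representations of $\mathfrak{su}(2)\oplus\mathbb{R}$ into the types $3+2$ and $4+1$. Just be sure to state explicitly that the case of the irreducible ${\rm SO}(3)$ together with a commuting circle is excluded because the commutant of a real-type irreducible orthogonal representation contains no torus (real Schur); that is the one subcase your decomposition list passes over. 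The remaining inputs (that $\mathfrak{su}(3)$ admits no $5$-dimensional real representation, and that the real irreducible representations of ${\rm SU}(2)$ have dimensions $1,3,5,\dots$ and $4,8,\dots$, yielding exactly the decomposition types $3+1+1$, $4+1$ and $5$) are identical in both proofs.
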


The proof is given in Appendix \ref{proof of classification subgrp SO(5)}. 
According to Lemma \ref{Lie subgrp of SO5}, 
we study the orbits on $\Lambda^{2}_{-} S^{4}$ of Lie subgroups of ${\rm SO}(5)$ above. 


\subsection{${\rm SO}(4) = {\rm SO}(4) \times \{ 1 \}$ and ${\rm SO}(5)$-actions} \label{section SO4}

In this subsection, 
We consider both the ${\rm SO}(4) = {\rm SO}(4) \times \{ 1 \}$ 
and the ${\rm SO}(5)$-orbits.

\begin{lem}[Orbits of the ${\rm SO}(4)$-action] \label{orbit SO4}
By the ${\rm SO}(4)$-action, 
any point in $\Lambda^{2}_{-} S^{4}$ is mapped to a point 
in the fiber of 
$\underline{p_{0}} = {}^t\! (x_{1}, 0, 0, 0, x_{5})$
where $x_{1} \geq 0$. 
The  ${\rm SO}(4)$-orbit through 
$p_{0} \in \Lambda^{2}_{-} S^{4}|_{\underline{p_{0}}}$ 
is diffeomorphic to 
\begin{align*}
\left\{
\begin{array}{ll}
 {\rm SO}(4) / {\rm SO}(2) & \qquad \mbox{for }\  x_{1} > 0, p_{0} \neq 0, \\ 
 S^{3}                             &  \qquad \mbox{for }\  x_{1} > 0, p_{0} = 0, \\ 
S^{2}                  &  \qquad \mbox{for }\  x_{5}= \pm 1, p_{0} \neq 0, \\
*                       &  \qquad \mbox{for }\  x_{5}= \pm 1, p_{0} = 0.
\end{array}
\right.
\end{align*}
\end{lem}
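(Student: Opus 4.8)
The plan is to reduce everything to the standard action of ${\rm SO}(4) = {\rm SO}(4) \times \{1\}$ on $S^4 \subset \mathbb{R}^5$, where ${\rm SO}(4)$ rotates the first four coordinates and fixes $x_5$. First I would recall that the orbits of this action on $S^4$ are the "latitude spheres" $\{x_5 = \text{const}\} \cong S^3$ for $-1 < x_5 < 1$, together with the two fixed points $\underline{p_0} = {}^t\!(0,0,0,0,\pm 1)$. Since every $S^3 = \{x_5 = c\}$ contains a point of the form ${}^t\!(x_1, 0, 0, 0, x_5)$ with $x_1 = \sqrt{1 - c^2} \geq 0$, any point of $\Lambda^2_- S^4$ lies, after applying a suitable element of ${\rm SO}(4)$, in the fiber over such a point $\underline{p_0} = {}^t\!(x_1, 0, 0, 0, x_5)$. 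This establishes the first assertion.

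Next I would identify the stabilizer of the base point $\underline{p_0} = {}^t\!(x_1, 0, 0, 0, x_5)$ in the base $S^4$: when $x_1 > 0$ (equivalently $x_5 \neq \pm 1$), the stabilizer in ${\rm SO}(4)$ of the vector ${}^t\!(x_1,0,0,0)$ in $\mathbb{R}^4$ is ${\rm SO}(3)$ (rotations fixing that vector), so the base orbit is ${\rm SO}(4)/{\rm SO}(3) = S^3$; when $x_5 = \pm 1$ the base point is fixed by all of ${\rm SO}(4)$. To get the orbit through a fiber point $p_0 \in \Lambda^2_- S^4|_{\underline{p_0}}$, I then need the isotropy representation of this base stabilizer on the fiber $\Lambda^2_-(S^4)_{\underline{p_0}} \cong \mathbb{R}^3$. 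The key computational point is that $\Lambda^2_-$ of the tangent space carries an ${\rm SO}(4)$-equivariant structure, and the residual isotropy group acts on the $3$-dimensional fiber. In the case $x_5 = \pm 1$: the full ${\rm SO}(4)$ acts on $T_{\underline{p_0}}S^4 = \mathbb{R}^4$ in the standard way, hence on $\Lambda^2_- \mathbb{R}^4 \cong \mathbb{R}^3$ through the surjection ${\rm SO}(4) \to {\rm SO}(3)$ (the anti-self-dual factor of ${\rm SO}(4) \cong ({\rm SU}(2) \times {\rm SU}(2))/\{\pm 1\}$); so a nonzero $p_0$ has orbit ${\rm SO}(3)/{\rm SO}(2) = S^2$, and $p_0 = 0$ is a fixed point $*$. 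In the case $x_1 > 0$: the base isotropy ${\rm SO}(3) \subset {\rm SO}(4)$ acts on the fiber $\mathbb{R}^3$; I would check (using the explicit frame $\{\omega_1, \omega_2, \omega_3\}$ from Section \ref{second local frame}, or an abstract weight argument) that this action is the standard ${\rm SO}(3)$-action on $\mathbb{R}^3$. Granting this, for $p_0 \neq 0$ the stabilizer of $(\underline{p_0}, p_0)$ in ${\rm SO}(4)$ is the ${\rm SO}(2) \subset {\rm SO}(3)$ fixing $p_0$, giving orbit ${\rm SO}(4)/{\rm SO}(2)$, and for $p_0 = 0$ the stabilizer is the whole ${\rm SO}(3)$, giving orbit ${\rm SO}(4)/{\rm SO}(3) = S^3$.

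The main obstacle — and the only genuinely non-formal step — is verifying that the base isotropy ${\rm SO}(3)$ (for $x_1 > 0$) acts \emph{standardly} on the $3$-dimensional fiber $\Lambda^2_-$, rather than, say, trivially or through some other representation. I would handle this concretely: using the local frame of Section \ref{second local frame}, write down how an element of ${\rm SO}(4)$ fixing ${}^t\!(x_1, 0, 0, 0, x_5)$ acts on the coframe $\{e^1, e^2, e^3, e^4\}$ (it must act as ${\rm id}$ on $e^4 = $ the direction dual to the radial fiber direction, and standardly on $\mathrm{span}\{e^1, e^2, e^3\}$), then compute the induced action on $\omega_1 = e^{12} - e^{34},\ \omega_2 = e^{13} - e^{42},\ \omega_3 = e^{14} - e^{23}$. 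A rotation in the $(e^1,e^2,e^3)$-block induces exactly the corresponding rotation of $(\omega_1, \omega_2, \omega_3)$ up to a permutation/sign fixed by the identifications, so the representation on the fiber is indeed the standard one; this is a short but essential linear-algebra check. Once this is in hand, the four cases of the diffeomorphism type follow by reading off stabilizers as above, completing the proof.
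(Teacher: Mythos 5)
Your proposal is correct and follows essentially the same route as the paper's proof: reduce to the fiber over $\underline{p_{0}}={}^t\!(x_{1},0,0,0,x_{5})$, identify the base isotropy group (${\rm SO}(3)$ for $x_{1}>0$, all of ${\rm SO}(4)$ for $x_{5}=\pm1$), and verify via the explicit frames of Sections \ref{second local frame} and \ref{frame at pole} that the induced action on the $3$-dimensional fiber is the standard one (the paper writes this out as an explicit signed-permutation conjugate of $g$ for $x_{1}>0$, and as $[(p,q)]\cdot a=qa\overline{q}$ on ${\rm Im}\,\mathbb{H}$ at the poles). The linear-algebra check you flag as the essential step is exactly the computation the paper performs.
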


\begin{cor} \label{orbit SO5}
Let $\mathcal{O}$ be an ${\rm SO}(5)$-orbit. 
Then $\dim \mathcal{O} \leq 4$ if and only if 
$\mathcal{O}$ is the zero section $S^{4}$. 
\end{cor}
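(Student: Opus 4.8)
The plan is to deduce the corollary directly from Lemma \ref{orbit SO4} together with the fact that $\mathrm{SO}(4)\subset\mathrm{SO}(5)$. First I would observe that any $\mathrm{SO}(5)$-orbit $\mathcal{O}$ contains an $\mathrm{SO}(4)$-orbit, so if $\dim\mathcal{O}\le 4$ then the $\mathrm{SO}(4)$-orbits inside $\mathcal{O}$ have dimension at most $4$. By Lemma \ref{orbit SO4}, every point of $\Lambda^{2}_{-}S^{4}$ lies in the $\mathrm{SO}(4)$-orbit of a point in the fiber over $\underline{p_0}={}^t\!(x_1,0,0,0,x_5)$ with $x_1\ge 0$, and the possible $\mathrm{SO}(4)$-orbit types there are $\mathrm{SO}(4)/\mathrm{SO}(2)$ (dimension $5$), $S^{3}$ (dimension $3$), $S^{2}$ (dimension $2$), or a point. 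So the constraint $\dim\mathcal{O}\le 4$ forces every $\mathrm{SO}(4)$-orbit inside $\mathcal{O}$ to be of type $S^{3}$, $S^{2}$, or a point — in particular it rules out the generic type $\mathrm{SO}(4)/\mathrm{SO}(2)$.

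Next I would use this to pin down $\mathcal{O}$. The $S^{3}$, $S^{2}$, and point orbit types all occur precisely on the zero section: from Lemma \ref{orbit SO4}, $S^{3}$ arises when $x_1>0$ and $p_0=0$; $S^{2}$ and the point orbit arise when $x_5=\pm 1$ (again with $p_0=0$ for the point, $p_0\neq 0$ for $S^2$). Here I need the observation that the $\mathrm{SO}(4)$-orbit of type $S^2$ over a pole, although it has $p_0\neq 0$, still maps into the zero section under $\mathrm{SO}(5)$: the point ${}^t\!(0,0,0,0,\pm 1)$ can be moved by $\mathrm{SO}(5)$ to a point with $x_1>0$, and checking the fiber behaviour one sees that a nonzero element of $\Lambda^2_-S^4|_{\underline{p_0}}$ at a pole is carried to a nonzero element over a generic point, whose $\mathrm{SO}(4)$-orbit is then $5$-dimensional — contradicting $\dim\mathcal{O}\le 4$. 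Hence the only surviving case is $p_0=0$ throughout $\mathcal{O}$, i.e. $\mathcal{O}$ is contained in the zero section. Since $\mathrm{SO}(5)$ acts transitively on $S^4$, the zero section is a single $4$-dimensional $\mathrm{SO}(5)$-orbit, so $\mathcal{O}=S^4$. The converse is immediate: the zero section is $4$-dimensional.

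I expect the main subtlety to be the $S^2$-orbit case: one must make sure that an $\mathrm{SO}(5)$-orbit containing an $S^{2}$-type $\mathrm{SO}(4)$-orbit (which sits over a pole and has $p_0\neq 0$) cannot have dimension $\le 4$. This requires tracking how a nonzero fiber element over a pole transforms when the base point is moved off the pole by $\mathrm{SO}(5)$, and confirming that its $\mathrm{SO}(4)$-stabilizer drops to give a $5$-dimensional orbit. Once that is handled, the rest is a direct case analysis using Lemma \ref{orbit SO4} and the transitivity of $\mathrm{SO}(5)$ on $S^4$.
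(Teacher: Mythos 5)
Your proposal is correct and follows essentially the same route as the paper, which derives the corollary directly from Lemma \ref{orbit SO4}: any point off the zero section is ${\rm SO}(5)$-congruent to a point over a non-pole with nonzero fiber component, whose ${\rm SO}(4)$-orbit ${\rm SO}(4)/{\rm SO}(2)$ is already $5$-dimensional. Your explicit handling of the $S^{2}$-type orbits over the poles (using that the lifted action is by fiberwise linear isomorphisms, so nonzero fiber elements stay nonzero) is exactly the detail the paper leaves implicit.
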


\begin{proof}[Proof of Lemma \ref{orbit SO4}]
It is obvious that 
any point in $\Lambda^{2}_{-} S^{4}$ is congruent to a point 
in the fiber of 
$\underline{p_{0}} = {}^t\! (x_{1}, 0, 0, 0, x_{5})$, where $x_{1} \geq 0$, 
by the ${\rm SO}(4) = {\rm SO}(4) \times \{ 1 \}$-action.

Suppose that $x_{1}>0$. 
Since the stabilizer of the ${\rm SO}(4)$-action on $S^{4}$ at $\underline{p_{0}}$ 
is 
${\rm SO}(3) = \{ 1 \} \times {\rm SO}(3) \times \{ 1 \} \subset {\rm SO}(5)$, 
we consider this ${\rm SO}(3)$-action on $\Lambda^{2}_{-} S^{4}|_{\underline{p_{0}}}.$
Use the notation in Section \ref{second local frame}. 
Since $x_{2} = x_{3} = x_{4} = 0$, 
the action of $g = (g_{i j}) \in {\rm SO}(3) = \{ 1 \} \times {\rm SO}(3) \times \{ 1 \}$ is given by 
\begin{align*}
g_{*} e_{i}  = \sum_{j=1}^{3} g_{j i} e_{j} \ \mbox{ for }  i=1,2,3, 
\qquad 
g_{*} e_{4} = e_{4}
\end{align*}
at $\underline{p_{0}}$. 
Then the induced action of $g = (g_{i j}) \in {\rm SO}(3)$ on $\Lambda^{2}_{-} S^{4}|_{\underline{p_{0}}}$ 
is described as 
\begin{align*}
\left( 
\begin{array}{c}
a_{1} \\
a_{2} \\
a_{3} \\
\end{array} 
\right)
\mapsto
\left( 
\begin{array}{ccc}
g_{33} & -g_{32} & -g_{31}\\
-g_{23} & g_{22} & g_{21}\\
-g_{13} & g_{12} & g_{11}\\
\end{array} 
\right)
\left( 
\begin{array}{c}
a_{1} \\
a_{2} \\
a_{3} \\
\end{array} 
\right).
\end{align*}
Thus 
the stabilizer of the ${\rm SO}(4)$-action on $\Lambda^{2}_{-} S^{4}$ at 
$p_{0} = ( {}^t\! (x_{1}, 0, 0, 0, x_{5}), {}^t\! (a_{1}, a_{2}, a_{3}))$  
is 
${\rm SO}(2)$ when ${}^t\! (a_{1}, a_{2}, a_{3}) \neq 0$. 
It is ${\rm SO}(3)$ when ${}^t\! (a_{1}, a_{2}, a_{3}) = 0$.

Next, suppose that $x_{5}= \pm 1$. 
Then the stabilizer of the ${\rm SO}(4)$-action on $S^{4}$ at 
$\underline{p_{0}} = {}^t\! (0, 0, 0, 0, \pm 1)$ 
is ${\rm SO}(4)$.  
By using the frame in Section \ref{frame at pole}, 
the induced action of 
${\rm SO}(4)$ on $\Lambda^{2}_{-} S^{4}|_{\underline{p_{0}}}$ 
is equivalent to 
that of  ${\rm SO}(4) = ({\rm Sp}(1) \times {\rm Sp}(1)) / \mathbb{Z}_{2} $ on 
$\Lambda^{2}_{\mp} \mathbb{R}^{4} = \mathbb{R}^{3} = {\rm Im} \mathbb{H}$, 
which is described as 
\begin{align*}
[(p,q)] \cdot a &= q a \overline{q}  \qquad \mbox{ if } \underline{p_{0}} = {}^t\! (0, 0, 0, 0,1), \\
[(p,q)] \cdot a &= p a \overline{p}  \qquad \mbox{ if } \underline{p_{0}} = {}^t\! (0, 0, 0, 0,-1). 
\end{align*}
This is the standard action of ${\rm Sp}(1)/ \mathbb{Z}_{2} = {\rm SO}(3)$ on $\mathbb{R}^{3}$, 
and hence we obtain the lemma.
\end{proof}

\begin{proof}[Proof of Corollary \ref{orbit SO5}]
It is obvious that 
any point in $\Lambda^{2}_{-} S^{4}$ is congruent to a point 
in the fiber of 
$\underline{p_{0}} = {}^t\! (1, 0, 0, 0, 0)$ by the ${\rm SO}(5)$-action. 
By Lemma \ref {orbit SO4}, 
the subgroup ${\rm SO}(4) \subset {\rm SO}(5)$ has 
5-dimensional orbits on each point of 
$\Lambda^{2}_{-} S^{4}|_{\underline{p_{0}}} - \{ 0 \}$. 
Hence $\mathcal{O}$ must be the zero section $S^{4}$. 
\end{proof}


\subsection{${\rm SO}(3) \times {\rm SO}(2)$-action} \label{section SO3 SO2}

Use the notation in Section \ref{second local frame}. 

\begin{lem}[Orbits of the ${\rm SO}(3) \times {\rm SO}(2)$-action] \label{orbit SO3 SO2}

By the ${\rm SO}(3) \times {\rm SO}(2)$-action, 
any point in $\Lambda^{2}_{-} S^{4}$ is mapped to a point 
in the fiber of 
$\underline{p_{0}} = {}^t\! (x_{1}, 0, 0, x_{4}, 0) \in S^{4}$, 
where $x_{1}, x_{4} \geq 0$.
The ${\rm SO}(3) \times {\rm SO}(2)$-orbit through 
$p_{0} =  ({}^t\! (x_{1}, 0, 0, x_{4}, 0), {}^t\! (a_{1}, a_{2}, a_{3})) 
\in \Lambda^{2}_{-} S^{4}|_{\underline{p_{0}}}$ 
is diffeomorphic to 
\begin{align*}
\left\{
\begin{array}{ll}
 {\rm SO}(3) \times {\rm SO}(2) & \qquad \mbox{for }\  0< x_{1} < 1, (a_{2}, a_{3}) \neq 0, \\ 
 S^{2} \times S^{1}                   & \qquad \mbox{for }\  0< x_{1} < 1, (a_{2}, a_{3}) = 0, \\ 
({\rm SO}(3) \times {\rm SO}(2))/ {\rm SO}(2) & \qquad \mbox{for }\ x_{1}=1, (a_{2}, a_{3}) \neq 0, \\
S^{2}                  & \qquad \mbox{for }\  x_{1}=1, (a_{2}, a_{3}) = 0,\\
S^{2} \times S^{1} & \qquad \mbox{for }\  x_{1}=0, (a_{1}, a_{2}, a_{3}) \neq 0, \\
S^{1}                   & \qquad \mbox{for }\  x_{1}=0, (a_{1}, a_{2}, a_{3}) = 0.
\end{array}
\right.
\end{align*}
When $x_{1}=1, (a_{2}, a_{3}) \neq 0$, 
the dividing group ${\rm SO}(2)$ is identified with 
\begin{align*}
\left \{ 
\left(
\left( 
\begin{array}{cc}
1 &    \\
   & h \\
\end{array}
\right), 
h
\right) 
\in {\rm SO}(3) \times {\rm SO}(2)
;
h \in {\rm SO}(2)
\right \}.
\end{align*}
\end{lem}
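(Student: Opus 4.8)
The plan is to analyze the ${\rm SO}(3) \times {\rm SO}(2)$-action in two stages, first on the base $S^4$ and then on the fibers of $\Lambda^2_- S^4$. For the first stage, I would note that ${\rm SO}(3)$ acts on the first three coordinates $(x_1,x_2,x_3)$ and ${\rm SO}(2)$ on $(x_4,x_5)$. Given a point ${}^t\!(x_1,x_2,x_3,x_4,x_5) \in S^4$, I can use ${\rm SO}(3)$ to rotate $(x_1,x_2,x_3)$ to $(\sqrt{x_1^2+x_2^2+x_3^2},0,0)$ and ${\rm SO}(2)$ to rotate $(x_4,x_5)$ to $(\sqrt{x_4^2+x_5^2},0)$, so every base point is equivalent to some $\underline{p_0} = {}^t\!(x_1,0,0,x_4,0)$ with $x_1,x_4 \geq 0$ and $x_1^2+x_4^2=1$. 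This establishes the first assertion of the lemma.

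Next I would compute the stabilizer in ${\rm SO}(3) \times {\rm SO}(2)$ of the base point $\underline{p_0}$ and then its induced action on the fiber $\Lambda^2_- S^4|_{\underline{p_0}}$, which is a $3$-dimensional vector space with coordinates $(a_1,a_2,a_3)$ with respect to the frame $\{\omega_1,\omega_2,\omega_3\}$ of Section \ref{second local frame}. I would split into the generic case $0<x_1<1$, where the base stabilizer is the ${\rm SO}(2)$ fixing $(1,0,0)$ in the first factor (and trivial in the second), and the degenerate cases $x_1 = 1$ (base stabilizer ${\rm SO}(2)\times{\rm SO}(2)$, where the second ${\rm SO}(2)$ lifts to act on the fiber, since then $\underline{p_0}$ is the fixed point of the ${\rm SO}(3)$-factor and the ${\rm SO}(2)$-action on $S^4$ has it as a fixed point) and $x_1 = 0$ (base stabilizer ${\rm SO}(3)\times\{1\}$, fixing $\underline{p_0} = {}^t\!(0,0,0,1,0)$). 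In each case I must trace through how the frame vectors $e_1,\dots,e_4$ transform under the stabilizer—this requires evaluating the local frame of Section \ref{second local frame} at $\underline{p_0}$ (which is valid here since $x_5 = 0 \neq \pm 1$)—and then push this through to the induced action on $\{\omega_1,\omega_2,\omega_3\}$, exactly as was done for the ${\rm SO}(4)$-case in Lemma \ref{orbit SO4}. The orbit is then $({\rm SO}(3)\times{\rm SO}(2))/\mathrm{Stab}(p_0)$, and combining the cases (stabilizer on the fiber acting trivially or not, depending on whether $(a_2,a_3)$ or $(a_1,a_2,a_3)$ vanishes) yields the listed diffeomorphism types, including the explicit description of the dividing ${\rm SO}(2)$ when $x_1=1$.

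The main obstacle will be the case $x_1 = 1$, i.e.\ $\underline{p_0} = {}^t\!(1,0,0,0,0)$: here the ${\rm SO}(3)$-factor fixes $\underline{p_0}$ and its full (three-dimensional) action must be computed on the fiber to see that it acts as ${\rm SO}(3)$ on $\mathbb{R}^3 \cong \Lambda^2_-$, while simultaneously the ${\rm SO}(2)$-factor also fixes $\underline{p_0}$ and acts on the fiber, and I must identify precisely how these two actions combine—in particular pinning down that the stabilizer of a nonzero fiber vector with $(a_2,a_3)\neq 0$ is the \emph{diagonal-type} ${\rm SO}(2)$ stated in the lemma rather than some other ${\rm SO}(2)$. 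This forces careful bookkeeping of the correspondence between rotations of the frame $\{e_i\}$ and rotations of the induced basis $\{\omega_i\}$, using the same matrix identity as in the ${\rm SO}(4)$ computation (where $g \in {\rm SO}(3)$ acting on $e_1,e_2,e_3$ induces a specific conjugate action on $a_1,a_2,a_3$). The other cases ($0<x_1<1$ and $x_1=0$) should follow routinely once the frame is set up, since there the orbit structure is essentially a product of a sphere from one factor with a circle or sphere-quotient from the other.
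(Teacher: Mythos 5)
Your overall strategy---normalize the base point using the block action of ${\rm SO}(3)\times{\rm SO}(2)$ on $\mathbb{R}^{3}\oplus\mathbb{R}^{2}$, compute the stabilizer of $\underline{p_{0}}$, and push that stabilizer through the frame of Section \ref{second local frame} to get its action on $(a_{1},a_{2},a_{3})$---is exactly the paper's, and it works. However, your analysis of the case $x_{1}=1$, which you single out as the main difficulty, starts from a false premise. At $\underline{p_{0}}={}^t\!(1,0,0,0,0)$ the ${\rm SO}(3)$-factor does \emph{not} fix $\underline{p_{0}}$: it moves $(1,0,0)$ over the whole unit sphere in the first three coordinates, and only the subgroup ${\rm SO}(2)\subset{\rm SO}(3)$ of rotations in the $(x_{2},x_{3})$-plane stabilizes it. This is consistent with the stabilizer ${\rm SO}(2)\times{\rm SO}(2)$ you correctly wrote down, but it contradicts your later claim that ``the ${\rm SO}(3)$-factor fixes $\underline{p_{0}}$ and its full (three-dimensional) action must be computed on the fiber to see that it acts as ${\rm SO}(3)$ on $\mathbb{R}^{3}$.'' If the full ${\rm SO}(3)\times{\rm SO}(2)$ fixed $\underline{p_{0}}$, the orbit through a point with $(a_{2},a_{3})\neq 0$ would be a compact $3$-dimensional orbit contained in the single $3$-dimensional fiber $\Lambda^{2}_{-}S^{4}|_{\underline{p_{0}}}$, which is impossible; in fact that orbit projects onto the ${\rm SO}(3)$-orbit $S^{2}\subset S^{4}$ of $\underline{p_{0}}$.

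The computation you actually need at $x_{1}=1$, and the source of the ``diagonal'' ${\rm SO}(2)$ in the statement, is this: by the induced fiber action (\ref{SO3 times 1 action}), the element $\mathrm{diag}(1,h_{\theta})$ of the first stabilizing ${\rm SO}(2)$ fixes $a_{1}$ and rotates $(a_{2},a_{3})$ by $-\theta$, while by (\ref{1 times SO2 action}) with $x_{1}=1$, $x_{4}=0$ the element $h_{\alpha}$ of the second factor fixes $a_{1}$ and rotates $(a_{2},a_{3})$ by $+\alpha$. The stabilizer of a fiber point with $(a_{2},a_{3})\neq 0$ is therefore $\{\alpha=\theta\}$, i.e.\ the diagonal ${\rm SO}(2)$ of the lemma, whereas a point with $(a_{2},a_{3})=0$ is fixed by all of ${\rm SO}(2)\times{\rm SO}(2)$, giving the orbit $S^{2}$. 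The picture you describe---base point fixed by the whole ${\rm SO}(3)$, which then acts as the full ${\rm SO}(3)$ on the fiber $\mathbb{R}^{3}$---is what happens in the \emph{other} degenerate case $x_{1}=0$, $\underline{p_{0}}={}^t\!(0,0,0,1,0)$, which you treat correctly. With the $x_{1}=1$ case repaired as above, the rest of your plan goes through and coincides with the paper's argument.
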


\begin{proof}
A direct computation gives the following descriptions. 
When $x_{5} \neq \pm 1$, 
the action of $g = (g_{ij}) \in {\rm SO}(3) = {\rm SO}(3) \times \{ I_{2} \}$ is given by 
\begin{align}\label{SO3 times 1 action}
&g \cdot
\left(
{}^t\! ( x_{1}, 0, 0, x_{4}, x_{5}), 
{}^t\! (a_{1}, a_{2}, a_{3})
\right) \\
= &
\left(
{}^t\! \left( 
g_{11} x_{1}, 
g_{21} x_{1},
g_{31} x_{1},
x_{4}, x_{5}
\right), 
\left( 
\begin{array}{ccc}
g_{11}  & g_{12}   & -g_{13} \\
g_{21}  & g_{22}   & -g_{23} \\
-g_{31} & -g_{32} & g_{33} \\
\end{array}
\right)
{}^t\! (a_{1}, a_{2}, a_{3})
\right). \nonumber
\end{align}
When  $x_{4} \neq \pm 1$, 
the action of 
$h =  
\left( 
\begin{array}{cc}
\cos \alpha & -\sin \alpha\\
\sin \alpha  & \cos \alpha
\end{array}
\right) \in {\rm SO}(2) = \{ I_{3} \} \times  {\rm SO}(2)$ is given by 
\begin{align} \label{1 times SO2 action}
h \cdot
\left(
{}^t\! (x_{1}, 0, 0, x_{4}, 0), 
{}^t\! (a_{1}, a_{2}, a_{3})
\right)
= 
\left(
{}^t\! ( x_{1}, 0, 0, x_{4} \cos \alpha, x_{4} \sin \alpha), 
\left( 
\begin{array}{ccc}
1  &   \\
    &  A\\
\end{array}
\right)
{}^t\! ( a_{1}, a_{2}, a_{3})
\right), 
\end{align}
where 
\begin{align*}
A= 
\frac{1}{1-x_{4} \sin \alpha}
\left( 
\begin{array}{cc}
x_{4}^{2}(1 - \cos \alpha) - x_{4} \sin \alpha + \cos \alpha & x_{1} x_{4} (1-\cos \alpha) - x_{1} \sin \alpha \\
-x_{1} x_{4} (1-\cos \alpha) + x_{1} \sin \alpha & x_{4}^{2}(1 - \cos \alpha) - x_{4} \sin \alpha + \cos \alpha 
\end{array}
\right).
\end{align*}

At $\underline{p_{0}} = {}^t\! (0, 0, 0, 1, 0)$, 
set the orthonormal basis $\{ f_{1}, f_{2}, f_{3}, f_{4} \}$ of $T_{\underline{p_{0}}} S^{4}$ by 
$
f_{1} = {}^t\! (0, 0, -1, 0, 0),
f_{2} = {}^t\! (0, 1, 0, 0, 0),
f_{3} = {}^t\! (-1, 0, 0, 0, 0),
f_{4} = {}^t\! (0, 0, 0, 0, 1).
$
Let $\{ f^{i} \}$ be the dual coframe of $\{ f_{i} \}$. 
Then the local trivialization 
$\{ \Omega_{1}, \Omega_{2}, \Omega_{3} \} = \{ f^{12}-f^{34}, f^{13}-f^{42}, f^{14}-f^{23} \}$ 
of $\Lambda^{2}_{-} S^{4}$ 
gives local fiber coordinates $(A_{1}, A_{2}, A_{3})$ of $\Lambda^{2}_{-} S^{4}$.
The action of $g = (g_{ij}) \in {\rm SO}(3) = {\rm SO}(3) \times \{ I_{2} \}$
on $\Lambda^{2}_{-}S^{4}|_{\underline{p_{0}}}$ is given by  
\begin{align*}
\left( 
\begin{array}{c}
A_{1} \\
A_{2} \\
A_{3} \\
\end{array}
\right)
\mapsto
\left( 
\begin{array}{ccc}
g_{11}  & g_{12}   & -g_{13} \\
g_{21}  & g_{22}   & -g_{23} \\
-g_{31} & -g_{32} & g_{33} \\
\end{array}
\right)
\left( 
\begin{array}{c}
A_{1} \\
A_{2} \\
A_{3} \\
\end{array}
\right)
\end{align*}
By these computations, 
we see the lemma 
as in the proof of Lemma \ref{orbit SO4}. 
\end{proof}

Define the basis $\{ E_{i} \}_{1 \leq i \leq 3}$ of $\mathfrak{so}(3)$  by 
\begin{align} \label{basis of so3}
E_{1} = 
\left( 
\begin{array}{ccc}
0 & -1 & 0 \\
1 & 0   & 0 \\
0 & 0   & 0
\end{array}
\right), \qquad
E_{2} = 
\left( 
\begin{array}{ccc}
0 & 0 & -1 \\
0 & 0 & 0   \\
1 & 0 & 0 
\end{array}
\right), \qquad
E_{3} = 
\left( 
\begin{array}{ccc}
0 & 0 & 0 \\
0 & 0 & -1 \\
0 & 1 & 0  
\end{array}
\right), 
\end{align}
and set 
$E_{4} = 
\left( 
 \begin{array}{ccc}
0  & -1 \\
1  & 0 \\
\end{array}
\right) \in \mathfrak{so}(2)$. 
Via the identifications
$\mathfrak{so}(3) = \mathfrak{so}(3) \oplus \{ 0 \}$ and 
$\mathfrak{so}(2) = \{ 0 \} \oplus \mathfrak{so}(2)$, 
$\{ E_{i} \}_{1 \leq i \leq 4}$ form a basis of $\mathfrak{so}(3) \oplus \mathfrak{so}(2)$. 
By (\ref{SO3 times 1 action}) and (\ref{1 times SO2 action}), 
the vector fields $\tilde{E_{i}^{*}}$ on $\Lambda^{2}_{-} S^{4}$ generated by $E_{i}$ 
are described as 
\begin{align*}
\tilde{E_{1}^{*}} &= x_{1} (x_{1} e_{1}   + x_{4} e_{2} ) 
- a_{2} \frac{\partial}{\partial a_{1}} + a_{1} \frac{\partial}{\partial a_{2}}, \\
\tilde{E_{2}^{*}} &= x_{1} (-x_{4} e_{1}   + x_{1} e_{2} ) 
+ a_{3} \frac{\partial}{\partial a_{1}} - a_{1} \frac{\partial}{\partial a_{3}},\\
\tilde{E_{3}^{*}} &=
a_{3} \frac{\partial}{\partial a_{2}} - a_{2} \frac{\partial}{\partial a_{3}},\\
\tilde{E_{4}^{*}} &= 
x_{4} e_{4} + x_{1} \left(
- a_{3} \frac{\partial}{\partial a_{2}} + a_{2} \frac{\partial}{\partial a_{3}} \right), 
\end{align*}
at $p_{0} = \left( {}^t\! (x_{1}, 0, 0, x_{4}, 0), {}^t\! (a_{1}, a_{2}, a_{3}) \right).$
A straightforward computation gives the following.
\begin{lem} \label{data SO3SO2}
At $p_{0} = \left( {}^t\! (x_{1}, 0, 0, x_{4}, 0), {}^t\! (a_{1}, a_{2}, a_{3}) \right)$, we have 
\begin{align*}
\left( 
\pi^{*} \omega_{j} (\tilde{E_{1}^{*}}, \tilde{E_{2}^{*}}) 
\right)
&= (x_{1}^{2}, 0, 0), 
&
\left( 
\pi^{*} \omega_{j} (\tilde{E_{1}^{*}}, \tilde{E_{4}^{*}})
\right) 
&= (0, x_{1} x_{4}^{2}, x_{1}^{2} x_{4}), \\
\left( 
\pi^{*} \omega_{j} (\tilde{E_{1}^{*}}, \tilde{E_{3}^{*}}) 
\right)
&= 0, 
&
\left( 
\pi^{*} \omega_{j} (\tilde{E_{2}^{*}}, \tilde{E_{4}^{*}}) 
\right)
&= (0, x_{1}^{2} x_{4}, -x_{1} x_{4}^{2}), \\
\left( 
\pi^{*} \omega_{j} (\tilde{E_{2}^{*}}, \tilde{E_{3}^{*}}) 
\right)
&= 0, 
&
\left( 
\pi^{*} \omega_{j} (\tilde{E_{3}^{*}}, \tilde{E_{4}^{*}}) 
\right)
&= 0,
\end{align*}
\begin{align*}
\left( b_{i} (\tilde{E_{j}^{*}}) \right) 
=
\left( 
 \begin{array}{cccc}
-x_{4} (a_{2} x_{4} + a_{3} x_{1}) & x_{4} (-a_{2} x_{1} + a_{3} x_{4}) & 0      & 0\\
a_{1} x_{4}^{2}                      & a_{1} x_{1} x_{4}                      & a_{3}  & -a_{3} x_{1} \\
a_{1} x_{1} x_{4}                    & -a_{1} x_{4}^{2}                      & -a_{2}& a_{2} x_{1}\\
\end{array}
\right).
\end{align*}
\end{lem}


\subsection{Action of ${\rm U}(2) \subset {\rm SO}(4) \times \{ 1 \}$} \label{section U2}

Use the notation in Section \ref{second local frame} and \ref{frame at pole}.

\begin{lem}[Orbits of the ${\rm U}(2)$-action] \label{orbit U2}
By the ${\rm U}(2)$-action, 
any point in $\Lambda^{2}_{-} S^{4}$ is mapped to a point 
in the fiber of 
$\underline{p_{0}} = {}^t\! (x_{1}, 0, 0, 0, x_{5})$ 
for some $x_{1} \geq 0$.
The ${\rm U}(2)$-orbit through 
$p_{0} =  ({}^t\! (x_{1}, 0, 0, 0, x_{5}), {}^t\! (a_{1}, a_{2}, a_{3})) 
\in \Lambda^{2}_{-} S^{4}|_{\underline{p_{0}}}$ 
is diffeomorphic to 
\begin{align*}
\left\{
\begin{array}{ll}
{\rm U}(2) & \qquad \mbox{for }\  x_{5} \neq \pm 1, (a_{1}, a_{2}) \neq 0, \\ 
 S^{3}       & \qquad \mbox{for }\  x_{5} \neq \pm 1, (a_{1}, a_{2}) = 0, \\ 
S^{2}        & \qquad \mbox{for }\  x_{5} = 1, p_{0} \neq 0, \\
S^{1}        & \qquad \mbox{for }\  x_{5} = -1, (A_{2}, A_{3}) \neq 0, \\
*             & \qquad \mbox{for }\  x_{5}=1, p_{0} = 0, \mbox{ or }\ x_{5}=-1, (A_{2}, A_{3}) = 0.
\end{array}
\right.
\end{align*}
\end{lem}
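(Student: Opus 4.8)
The proof will follow the same pattern as the proofs of Lemma~\ref{orbit SO4} and Lemma~\ref{orbit SO3 SO2}: reduce a general point to a standard slice, compute the stabilizer of the ${\rm U}(2)$-action in each case, and read off the orbit as a homogeneous space. For the reduction, since ${\rm U}(2) \subset {\rm SO}(4) = {\rm SO}(4) \times \{1\}$ fixes the $x_5$-axis of $S^4$ and acts on $\mathbb{R}^4 = \mathbb{C}^2$ by the standard unitary representation, which is transitive on every sphere $\{x_1^2 + \dots + x_4^2 = c\}$, every point of $S^4$ is ${\rm U}(2)$-congruent to $\underline{p_0} = {}^t\!(x_1,0,0,0,x_5)$ with $x_1 = \sqrt{1-x_5^2} \geq 0$; hence every point of $\Lambda^2_- S^4$ is congruent to a point of $\Lambda^2_- S^4|_{\underline{p_0}}$, and it remains to study, on this fiber, the action of the stabilizer $H$ of $\underline{p_0}$ in ${\rm U}(2)$.

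Suppose first $x_5 \neq \pm 1$, so $x_1 > 0$. Then $H$ is the stabilizer in ${\rm U}(2)$ of the nonzero vector ${}^t\!(x_1,0,0,0) \in \mathbb{C}^2$, a circle ${\rm U}(1)$. Evaluating the frame of Section~\ref{second local frame} at $\underline{p_0}$, one checks that $H$ rotates the $(e_2,e_3)$-plane and fixes $e_1, e_4$; via $\{\omega_1,\omega_2,\omega_3\} = \{e^{12}-e^{34}, e^{13}-e^{42}, e^{14}-e^{23}\}$ this means $H$ acts on the fiber coordinate $(a_1,a_2,a_3)$ by a faithful rotation of $(a_1,a_2)$ while fixing $a_3$. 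Consequently the stabilizer of $p_0 = (\underline{p_0}, {}^t\!(a_1,a_2,a_3))$ in ${\rm U}(2)$ is trivial when $(a_1,a_2) \neq 0$, so the orbit is diffeomorphic to ${\rm U}(2)$; and it is all of $H \cong {\rm U}(1)$ when $(a_1,a_2) = 0$, so the orbit is ${\rm U}(2)/{\rm U}(1) \cong S^3$.

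Now suppose $x_5 = \pm 1$, so $H = {\rm U}(2)$ acts linearly on $\Lambda^2_- S^4|_{\underline{p_0}} \cong \mathbb{R}^3$. Using the frame of Section~\ref{frame at pole} together with the description of the ${\rm SO}(4)$-action on this fiber obtained in the proof of Lemma~\ref{orbit SO4} (the action $[(p,q)]\cdot a = q a \overline{q}$, resp. $p a \overline{p}$, on ${\rm Im}\,\mathbb{H}$), a direct computation shows that at $x_5 = 1$ the ${\rm U}(2)$-action on the fiber is equivalent, through a homomorphism ${\rm U}(2) \to {\rm SO}(3)$, to the standard ${\rm SO}(3)$-action on $\mathbb{R}^3$, so the orbit of $p_0$ is $S^2$ if $p_0 \neq 0$ and a point if $p_0 = 0$; and at $x_5 = -1$ the representation splits as $\mathbb{R}\,\Omega_1 \oplus \mathbb{C}\langle \Omega_2, \Omega_3\rangle$, with ${\rm U}(2)$ acting trivially on the first summand and through a nontrivial rotation character on the second, so the orbit of $p_0$ is $S^1$ if $(A_2,A_3) \neq 0$ and a point if $(A_2,A_3) = 0$ (which includes the zero section). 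Collecting the four cases yields the list in the statement, as in the proof of Lemma~\ref{orbit SO4}.

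The step I expect to cause the most trouble is the pair of polar cases: one must track the complex structure defining ${\rm U}(2) \subset {\rm SO}(4)$ against the orientation conventions of the two fibers $\Lambda^2_- S^4|_{{}^t(0,0,0,0,\pm 1)}$ — equivalently, decide whether the ${\rm U}(2)$-invariant K\"ahler form of $\mathbb{R}^4$ lies in $\Lambda^2_+ \mathbb{R}^4$ or $\Lambda^2_- \mathbb{R}^4$ relative to each polar orientation — so as to see that it is precisely $x_5 = 1$ that produces the $S^2$-orbits and $x_5 = -1$ that produces the $S^1$-orbits. Once this sign is pinned down, the remaining computations are routine and amount to the stabilizer analysis already carried out for the ${\rm SO}(4)$-action.
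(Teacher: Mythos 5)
Your proposal is correct and follows essentially the same route as the paper: reduce to the fiber over $\underline{p_{0}}={}^t\!(x_{1},0,0,0,x_{5})$ using transitivity of ${\rm U}(2)$ on spheres in $\mathbb{C}^{2}$, write a general element as $k(\theta)g$ with $g\in{\rm SU}(2)$ and $k(\theta)={\rm diag}(1,e^{i\theta})$, observe that ${\rm SU}(2)$ acts trivially and $k(\theta)$ by a rotation of $(a_{1},a_{2})$ on the fiber coordinates when $x_{5}\neq\pm1$, and at the poles identify the fiber representation as the standard ${\rm SO}(3)$-action (for $x_{5}=1$) versus the rotation character on $(A_{2},A_{3})$ with ${\rm SU}(2)$ acting trivially (for $x_{5}=-1$). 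The sign issue you flag at the poles is exactly the point the paper settles by computing that the covering $\varpi'$ is the double cover ${\rm SU}(2)\to{\rm SO}(3)$ at $x_{5}=1$ and the trivial representation at $x_{5}=-1$.
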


\begin{proof}
Suppose that $x_{5} \neq \pm 1$. 
Denoting 
$
z_{1} = x_{1} + i x_{2}, 
z_{2} = x_{3} + i x_{4}
$, 
we have 
\begin{align*} 
(e_{1}, e_{2}, e_{3}, e_{4}) 
&= 
\frac{1}{\sqrt{1-x_{5}^{2}}} 
\left(
\left(
\begin{array}{c}
i z_{1} \\
i z_{2} \\
0
\end{array}
\right), 
\left(
\begin{array}{c}
- \overline{z}_{2} \\
\overline{z}_{1} \\
0
\end{array}
\right), 
\left(
\begin{array}{c}
- i \overline{z}_{2} \\
i \overline{z}_{1} \\
0
\end{array}
\right), 
\left(
\begin{array}{c}
- x_{5} z_{1} \\
- x_{5} z_{2} \\
1-x_{5}^{2}
\end{array}
\right)
\right) \\
&\subset 
\mathbb{C}^{2} \oplus \mathbb{R}. 
\end{align*}
We see that $e_{1}, e_{2}, e_{3}$ and $e_{4}$ are ${\rm SU}(2)$-invariant. 
Namely, $g_{*} e_{i} = e_{i}$ for any $1 \leq i \leq 4$ and $g \in {\rm SU}(2)$. 
Then 
the 2-forms $\omega_{i}$ are all ${\rm SU}(2)$-invariant, 
and hence 
$
g \in {\rm SU}(2)
$
acts on $\Lambda^{2}_{-} S^{4}$ by 
\begin{align} \label{SU2 action on frame}
g \cdot  ({}^t\! (z_{1}, z_{2}, x_{5}), {}^t\! (a_{1}, a_{2}, a_{3}) ) 
= 
( {}^t\!(g {}^t\! (z_{1}, z_{2}), x_{5}),  {}^t\! (a_{1}, a_{2}, a_{3}) ). 
\end{align}
The action of 
$k(\theta) = 
\left( 
\begin{array}{cc}
1 & \\
   & e^{i \theta}
\end{array} 
\right)
\in {\rm U}(2)$, 
where $\theta \in \mathbb{R}$,  
is given by 
\begin{align*}
&\left(
k(\theta)_{*} e_{1}, k(\theta)_{*} e_{2}, k(\theta)_{*} e_{3}, k(\theta)_{*} e_{4}
\right)\\
=&
\left(
e_{1}, 
e_{2} \cos \theta  + e_{3} \sin \theta, 
- e_{2} \sin \theta + e_{3} \cos \theta, 
e_{4}
\right), 
\end{align*}
which induces the action of $k(\theta)$ on $\Lambda^{2}_{-} S^{4}$ 
described as 
\begin{align} \label{U2-SU2 action on frame}
\left( 
\begin{array}{c}
a_{1} \\
a_{2} \\
a_{3} \\
\end{array} 
\right)
\mapsto
\left( 
\begin{array}{ccc}
\cos \theta & -\sin \theta & 0\\
\sin \theta  & \cos \theta  & 0\\
0               & 0                 & 1\\
\end{array} 
\right)
\left( 
\begin{array}{c}
a_{1} \\
a_{2} \\
a_{3} \\
\end{array} 
\right).
\end{align}
Since any element in ${\rm U}(2)$ is described as 
$k(\theta) g$ for some $\theta$ and $g \in {\rm SU}(2)$, 
we see the case $x_{5} \neq \pm 1$.

Next, suppose that $x_{5} = \pm 1$. 
Then the stabilizer of the ${\rm U}(2)$-action on $S^{4}$ at 
$\underline{p_{0}} = {}^t\! (0, 0, 0, 0, \pm 1)$ 
is ${\rm U}(2)$.  
By using the notation in Section \ref{frame at pole}, 
the induced action of 
$
k(\theta) g, 
$
where 
$
\theta \in \mathbb{R}, g \in {\rm SU}(2), 
$
on $\Lambda^{2}_{-} S^{4}|_{\underline{p_{0}}}$ 
is described as 
\begin{align*}
\left( 
\begin{array}{c}
A_{1} \\
A_{2} \\
A_{3} \\
\end{array} 
\right)
\mapsto
\left( 
\begin{array}{ccc}
1 & 0                 & 0\\
0 & \cos \theta & \sin \theta \\
0 & -\sin \theta  & \cos \theta \\
\end{array} 
\right)
\varpi'(g) 
\left( 
\begin{array}{c}
A_{1} \\
A_{2} \\
A_{3} \\
\end{array} 
\right), 
\end{align*}
where $\varpi'$ is a a double covering $\varpi : {\rm SU}(2) \rightarrow {\rm SO}(3)$ 
(resp. a trivial representation) when $x_{5} = 1$ (resp. $x_{5} = -1$). 
This gives the proof in the case $x_{5} = \pm 1$. 
\end{proof}

Note that the double covering  $\varpi : {\rm SU}(2) \rightarrow {\rm SO}(3)$ 
is given by
\begin{align} \label{covering_SU2_SO3}
\varpi \left(
\left( 
\begin{array}{cc}
a & - \overline{b} \\
b & \overline{a}
\end{array} 
\right)
\right)
=
\left( 
\begin{array}{ccc}
|a|^{2} - |b|^{2}                 & 2 {\rm Im}(ab)            & -2 {\rm Re}(ab)\\
-2 {\rm Im}(\overline{a} b) & {\rm Re}(a^{2} + b^{2})  & {\rm Im}(a^{2} + b^{2})\\
 2 {\rm Re}(\overline{a} b) & {\rm Im}(-a^{2} + b^{2}) & {\rm Re}(a^{2} - b^{2}) 
\end{array} 
\right), 
\end{align}
where $a, b \in \mathbb{C}$ such that $|a|^{2} + |b|^{2} = 1$. 

Define the basis $\{ E_{i} \}_{1 \leq i \leq 4}$ of $\mathfrak{u}(2)$  by 
\begin{align} \label{basis of u2}
E_{1} = 
\left( 
\begin{array}{ccc}
0 & 1  \\
-1 & 0  
\end{array}
\right), \qquad
E_{2} = 
\left( 
\begin{array}{ccc}
0 & i  \\
i & 0  
\end{array}
\right), \qquad
E_{3} = 
\left( 
\begin{array}{ccc}
i & 0  \\
0 & -i  
\end{array}
\right), \qquad
E_{4} = 
\left( 
\begin{array}{ccc}
i & 0  \\
0 & i  
\end{array}
\right). 
\end{align}
By (\ref{SU2 action on frame}) and (\ref{U2-SU2 action on frame}), 
the vector fields $\tilde{E_{i}^{*}}$ on $\Lambda^{2}_{-} S^{4}$ generated by $E_{i}$ 
are described as 
\begin{align*}
\tilde{E_{1}^{*}} = - x_{1} e_{2}, \qquad
\tilde{E_{2}^{*}} = x_{1} e_{3}, \qquad
\tilde{E_{3}^{*}} = x_{1} e_{1}, \qquad
\tilde{E_{4}^{*}} = x_{1} e_{1}  
-2 a_{2} \frac{\partial}{\partial a_{1}} + 2 a_{1} \frac{\partial}{\partial a_{2}}
\end{align*}
at $p_{0} = \left( {}^t\! (x_{1}, 0, 0, 0, x_{5}), {}^t\! (a_{1}, a_{2}, a_{3}) \right).$
A straightforward computation gives the following.

\begin{lem}\label{data U2}
At $p_{0} = \left( {}^t\! (x_{1}, 0, 0, 0, x_{5}), {}^t\! (a_{1}, a_{2}, a_{3}) \right)$, we have 
\begin{align*}
\left( 
\pi^{*} \omega_{j} (\tilde{E_{1}^{*}}, \tilde{E_{2}^{*}}) 
\right)
&= (0, 0, x_{1}^{2}), 
&
\left( 
\pi^{*} \omega_{j} (\tilde{E_{1}^{*}}, \tilde{E_{4}^{*}})
\right) 
&= (x_{1}^{2}, 0, 0), \\
\left( 
\pi^{*} \omega_{j} (\tilde{E_{1}^{*}}, \tilde{E_{3}^{*}}) 
\right)
&= (x_{1}^{2}, 0, 0), 
&
\left( 
\pi^{*} \omega_{j} (\tilde{E_{2}^{*}}, \tilde{E_{4}^{*}}) 
\right)
&= (0, -x_{1}^{2}, 0), \\
\left( 
\pi^{*} \omega_{j} (\tilde{E_{2}^{*}}, \tilde{E_{3}^{*}}) 
\right)
&= (0, -x_{1}^{2}, 0), 
&
\left( 
\pi^{*} \omega_{j} (\tilde{E_{3}^{*}}, \tilde{E_{4}^{*}}) 
\right)
&= 0,
\end{align*}
\begin{align*}
\left( b_{i} (\tilde{E_{j}^{*}}) \right) 
=
\left( 
\begin{array}{cccc}
(1+ x_{5}) a_{3}  & 0                       & (1+ x_{5}) a_{2}  & (-1 + x_{5}) a_{2}\\
0                   & - (1+ x_{5}) a_{3}   & -(1+ x_{5}) a_{1} & (1 - x_{5}) a_{1}  \\
-(1+ x_{5})a_{1} & (1+ x_{5})a_{2}       & 0                    & 0
\end{array}
\right). 
\end{align*}
\end{lem}


\subsection{Action of ${\rm SU}(2) \subset {\rm SO}(4) \times \{ 1 \}$} \label{section SU2}

The next lemma follows easily from the proof of Lemma \ref{orbit U2}.

\begin{lem}[Orbits of the ${\rm SU}(2)$-action] \label{orbit SU2}

By the ${\rm SU}(2)$-action, 
any point in $\Lambda^{2}_{-} S^{4}$ is mapped to a point 
in the fiber of 
$\underline{p_{0}} = {}^t\! (x_{1}, 0, 0, 0, x_{5})$ 
with $x_{1} \geq 0$.
The ${\rm SU}(2)$-orbit through 
$p_{0} \in \Lambda^{2}_{-} S^{4}|_{\underline{p_{0}}}$ 
is diffeomorphic to 
\begin{align*}
\left\{
\begin{array}{ll}
S^{3} & \qquad \mbox{for }\  x_{5} \neq \pm 1, \\
S^{2} & \qquad \mbox{for }\  x_{5} = 1, p_{0} \neq 0, \\
*       & \qquad \mbox{for }\  x_{5} = 1, p_{0} = 0 \mbox{ or }\ x_{5} = -1.
\end{array}
\right.
\end{align*}
\end{lem}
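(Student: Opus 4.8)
The plan is to derive this lemma directly from the computations in the proof of Lemma \ref{orbit U2}, specialized to the subgroup ${\rm SU}(2) \subset {\rm U}(2) \subset {\rm SO}(4) \times \{ 1 \}$. First I would record the reduction step: ${\rm SU}(2)$ acts on $S^{4} \subset \mathbb{C}^{2} \oplus \mathbb{R}$ by the standard representation on $\mathbb{C}^{2}$ and trivially on $\mathbb{R}$, and this action is transitive on each round sphere $\{ |z|^{2} = 1 - x_{5}^{2} \} \cong S^{3}$. Hence any point of $\Lambda^{2}_{-} S^{4}$ is carried by ${\rm SU}(2)$ into the fiber over some $\underline{p_{0}} = {}^t\! (x_{1}, 0, 0, 0, x_{5})$ with $x_{1} = \sqrt{1 - x_{5}^{2}} \geq 0$, and it remains only to identify the ${\rm SU}(2)$-orbit through a point $p_{0}$ in that fiber.

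For $x_{5} \neq \pm 1$: by (\ref{SU2 action on frame}), in the local frame of Section \ref{second local frame} --- whose vectors $e_{1}, e_{2}, e_{3}, e_{4}$ are ${\rm SU}(2)$-invariant, as observed in the proof of Lemma \ref{orbit U2} --- the ${\rm SU}(2)$-action fixes the fiber coordinates $(a_{1}, a_{2}, a_{3})$ pointwise, so the orbit through $p_{0}$ maps isomorphically onto the ${\rm SU}(2)$-orbit of $\underline{p_{0}}$ in $S^{4}$. Since ${\rm SU}(2)$ acts freely and transitively on $S^{3}$, the stabilizer of $p_{0}$ is trivial and the orbit is diffeomorphic to ${\rm SU}(2) \cong S^{3}$, independently of $p_{0}$. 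For $x_{5} = \pm 1$ the point $\underline{p_{0}}$ is fixed, so the orbit is the ${\rm SU}(2)$-orbit inside the fiber $\Lambda^{2}_{-} S^{4}|_{\underline{p_{0}}} \cong \mathbb{R}^{3}$; by the description obtained in the proof of Lemma \ref{orbit U2}, this representation factors through the double cover $\varpi : {\rm SU}(2) \to {\rm SO}(3)$ when $x_{5} = 1$ --- the standard ${\rm SO}(3)$-action on $\mathbb{R}^{3}$, with orbit $S^{2}$ when $p_{0} \neq 0$ and a point when $p_{0} = 0$ --- and is the trivial representation when $x_{5} = -1$, with orbit a single point for every $p_{0}$. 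Collecting these cases gives the list in the statement.

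There is essentially no real obstacle here; the only points requiring care are the two I have already flagged: that the frame used to read off (\ref{SU2 action on frame}) is genuinely ${\rm SU}(2)$-invariant, so that ``fixes the fiber coordinates'' holds on the nose, and that the isotropy representation on the fiber at the north pole $x_{5} = 1$ is the $\varpi$-twisted one while at the south pole $x_{5} = -1$ it is trivial, so the two poles must be kept separate throughout.
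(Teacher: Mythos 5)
Your proof is correct and follows exactly the route the paper intends: the paper simply states that the lemma ``follows easily from the proof of Lemma \ref{orbit U2},'' and your argument is precisely that derivation spelled out --- restriction of (\ref{SU2 action on frame}) to see the fiber coordinates are fixed for $x_{5}\neq\pm1$, and the $\varpi$-twisted versus trivial isotropy representation at the two poles.
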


Define the basis $\{ E_{1}, E_{2}, E_{3} \}$ of the Lie algebra $\mathfrak{su}(2)$ of ${\rm SU}(2)$ by   
\begin{align} \label{basis of su2}
E_{1} = 
\left( 
\begin{array}{cc}
0  & 1 \\
-1 & 0 \\
\end{array}
\right), \qquad
E_{2} = 
\left( 
\begin{array}{cc}
0  & i  \\
i   & 0 \\
\end{array}
\right), \qquad
E_{3} = 
\left( 
\begin{array}{cc}
i  & 0   \\
0 & -i  \\
\end{array}
\right), \qquad
\end{align}
which satisfies 
$[E_{j}, E_{j+1}] = 2 E_{j+2}$ for  $j \in \mathbb{Z}/3$. 
Note that 
via the inclusion ${\rm SU}(2) \hookrightarrow {\rm SO}(4) \times \{ 1 \}$, 
$E_{1}, E_{2}$ and $E_{3}$ correspond to 
\begin{align} \label{lie alg of diag su2}
\left(
\begin{array}{ccc}
           &  I_{2}   & \\
    -I_{2} &         & \\
           &          &0
\end{array} 
\right), \qquad
\left(
\begin{array}{ccc}
       &  J & \\
    J &     & \\
       &     &0
\end{array} 
\right), \qquad
\left(
\begin{array}{ccc}
J &      &  \\
   & -J &   \\
   &     & 0
\end{array} 
\right), 
\end{align}
where $J = 
\left(
\begin{array}{cc}
 &   -1 \\
 1&     
\end{array}
\right),
$
respectively. 
Since $E_{i}$ in (\ref{basis of su2})
agrees with $E_{i}$ in (\ref{basis of u2}) for $i=1,2,3$, 
we have the same formula as Lemma \ref{data U2}.


\subsection{${\rm SO}(3) = {\rm SO}(3) \times \{ I_{2} \}$-action} \label{section SO3}

Use the notation in Section \ref{second local frame}. 

\begin{lem}[Orbits of the ${\rm SO}(3)$-action] \label{orbit SO3}

By the ${\rm SO}(3)$-action, 
any point in $\Lambda^{2}_{-} S^{4}$ is mapped to a point 
in the fiber of 
$\underline{p_{0}} = {}^t\! (x_{1}, 0, 0, x_{4}, x_{5})$ 
for some $x_{1} \geq 0$.
The ${\rm SO}(3)$-orbit through 
$p_{0} =  ({}^t\! (x_{1}, 0, 0, x_{4}, x_{5}), {}^t\! (a_{1}, a_{2}, a_{3})) 
\in \Lambda^{2}_{-} S^{4}|_{\underline{p_{0}}}$ 
is diffeomorphic to 
\begin{align*}
\left\{
\begin{array}{ll}
{\rm SO}(3) & \qquad \mbox{for }\  x_{1} > 0, (a_{2}, a_{3}) \neq 0, \\ 
 S^{2}       & \qquad \mbox{for }\  x_{1} > 0, (a_{2}, a_{3}) = 0 \mbox{ or }\ x_{1} = 0, p_{0} \neq 0, \\ 
*             &  \qquad \mbox{for }\  x_{1} = 0, p_{0} = 0.
\end{array}
\right.
\end{align*}
\end{lem}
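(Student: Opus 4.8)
The plan is to argue exactly as in the proofs of Lemma~\ref{orbit SO4} and Lemma~\ref{orbit SO3 SO2}: reduce an arbitrary point of $\Lambda^{2}_{-} S^{4}$ to a normal form using the action on the base, compute the stabilizer of such a normal point, and read off the orbit. Since ${\rm SO}(3) = {\rm SO}(3) \times \{ I_{2} \}$ acts on $S^{4} \subset \mathbb{R}^{5} = \mathbb{R}^{3} \oplus \mathbb{R}^{2}$ by rotating $(x_{1}, x_{2}, x_{3})$ and fixing $(x_{4}, x_{5})$, every $\underline{p} = {}^t\!(x_{1}, \dots, x_{5})$ can be rotated to $\underline{p_{0}} = {}^t\!(\rho, 0, 0, x_{4}, x_{5})$ with $\rho = \sqrt{x_{1}^{2} + x_{2}^{2} + x_{3}^{2}} \geq 0$; so, after renaming $\rho$ as $x_{1}$, every point of the total space is congruent to one in the fiber over $\underline{p_{0}} = {}^t\!(x_{1}, 0, 0, x_{4}, x_{5})$ with $x_{1} \geq 0$. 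The stabilizer of $\underline{p_{0}}$ in ${\rm SO}(3)$ is the ${\rm SO}(2)$ rotating the $x_{2} x_{3}$-plane when $x_{1} > 0$, and is all of ${\rm SO}(3)$ when $x_{1} = 0$.

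When $x_{1} > 0$ I would use the explicit formula (\ref{SO3 times 1 action}) for the ${\rm SO}(3)$-action. Restricting $g = (g_{ij})$ to the stabilizer of $\underline{p_{0}}$ — so $g_{11} = 1$ and $g_{12} = g_{13} = g_{21} = g_{31} = 0$, with lower-right block in ${\rm SO}(2)$ — the matrix in (\ref{SO3 times 1 action}) acting on $(a_{1}, a_{2}, a_{3})$ becomes block diagonal of the form $1 \oplus R$ with $R \in {\rm SO}(2)$, i.e.\ it fixes $a_{1}$ and rotates $(a_{2}, a_{3})$ by the same ${\rm SO}(2)$. Hence the stabilizer of $p_{0} = (\underline{p_{0}}, {}^t\!(a_{1}, a_{2}, a_{3}))$ is trivial if $(a_{2}, a_{3}) \neq 0$, giving an orbit $\cong {\rm SO}(3)$, and is the whole ${\rm SO}(2)$ if $(a_{2}, a_{3}) = 0$, giving an orbit $\cong {\rm SO}(3)/{\rm SO}(2) = S^{2}$.

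When $x_{1} = 0$ the group ${\rm SO}(3)$ fixes $\underline{p_{0}}$, so the orbit of $p_{0} = (\underline{p_{0}}, {}^t\!(a_{1}, a_{2}, a_{3}))$ coincides with the orbit of $(a_{1}, a_{2}, a_{3})$ under the induced linear action on the fiber $\Lambda^{2}_{-} S^{4}|_{\underline{p_{0}}} \cong \mathbb{R}^{3}$. For $x_{5} \neq \pm 1$ this action is read off from (\ref{SO3 times 1 action}) with $x_{1} = 0$: it is $g \mapsto DgD$ with $D = {\rm diag}(1, 1, -1)$, and since $D^{2} = I$ one checks $DgD \in {\rm SO}(3)$ and that $g \mapsto DgD$ is onto ${\rm SO}(3)$, so the orbits on $\mathbb{R}^{3}$ are $\{0\}$ and the spheres about $0$. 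For $x_{5} = \pm 1$ (a pole), where (\ref{SO3 times 1 action}) does not apply, I would instead use the frame of Section~\ref{frame at pole}: as an ${\rm SO}(3)$-module $T_{\underline{p_{0}}} S^{4} = \mathbb{R}^{3} \oplus \mathbb{R}$ (standard plus trivial), hence $\Lambda^{2} T_{\underline{p_{0}}} S^{4} \cong \Lambda^{2}\mathbb{R}^{3} \oplus \mathbb{R}^{3} \cong \mathbb{R}^{3} \oplus \mathbb{R}^{3}$, two copies of the standard module; the ${\rm SO}(4)$-invariant (hence ${\rm SO}(3)$-invariant) splitting $\Lambda^{2} = \Lambda^{2}_{+} \oplus \Lambda^{2}_{-}$ then decomposes this isotypic module into two $3$-dimensional pieces, so each of $\Lambda^{2}_{\pm}$ is again the standard module $\mathbb{R}^{3}$, and the orbits on $\Lambda^{2}_{-} S^{4}|_{\underline{p_{0}}} \cong \mathbb{R}^{3}$ are once more $\{0\}$ and spheres. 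In both subcases the orbit of $p_{0}$ is a point when $p_{0} = 0$ and is $\cong S^{2}$ when $p_{0} \neq 0$. Assembling the three cases yields the stated list.

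The main obstacle I expect is the pole case $x_{5} = \pm 1$: there the convenient formula (\ref{SO3 times 1 action}) is unavailable because the frame of Section~\ref{second local frame} degenerates, so one must either run the direct computation in the frame at the pole or, as above, fall back on the representation-theoretic identification of $\Lambda^{2}_{-} S^{4}|_{\underline{p_{0}}}$ with the standard ${\rm SO}(3)$-module — a point already implicit in Lemma~\ref{orbit SO4}. The remaining steps are routine linear algebra with the matrices from (\ref{SO3 times 1 action}).
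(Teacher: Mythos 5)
Your proposal is correct and follows essentially the same route as the paper: reduce to the normal form $\underline{p_{0}} = {}^t\!(x_{1},0,0,x_{4},x_{5})$, read off the stabilizer from (\ref{SO3 times 1 action}) when $x_{1}>0$ or when $x_{1}=0$ with $x_{5}\neq\pm1$ (where the fiber action is $g\mapsto DgD$, a twisted standard representation), and treat the pole separately. The only divergence is at $x_{5}=\pm1$, where the paper writes out the induced matrix on $(A_{1},A_{2},A_{3})$ explicitly in the frame of Section \ref{frame at pole}, whereas you identify $\Lambda^{2}_{-}S^{4}|_{\underline{p_{0}}}$ with the standard ${\rm SO}(3)$-module by decomposing $\Lambda^{2}(\mathbb{R}^{3}\oplus\mathbb{R})$; both give the same conclusion that the fiber orbits are points and round spheres.
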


\begin{proof}
We easily see the cases 
$x_{1} > 0$ and $x_{1} = 0, x_{5} \neq \pm 1$ from 
(\ref{SO3 times 1 action}). 
Suppose that $x_{1}=0, x_{5} = \pm 1$. 
Then the stabilizer of the ${\rm SO}(3)$-action on $S^{4}$ at 
$\underline{p_{0}} = {}^t\! (0, 0, 0, 0, \pm 1)$ 
is ${\rm SO}(3)$.  
By using the notation in Section \ref{frame at pole}, 
the action of $g = (g_{i j}) \in {\rm SO}(3)$ is given by 
\begin{align*}
(g_{*} f_{1}, g_{*} f_{2}, g_{*} f_{3}, g_{*} f_{4}) 
=
(f_{1}, f_{2}, f_{3}, f_{4}) 
\left( 
\begin{array}{cc}
g & \\
   & 1
\end{array} 
\right)
\end{align*}
at $\underline{p_{0}}$. 
The induced action of 
$
g =  (g_{i j}) \in {\rm SO}(3)
$
on $\Lambda^{2}_{-} S^{4}|_{\underline{p_{0}}}$ 
is described as 
\begin{align*}
\left( 
\begin{array}{c}
A_{1} \\
A_{2} \\
A_{3} \\
\end{array} 
\right)
\mapsto
\left( 
\begin{array}{ccc}
g_{33} & -g_{32}  & -g_{31} \\
-g_{23} & g_{22}  &  g_{21} \\
-g_{13} & g_{12}  & g_{11} \\
\end{array} 
\right)
\left( 
\begin{array}{c}
A_{1} \\
A_{2} \\
A_{3} \\
\end{array} 
\right), 
\end{align*}
which gives the proof in the case $x_{5} = \pm 1$. 
\end{proof}

By Lemma \ref{orbit SO3}, 
an ${\rm SO}(3)$-orbit 
through $({}^t\! (x_{1}, 0, 0, x_{4}, x_{5}), {}^t\! (a_{1}, a_{2}, a_{3}))$ 
is 3-dimensional when 
$x_{1} > 0, (a_{2}, a_{3}) \neq 0$. 
By the fact that the stabilizer at its point  
is ${\rm SO}(2)$
and (\ref{SO3 times 1 action}), 
its ${\rm SO}(3)$-orbit contains a point 
$({}^t\! (x_{1}, 0, 0, x_{4}, x_{5}), {}^t\! (a_{1}, a_{2}, 0))$, 
where $x_{1} > 0, a_{2} > 0$. 
Thus we may assume that 
$x_{1} > 0, a_{2} > 0, a_{3} = 0$.

Let $\{ E_{i} \}_{1 \leq i \leq 3}$ be the basis of $\mathfrak{so}(3)$ in (\ref{basis of so3}).
At $p_{0} = \left( {}^t\! (x_{1}, 0, 0, x_{4}, x_{5}), {}^t\! (a_{1}, a_{2}, 0) \right)$, 
the vector fields $\tilde{E_{i}^{*}}$ on $\Lambda^{2}_{-} S^{4}$ generated by $E_{i}$ 
are described as 
\begin{align*}
\tilde{E_{1}^{*}} &= 
\frac{x_{1}}{\sqrt{1-x_{5}^{2}}} 
(x_{1} e_{1} + x_{4} e_{2}) 
- a_{2} \frac{\partial}{\partial a_{1}} +  a_{1} \frac{\partial}{\partial a_{2}},\\
\tilde{E_{2}^{*}} &=   
\frac{x_{1}}{\sqrt{1-x_{5}^{2}}} 
(-x_{4} e_{1} + x_{1} e_{2}) - a_{1} \frac{\partial}{\partial a_{3}},\\
\tilde{E_{3}^{*}} &= - a_{2} \frac{\partial}{\partial a_{3}}, 
\end{align*}
by (\ref{SO3 times 1 action}). 
A straightforward computation gives the following.
\begin{lem} \label{data SO3}
At $p_{0} = \left( {}^t\! (x_{1}, 0, 0, x_{4}, x_{5}), {}^t\! (a_{1}, a_{2}, 0) \right)$, 
we have 
\begin{align*}
\left( 
\pi^{*} \omega_{j} (\tilde{E_{1}^{*}}, \tilde{E_{2}^{*}}) 
\right)
&= (x_{1}^{2}, 0, 0), \\
\left( 
\pi^{*} \omega_{j} (\tilde{E_{1}^{*}}, \tilde{E_{3}^{*}}) 
\right)
&= 0, \\
\left( 
\pi^{*} \omega_{j} (\tilde{E_{2}^{*}}, \tilde{E_{3}^{*}}) 
\right)
&= 0, 
\end{align*}
\begin{align*}
\left( b_{i} (\tilde{E_{j}^{*}}) \right) 
&=
\left(
\begin{array}{ccc}
\left(-1+ \frac{x_{1}^{2}}{1-x_{5}} \right) a_{2}  & - \frac{x_{1} x_{4}}{1-x_{5}} a_{2}          & 0 \\
\left(1 - \frac{x_{1}^{2}}{1-x_{5}} \right) a_{1}  & \frac{x_{1} x_{4}}{1-x_{5}} a_{1}             & 0 \\
\frac{x_{1} x_{4}}{1-x_{5}} a_{1}                     & \left(-1+ \frac{x_{1}^{2}}{1-x_{5}} \right) a_{1} & -a_{2} 
\end{array}
\right).
\end{align*}
\end{lem}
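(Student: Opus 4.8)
The plan is to evaluate both tables by direct substitution into the explicit formulas for $\tilde{E_{i}^{*}}$ displayed just above the lemma, exploiting two structural simplifications that collapse nearly all of the work. First, because the forms $\omega_{j}$ are pulled back from the base $S^{4}$, the pullback $\pi^{*}\omega_{j}$ annihilates every vertical tangent vector; hence only the horizontal parts of the vector fields contribute to the first table. Reading these off, the horizontal part of $\tilde{E_{1}^{*}}$ is $\frac{x_{1}}{\sqrt{1-x_{5}^{2}}}(x_{1} e_{1} + x_{4} e_{2})$, that of $\tilde{E_{2}^{*}}$ is $\frac{x_{1}}{\sqrt{1-x_{5}^{2}}}(-x_{4} e_{1} + x_{1} e_{2})$, while $\tilde{E_{3}^{*}}=-a_{2}\,\partial/\partial a_{3}$ is purely vertical. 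This at once forces the second and third rows of the first table to vanish, and since the two horizontal vectors span only the $e_{1},e_{2}$ directions, only $\omega_{1}=e^{12}-e^{34}$ can pair nontrivially. For that entry $e^{12}$ applied to the two horizontal vectors is the $2\times 2$ antisymmetric determinant in their $e_{1},e_{2}$ components, equal to $\frac{x_{1}^{2}(x_{1}^{2}+x_{4}^{2})}{1-x_{5}^{2}}$; the on-sphere constraint $x_{1}^{2}+x_{4}^{2}+x_{5}^{2}=1$ at the base point ${}^{t}(x_{1},0,0,x_{4},x_{5})$ gives $x_{1}^{2}+x_{4}^{2}=1-x_{5}^{2}$ and collapses it to $x_{1}^{2}$, matching the first table.

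For the second table I would substitute $a_{3}=0$ into the formula $b_{i}=da_{i}+\frac{1+x_{5}}{\sqrt{1-x_{5}^{2}}}(\cdots)$ from Section \ref{second local frame} and split each $b_{i}(\tilde{E_{j}^{*}})$ into its $da_{i}$-part, which reads off the vertical coefficient of $\tilde{E_{j}^{*}}$, and its correction term, which reads off the horizontal coefficients $e^{k}(\tilde{E_{j}^{*}})$. Each of the nine entries is then a two-term sum. The sole algebraic simplification needed is the identity $\frac{1+x_{5}}{\sqrt{1-x_{5}^{2}}}\cdot\frac{1}{\sqrt{1-x_{5}^{2}}}=\frac{1}{1-x_{5}}$, which turns products such as $\frac{(1+x_{5})x_{1}^{2}}{1-x_{5}^{2}}$ into $\frac{x_{1}^{2}}{1-x_{5}}$ and reproduces the recorded entries $(-1+\frac{x_{1}^{2}}{1-x_{5}})a_{2}$, $-\frac{x_{1}x_{4}}{1-x_{5}}a_{2}$, and so on.

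There is no genuine obstacle here: the lemma is pure bookkeeping. The two points requiring care are not forgetting the on-sphere relation when simplifying $\pi^{*}\omega_{1}(\tilde{E_{1}^{*}},\tilde{E_{2}^{*}})$, and handling the factor $\frac{1+x_{5}}{\sqrt{1-x_{5}^{2}}}$ consistently so that the $1-x_{5}^{2}$ denominator cancels one copy of $\sqrt{1-x_{5}^{2}}$ coming from the horizontal normalization, leaving the clean $1-x_{5}$ denominators of the statement. Imposing $a_{3}=0$ at the outset, as the lemma does in order to keep the path transverse to the orbits, removes every $a_{3}$-term and keeps the $b_{3}$ row short.
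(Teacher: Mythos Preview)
Your proposal is correct and is precisely the straightforward computation the paper has in mind; the paper itself records no proof beyond the sentence ``A straightforward computation gives the following,'' and your outline supplies exactly that computation, including the two cancellations (via $x_{1}^{2}+x_{4}^{2}=1-x_{5}^{2}$ and $\frac{1+x_{5}}{1-x_{5}^{2}}=\frac{1}{1-x_{5}}$) that tidy the answers.
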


\subsection{Irreducible ${\rm SO}(3)$-action} \label{section irr SO3}

The irreducible representation of ${\rm SO}(3)$ on $\mathbb{R}^{5}$
is described as follows. 

Let $V$ be the space of all $3 \times 3$ real symmetric traceless matrices, 
which is isomorphic to $\mathbb{R}^{5}$. 
Let ${\rm SO}(3)$ act on $V$ by 
$g \cdot X = g X g^{-1}$, where $X \in V, g \in {\rm SO}(3)$. 
This action preserves the norm $|X|^{2} = {\rm tr} (X^{2})/2$, 
and hence induces the action on the unit sphere 
$S^{4} = \{ X \in V;  |X| = 1 \} \subset V$. 
We identify $V \cong \mathbb{R}^{5}$ by 
\begin{align} \label{identification V R5}
\left(
\begin{array}{ccc}
\lambda_{1} & \mu_{1}      & \mu_{2} \\
\mu_{1}      & \lambda_{2} & \mu_{3} \\
\mu_{2}      & \mu_{3}      & - \lambda_{1}-\lambda_{2} 
\end{array}
\right)
\mapsto
{}^t\! \left(\lambda_{1} + \frac{\lambda_{2}}{2}, -\mu_{2}, \mu_{3}, \mu_{1}, - \frac{\sqrt{3}}{2} \lambda_{2} \right). 
\end{align}

\begin{rem}
We can also describe 
the irreducible representation of ${\rm SO}(3) = {\rm SU}(2)/ \mathbb{Z}_{2}$ on $\mathbb{R}^{5}$ 
by the method in Appendix \ref{real irr rep}. 
We use the description above 
because it is easier to work with. 
\end{rem}

Use the notation in Section \ref{second local frame}. 

\begin{lem}[Orbits of the irreducible ${\rm SO}(3)$-action] \label{orbit irr SO3}
By the ${\rm SO}(3)$-action, 
any point in $\Lambda^{2}_{-} S^{4}$ is mapped to a point 
in the fiber of 
$\underline{p_{0}} = {}^t\! (x_{1}, 0, 0, 0, x_{5})$ 
where $x_{1} > 0, |x_{5}| \leq 1/2$.

When $ |x_{5}| < 1/2$, 
the ${\rm SO}(3)$-orbit through 
$p_{0} =  ({}^t\! (x_{1}, 0, 0, 0, x_{5}), {}^t\! (a_{1}, a_{2}, a_{3})) 
\in \Lambda^{2}_{-} S^{4}|_{\underline{p_{0}}}$ 
is diffeomorphic to 
\begin{align*}
\left\{
\begin{array}{ll}
{\rm SO}(3) & 
\qquad \mbox{when }\  a_{1} a_{2} a_{3} \neq 0 \mbox{ or one of } \{ a_{1}, a_{2}, a_{3} \} \mbox{ is } 0, \\ 
{\rm SO}(3)/ \mathbb{Z}_{2} & \qquad \mbox{when }\  
\mbox{two of } \{ a_{1}, a_{2}, a_{3} \} \mbox{ are } 0, \\ 
{\rm SO}(3)/(\mathbb{Z}_{2} \times \mathbb{Z}_{2}) & 
\qquad \mbox{when }\  a_{1}=a_{2}=a_{3}= 0.  
\end{array}
\right.
\end{align*}

When $ x_{5} = 1/2$ (resp. $ x_{5} = -1/2$), 
the ${\rm SO}(3)$-orbit through 
$p_{0} =  ({}^t\! (x_{1}, 0, 0, 0, x_{5}),$ ${}^t\! (a_{1}, a_{2}, a_{3})) 
\in \Lambda^{2}_{-} S^{4}|_{\underline{p_{0}}}$ 
is diffeomorphic to 
\begin{align*}
\left\{
\begin{array}{ll}
{\rm SO}(3) & \qquad \mbox{for }\  a_{2} \neq 0, (a_{1}, a_{3}) \neq 0, \\ 
 S^{2}       & \qquad \mbox{for }\  a_{2} \neq 0, (a_{1}, a_{3}) = 0, \\ 
{\rm SO}(3)/\mathbb{Z}_{2} & \qquad \mbox{for }\  a_{2} = 0, (a_{1}, a_{3}) \neq 0, \\ 
\mathbb{R} P^{2} & \qquad \mbox{for }\  a_{1}=a_{2}=a_{3}= 0. 
\end{array}
\right. \\
\left (
\mbox{resp. }
\left\{
\begin{array}{ll}
{\rm SO}(3) & \qquad \mbox{for }\ a_{1} \neq 0, (a_{2}, a_{3}) \neq 0, \\ 
 S^{2}       & \qquad \mbox{for }\  a_{1} \neq 0, (a_{2}, a_{3}) = 0, \\ 
{\rm SO}(3)/\mathbb{Z}_{2} & \qquad \mbox{for }\  a_{1} = 0, (a_{2}, a_{3}) \neq 0, \\ 
\mathbb{R} P^{2} & \qquad \mbox{for }\   a_{1}=a_{2}=a_{3}= 0. 
\end{array}
\right.
\right)
\end{align*}
\end{lem}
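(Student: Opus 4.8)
The plan is to follow the same two-step pattern as in the proofs of Lemmas \ref{orbit SO4}--\ref{orbit SO3}: first normalise the base point and identify its isotropy group $H$ in ${\rm SO}(3)$, then compute the linear action of $H$ on the fibre $\Lambda^{2}_{-} S^{4}|_{\underline{p_{0}}}\cong \mathbb{R}^{3}$ and read off the orbit types from the stabilisers.

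First I would describe the ${\rm SO}(3)$-orbits on $S^{4}$. Writing a point of $S^{4}$ as a traceless symmetric matrix $X\in V$ with ${\rm tr}(X^{2})=2$, the spectral theorem diagonalises $X$ by an orthogonal matrix; adjusting the sign of a column makes it lie in ${\rm SO}(3)$ without changing the diagonal form, and since any permutation of the eigenvalues can be realised inside ${\rm SO}(3)$ (a permutation matrix times a diagonal sign matrix, the latter acting trivially on diagonal matrices), we may conjugate $X$ to ${\rm diag}(\lambda_{1},\lambda_{2},\lambda_{3})$ with $\lambda_{1}\ge\lambda_{2}\ge\lambda_{3}$. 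Under (\ref{identification V R5}) this corresponds to $\underline{p_{0}}={}^t\!(x_{1},0,0,0,x_{5})$ with $x_{1}=(\lambda_{1}-\lambda_{3})/2$ and $x_{5}=-\sqrt{3}\,\lambda_{2}/2$. Since $\lambda_{1}=\lambda_{3}$ would force $\lambda_{1}=\lambda_{2}=\lambda_{3}=0$, we get $x_{1}>0$; and assuming (after possibly replacing $X$ by $-X$) that $\lambda_{2}\ge 0$, the inequality $\lambda_{3}=-\lambda_{1}-\lambda_{2}\le -2\lambda_{2}$ gives $2=\sum\lambda_{i}^{2}\ge 6\lambda_{2}^{2}$, hence $|x_{5}|\le 1/2$, with equality exactly when two eigenvalues coincide — i.e. $X$ is conjugate to ${\rm diag}(2,-1,-1)/\sqrt{3}$ when $x_{5}=1/2$ and to ${\rm diag}(1,1,-2)/\sqrt{3}$ when $x_{5}=-1/2$. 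Since the ${\rm SO}(3)$-action on $\Lambda^{2}_{-}S^{4}$ covers that on $S^{4}$, every point is congruent to one in $\Lambda^{2}_{-}S^{4}|_{\underline{p_{0}}}$, and ${\rm Stab}_{{\rm SO}(3)}(p_{0})={\rm Stab}_{H}\big((a_{1},a_{2},a_{3})\big)$ with $H:={\rm Stab}_{{\rm SO}(3)}(\underline{p_{0}})$.

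Next I would compute $H$ and its action on the fibre coordinates $(a_{1},a_{2},a_{3})$ with respect to $\{\omega_{1},\omega_{2},\omega_{3}\}$ of Section \ref{second local frame}, which is defined on $S^{4}-\{x_{5}=\pm 1\}\ni\underline{p_{0}}$. When $|x_{5}|<1/2$ the eigenvalues are distinct, so $H$ is the Klein four-group $K$ of matrices ${\rm diag}(\varepsilon_{1},\varepsilon_{2},\varepsilon_{3})$, $\varepsilon_{i}=\pm 1$, $\varepsilon_{1}\varepsilon_{2}\varepsilon_{3}=1$; translating conjugation by such a matrix through (\ref{identification V R5}) and the explicit formulas for the $e_{i}$ shows that at $\underline{p_{0}}$ it fixes $e_{4}$ and rescales $e_{1},e_{2},e_{3}$ by $\pm 1$, so that $K$ acts on $(a_{1},a_{2},a_{3})$ as the group of sign changes flipping an even number of entries. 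When $x_{5}=\pm 1/2$ one eigenvalue is double, so $H=S(O(1)\times O(2))$ (resp. $S(O(2)\times O(1))$): the same translation shows its identity component ${\rm SO}(2)$ fixes one fibre axis — the $a_{2}$-axis when $x_{5}=1/2$, the $a_{1}$-axis when $x_{5}=-1/2$ — and rotates the complementary $2$-plane, while a reflection in $H$ flips that special axis and acts as an orthogonal reflection on the complementary $2$-plane. From these explicit descriptions the orbit $= {\rm SO}(3)/{\rm Stab}_{H}\big((a_{1},a_{2},a_{3})\big)$ is read off case by case exactly as in Lemma \ref{orbit SO4}: for $|x_{5}|<1/2$ the stabiliser is $\{1\}$ (no $a_{i}=0$, or exactly one $a_{i}=0$), $\mathbb{Z}_{2}$ (two of them $0$), or $K\cong\mathbb{Z}_{2}\times\mathbb{Z}_{2}$ (all $0$); for $x_{5}=1/2$ (symmetrically $x_{5}=-1/2$) it is $\{1\}$ ($a_{2}\ne 0$, $(a_{1},a_{3})\ne 0$), ${\rm SO}(2)$ ($a_{2}\ne 0$, $(a_{1},a_{3})=0$, giving $S^{2}$), $\mathbb{Z}_{2}$ generated by a single reflection ($a_{2}=0$, $(a_{1},a_{3})\ne 0$), or $O(2)$ (all $0$, giving $\mathbb{R} P^{2}$).

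The hard part will be Step 2: carrying the conjugation action on $V$ accurately through the identification (\ref{identification V R5}) and the rather asymmetric frame $\{e_{i}\}$ of Section \ref{second local frame} down to the fibre coordinates $(a_{1},a_{2},a_{3})$, and in particular handling the boundary cases $x_{5}=\pm 1/2$ where the isotropy is only one-dimensional: there the whole distinction between the orbit types $S^{2}={\rm SO}(3)/{\rm SO}(2)$, $\mathbb{R} P^{2}={\rm SO}(3)/O(2)$ and ${\rm SO}(3)/\mathbb{Z}_{2}$ hinges on verifying which fibre axis the ${\rm SO}(2)\subset H$ fixes and that the reflections in $H$ flip precisely that axis. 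I expect this stays manageable if one first works entirely inside $V$, where conjugation is transparent, chooses the diagonalising frame so that the relevant $e_{i}$ are eigenvectors of the isotropy elements, and only translates into the $(a_{1},a_{2},a_{3})$ coordinates at the very end.
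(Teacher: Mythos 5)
Your overall strategy coincides with the paper's: normalise to the slice of ordered diagonal matrices, identify the isotropy group $H$ of $\underline{p_{0}}$, and read off orbit types from the stabilisers of $(a_{1},a_{2},a_{3})$ under the induced linear $H$-action on the fibre. The normalisation and the interior case $|x_{5}|<1/2$ are in order: the Klein four-group does act by even sign changes on $(a_{1},a_{2},a_{3})$, exactly as in the paper's computation, and the resulting stabilisers are as you list. (One small repair: you may not ``replace $X$ by $-X$'' to arrange $\lambda_{2}\ge 0$, since $-X$ generally lies in a different ${\rm SO}(3)$-orbit; but the estimate $2=\sum\lambda_{i}^{2}\ge 6\lambda_{2}^{2}$ holds for either sign of $\lambda_{2}$ by the symmetric argument, so the bound $|x_{5}|\le 1/2$ survives.)

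The genuine gap is in the boundary cases, precisely at the step you yourself flag as the hard part. You assume that the identity component ${\rm SO}(2)\subset H$ acts on the complementary fibre $2$-plane by the \emph{standard} rotation, so that a nonzero vector there has trivial stabiliser, and you treat $x_{5}=\pm 1/2$ as symmetric. Neither is correct at $x_{5}=+1/2$. There $T_{\underline{p_{0}}}S^{4}$ splits into the weight-$1$ plane tangent to the Veronese surface and the weight-$2$ normal plane, and the nontrivial $2$-dimensional summand of $\Lambda^{2}_{-}$ is the weight-$3$ piece of $\mathbb{R}^{2}_{(1)}\otimes\mathbb{R}^{2}_{(2)}\cong\mathbb{R}^{2}_{(3)}\oplus\mathbb{R}^{2}_{(1)}$ (the weight-$1$ piece lands in $\Lambda^{2}_{+}$); at $x_{5}=-1/2$ the orientations conspire the other way and $\Lambda^{2}_{-}$ carries the weight-$1$ piece. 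The paper's displayed matrix records exactly this: for $A=R_{\theta}$ its entries are $\cos 3\theta$ and $\sin 3\theta$, i.e.\ $R_{\theta}$ rotates the $(a_{1},a_{3})$-plane through $3\theta$. Hence at $x_{5}=+1/2$ the kernel of the fibre representation restricted to ${\rm SO}(2)$ is $\mathbb{Z}_{3}$, the stabiliser of a point with $a_{2}\neq 0$, $(a_{1},a_{3})\neq 0$ is $\mathbb{Z}_{3}$ rather than $\{1\}$, and for $a_{2}=0$, $(a_{1},a_{3})\neq 0$ it is the dihedral group of order $6$ rather than $\mathbb{Z}_{2}$; the resulting orbits ${\rm SO}(3)/\mathbb{Z}_{3}$ and ${\rm SO}(3)/D_{3}$ are not diffeomorphic to ${\rm SO}(3)$ and ${\rm SO}(3)/\mathbb{Z}_{2}$. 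So your argument does not establish the stated orbit types in those sub-cases (and, as the triple-angle matrix shows, the lemma's own list for $x_{5}=+1/2$ is in tension with the computation on which it rests). To close the gap you must actually compute the weight of the fibre representation at each endpoint of the slice, as the paper does, rather than inferring it from the abstract shape of $H$; the answer genuinely differs between the two poles.
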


\begin{rem}
The ${\rm SO}(3)$-orbit in $S^{4}$ 
through $ {}^t\! (\sqrt{3}, 0, 0, 0, \pm 1)/2$ 
is a superminimal surface called a Veronese surface. 
For example, see \cite{Hitchin}.
\end{rem}

\begin{proof}
The first statement is well-known. See, for example, \cite{Bor}. 
Set 
\begin{align*}
\Sigma = \left \{ 
{\rm diag} (\lambda_{1}, \lambda_{2}, \lambda_{3});
\lambda_{1} \geq \lambda_{2} \geq \lambda_{3}, 
\lambda_{1} + \lambda_{2} + \lambda_{3} = 0, 
\lambda_{1}^{2} + \lambda_{2}^{2} + \lambda_{3}^{2} = 2
\right \}. 
\end{align*}
Since every symmetric matrix is diagonalizable by an orthogonal matrix, unique up to the order
of its diagonal elements, we see that 
every orbit of the ${\rm SO}(3)$-action on $S^{4}$ intersects $\Sigma$ 
at precisely one point.
Via (\ref{identification V R5}), $\Sigma$ corresponds to 
\begin{align*}
\left \{
{}^t\! (x_{1}, 0, 0, 0, x_{5}) \in S^{4}; x_{1} > 0, -\frac{1}{2} \leq x_{5} \leq \frac{1}{2}
\right \}. 
\end{align*}

The stabilizer at $\underline{p_{0}} = {}^t\! (x_{1}, 0, 0, 0, x_{5})$, 
where  $|x_{5}| < 1/2$,  
is given by 
\begin{align*}
\left \{ 
{\rm diag} (\epsilon_{1}, \epsilon_{2}, \epsilon_{1} \epsilon_{2});
\epsilon_{1} = \pm 1, \epsilon_{2} = \pm 1
\right \}. 
\end{align*}
Note that 
\begin{align*}
e_{1}   &=  {}^t\! (0, 1, 0, 0, 0),  \qquad
&e_{2} &=  {}^t\! (0, 0, 1, 0, 0), \\
e_{3}   &=  {}^t\! (0, 0, 0, 1, 0), \qquad
&e_{4} &=  {}^t\! (-x_{5}, 0, 0, 0, x_{1})
\end{align*}
at $\underline{p_{0}}$. 
Then via the identification (\ref{identification V R5}), 
the action of 
$k = {\rm diag}(\epsilon_{1}, \epsilon_{2}, \epsilon_{1} \epsilon_{2})$ is given by 
\begin{align*}
(k_{*} e_{1}, k_{*} e_{2}, k_{*} e_{3}, k_{*} e_{4})
=
(\epsilon_{2} e_{1}, \epsilon_{1} e_{2}, \epsilon_{1} \epsilon_{2} e_{3}, e_{4}), 
\end{align*}
which induces the action of $k$ on $\Lambda^{2}_{-} S^{4}|_{\underline{p_{0}}}$ described as 
\begin{align*}
{}^t\! (a_{1}, a_{2}, a_{3})
\mapsto {}^t\! (\epsilon_{1} \epsilon_{2} a_{1}, \epsilon_{1} a_{2}, \epsilon_{2} a_{3}).
\end{align*}

The stabilizer at $\underline{p_{0}} = {}^t\! (\sqrt{3}/2, 0, 0, 0, \pm 1/2)$ 
is given by 
\begin{align*}
\left \{ 
\left(
\begin{array}{cc}
\det A & \\
          & A
\end{array}
\right);
A \in {\rm O}(2)
\right \}, \qquad
\left \{ 
\left(
\begin{array}{cc}
A & \\
   & \det A
\end{array}
\right);
A \in {\rm O}(2)
\right \},
\end{align*}
respectively. 
The induced action of 
$\left(
\begin{array}{cc}
\det A & \\
          & A
\end{array}
\right)$
(resp. 
$\left(
\begin{array}{cc}
A & \\
   & \det A
\end{array}
\right)$), 
where $A = (a_{i j}) \in {\rm O}(2)$, on $\Lambda^{2}_{-} S^{4}|_{\underline{p_{0}}}$ is given by 
\begin{align*}
\det A
\left(
\begin{array}{ccc}
a_{11} (a_{11}^{2} - 3 a_{12}^{2}) & 0 &  a_{12} (3 a_{11}^{2} - a_{12}^{2})\\
0                                        & 1 & 0 \\
a_{21} (3 a_{11}^{2} - a_{12}^{2}) & 0 & a_{22} (a_{11}^{2} - 3 a_{12}^{2})
\end{array}
\right) \qquad
\left( \mbox{resp. }
\det A
\left(
\begin{array}{ccc}
1 & 0 & 0 \\
0 & a_{22} &  a_{21} \\
0 & a_{12} & a_{11}
\end{array}
\right)
\right).
\end{align*}
Hence we obtain the statement.
\end{proof}

The irreducible representation of ${\rm SO}(3)$ on $\mathbb{R}^{5}$ gives 
the inclusion ${\rm SO}(3) \hookrightarrow {\rm SO}(5)$. 
Via this inclusion, 
the basis $\{ E_{1}, E_{2}, E_{3} \}$ of $\mathfrak{so}(3)$ in (\ref{basis of so3}) 
correspond to
\begin{align} \label{lie alg of irr su2}
\left(
\begin{array}{cccc}
           & J & \\
J  &         & 
\begin{array}{c}
0 \\
\sqrt{3}
\end{array}
\\
 & 
\begin{array}{cc}
0 & - \sqrt{3}
\end{array}
\end{array} 
\right), 
\left(
\begin{array}{ccc}
-2J &      &  \\
    & -J &   \\
    &     & 0
\end{array} 
\right), 
\left(
\begin{array}{cccc}
         & -I_{2} & \\
 I_{2}  &         & 
\begin{array}{c}
- \sqrt{3} \\
0
\end{array}
\\
 & 
\begin{array}{cc}
\sqrt{3} & 0
\end{array}
\end{array} 
\right), 
\end{align}
where $J = 
\left(
\begin{array}{cc}
 &   -1 \\
 1&     
\end{array}
\right),
$
respectively. 
Let $E_{i}^{*}$ be the vector field on $S^{4}$ generated by $E_{i}$. 
Then we have 
at $\underline{p_{0}} = {}^t\! (x_{1}, 0, 0, 0, x_{5}) \in S^{4} $, 
where $x_{1} > 0, |x_{5}| \leq 1/2$, 
\begin{align*}
([E_{i}^{*}, e_{j}]) 
=& 
\frac{\sqrt{3}}{x_{1}}
\left( 
\begin{array}{cccc}
-x_{5} e_{2}   & x_{5} e_{1}             & e_{4}                    & -e_{3} \\
0               & \sqrt{3} x_{1} e_{3}  & -\sqrt{3} x_{1} e_{2} & 0\\
-x_{5} e_{3} & -e_{4} &  x_{5} e_{1}      & e_{2}
\end{array}
\right), \\
(L_{E_{i}^{*}} \omega_{j}) = &
\left( 
\begin{array}{ccc}
0                 & \frac{\sqrt{3} (1+x_{5})}{x_{1}} \omega_{3} & -\frac{\sqrt{3} (1+x_{5})}{x_{1}} \omega_{2} \\
3 \omega_{2}  &  -3 \omega_{1}                                  &0 \\
-\frac{\sqrt{3} (1+x_{5})}{x_{1}} \omega_{3}  & 0 & \frac{\sqrt{3} (1+x_{5})}{x_{1}} \omega_{1}
\end{array}
\right). 
\end{align*}
Hence 
at $p_{0} = \left( {}^t\! (x_{1}, 0, 0, 0, x_{5}), {}^t\! (a_{1}, a_{2}, a_{3}) \right) \in \Lambda^{2}_{-} S^{4}$, 
where $x_{1} > 0, |x_{5}| \leq 1/2$, 
the vector fields $\tilde{E_{i}^{*}}$ on $\Lambda^{2}_{-} S^{4}$ generated by $E_{i}$ 
are described as 
\begin{align*}
\tilde{E_{1}^{*}} &= 
(x_{1} + \sqrt{3} x_{5}) e_{3} + \frac{\sqrt{3} (1+x_{5})}{x_{1}} 
\left( a_{3} \frac{\partial}{\partial a_{2}} - a_{2} \frac{\partial}{\partial a_{3}} \right), \\
\tilde{E_{2}^{*}} &= 
-2 x_{1} e_{1} + 
3 \left( a_{2} \frac{\partial}{\partial a_{1}} - a_{1} \frac{\partial}{\partial a_{2}} \right), \\
\tilde{E_{3}^{*}} &=
(x_{1} - \sqrt{3} x_{5}) e_{2} + \frac{\sqrt{3} (1+x_{5})}{x_{1}} 
\left( - a_{3} \frac{\partial}{\partial a_{1}} + a_{1} \frac{\partial}{\partial a_{3}} \right). 
\end{align*}
A straightforward computation gives the following.
\begin{lem} \label{data irrSO3}
At $p_{0} = \left( {}^t\! (x_{1}, 0, 0, 0, x_{5}), {}^t\! (a_{1}, a_{2}, a_{3}) \right)$, we have 
\begin{align*}
\left( 
\pi^{*} \omega_{j} (\tilde{E_{1}^{*}}, \tilde{E_{2}^{*}}) 
\right)
&= (0, 2 x_{1} (x_{1} + \sqrt{3} x_{5}), 0), \\
\left( 
\pi^{*} \omega_{j} (\tilde{E_{1}^{*}}, \tilde{E_{3}^{*}}) 
\right)
&= (0, 0, x_{1}^{2}-3 x_{5}^{2}), \\
\left( 
\pi^{*} \omega_{j} (\tilde{E_{2}^{*}}, \tilde{E_{3}^{*}}) 
\right)
&= (2 x_{1} (- x_{1} + \sqrt{3} x_{5}), 0, 0), 
\end{align*}
\begin{align*}
\left( b_{i} (\tilde{E_{j}^{*}}) \right) 
=
\left(
\begin{array}{ccc}
0                                         & (1-2 x_{5}) a_{2}    & -(\sqrt{3} x_{1} + x_{5} + 1) a_{3} \\
(\sqrt{3} x_{1} - x_{5} - 1) a_{3}  &  (-1+2 x_{5}) a_{1}  & 0 \\
(-\sqrt{3} x_{1} + x_{5} + 1) a_{2} & 0                       & (\sqrt{3} x_{1} + x_{5} + 1) a_{1}
\end{array}
\right).
\end{align*}
\end{lem}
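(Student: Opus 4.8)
The plan is to verify the two displayed matrices entry by entry, taking the generators $\tilde{E_1^*},\tilde{E_2^*},\tilde{E_3^*}$ computed just above the lemma as given input. The first observation is that $\pi^*\omega_j$ is the pullback of a $2$-form on $S^4$, so it annihilates every vertical vector; hence only the horizontal parts of the generators contribute, namely $(x_1+\sqrt{3}x_5)e_3$, $-2x_1 e_1$, and $(x_1-\sqrt{3}x_5)e_2$ for $\tilde{E_1^*},\tilde{E_2^*},\tilde{E_3^*}$ respectively. Since $\omega_1 = e^{12}-e^{34}$, $\omega_2 = e^{13}-e^{42}$, $\omega_3 = e^{14}-e^{23}$ and $e^a(e_b)=\delta_{ab}$, each scalar $\omega_j(e_a,e_b)$ is one of $0,\pm 1$, and the first matrix follows at once. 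For instance $\omega_2(e_3,e_1)=-1$ yields $\pi^*\omega_2(\tilde{E_1^*},\tilde{E_2^*})=2x_1(x_1+\sqrt{3}x_5)$, and $\omega_3(e_3,e_2)=1$ yields $\pi^*\omega_3(\tilde{E_1^*},\tilde{E_3^*})=x_1^2-3x_5^2$.

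For the second matrix I would use the defining relation $b_i = da_i + \sum_{k=1}^3 a_k\,\pi^*\gamma_{ki}$. The two summands probe complementary parts of each generator: $da_i(\tilde{E_j^*})$ extracts the coefficient of $\partial_{a_i}$ in the vertical part, while $\pi^*\gamma_{ki}(\tilde{E_j^*})$ evaluates the connection $1$-form on the horizontal part. The forms $\gamma_{ki}$ at $\underline{p_0}$ are read off from the matrix computed in Section \ref{second local frame}, namely $\frac{1+x_5}{\sqrt{1-x_5^2}}$ times the stated skew array of the $e^a$. Pairing these against the horizontal vectors $e_1,e_2,e_3$ again reduces to $e^a(e_b)=\delta_{ab}$, so each entry $b_i(\tilde{E_j^*})$ is the sum of one vertical contribution and one horizontal contribution.

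The one simplification that makes the final entries clean is the constraint that $\underline{p_0} = {}^t(x_1,0,0,0,x_5)$ lies on $S^4$, so that $x_1^2 = 1-x_5^2$ and hence $\frac{1+x_5}{\sqrt{1-x_5^2}} = \frac{1+x_5}{x_1}$. Each connection contribution is $\frac{1+x_5}{\sqrt{1-x_5^2}}$ multiplied by one of the horizontal speeds $x_1+\sqrt{3}x_5$, $-2x_1$, $x_1-\sqrt{3}x_5$; under the sphere constraint these become polynomial in $x_1,x_5$, and combined with the vertical contributions (carrying the factors $3$ and $\frac{\sqrt{3}(1+x_5)}{x_1}$ that appear in $\tilde{E_2^*}$ and in $\tilde{E_1^*},\tilde{E_3^*}$) they collapse to the stated forms. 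For example $b_1(\tilde{E_2^*}) = 3a_2 - 2(1+x_5)a_2 = (1-2x_5)a_2$, while $b_3(\tilde{E_3^*}) = \frac{\sqrt{3}(1+x_5)}{x_1}a_1 + (x_1-\sqrt{3}x_5)\frac{1+x_5}{x_1}a_1 = (\sqrt{3}x_1+x_5+1)a_1$ after using $1-x_5^2 = x_1^2$.

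There is no conceptual obstacle; the content is purely computational. The main practical difficulty is bookkeeping: keeping the horizontal and vertical parts of each $\tilde{E_j^*}$ separate, correctly matching the second index of $\gamma_{ki}$ to the coordinate $a_i$, and tracking the signs coming from the skew array of $\gamma$ and from the definitions $\omega_2 = e^{13}-e^{42}$ and $\omega_3 = e^{14}-e^{23}$. I would organize the work by fixing $j$ and computing the whole column $(b_1,b_2,b_3)(\tilde{E_j^*})$ in one pass, so that the single horizontal vector of $\tilde{E_j^*}$ is paired against all three connection forms together, which minimizes repeated evaluation and makes the sphere-constraint collapse easy to apply uniformly.
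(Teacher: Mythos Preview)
Your proposal is correct and is precisely what the paper intends: it states only that ``a straightforward computation gives the following,'' and your entry-by-entry verification using the horizontal/vertical split, the explicit $\gamma_{ij}$ from Section~\ref{second local frame}, and the sphere constraint $x_1^2 = 1 - x_5^2$ is exactly that computation. The sample entries you work out ($b_1(\tilde{E_2^*})$ and $b_3(\tilde{E_3^*})$) check out, and the same routine handles the remaining entries without further ideas.
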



\subsection{Classification of homogeneous  coassociative submanifolds}

Summarizing the results in Section \ref{section orbits of Lie subgrp}, 
we obtain the following. 
\begin{prop} \label{Lie grp 3 4-dim orbit}
The connected closed Lie subgroup of ${\rm SO}(5)$ which has a 4-dimensional orbit 
on $\Lambda^{2}_{-} S^{4}$
is either ${\rm SO}(5)$, whose only 4-dimensional orbit is the zero section, 
${\rm SO}(3) \times {\rm SO}(2)$, or ${\rm U}(2)$. 

The connected closed Lie subgroup of ${\rm SO}(5)$ which has a 3-dimensional orbit 
on $\Lambda^{2}_{-} S^{4}$ 
is one of the following. 
\begin{align*}
{\rm SO}(4) &= {\rm SO}(4) \times \{ 1 \},  \qquad
&&{\rm SO}(3) \times {\rm SO}(2), \qquad
{\rm U}(2), \ {\rm SU}(2) \subset {\rm SO}(4) \times \{ 1 \},  \\
{\rm SO}(3) &= {\rm SO}(3) \times \{ I_{2} \}, \qquad
&&{\rm SO}(3) \mbox{ acting irreducibly on } \mathbb{R}^{5}.
\end{align*}
\end{prop}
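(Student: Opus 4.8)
The plan is to assemble Proposition \ref{Lie grp 3 4-dim orbit} directly from the orbit computations carried out in the preceding subsections, so that essentially no new work is required beyond bookkeeping. First I would recall Lemma \ref{Lie subgrp of SO5}, which lists the seven connected closed subgroups of ${\rm SO}(5)$ of dimension $\geq 3$: ${\rm SO}(5)$, ${\rm SO}(4)$, ${\rm SO}(3) \times {\rm SO}(2)$, ${\rm U}(2)$, ${\rm SU}(2)$, ${\rm SO}(3) = {\rm SO}(3) \times \{I_2\}$, and the irreducible ${\rm SO}(3)$. Subgroups of dimension $\leq 2$ cannot have orbits of dimension $3$ or $4$, so this list is exhaustive for the question at hand. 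Then I would go through the list group by group, reading off from Lemmas \ref{orbit SO4}, \ref{orbit SO3 SO2}, \ref{orbit U2}, \ref{orbit SU2}, \ref{orbit SO3}, \ref{orbit irr SO3} (and the Corollary after Lemma \ref{orbit SO4}) the maximal dimension of an orbit on $\Lambda^2_- S^4$.

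Concretely: for ${\rm SO}(5)$, the Corollary after Lemma \ref{orbit SO4} says every orbit has dimension $\geq 5$ except the zero section $S^4$, which has dimension $4$; so ${\rm SO}(5)$ contributes a $4$-dimensional orbit (only the zero section) but no $3$-dimensional orbit. For ${\rm SO}(4)$, Lemma \ref{orbit SO4} gives orbits ${\rm SO}(4)/{\rm SO}(2)$ (dimension $5$), $S^3$ (dimension $3$), $S^2$, and a point; so the maximal orbit dimension relevant here is $3$, and ${\rm SO}(4)$ contributes a $3$-dimensional orbit but no $4$-dimensional one. For ${\rm SO}(3) \times {\rm SO}(2)$, Lemma \ref{orbit SO3 SO2} gives a generic orbit ${\rm SO}(3) \times {\rm SO}(2)$ of dimension $4$ as well as $3$-dimensional orbits $S^2 \times S^1$ and ${\rm SO}(3) \times {\rm SO}(2)/{\rm SO}(2)$; so it contributes in both columns. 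For ${\rm U}(2)$, Lemma \ref{orbit U2} gives a generic orbit ${\rm U}(2)$ of dimension $4$ and a $3$-dimensional orbit $S^3$; both columns again. For ${\rm SU}(2)$, Lemma \ref{orbit SU2} gives maximal orbit $S^3$ of dimension $3$; $3$-dimensional column only. For ${\rm SO}(3) = {\rm SO}(3) \times \{I_2\}$, Lemma \ref{orbit SO3} gives a generic orbit ${\rm SO}(3)$ of dimension $3$ (and $S^2$, point otherwise); $3$-dimensional column only. For the irreducible ${\rm SO}(3)$, Lemma \ref{orbit irr SO3} gives a generic orbit ${\rm SO}(3)$ of dimension $3$ (with ${\rm SO}(3)/\mathbb{Z}_2$ etc.\ in degenerate cases); $3$-dimensional column only.

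Collecting these observations: the subgroups with a $4$-dimensional orbit are exactly ${\rm SO}(5)$ (only the zero section), ${\rm SO}(3) \times {\rm SO}(2)$, and ${\rm U}(2)$, and the subgroups with a $3$-dimensional orbit are exactly ${\rm SO}(4)$, ${\rm SO}(3) \times {\rm SO}(2)$, ${\rm U}(2)$, ${\rm SU}(2)$, ${\rm SO}(3) = {\rm SO}(3) \times \{I_2\}$, and the irreducible ${\rm SO}(3)$. This is precisely the statement of the Proposition, so the proof is complete. There is no real obstacle here; the only point requiring a little care is confirming that no subgroup of dimension $\leq 2$ sneaks a $3$- or $4$-dimensional orbit past us — this is immediate since an orbit is a quotient of the group — and that in the ${\rm SO}(5)$ case the zero section genuinely is the unique orbit of dimension $\leq 4$, which is exactly the content of the Corollary following Lemma \ref{orbit SO4}.
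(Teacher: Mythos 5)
Your proposal is correct and follows exactly the paper's own route: the paper simply states that the proposition is obtained by ``summarizing the results in Section \ref{section orbits of Lie subgrp}'', i.e.\ by combining Lemma \ref{Lie subgrp of SO5} with the orbit lemmas \ref{orbit SO4}--\ref{orbit irr SO3} and the corollary on ${\rm SO}(5)$-orbits, which is precisely the bookkeeping you carry out. Your explicit remarks that a subgroup of dimension $\leq 2$ cannot have a $3$- or $4$-dimensional orbit and that ${\rm SO}(4)$ has orbits of dimensions $5,3,2,0$ but none of dimension $4$ are the only points needing care, and you handle both correctly.
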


By Proposition \ref{Lie grp 3 4-dim orbit}, 
we prove Theorem \ref{homog coasso}.


\begin{proof}[Proof of Theorem \ref{homog coasso}]
By Proposition \ref{Lie grp 3 4-dim orbit}, 
we consider the actions of 
${\rm SO}(5), {\rm SO}(3) \times {\rm SO}(2)$, and ${\rm U}(2)$. 
A 4-dimensional ${\rm SO}(5)$-orbit, which is the zero section, is obviously coassociative. 

Consider the ${\rm SO}(3) \times {\rm SO}(2)$-action. 
Use the notation in Section \ref{section SO3 SO2}.
By Lemma \ref{orbit SO3 SO2}, 
the ${\rm SO}(3) \times {\rm SO}(2)$ 
orbits through 
$p_{0}=\left({}^t\! (x_{1}, 0, 0, x_{4}, 0),  {}^t\! (a_{1}, a_{2}, a_{3}) \right)$, 
where 
$0 < x_{1} < 1, (a_{2}, a_{3}) \neq 0$, 
are 4-dimensional.
By (\ref{def of G2 str S4}) and Lemma \ref{data SO3SO2}, we compute 
\begin{align*}
\varphi_{\lambda} (\tilde{E_{1}^{*}}, \tilde{E_{2}^{*}}, \tilde{E_{3}^{*}}) &= 
\varphi_{\lambda} (\tilde{E_{1}^{*}}, \tilde{E_{2}^{*}}, \tilde{E_{4}^{*}}) = 0,\\
\varphi_{\lambda} (\tilde{E_{1}^{*}}, \tilde{E_{3}^{*}}, \tilde{E_{4}^{*}}) &= 
2 s_{\lambda} x_{1} x_{4} (a_{2} x_{1} -a_{3} x_{4}), \\
\varphi_{\lambda} (\tilde{E_{2}^{*}}, \tilde{E_{3}^{*}}, \tilde{E_{4}^{*}}) &= 
-2 s_{\lambda} x_{1} x_{4} (a_{2} x_{4} + a_{3} x_{1}), 
\end{align*}
at $p_{0}$.
Hence the orbit is coassociative if and only if 
$x_{1} =0$ or $x_{4}=0$ or $a_{2}=a_{3}=0$, 
which implies that the orbit is not 4-dimensional. 

Consider the ${\rm U}(2)$-action. 
Use the notation in Section \ref{section U2}. 
By Lemma \ref{orbit U2}, 
the ${\rm U}(2)$ 
orbits through 
$p_{0}=\left({}^t\! (x_{1}, 0, 0, 0, x_{5}),  {}^t\! (a_{1}, a_{2}, a_{3}) \right)$, 
where 
$x_{5} \neq \pm 1, (a_{1}, a_{2}) \neq 0$, 
are 4-dimensional.
By (\ref{def of G2 str S4}) and Lemma \ref{data U2}, we compute 
\begin{align*}
\varphi_{\lambda} (\tilde{E_{1}^{*}}, \tilde{E_{2}^{*}}, \tilde{E_{3}^{*}}) &= 
\varphi_{\lambda} (\tilde{E_{1}^{*}}, \tilde{E_{2}^{*}}, \tilde{E_{4}^{*}}) = 0,\\
\varphi_{\lambda} (\tilde{E_{1}^{*}}, \tilde{E_{3}^{*}}, \tilde{E_{4}^{*}}) &= 
- 4 s_{\lambda} x_{1}^{2} a_{2}, \\
\varphi_{\lambda} (\tilde{E_{2}^{*}}, \tilde{E_{3}^{*}}, \tilde{E_{4}^{*}}) &= 
- 4 s_{\lambda} x_{1}^{2}  a_{1}, 
\end{align*}
at $p_{0}$.
Hence the orbit is coassociative if and only if 
$x_{1} =0$ or $a_{1}=a_{2}=0$, 
which implies that the orbit is not 4-dimensional. 
\end{proof}


\section{Cohomogeneity one coassociative submanifolds} \label{cohomo one section}

The connected Lie subgroups which have 3-dimensional orbits are classified 
in Proposition \ref{Lie grp 3 4-dim orbit}. 
We construct cohomogeneity one coassociative submanifolds
in each case. 
In this section, denote by $I \subset \mathbb{R}$ an open interval. 

\subsection{${\rm SO}(4) = {\rm SO}(4) \times \{ 1 \}$-action} \label{cohomo1 section SO4}

By Lemma \ref{orbit SO4}, an ${\rm SO}(4)$-orbit 
through $({}^t\! (x_{1}, 0, 0, 0, x_{5}), {}^t\! (0, 0, 0))$, where $x_{1}>0$, 
is 3-dimensional. 
We may find a path 
$c: I \rightarrow \Lambda^{2}_{-} S^{4}$ given by 
\begin{align*}
c(t) = \left( {}^t\! (x_{1}(t), 0, 0, 0, x_{5}(t)), {}^t\! (0, 0, 0) \right) 
\end{align*}
satisfying $x_{1}(t) > 0, \varphi_{\lambda} |_{{\rm SO}(4) \cdot {\rm Image}(c)} = 0$. 
However, since 
${\rm SO}(4) \cdot {\rm Image}(c)$ is contained in the zero section 
which is an obvious coassociative submanifold, 
we cannot find new examples in this case. 


\subsection{${\rm SO}(3) \times {\rm SO}(2)$-action} \label{cohomo1 section SO3SO2}

We give a proof of Theorem \ref{SO3SO2 thm}. 
Recall the notation in Section \ref{section SO3 SO2}. 
By Lemma \ref{orbit SO3 SO2}, 
an ${\rm SO}(3) \times {\rm SO}(2)$-orbit 
through $({}^t\! (x_{1}, 0, 0, x_{4}, 0), {}^t\! (a_{1}, a_{2}, a_{3}))$ 
is 3-dimensional when 
\begin{enumerate}
\item 
$0 < x_{1} < 1, (a_{2}, a_{3}) = 0$,
\item
$x_{1} = 1, (a_{2}, a_{3}) \neq 0$, or
\item
$x_{1} = 0, (a_{1}, a_{2}, a_{3}) \neq 0$. 
\end{enumerate}

Consider \underline{case 1}. 
Take a path $c: I \rightarrow \Lambda^{2}_{-} S^{4}$ given by 
\begin{align*}
c(t) = \left( {}^t\! (x_{1}(t), 0, 0, x_{4}(t), 0), {}^t\! (a_{1}(t), 0, 0) \right), 
\end{align*}
where $x_{1}(t), x_{4}(t) > 0$. 
Note that $\tilde{E_{3}^{*}}=0$ at $c(t)$. 
We find a path $c$ satisfying $\varphi_{\lambda} |_{({\rm SO}(3) \times {\rm SO}(2)) \cdot {\rm Image}(c)} = 0$, 
where $\varphi_{\lambda}$ is given by (\ref{def of G2 str S4}).  
We easily see that $\varphi_{\lambda}(\tilde{E_{i}^{*}}, \tilde{E_{j}^{*}}, \tilde{E_{k}^{*}})|_{c} = 0$
for $1 \leq i,j,k \leq 4$ by Lemma \ref{data SO3SO2}. 
Since 
$\dot{c} = (-\dot{x}_{1} x_{4} + x_{1} \dot{x}_{4}) e_{3} + \dot{a}_{1} \frac{\partial}{\partial a_{1}}$ 
and 
\begin{align*}
(\pi^{*} \omega_{i} (\tilde{E_{j}^{*}}, \dot{c})) 
= 
\left( 
 \begin{array}{cccc}
0                    & 0                        & 0  & -\dot{x}_{1} \\
x_{1} \dot{x}_{4}  & -x_{4} \dot{x}_{4}    & 0  & 0\\
-x_{4} \dot{x}_{4}  & -x_{1} \dot{x}_{4}  & 0 & 0\\
\end{array}
\right), \qquad 
(b_{i} (\dot{c})) 
= 
\left( 
 \begin{array}{c}
\dot{a}_{1}\\
0\\
0
\end{array}
\right), 
\end{align*}
we have at $c(t)$
\begin{align*}
\varphi_{\lambda} (\tilde{E_{1}^{*}}, \tilde{E_{2}^{*}}, \dot{c}) 
&= 2 s_{\lambda} \left(-2 a_{1} x_{4} \dot{x}_{4} + \dot{a}_{1} x_{1}^{2}
\right)
- s_{\lambda}^{-3} \dot{a}_{1} a_{1}^{2} x_{4}^{2}, \\
\varphi_{\lambda} (\tilde{E_{1}^{*}}, \tilde{E_{2}^{*}}, \tilde{E_{4}^{*}}) &= 
\varphi_{\lambda} (\tilde{E_{1}^{*}}, \tilde{E_{4}^{*}}, \dot{c}) = 
\varphi_{\lambda} (\tilde{E_{2}^{*}}, \tilde{E_{4}^{*}}, \dot{c}) =0.
\end{align*}
Thus 
the condition $\varphi_{\lambda} |_{({\rm SO}(3) \times {\rm SO}(2)) \cdot {\rm Image}(c)} = 0$ is equivalent to 
\begin{align*}
4 a_{1} x_{1} \dot{x}_{1} +
\frac{1}{\lambda+a_{1}^{2}}
\left \{ 
- a_{1}^{2}  
+ (2 \lambda + 3 a_{1}^{2}) x_{1}^{2}
\right \}
\dot{a}_{1} = 0.
\end{align*}
This equation is solved explicitly as 
\begin{align} \label{sol SO3SO2}
G(a_{1}, x_{1}) = C
\end{align}
for  $C \in \mathbb{R}$, 
where 
$G: \mathbb{R} \times (0, 1] \rightarrow \mathbb{R}$ 
is defined by 
\begin{align} \label{def of G}
G(a_{1}, x_{1}) = 
a_{1} (\lambda + a_{1}^{2})^{1/4} (2 x_{1}^{2} - 1) + 
\frac{1}{2} \int^{a_{1}}_{0} \frac{x^{2}+2 \lambda}{(\lambda + x^{2})^{3/4}} dx. 
\end{align}
This solution is obtained by Maple 16 \cite{Maple}.

\begin{rem} \label{domain of G}
We give some remarks on the domain of $G$. 
Since we take a path 
$
c(t) = \left( {}^t\! (x_{1}(t), 0, 0, x_{4}(t), 0), {}^t\! (a_{1}(t), 0, 0) \right) 
$
satisfying $0 < x_{1}(t) < 1$, 
$G$ is defined on $\mathbb{R} \times (0,1)$ in the first place. 
Though $G$ extends to a map $\mathbb{R} \times [0,1] \rightarrow \mathbb{R}$ formally, 
it is not appropriate to define $G$ on $x_{1}=0$. 

In fact, by (\ref{SO3 times 1 action}) and (\ref{1 times SO2 action}), 
the ${\rm SO}(3) \times {\rm SO}(2)$-orbit through 
$\left( {}^t\! (0, 0, 0, 1, 0), {}^t\! (a_{1}, 0, 0) \right)$ 
coincides with that through $\left( {}^t\! (0, 0, 0, 1, 0), {}^t\! (-a_{1}, 0, 0) \right)$. 
Thus we should have $G(- a_{1}, 0) = G(a_{1}, 0)$. 
However, we easily see that $G(- a_{1}, 0) = -G(a_{1}, 0)$. 

Such a problem does not occur when $x_{1}=1$. 
Hence we regard $G$ as a map $\mathbb{R} \times (0,1] \rightarrow \mathbb{R}$.
\end{rem}

Set 
\begin{align*}
M_{C} &= 
{\rm SO}(3) \times {\rm SO}(2) \cdot 
\left \{ 
\left(
{}^t\! (x_{1}, 0, 0, \sqrt{1-x_{1}^{2}}, 0), {}^t\! (a_{1}, 0, 0)
\right); 
\begin{array}{c}
G(a_{1} ,x_{1}) = C, \\
a_{1} \in \mathbb{R}, 0 < x_{1} \leq 1
\end{array}
\right \}, \\
M_{C}^{\pm} &= 
{\rm SO}(3) \times {\rm SO}(2) \cdot 
\left \{ 
\left(
{}^t\! (x_{1}, 0, 0, \sqrt{1-x_{1}^{2}}, 0), {}^t\! (a_{1}, 0, 0)
\right); 
\begin{array}{c}
G(a_{1} ,x_{1}) = C, \\
\pm a_{1} >0, 0 < x_{1} \leq 1
\end{array}
\right \}. 
\end{align*}
Then 
$M_{C}$ is coassociative  
and $M_{C} = M_{C}^{+} \sqcup M_{C}^{-}$ when $C \neq 0$ 
and $M_{0} = M_{0}^{+} \sqcup M_{0}^{-} \sqcup S^{4}$.

\begin{lem}
The coassociative submanifold $M_{C}$ is homeomorphic to 
\begin{align*}
\left \{
\begin{array}{ll}
(S^{2} \times \mathbb{R}^{2}) \sqcup (S^{2} \times S^{1} \times \mathbb{R}_{> 0}) 
& \qquad \mbox{for }\ C \neq 0, \\
S^{4} \sqcup (S^{2} \times S^{1} \times \mathbb{R}_{> 0}) \sqcup (S^{2} \times S^{1} \times \mathbb{R}_{> 0}) 
& \qquad \mbox{for }\ C=0,
\end{array}
\right.
\end{align*}
where $S^{4}$ is the zero section of $\Lambda^{2}_{-} S^{4}$. 
\end{lem}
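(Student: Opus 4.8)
The plan is to analyze the level sets of $G$ defined in (\ref{def of G}) and then glue in the information about how orbits degenerate. First I would study the function $g_C(x_1) = G(a_1, x_1) = C$ as implicitly defining $a_1$ as a function of $x_1$ on $(0,1]$: since the monomial coefficient $(2x_1^2 - 1)$ changes sign, and since $a_1 \mapsto a_1(\lambda + a_1^2)^{1/4}(2x_1^2-1) + \tfrac12\int_0^{a_1}\frac{x^2+2\lambda}{(\lambda+x^2)^{3/4}}dx$ is a smooth odd-ish perturbation, I would verify that for each fixed $x_1 \in (0,1]$ the map $a_1 \mapsto G(a_1, x_1)$ is a diffeomorphism of $\mathbb{R}$ onto $\mathbb{R}$ (strictly increasing: its $a_1$-derivative is, up to a positive factor from the ODE derivation, $\frac{1}{\lambda+a_1^2}\{-a_1^2 + (2\lambda+3a_1^2)x_1^2\}$ divided by $4a_1 x_1$ — actually the cleanest route is to note the level set equation came from a first-order ODE with $\dot x_1$-coefficient $4a_1 x_1 \neq 0$ away from $a_1=0$, so the solution curves foliate the region $\{x_1>0\}$). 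So the solution set in the two-dimensional $(a_1,x_1)$-slice is, for each $C$, a single curve, and I would determine its behavior as $x_1 \to 1$ and as the curve approaches the boundary $a_1 = 0$ or runs off to $|a_1| \to \infty$.

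The key topological step is then to track which orbit sits over each point of this curve. By Lemma \ref{orbit SO3 SO2}, the orbit through $({}^t(x_1,0,0,\sqrt{1-x_1^2},0),{}^t(a_1,0,0))$ is: the full $S^2 \times S^1$ for $0 < x_1 < 1$ (since there $a_2 = a_3 = 0$ puts us in the "$(a_2,a_3)=0$" case), it degenerates to $S^2$ when $x_1 = 1$ and $a_1$ arbitrary (the $\mathrm{SO}(2)$ factor now acts trivially on the fiber since $x_4 = 0$, as one reads off from (\ref{1 times SO2 action})), and when $C = 0$ and $x_1 = 1/\sqrt2$ we may also hit $a_1 = 0$, giving a point on the zero section $S^4$. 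So $M_C^+$ (and symmetrically $M_C^-$) fibers over the curve segment $\{G = C, a_1 > 0\}$: over the interior $x_1 \in (0,1)$ the fiber is $S^2 \times S^1$, collapsing the circle at the single endpoint $x_1 = 1$. Since the curve segment is homeomorphic to a half-open interval $(0,1]$ (with the open end as $x_1 \to 0$ or $|a_1|\to\infty$, depending on $C$) capped by $x_1 = 1$, collapsing the $S^1$ over the capped end of an interval-fibration of $S^2 \times S^1 \times I$ yields $S^2 \times D^2 \cong S^2 \times \mathbb{R}^2$ when $C \neq 0$ and the branch does not meet the zero section, and yields $S^2 \times S^1 \times \mathbb{R}_{>0}$ when the segment is relatively closed at $x_1=1$ only on one side and open at the other with the open end a genuine (non-collapsing) end. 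I would do this bookkeeping carefully for $M_C^+$, $M_C^-$ separately and for the extra component $S^4$ in the $C=0$ case, then assemble $M_C = M_C^+ \sqcup M_C^- \;(\sqcup\, S^4)$.

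The main obstacle I anticipate is pinning down the precise homeomorphism type at the endpoints of the level curves — specifically, distinguishing when the $x_1 = 1$ endpoint produces a genuine collapse that caps off (giving the $D^2 = \mathbb{R}^2$ factor and hence the $S^2 \times \mathbb{R}^2$ piece) versus when the level curve for a given $C$ simply fails to reach $x_1 = 1$ at all, or reaches it at the collapsed point but the curve is on only one side. This requires a sign/monotonicity analysis of $G(\cdot, 1)$: one computes $G(a_1, 1) = a_1(\lambda+a_1^2)^{1/4} + \tfrac12\int_0^{a_1}\frac{x^2+2\lambda}{(\lambda+x^2)^{3/4}}dx$, which is an odd, strictly increasing, unbounded function of $a_1$; hence for every $C$ the curve $\{G = C\}$ meets $x_1 = 1$ at exactly one point, and that point has $a_1 > 0$ iff $C > 0$, $a_1 < 0$ iff $C < 0$, and $a_1 = 0$ iff $C = 0$. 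This is exactly what separates the branch $M_C^{\mathrm{sign}(C)}$ (which gets capped, becoming $S^2 \times \mathbb{R}^2$) from the branch $M_C^{-\mathrm{sign}(C)}$ (which stays $S^2 \times S^1 \times \mathbb{R}_{>0}$), and in the $C = 0$ case both branches limit to the zero-section point, so each becomes $S^2 \times S^1 \times \mathbb{R}_{>0}$ with the zero section $S^4$ appearing as its own component. I would close by noting that openness/completeness at the far end ($x_1 \to 0$ or $|a_1| \to \infty$) never caps, so that end always contributes an $\mathbb{R}$ or $\mathbb{R}_{>0}$ factor, which matches the stated list.
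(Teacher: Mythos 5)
Your overall strategy is the paper's: analyze the level sets of $G$ from (\ref{def of G}), use Lemma \ref{orbit SO3 SO2} to see that the $S^{1}$-factor of the orbit collapses exactly at $x_{1}=1$, and decide which of the two branches $\{\pm a_{1}>0\}$ actually reaches $x_{1}=1$ by the monotonicity of the odd function $G(\cdot,1)$ (this is precisely the paper's $\alpha_{C}$, with ${\rm sign}(\alpha_{C})={\rm sign}(C)$). That last step, and the capping argument $S^{2}\times\bigl(S^{1}\times[\alpha_{C},\infty)/(S^{1}\times\{\alpha_{C}\})\bigr)\cong S^{2}\times\mathbb{R}^{2}$, are correct and match the paper.

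However, the first step of your plan contains a claim that is false and that you would discover fails if you tried to verify it: for fixed $x_{1}$ the map $a_{1}\mapsto G(a_{1},x_{1})$ is \emph{not} strictly increasing, let alone a diffeomorphism of $\mathbb{R}$ onto $\mathbb{R}$. One computes $\partial G/\partial a_{1}=\tfrac12(\lambda+a_{1}^{2})^{-3/4}\bigl\{4\lambda x_{1}^{2}+2a_{1}^{2}(3x_{1}^{2}-1)\bigr\}$, which changes sign for every fixed $x_{1}<1/\sqrt{3}$ (indeed $G(a_{1},x_{1})\to-\infty$ as $a_{1}\to+\infty$ when $x_{1}<1/\sqrt{3}$), so a horizontal slice $\{x_{1}={\rm const}\}$ can meet a level set in up to three points, and the level set $\{G=C\}$ is \emph{not} a single curve: for $C\neq0$ it has two components (one in each half-plane $\pm a_{1}>0$), and for $C=0$ additionally the whole line $\{a_{1}=0\}$. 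Your subsequent bookkeeping tacitly assumes you can parametrize each branch as a graph over an $x_{1}$-interval "homeomorphic to $(0,1]$", which is not available. The repair is exactly what the paper does: use instead the monotonicity in the \emph{other} variable, $\partial G/\partial x_{1}=4a_{1}(\lambda+a_{1}^{2})^{1/4}x_{1}\neq0$ for $a_{1}\neq0$, to parametrize each branch by $a_{1}$, i.e. solve $2x_{1}^{2}-1=g_{C}(a_{1})$ and determine the exact $a_{1}$-range on which $-1<g_{C}(a_{1})\leq1$ (this is Lemma \ref{SO3SO2 condition on gc}, yielding $[\alpha_{C},\infty)$ and $(-\infty,\beta_{C})$ for $C>0$). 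Once the branches are so parametrized, your collapsing argument goes through verbatim; also note that for $C=0$ the reason both branches give $S^{2}\times S^{1}\times\mathbb{R}_{>0}$ is that $|g_{0}(a_{1})|<1$ for all $a_{1}\neq0$, so neither branch contains a point with $x_{1}=1$ (they exit through $x_{1}\to0$, not through the zero section).
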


\begin{proof}
Since we have 
\begin{align*}
\frac{\partial G}{\partial x_{1}} = 4 a_{1} (\lambda + a_{1}^{2})^{1/4} x_{1}, \qquad
\frac{\partial G}{\partial a_{1}} = 2^{-1} (\lambda + a_{1}^{2})^{-3/4} 
\left \{ (2 x_{1}^{2} - 1) (3 a_{1}^{2} + 2\lambda)  + a_{1}^{2} + 2 \lambda \right \}, 
\end{align*}
$G(a_{1}, \cdot)$ is monotonically increasing (resp. decreasing) 
on $( 0, 1]$ for a fixed $a_{1} > 0$ (resp. $a_{1} <0$) 
and 
$\lim_{x_{1} \to 0} G(\cdot, x_{1})$ (resp. $G(\cdot, 1)$) is monotonically decreasing  
(resp. increasing) on $\mathbb{R}$. 
We compute 
\begin{align*}
G (0, \cdot) = 0, \qquad
\lim_{a_{1} \to \pm \infty} G (a_{1}, 1) = \pm \infty, \qquad
\lim_{a_{1} \to \pm \infty} \lim_{x_{1} \to 0} G (a_{1}, x_{1}) = \mp \infty, 
\end{align*}
where we use the estimate 
\begin{align*}
&\int^{a_{1}}_{0} \frac{x^{2}+2 \lambda}{(\lambda + x^{2})^{3/4}} dx 
\leq
a_{1}  \left \{ (\lambda + a_{1}^{2})^{1/4} +\lambda^{1/4} \right \} 
\qquad \mbox{ for } a_{1} \geq 0. 
\end{align*}

Thus for any $C \in \mathbb{R}$, 
there exists a unique $\alpha_{C} \in \mathbb{R}$ (resp.  $\beta_{C} \in \mathbb{R}$)
such that $C = G(\alpha_{C}, 1)$ (resp.  $C=\lim_{x_{1} \to 0} G(\beta_{C}, x_{1})$). 
Note that $C$ and $\alpha_{C}$ (resp. $\beta_{C}$) have the same (resp. opposite) sign.
Now, define a function $g_{C}: \mathbb{R} - \{ 0 \} \rightarrow \mathbb{R}$ by 
\begin{align*}
g_{C} (a_{1}) = 
a_{1}^{-1} (\lambda + a_{1}^{2})^{-1/4}
\left ( C- \frac{1}{2} \int^{a_{1}}_{0} \frac{x^{2}+2 \lambda}{(\lambda + x^{2})^{3/4}} dx \right).
\end{align*}
Note that $G(a_{1}, x_{1}) = C$ is equivalent to $2 x_{1}^{2} - 1 = g_{C} (a_{1})$. 
We may find the condition on $a_{1}$ so that $-1 < g_{C} (a_{1}) \leq 1$.

First, suppose that $\underline{C > 0}$. 
\begin{lem} \label{SO3SO2 condition on gc}
When $a_{1}>0$, $g_{C} (a_{1}) > -1$ holds 
and $g_{C} (a_{1}) \leq 1$ is equivalent to $a_{1} \geq \alpha_{C}$. 
When $a_{1}<0$, $g_{C} (a_{1}) < 1$ holds 
and $g_{C} (a_{1}) > -1$ is equivalent to $a_{1} < \beta_{C}$. 
\end{lem}

\begin{proof}
Suppose that $a_{1} > 0$. 
Then $g_{C}(a_{1}) > -1$ 
is equivalent to $C> \lim_{x_{1} \to 0} G(a_{1}, x_{1})$, which holds for any $a_{1}>0$
since $ \lim_{x_{1} \to 0} G(a_{1}, x_{1})<0$. 
The condition that $g_{C}(a_{1}) \leq 1$
is equivalent to $C = G(\alpha_{C}, 1) \leq G(a_{1}, 1)$. 
Since $G(\cdot, 1)$ is monotonically increasing, this is equivalent to $a_{1} \geq \alpha_{C}$. 
We can prove similarly when $a_{1} < 0$.
\end{proof}

Set 
$
\Gamma(C)^{\pm} = \{ (a_{1}, x_{1}) \in \mathbb{R} \times (0, 1] ; G(a_{1}, x_{1})=C, \pm a_{1} > 0 \}.
$
By Lemma \ref{SO3SO2 condition on gc}, 
we have homeomorphisms
$[\alpha_{C}, \infty) \cong \Gamma(C)^{+}$ and 
$(-\infty, \beta_{C}) \cong \Gamma(C)^{-}$ 
via $a_{1} \mapsto (a_{1}, \sqrt{(g_{C} (a_{1}) +1)/2}$.  
Then from (\ref{SO3 times 1 action}) and (\ref{1 times SO2 action}), it follows that 
\begin{align*}
M_{C}^{+}
= 
\left \{
\left(
\left( 
\begin{array}{c}
g_{11} x_{1} \\
g_{21} x_{1} \\
g_{31} x_{1} \\
\sqrt{1 - x_{1}^{2}} \cos \alpha  \\
\sqrt{1 - x_{1}^{2}} \sin \alpha 
\end{array}
\right), 
\left( 
\begin{array}{c}
g_{11} a_{1}\\
g_{21} a_{1}\\
-g_{31} a_{1}\\
\end{array}
\right)
\right);
\begin{array}{c}
0< x_{1} = \sqrt{(g_{C}(a_{1})+1)/2} \leq 1 \\
a_{1} \in [ \alpha_{C}, \infty),\\
(g_{ij}) \in {\rm SO}(3), \\
\alpha \in \mathbb{R}
\end{array}
\right \}, 
\end{align*}
which implies that 
$M_{C}^{+}$ is homeomorphic to 
$S^{2} \times (S^{1} \times [\alpha_{C}, \infty)/ (S^{1} \times \{ \alpha_{C} \})) 
\cong S^{2} \times \mathbb{R}^{2}$. 
In the same way, 
we see that $M_{C}^{-}$ is homeomorphic to $S^{2} \times S^{1} \times \mathbb{R}_{>0}$. 
We can prove the case $\underline{C<0}$ similarly.

When $\underline{C=0}$, 
we see that 
$|g_{C} (a_{1}) | < 1$ holds for any $a_{1} \neq 0$
as Lemma \ref{SO3SO2 condition on gc}. 
Then we have homeomorphisms
$(0, \infty) \cong \Gamma(0)^{+}$ and 
$(-\infty, 0) \cong \Gamma(0)^{-}$,  
and 
by Lemma \ref{orbit SO3 SO2}, 
we obtain $M_{0}^{\pm} \cong S^{2} \times S^{1} \times \mathbb{R}_{>0}$. 
\end{proof}

\begin{rem}
When $\lambda = 0$, the equation (\ref{sol SO3SO2}) is given by 
\begin{align} \label{Asymp_SO3SO2}
a_{1} |a_{1}|^{\frac{1}{2}} \left( 2 x_{1}^{2} - \frac{2}{3} \right) = C.
\end{align}
We exhibit the graph of (\ref{Asymp_SO3SO2}). 
The solid curve indicates the case $C>0$,  
the dashed curve indicates the case $C = 0$ 
and 
the dotted curve indicates the case $C<0$.   
We see that the solution (\ref{sol SO3SO2}) is  
asymptotic to this graph 
as $\lambda \rightarrow 0$.
The vertical line gives a coassociative cone in 
$\Lambda^{2}_{-} S^{4} - \{ \mbox{zero section} \} \cong 
\mathbb{C} P^{3} \times \mathbb{R}_{>0}$, 
which corresponds to a Lagrangian submanifold 
in the nearly K\"{a}hler $\mathbb{C}P^{3}$. 
\end{rem}
\begin{figure}
 \begin{center}
  \includegraphics[width=5cm]{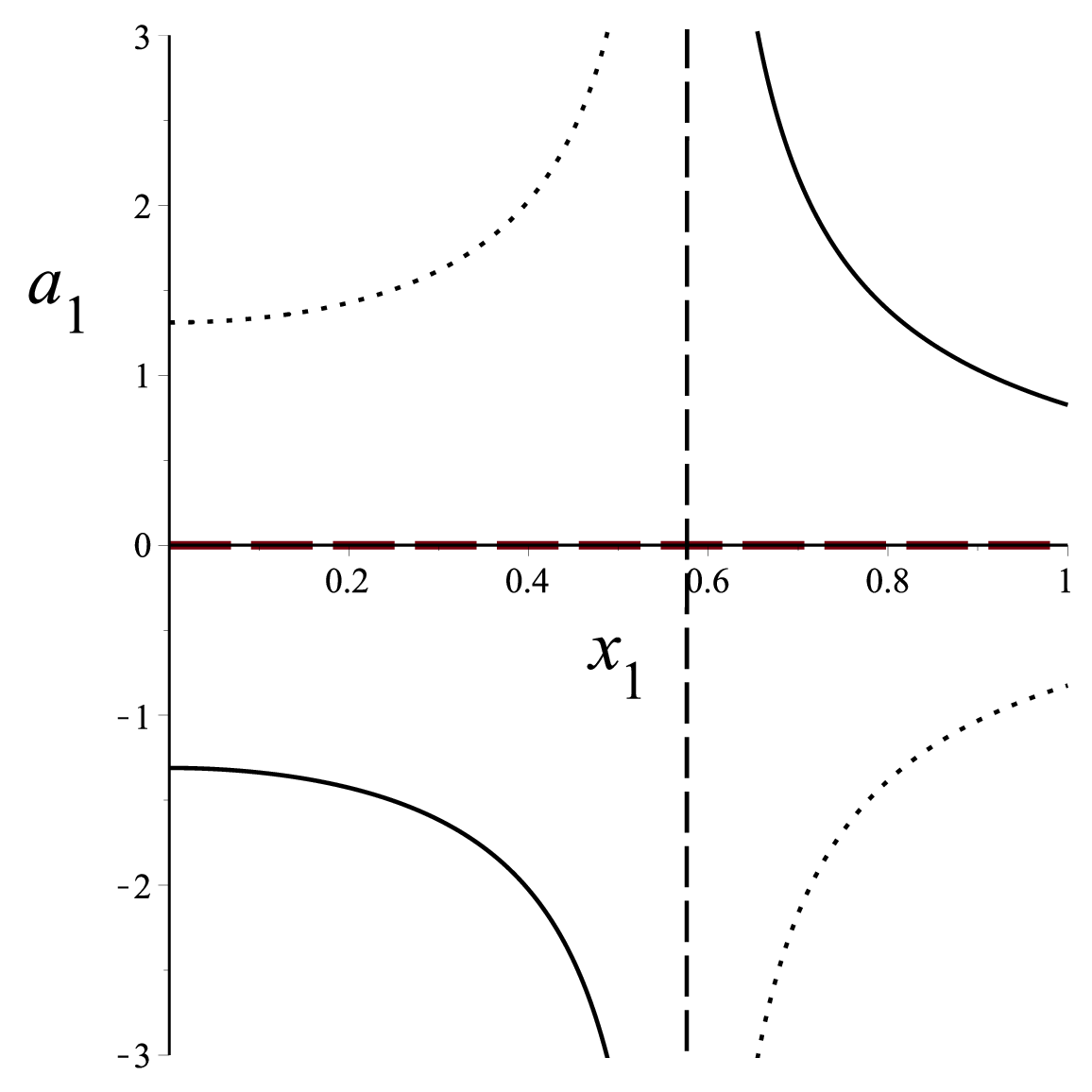}
  \caption{\hspace{-0.2cm} the graph of (\ref{Asymp_SO3SO2})}
 \end{center}
\end{figure}

Consider \underline{case 2}.
Take a path $c: I \rightarrow \Lambda^{2}_{-} S^{4}$ given by 
\begin{align*}
c(t) = \left( {}^t\! (1, 0, 0, 0, 0), {}^t\! (a_{1}(t), a_{2} (t), a_{3} (t)) \right). 
\end{align*}
We may assume that $a_{2}>0, a_{3}=0$ so that 
$c(t)$ is transverse to the ${\rm SO}(3) \times {\rm SO}(2)$-orbit. 
We find a path $c$ satisfying $\varphi_{\lambda} 
|_{({\rm SO}(3) \times {\rm SO}(2)) \cdot {\rm Image} (c)} = 0$, 
where $\varphi_{\lambda}$ is given by (\ref{def of G2 str S4}).  
Since 
$\dot{c} = \sum_{i=1}^{2} \dot{a}_{i} \frac{\partial}{\partial a_{i}}$, 
we have at $c(t)$
\begin{align*}
(\pi^{*} \omega_{i} (\tilde{E_{j}^{*}}, \dot{c})) 
&= 0, \\
(b_{i} (\dot{c})) 
&= 
\left( 
\dot{a}_{1}, \dot{a}_{2}, 0
\right), \\
\varphi_{\lambda} (\tilde{E_{1}^{*}}, \tilde{E_{2}^{*}}, \dot{c}) 
&= 2 s_{\lambda} \dot{a}_{1}, \\
\varphi_{\lambda} (\tilde{E_{p}^{*}}, \tilde{E_{q}^{*}}, \dot{c}) &= 0
\qquad \mbox{for }\ (p,q) \neq (1,2),(2,1). 
\end{align*}
Thus 
the condition $\varphi_{\lambda} |_{({\rm SO}(3) \times {\rm SO}(2)) \cdot {\rm Image} (c)} = 0$ is equivalent to 
$a_{1}=C$ for $C \in \mathbb{R}$. 
Set
$M_{C} = {\rm SO}(3) \times {\rm SO}(2) \cdot \{ ( {}^t\! (1, 0, 0, 0, 0), {}^t\! (C, r, 0)); r \in \mathbb{R} \}$.
By 
(\ref{SO3 times 1 action}) and 
(\ref{1 times SO2 action}), 
$M_{C}$ is explicitly described as 
\begin{align} \label{SO3SO2 case2 MC}
M_{C} =& \left \{ 
\left(
\begin{array}{c}
g_{11} \\
g_{21} \\
g_{31} \\
0 \\
0
\end{array}
\right),
\left( 
\begin{array}{ccc}
g_{11}  & g_{12}   & -g_{13} \\
g_{21}  & g_{22}   & -g_{23} \\
-g_{31} & -g_{32} & g_{33} \\
\end{array}
\right)
\left( 
\begin{array}{ccc}
1  & 0  & 0 \\
0  & u  & -v \\
0  & v  & u \\
\end{array}
\right)
\left( 
\begin{array}{c}
C \\
r \\
0 \\
\end{array}
\right); 
\begin{array}{c}
(g_{ij}) \in {\rm SO}(3) \\
u^{2} + v^{2} =1 \\
r \in \mathbb{R}
\end{array} 
\right \} \\
= &
\left \{ 
\left( {}^t\! (x_{1}, x_{2}, x_{3}, 0, 0), {}^t\! (a_{1}, a_{2}, a_{3}) \right) \in S^{4} \times \mathbb{R}^{3}; 
a_{1} x_{1} + a_{2} x_{2} - a_{3} x_{3} = C 
\right \}. \nonumber 
\end{align}
Thus 
$M_{C}$ is canonically identified with 
$\{ (v, w) \in S^{2} \times \mathbb{R}^{3}; \langle v, w \rangle = C \}$, 
which is homeomorphic to 
$\{ (v, w) \in S^{2} \times \mathbb{R}^{3}; \langle v, w \rangle = 0 \} = TS^{2}$ 
via $(v, w) \mapsto (v, w-Cv)$.

\begin{rem} \label{relation Kari_Min}
Let $\Sigma^{2} \subset S^{4}$ be an oriented 2-submanifold. 
Let $L \rightarrow \Sigma$ be a line bundle over $\Sigma$ 
spanned by ${\rm vol}_{\Sigma} - * {\rm vol}_{\Sigma}$, 
where ${\rm vol}_{\Sigma}$ is a volume form of $\Sigma$ and $*$ is 
a Hodge star in $S^{4}$. 
Denote by $L^{\perp}$ the orthogonal complement bundle of $L$
in $\Lambda^{2}_{-} S^{4}$ and 
take a section $\eta$ of $L$ over $\Sigma$.
By the argument in \cite{Kari_Min} and \cite{Kari_Leung}, 
\begin{align*}
\eta + L^{\perp} = \left \{ (x, \eta_{x} + \sigma) \in \Lambda^{2}_{-} S^{4}|_{\Sigma}; 
x\in \Sigma, \sigma \in L^{\perp}_{x} \right \}
\end{align*}
is coassociative if and only if 
$\Sigma$ is superminimal and $\eta \in \mathbb{R} ({\rm vol}_{\Sigma} - * {\rm vol}_{\Sigma})$.

The submanifold $M_{C}$ is a special case of these examples.
In fact, 
$\pi (M_{C})$ is a totally geodesic $S^{2} = \{ {}^t\!(x_{1}, x_{2}, x_{3}, 0, 0) \in S^{4}\}$
and define 
$\tau \in C^{\infty}(S^{2}, \Lambda^{2}_{-} S^{4}|_{S^{2}})$
by 
\begin{align*}
\tau_{x} = x_{1} (\omega_{1})_{x} + x_{2} (\omega_{2})_{x} - x_{3} (\omega_{3})_{x},
\end{align*} 
where $x= {}^t\!(x_{1}, x_{2}, x_{3}, 0, 0) \in S^{2}$. 
Note that $\tau$ is not the restriction of the tautological 2-form to $S^{2}$. 
We easily see that 
$M_{C} = C \tau + (\mathbb{R} \tau)^{\perp}$ and 
$\tau = {\rm vol}_{S^{2}} - * {\rm vol}_{S^{2}}$ by the ${\rm SO}(3)$-invariance of $\tau$.
\end{rem}

Consider \underline{case 3}.
Take a path $c: I \rightarrow \Lambda^{2}_{-} S^{4}$ given by 
\begin{align*}
c(t) = \left( {}^t\! (0, 0, 0, 1, 0), {}^t\! (a_{1}(t), a_{2} (t), a_{3} (t)) \right). 
\end{align*}
We may assume that $a_{1} >0, a_{2}=a_{3}=0$ so that 
$c(t)$ is transverse to the ${\rm SO}(3) \times {\rm SO}(2)$-orbit. 
Since $\tilde{E_{1}^{*}} = a_{1} \frac{\partial}{\partial a_{2}}$, 
$\tilde{E_{2}^{*}} = - a_{1} \frac{\partial}{\partial a_{3}}$, 
$\tilde{E_{3}^{*}} = 0, \tilde{E_{4}^{*}} = e_{4}$ at $c(t)$, 
we compute 
$
\varphi_{\lambda} (\tilde{E_{1}^{*}}, \tilde{E_{2}^{*}}, \dot{c}) 
= - a_{1}^{2} \dot{a}_{1}/s_{\lambda}^{3}, 
$
which implies that $a_{1}$ is constant. 
Hence we cannot obtain a 4-submanifold.


\subsection{Action of ${\rm U}(2) \subset {\rm SO}(4) \times \{ 1 \}$} \label{cohomo1 section U2}

By Lemma \ref{orbit U2}, a ${\rm U}(2)$-orbit 
through $p_{0} = ({}^t\! (x_{1}, 0, 0, 0, x_{5}), {}^t\! (0, 0, a_{3}))$, where $x_{1}>0$, 
is 3-dimensional. 
At $p_{0}$, the stabilizer of the ${\rm U}(2)$-action is ${\rm U}(1)$. 
Thus a ${\rm U}(2)$-orbit through $p_{0}$ 
agrees with an ${\rm SU}(2)$-orbit through $p_{0}$. 
The case of ${\rm SU}(2)$ is considered in the next subsection.


\subsection{Action of ${\rm SU}(2) \subset {\rm SO}(4) \times \{ 1 \}$} \label{cohomo1 section SU2}

We give a proof of Theorem \ref{SU2 thm}. 
Recall the notation in Section \ref{section U2} and \ref{section SU2}. 
By Lemma \ref{orbit SU2}, 
an ${\rm SU}(2)$-orbit 
through $({}^t\! (x_{1}, 0, 0, 0, x_{5}), {}^t\! (a_{1}, a_{2}, a_{3}))$ 
is 3-dimensional when 
$x_{5} \neq 0$. 
Take a path 
$c: I \rightarrow \Lambda^{2}_{-} S^{4}$ given by 
\begin{align*}
c(t) = \left( {}^t\! (x_{1}(t), 0, 0, 0, x_{5}(t)), {}^t\! (a_{1}(t), a_{2}(t), a_{3}(t)) \right), 
\end{align*}
where $x_{1}(t) > 0$. 
We find a path 
$c$ satisfying $\varphi_{\lambda}|_{{\rm SU}(2) \cdot {\rm Image}(c)}=0$, 
where $\varphi_{\lambda}$ is given by (\ref{def of G2 str S4}).  
The condition $\varphi_{\lambda} (\tilde{E_{1}^{*}}, \tilde{E_{2}^{*}}, \tilde{E_{3}^{*}})=0$ 
is always satisfied. 
In fact, 
since the $G_{2}$-structure $\varphi_{\lambda}$ is preserved by the ${\rm SU}(2)$-action, we have 
$d(\varphi_{\lambda} (\tilde{E_{1}^{*}}, \tilde{E_{2}^{*}}, \tilde{E_{3}^{*}})) = 0$ by Cartan's formula. 
Since the action of ${\rm SU}(2)$ is not free, 
we have $\tilde{E_{1}^{*}} \wedge \tilde{E_{2}^{*}} \wedge \tilde{E_{3}^{*}} = 0$ 
at some point. Thus we have 
$\varphi_{\lambda} (\tilde{E_{1}^{*}}, \tilde{E_{2}^{*}}, \tilde{E_{3}^{*}}) = 0$. 

\begin{lem} 
The condition $\varphi_{\lambda}|_{{\rm SU}(2) \cdot {\rm Image}(c)} = 0$ is equivalent to 
\begin{align} \label{S4SU2_3}
4 \dot{a}_{i} \frac{1-x_{5}}{1+x_{5}}
- a_{i} \frac{d}{dt} \left \{ \log(\lambda + r^{2}) + 8 \log (1 + x_{5}) \right \} = 0  
\qquad \mbox{ for } i = 1, 2, 3.
\end{align}
\end{lem}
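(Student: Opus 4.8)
The plan is as follows. Since $x_{1}(t) > 0$, the ${\rm SU}(2)$-orbit through $c(t)$ is the free orbit $S^{3}$ (Lemma \ref{orbit SU2}), so at a generic point the tangent space of ${\rm SU}(2) \cdot {\rm Image}(c)$ at $c(t)$ is spanned by $\tilde{E_{1}^{*}}, \tilde{E_{2}^{*}}, \tilde{E_{3}^{*}}$ and $\dot{c}$; as the vanishing of a form on a submanifold is a closed condition, it suffices to check it at such points. Hence $\varphi_{\lambda}|_{{\rm SU}(2) \cdot {\rm Image}(c)} = 0$ is equivalent to the four scalar equations $\varphi_{\lambda}(\tilde{E_{1}^{*}}, \tilde{E_{2}^{*}}, \tilde{E_{3}^{*}}) = 0$ together with $\varphi_{\lambda}(\tilde{E_{i}^{*}}, \tilde{E_{j}^{*}}, \dot{c}) = 0$ for $1 \leq i < j \leq 3$. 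The first equation holds automatically, as already observed, so the task reduces to the three equations involving $\dot{c}$.

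First I would compute the relevant components of $\dot{c}$. Using $x_{1} = \sqrt{1-x_{5}^{2}}$ and the frame of Section \ref{second local frame}, one finds that the horizontal part of $\dot{c}$ is $(\dot{x}_{5}/x_{1})\, e_{4}$, that $b_{i}(\dot{c}) = \dot{a}_{i}$ (the connection forms $\gamma_{ji}$ having no $e^{4}$-component), and, since $\tilde{E_{1}^{*}} = -x_{1} e_{2}$, $\tilde{E_{2}^{*}} = x_{1} e_{3}$, $\tilde{E_{3}^{*}} = x_{1} e_{1}$, that the only nonzero values of $\pi^{*}\omega_{k}(\tilde{E_{j}^{*}}, \dot{c})$ are governed by $\omega_{2}(e_{2}, e_{4}) = 1$, $\omega_{1}(e_{3}, e_{4}) = -1$, $\omega_{3}(e_{1}, e_{4}) = 1$. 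Substituting these together with the data of Lemma \ref{data U2} into (\ref{def of G2 str S4}), and observing that the vertical determinant term $b_{123}(\tilde{E_{i}^{*}}, \tilde{E_{j}^{*}}, \dot{c})$ collapses to $\pm (1+x_{5})^{2} a_{k}(a_{1}\dot{a}_{1} + a_{2}\dot{a}_{2} + a_{3}\dot{a}_{3}) = \pm \tfrac{1}{2}(1+x_{5})^{2} a_{k}\tfrac{d}{dt}(r^{2})$, I expect each equation $\varphi_{\lambda}(\tilde{E_{i}^{*}}, \tilde{E_{j}^{*}}, \dot{c}) = 0$ to take the cyclically symmetric form
\begin{align*}
2 s_{\lambda}\Bigl( x_{1}^{2}\dot{a}_{k} - 2(1+x_{5}) a_{k}\dot{x}_{5} \Bigr) - \frac{(1+x_{5})^{2} a_{k}}{2 s_{\lambda}^{3}}\frac{d}{dt}(r^{2}) = 0,
\end{align*}
for $k = 3, 1, 2$ (corresponding to the pairs $(i,j) = (1,2), (1,3), (2,3)$), where $r^{2} = \sum_{l=1}^{3} a_{l}^{2}$.

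Finally I would divide by $2 s_{\lambda}$, use $s_{\lambda}^{4} = \lambda + r^{2}$ so that $\tfrac{1}{\lambda + r^{2}}\tfrac{d}{dt}(r^{2}) = \tfrac{d}{dt}\log(\lambda + r^{2})$, factor $x_{1}^{2} = (1-x_{5})(1+x_{5})$ and cancel $1 + x_{5} > 0$, and recognize $8\dot{x}_{5}/(1+x_{5}) = 8\tfrac{d}{dt}\log(1+x_{5})$; these steps turn the three equations into (\ref{S4SU2_3}). The main obstacle is purely bookkeeping: getting all signs right and keeping careful track of which components of $\dot{c}$ and of the $\tilde{E_{i}^{*}}$ are horizontal versus vertical when evaluating the $3$-form $\varphi_{\lambda}$ on the mixed triples, together with spotting the algebraic collapse of the $b_{123}$-term into a multiple of $\tfrac{d}{dt}(r^{2})$; once these are in hand, the identification with (\ref{S4SU2_3}) is immediate.
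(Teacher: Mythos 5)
Your proposal is correct and follows essentially the same route as the paper: reduce to the three equations $\varphi_{\lambda}(\tilde{E_{i}^{*}},\tilde{E_{j}^{*}},\dot{c})=0$, compute $\dot{c}=(\dot{x}_{5}/x_{1})e_{4}+\sum_{j}\dot{a}_{j}\partial/\partial a_{j}$ and the pairings from Lemma \ref{data U2}, and observe that the $b_{123}$-term collapses to $\mp\tfrac{1}{2}(1+x_{5})^{2}a_{k}\,d(r^{2})/dt$; your intermediate expression $2s_{\lambda}(x_{1}^{2}\dot{a}_{k}-2(1+x_{5})a_{k}\dot{x}_{5})-\tfrac{(1+x_{5})^{2}a_{k}}{2s_{\lambda}^{3}}\tfrac{d(r^{2})}{dt}$ is exactly the paper's, and the final algebraic rewriting (using $s_{\lambda}^{4}=\lambda+r^{2}$ and $x_{1}^{2}=(1-x_{5})(1+x_{5})$) matches as well.
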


\begin{proof}
Since 
$
\dot{c}(t) =
\left( -\dot{x}_{1} x_{5} + x_{1} \dot{x}_{5} \right) e_{4} 
+ \sum_{j=1}^{3} \dot{a}_{j} \frac{\partial}{\partial a_{j}}, 
$
we have 
\begin{align*}
\left (
\pi^{*} \omega_{i} (\tilde{E_{j}^{*}}, \dot{c})
\right ) &= 
\left( 
\begin{array}{ccc}
 0             & -\dot{x}_{5} & 0 \\
-\dot{x}_{5} & 0              & 0 \\
             0 & 0              & \dot{x}_{5} \\
\end{array}
\right),  \\
b_{j} (\dot{c}) &= \dot{a_{j}} \qquad \mbox{ for } j=1,2,3. 
\end{align*}
Then we compute 
\begin{align*}
\varphi_{\lambda} (\tilde{E_{1}^{*}}, \tilde{E_{2}^{*}}, \dot{c}) |_{c}
=& \ 
2 s_{\lambda}
\sum_{i = 1}^{3} 
b_{i} \wedge \pi^{*} \omega_{i} (\tilde{E_{1}^{*}}, \tilde{E_{2}^{*}}, \dot{c})
+ \frac{1}{s_{\lambda}^{3}} b_{123}(\tilde{E_{1}^{*}}, \tilde{E_{2}^{*}}, \dot{c}) \\
=& 
2 s_{\lambda} \left( -2 (1+x_{5}) \dot{x}_{5} a_{3} +\dot{a}_{3} x_{1}^{2} \right) 
-\frac{(1+x_{5})^{2} a_{3} }{2 s_{\lambda}^{3}} \frac{d(r^{2})}{dt} \\
=&
\frac{s_{\lambda} (1+x_{5})^{2}}{2}
\left \{
4 \dot{a}_{3} \frac{1 - x_{5}}{1 + x_{5}}
-a_{3} 
\frac{d}{dt}
\left (
\log(\lambda + r^{2}) + 8 \log (1+x_{5})
\right )
\right \}. 
\end{align*}
We compute $\varphi_{\lambda} (E_{i}^{*}, E_{i+1}^{*}, \dot{c}) |_{c}$ 
in the same way and we see the lemma. 
\end{proof}

By (\ref{S4SU2_3}), 
we have 
$\frac{d}{dt} {}^t\!
\left (
a_{1}(t), a_{2}(t), a_{3}(t)
\right )
=
f(t)
{}^t\!
\left (
a_{1}(t), a_{2}(t), a_{3}(t)
\right )$
for some function $f(t)$. The solution is given by 
${}^t\! \left ( a_{1}(t), a_{2}(t), a_{3}(t) \right ) = 
\exp(\int^{t} f(s)ds) v$ for some $v \in \mathbb{R}^{3}$. 
Thus we may assume that 
$
{}^t\! \left (a_{1}(t), a_{2}(t), a_{3}(t) \right )
=
r(t) v
$
for a smooth function $r : I \rightarrow \mathbb{R}_{\geq 0}$ 
and $v \in S^{2} \subset \mathbb{R}^{3}$. 
Then (\ref{S4SU2_3}) is solved explicitly as 
\begin{align} \label{S4SU2_5}
F(r, x_{5}) = C
\end{align}
for $C \in \mathbb{R}$, 
where $F: [0, \infty) \times [-1, 1] \rightarrow \mathbb{R}$ is defined by 
\begin{align} \label{def of F}
F(r, x_{5}) = 
(1 - 3 x_{5}) (\lambda + r^{2})^{1/8} \sqrt{r}
+ \int^{\sqrt{r}}_{0} \frac{2 \lambda}{(\lambda + x^{4})^{7/8}} dx. 
\end{align}
This solution is obtained by Maple 16 \cite{Maple}. 
Though the definition of $c$ implies that 
the domain of $F$ is $[0, \infty) \times (-1, 1)$, 
$F$ extends to a map $[0, \infty) \times [-1, 1] \rightarrow \mathbb{R}$
as in Remark \ref{domain of G}. 
Thus we obtain the coassociative submanifold 
\begin{align*}
M_{C, v} := {\rm SU}(2) \cdot \left \{ 
\left({}^t\! (\sqrt{1-x_{5}^{2}}, 0, 0, 0, x_{5}), r v \right) ; 
                                 F(r, x_{5}) = C, r\geq 0, -1 \leq x_{5} \leq 1 \right \}, 
\end{align*}
where $C \in \mathbb{R}$ and $v \in S^{2} \subset \mathbb{R}^{3}$. 
We study the topology of $M_{C, v}$ now.

\begin{lem} \label{topology_S4SU2}
The coassociative submanifold $M_{C}$ is homeomorphic to 
\begin{align*}
\left \{
\begin{array}{ll}
\mathbb{R}^{4} & \qquad \mbox{for }\ C>0, \\
S^{4} \sqcup (S^{3} \times \mathbb{R}_{>0}) & \qquad \mbox{for }\ C=0, \\
\mathcal{O}_{\mathbb{C} P^{1}} (-1) & \qquad \mbox{for }\  C<0,
\end{array}
\right.
\end{align*}
where $S^{4}$ is the zero section of $\Lambda^{2}_{-} S^{4}$
and $\mathcal{O}_{\mathbb{C} P^{1}} (-1)$ is 
the tautological line bundle over $\mathbb{C} P^{1} \cong S^{2}$. 
\end{lem}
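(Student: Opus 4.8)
The plan is to reduce the statement to a picture of the level set $\Gamma_C=\{(r,x_5)\in[0,\infty)\times[-1,1]:F(r,x_5)=C\}$ of the explicit function $F$ of (\ref{def of F}), together with the orbit data of Lemma \ref{orbit SU2}; the homeomorphism type of $M_{C,v}$ is visibly independent of $v\in S^2$, so below I abbreviate it $M_C$. First I would record the monotonicity of $F$. One has $\partial F/\partial x_5=-3(\lambda+r^2)^{1/8}\sqrt r<0$ for $r>0$, so $F(r,\cdot)$ is strictly decreasing and for fixed $r>0$ the level set meets the fibre in at most one point $x_5=h_C(r)$, present iff $F(r,1)\le C\le F(r,-1)$. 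Along the boundary, $F(0,\cdot)\equiv 0$; $F(r,-1)=4(\lambda+r^2)^{1/8}\sqrt r+\int_0^{\sqrt r}2\lambda(\lambda+x^4)^{-7/8}dx$ is a strictly increasing bijection $[0,\infty)\to[0,\infty)$; and a short computation gives $\partial_r F(r,1)=-\tfrac32 r^{3/2}(\lambda+r^2)^{-7/8}<0$, so $F(\cdot,1)$ is a strictly decreasing bijection $[0,\infty)\to(-\infty,0]$. Since $\int_0^\infty 2\lambda(\lambda+x^4)^{-7/8}dx<\infty$, rewriting $F(r,x_5)=C$ as $(1-3x_5)(\lambda+r^2)^{1/8}\sqrt r=C-\int_0^{\sqrt r}2\lambda(\lambda+x^4)^{-7/8}dx$ shows $h_C(r)\to 1/3$ as $r\to\infty$. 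Hence $\{r>0:F(r,1)\le C\le F(r,-1)\}$ is a half-line and $\Gamma_C\cap\{r>0\}$ is the graph of $h_C$ over it: for $C>0$ it is $[r_-(C),\infty)$ with $h_C(r_-(C))=-1$, $r_-(C):=F(\cdot,-1)^{-1}(C)>0$; for $C<0$ it is $[r_+(C),\infty)$ with $h_C(r_+(C))=1$, $r_+(C):=F(\cdot,1)^{-1}(C)>0$; for $C=0$ it is $(0,\infty)$ with $h_0(r)\to 1$ as $r\to 0^+$ and $h_0(r)\in(-1,1)$ throughout, and moreover $\Gamma_0$ contains the entire segment $\{r=0\}$.

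Next I would apply ${\rm SU}(2)\cdot(-)$ to $\{(\underline p(x_5),rv):(r,x_5)\in\Gamma_C\}$, $\underline p(x_5):={}^t(\sqrt{1-x_5^2},0,0,0,x_5)$, and read off orbit types from Lemma \ref{orbit SU2}: over an interior point of $\Gamma_C$ (where $x_5\in(-1,1)$) the orbit is a free $S^3$; over an endpoint with $x_5=-1$ it is a single point, since by the proof of Lemma \ref{orbit U2} the ${\rm SU}(2)$-action on the whole fibre over the south pole is trivial; over an endpoint with $x_5=1$ and $r>0$ it is $S^2={\rm SU}(2)/{\rm U}(1)$, since there the action on the fibre $\mathbb{R}^3$ is the double cover $\varpi$ of (\ref{covering_SU2_SO3}) and the ${\rm SO}(3)$-orbit of $rv\ne 0$ is a round $2$-sphere; and the segment $\{r=0\}$ is carried onto the full zero section $S^4$. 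Therefore: for $C>0$, $M_C$ is $S^3\times(r_-(C),\infty)$ with the end $r\to r_-(C)$ collapsed to a point, i.e. a cone on $S^3$, which is $\mathbb{R}^4$; for $C<0$, $M_C$ is $S^3\times(r_+(C),\infty)$ with the slice $S^3\times\{r_+(C)\}$ collapsed along the Hopf fibration $S^3={\rm SU}(2)\to{\rm SU}(2)/{\rm U}(1)=\mathbb{C}P^1$, i.e. the open disc bundle of $\mathcal{O}_{\mathbb{C}P^1}(-1)$, which is homeomorphic to its total space; for $C=0$ the parameter sets $\{r=0\}$ and $\{r>0,\,x_5=h_0(r)\}$ are disjoint and give respectively the zero section $S^4$ and, since $h_0$ avoids $\pm1$, $S^3\times\mathbb{R}_{>0}$, so $M_0\cong S^4\sqcup(S^3\times\mathbb{R}_{>0})$.

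To make the collapsing arguments rigorous I would, near each relevant endpoint, invoke the implicit function theorem: $\partial_r F>0$ along $\{x_5=-1\}$ and $\partial_r F\ne 0$ along $\{x_5=1\}$ by the formulas above, so $r$ is a $C^1$ function of $x_5$ near the endpoint with a nonzero limit; as $g$ ranges over ${\rm SU}(2)$ and $x_5$ over a small one-sided neighbourhood, the base points ${}^t(g\,{}^t(\sqrt{1-x_5^2},0),x_5)$ sweep a punctured neighbourhood of the pole, which together with the pole is an open $4$-ball when $x_5\to-1$ and a cone on $S^3$ (shrinking round $S^3$'s) when $x_5\to 1$; expressing the fibre value $r(x_5)v$ in the frame at the pole of Section \ref{frame at pole} then identifies $M_C$ near the endpoint with a graph over this neighbourhood, yielding the local models ${\rm cone}(S^3)=\mathbb{R}^4$ and ${\rm cyl}(S^3\to S^2)=$ disc bundle of $\mathcal{O}_{\mathbb{C}P^1}(-1)$. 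I expect the endpoint analysis to be the main obstacle: verifying that $\Gamma_C$ meets the boundary lines $\{x_5=\pm1\}$ exactly as claimed (which rests on the boundary monotonicity and asymptotics of $F$) and that the orbit type degenerates there precisely by the trivial quotient, resp. the Hopf quotient, so that the total space is genuinely $\mathbb{R}^4$, resp. $\mathcal{O}_{\mathbb{C}P^1}(-1)$, and not a singular quotient; the ends $r\to\infty$ and the piece over $r=0$ are routine.
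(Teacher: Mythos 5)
Your proposal is correct and follows essentially the same route as the paper: both reduce the problem to the monotonicity/asymptotics of $F$ (so that $\Gamma_C$ is a half-line hitting $\{x_{5}=-1\}$ for $C>0$, $\{x_{5}=1\}$ for $C<0$, and neither for $C=0$ away from $r=0$) and then read off $M_{C,v}$ as ${\rm SU}(2)$-orbits degenerating to a point, resp.\ to $S^{2}$ via the Hopf map, at the endpoint. The only presentational difference is that the paper nails down the endpoint identifications by writing explicit parametrizations in the two stereographic charts (where the fibre action is trivial, resp.\ factors through $\varpi$), whereas you package the same information as the local models ${\rm cone}(S^{3})\cong\mathbb{R}^{4}$ and the Hopf collapse giving $\mathcal{O}_{\mathbb{C}P^{1}}(-1)$.
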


\begin{proof}
Since we have 
\begin{align*}
\frac{\partial F}{\partial x_{5}} &= -3 (\lambda + r^{2})^{1/8} \sqrt{r}, \\
\frac{\partial F}{\partial r} &= 
4^{-1} r^{-1/2} (\lambda + r^{2})^{-7/8} 
\left \{ (1 - 3 x_{5})(3 r^{2} + 2\lambda)   +  4 \lambda \right \}, 
\end{align*}
$F(r, \cdot)$ is monotonically decreasing 
on $[ -1, 1]$ for a fixed $r > 0$
and 
$F(\cdot, -1)$ (resp. $F(\cdot, 1)$) is monotonically increasing 
(resp. decreasing) on $\mathbb{R}_{\geq 0}$. 
We compute 
\begin{align*}
F (0, \cdot) = 0, \qquad
\lim_{r \to \infty} F (r, \mp 1) = \pm \infty. 
\end{align*}
Thus for any $C > 0$ (resp. $C<0$), 
there exists a unique $\alpha_{C} >0$ (resp.  $\beta_{C} >0$)
such that $C = F(\alpha_{C}, -1)$ (resp.  $C=F(\beta_{C}, 1)$). 

Now, define a function $f_{C}: \mathbb{R}_{>0} \rightarrow \mathbb{R}$ by 
\begin{align*}
f_{C} (r) = 
r^{-1/2} (\lambda + r^{2})^{-1/8}
\left ( C- \int^{\sqrt{r}}_{0} \frac{2 \lambda}{(\lambda + x^{4})^{7/8}} dx \right).
\end{align*}
Note that $F(x_{5}, t) = C$ is equivalent to $1-3 x_{5} = f_{C} (r)$. 
Since $-1 \leq x_{5} \leq 1$, 
we may find the condition on $a_{1}$ so that $-2 \leq f_{C} (r) \leq 4$.

\begin{lem} \label{SU2 condition on fc}
When $C>0$, $f_{C} (r) > -2$ holds for any $r>0$ 
and $f_{C} (r) \leq 4$ is equivalent to $r \geq \alpha_{C}$. 
When $C<0$, $f_{C} (r) < 4$ holds for any $r>0$ 
and $f_{C} (r) \geq -2$ is equivalent to $r \geq \beta_{C}$. 
When $C=0$, $-2 < f_{C} (r) < 4$ holds for any $r>0$. 
\end{lem}

\begin{proof}
Suppose that $C > 0$. 
Then $f_{C}(r) > -2$ 
is equivalent to $C> F(r, 1)$, which holds for any $r>0$
since $ F(r, 1)<0$. 
The condition that $f_{C}(r) \leq 4$
is equivalent to $C = F(\alpha_{C}, -1) \leq F(r, -1)$. 
Since $F(\cdot, -1)$ is monotonically increasing, this is equivalent to $r \geq \alpha_{C}$. 
We can prove similarly when $C \leq 0$.
\end{proof}

\begin{rem} \label{graph cha SU2}
Set 
$
\Gamma(C) = \{ (x_{5},r) \in [-1, 1] \times [0, \infty) ; F(x_{5},r)=C \}.
$
By Lemma \ref{SU2 condition on fc}, 
we have homeomorphisms
$[\alpha_{C}, \infty) \cong \Gamma(C)$ when $C>0$, 
$[\beta_{C}, \infty) \cong \Gamma(C)$ when $C<0$, 
and 
$(0, \infty) \cong \Gamma(0) \cap \{ r \neq 0 \}$ 
via $r \mapsto ((1- f_{C}(r))/3, r)$.  
Note that 
$\Gamma(C) \cap \{ x_{5} = 1 \} = \emptyset$
when $C>0$, 
$\Gamma(C) \cap \{ x_{5} = -1 \} = \emptyset$ 
when $C<0$, 
and
$\Gamma(0)  \cap \{ r \neq 0 \} \cap \{ x_{5} = \pm 1 \} = \emptyset$. 
\end{rem}

Hence we see that 
\begin{align*}
M_{0, v} \cap \{ r > 0 \}
=
\left \{
\left(
\left(
\begin{array}{c}
\sqrt{1-x_{5}^{2}} a\\
\sqrt{1-x_{5}^{2}} b\\
x_{5}
\end{array}
\right),
r v
\right)
\in \mathbb{C}^{2} \oplus \mathbb{R} \oplus \mathbb{R}^{3}; 
\begin{array}{c}
-1 < x_{5} = \frac{1-f_{C}(r)}{3}<1, \\
r>0,\\
a,b \in \mathbb{C}, \\
|a|^{2} + |b|^{2} =1
\end{array}
\right \},
\end{align*}
which is homeomorphic to $S^{3} \times \mathbb{R}_{>0}$.

When $C \neq 0$, $M_{C, v}$ intersects with $\Lambda^{2}_{-} S^{4} |_{{}^t\! (0,0,0,0, \pm 1)}$. 
To study the topology of $M_{C, v}$, 
we use the stereographic local coordinates.

First, suppose that $\underline{C > 0}$. 
By Remark \ref{graph cha SU2}, 
$M_{C, v}$ does not intersect with $\Lambda^{2}_{-} S^{4} |_{{}^t\! (0,0,0,0,1)}$.
Take the stereographic local coordinates of $\Phi : S^{4} - \{ x_{5} = 1\} \rightarrow \mathbb{R}^{4}$ 
given by 
\begin{align*} 
\Phi (x_{1}, \cdots, x_{5}) &= 
\frac{\left( x_{1}, x_{2}, x_{3}, -x_{4} \right )}{1 - x_{5}}, \\
\Phi^{-1} (y_{1}, \cdots, y_{4}) &= \frac{\left( 2 y_{1}, 2 y_{2}, 2 y_{3}, -2 y_{4}, -1+|y|^{2} \right)}{1+|y|^{2}}, 
\end{align*}
where 
$|y|^{2} = \Sigma_{i = 1}^{4} y_{i}^{2}$. 
The standard metric on $S^{4}$ is given by  
$
4 \sum_{j=1}^{4} dy_{j}^{2}/(1 + |y|^{2})^{2}, 
$ and hence we see that 
$
\left \{ 2 dy_{i} /(1 + |y|^{2}) \right \}_{i = 1, \cdots ,4} 
$
is a local oriented orthonormal coframe. 
The trivialization 
$4 (1 + |y|^{2})^{-2} \{ dy_{12}-dy_{34}, dy_{13}-dy_{42}, dy_{14} - dy_{23} \}$ 
of $\Lambda^{2}_{-}S^{4}$
induces the local fiber coordinates $(\alpha_{1}, \alpha_{2}, \alpha_{3})$. 
Setting $\zeta_{1} = y_{1} + i y_{2}, \zeta_{2} = y_{3} + i y_{4}$, 
the action of ${\rm SU}(2)$ is described as 
\begin{align*}
\left( 
\begin{array}{cc}
a & - \overline{b} \\
b & \overline{a}
\end{array}
\right) \cdot 
( {}^t\! (\zeta_{1}, \zeta_{2}), {}^t\! (\alpha_{1}, \alpha_{2}, \alpha_{3}))
=
( {}^t\! (a \zeta_{1} - \overline{b} \overline{\zeta_{2}}, 
\overline{\beta} \overline{\zeta_{1}} + \alpha \zeta_{2}), 
{}^t\! (\alpha_{1}, \alpha_{2}, \alpha_{3})), 
\end{align*}
where $a, b \in \mathbb{C}$ such that $|a|^{2} + |b|^{2} = 1$. 
Then we obtain 
\begin{align*}
M_{C, v} 
=
\left \{
\left(
{}^t\! \left(
y_{1} a, y_{1} b \right),
r v'
\right)
\in \mathbb{C}^{2} \oplus \mathbb{R}^{3}; 
\begin{array}{c}
r \in [\alpha_{C}, \infty), 
y_{1} = \sqrt{\frac{6}{f_{C}(r)+2}-1}, \\
a,b \in \mathbb{C}, 
|a|^{2} + |b|^{2} =1
\end{array}
\right \},
\end{align*}
where 
$v' \in S^{2}$ is a corresponding element to $v$ under the change of local coordinates. 
Then it follows that 
$M_{C, v}$ is homeomorphic to 
$(S^{3} \times [\alpha_{C}, \infty))/(S^{3} \times \{ \alpha_{C} \}) \cong \mathbb{R}^{4}$.

Next, suppose that $\underline{C < 0}$. 
By Remark \ref{graph cha SU2}, 
$M_{C, v}$ does not intersect with $\Lambda^{2}_{-} S^{4} |_{{}^t\! (0,0,0,0,-1)}$.
Take the stereographic local coordinates of $\Psi : S^{4} - \{ x_{5} = -1 \} \rightarrow \mathbb{R}^{4}$ 
given by 
\begin{align*} 
\Psi (x_{1}, \cdots, x_{5}) &= 
\frac{\left( x_{1}, x_{2}, x_{3}, x_{4} \right )}{1 + x_{5}}, \\
\Psi^{-1} (u_{1}, \cdots, u_{4}) &= \frac{\left( 2 u_{1}, 2 u_{2}, 2 u_{3}, 2 u_{4}, 1-|u|^{2} \right)}{1+|u|^{2}}, 
\end{align*}
where 
$|u|^{2} = \Sigma_{i = 1}^{4} u_{i}^{2}$. 
The standard metric on $S^{4}$ is given by  
$
4 \sum_{j=1}^{4} du_{j}^{2}/(1 + |u|^{2})^{2}, 
$ and hence we see that 
$
\left \{ 2 du_{i} /(1 + |u|^{2}) \right \}_{i = 1, \cdots ,4} 
$
is a local oriented orthonormal coframe. 
The trivialization 
$4 (1 + |u|^{2})^{-2} \{ du_{12}-du_{34}, du_{13}-du_{42}, du_{14} - du_{23} \}$ 
of $\Lambda^{2}_{-}S^{4}$
induces the local fiber coordinates 
$(\underline{\alpha_{1}}, \underline{\alpha_{2}}, \underline{\alpha_{3}})$. 
Setting $\underline{\zeta_{1}} = u_{1} + i u_{2}, \underline{\zeta_{2}} = u_{3} + i u_{4}$, 
the action of ${\rm SU}(2)$ is described as  
\begin{align} \label{SU2 action stereo graphic coor}
g \cdot 
( {}^t\! (\underline{\zeta_{1}}, \underline{\zeta_{2}}), 
{}^t\! (\underline{\alpha_{1}}, \underline{\alpha_{2}}, \underline{\alpha_{3}}))
=
( g {}^t\! (\underline{\zeta_{1}}, \underline{\zeta_{2}}), 
\varpi (g) {}^t\! (\underline{\alpha_{1}}, \underline{\alpha_{2}}, \underline{\alpha_{3}})), 
\end{align}
where $g \in {\rm SU}(2)$ and $\varpi : {\rm SU}(2) \rightarrow {\rm SO}(3)$ is a double covering 
given by (\ref{covering_SU2_SO3}). 
Then we obtain 
\begin{align} \label{MC C<0 SU2}
M_{C, v} 
=
\left \{
\left(
g  {}^t\! (u_{1}, 0 \right),
r \varpi (g) v')
\in \mathbb{C}^{2} \oplus \mathbb{R}^{3}; 
\begin{array}{c}
r \in [\beta_{C}, \infty), 
u_{1} = \sqrt{\frac{6}{4- f_{C}(r)}-1}, \\
g \in {\rm SU}(2)
\end{array}
\right \},
\end{align}
where 
$v' \in S^{2}$ is a corresponding element to $v$ under the change of local coordinates. 

Note that the topology of $M_{C, v}$ is independent of $v$. 
In fact, fix $v_{0} \in S^{2}$ 
and let $v_{0}'$ be a corresponding element to $v_{0}$ under the change of local coordinates. 

For any $v \in S^{2}$, there exists $g_{0} \in {\rm SU}(2)$ such that $v' = \varpi (g_{0}) v_{0}'$. 
Then $M_{C, v} \cong M_{C, v_{0}}$ via 
$
(g  {}^t\! (u_{1}, 0),
r \varpi (g) v') 
\mapsto
(g g_{0}  {}^t\! (u_{1}, 0),
r \varpi (g) v'). 
$
Thus we only have to consider the case $v_{0}'= {}^t\! (1, 0, 0)$. 
Setting $v_{0}'= {}^t\! (1, 0, 0)$ in (\ref{MC C<0 SU2}), we obtain 
\begin{align*}
\left \{
\left(
{}^t\! (u_{1} a, u_{1} b \right),
r {}^t\! (|a|^{2}-|b|^{2}, 2 {\rm Im} (a \overline{b}), 2 {\rm Re} (a \overline{b})); 
\begin{array}{c}
r \in [\beta_{C}, \infty), 
u_{1} = \sqrt{\frac{6}{4- f_{C}(r)}-1}, \\
a,b \in \mathbb{C}, |a|^{2} + |b|^{2} = 1
\end{array}
\right \},
\end{align*}
which is homeomorphic to 
\begin{align*}
\left\{ (v, [w,r]) \in S^{2} \times (S^{3} \times [\beta_{C}, \infty))/(S^{3} \times \{ \beta_{C} \}); 
w \in p^{-1}(v) \right \},
\end{align*}
where $p: S^{3} \rightarrow \mathbb{C}P^{1} = S^{2}$ is the Hopf fibration. 
This is the tautological line bundle $\mathcal{O}_{\mathbb{C}P^{1}}(-1)$ over $\mathbb{C}P^{1}$. 
\end{proof}

\begin{rem}
When $\lambda = 0$, (\ref{S4SU2_5}) is given by  
\begin{eqnarray} \label{Asymp_SU(2)_S4}
(1- 3 x_{5}) r^{3/4}  = C. 
\end{eqnarray}
We exhibit the graph of (\ref{Asymp_SU(2)_S4}). 
The solid curve indicates the case $C>0$,  
the dashed curve indicates the case $C = 0$ 
and 
the dotted curve indicates the case $C<0$.   
We see that the solution (\ref{S4SU2_5}) is  
asymptotic to this graph 
as $\lambda \rightarrow 0$.
The vertical line gives a coassociative cone in 
$\Lambda^{2}_{-} S^{4} - \{\mbox{zero section} \} \cong 
\mathbb{C} P^{3} \times \mathbb{R}_{>0}$, 
which corresponds to a Lagrangian submanifold 
in the nearly K\"{a}hler $\mathbb{C}P^{3}$. 
\end{rem}

\begin{figure}
 \begin{center}
  \includegraphics[width=5cm]{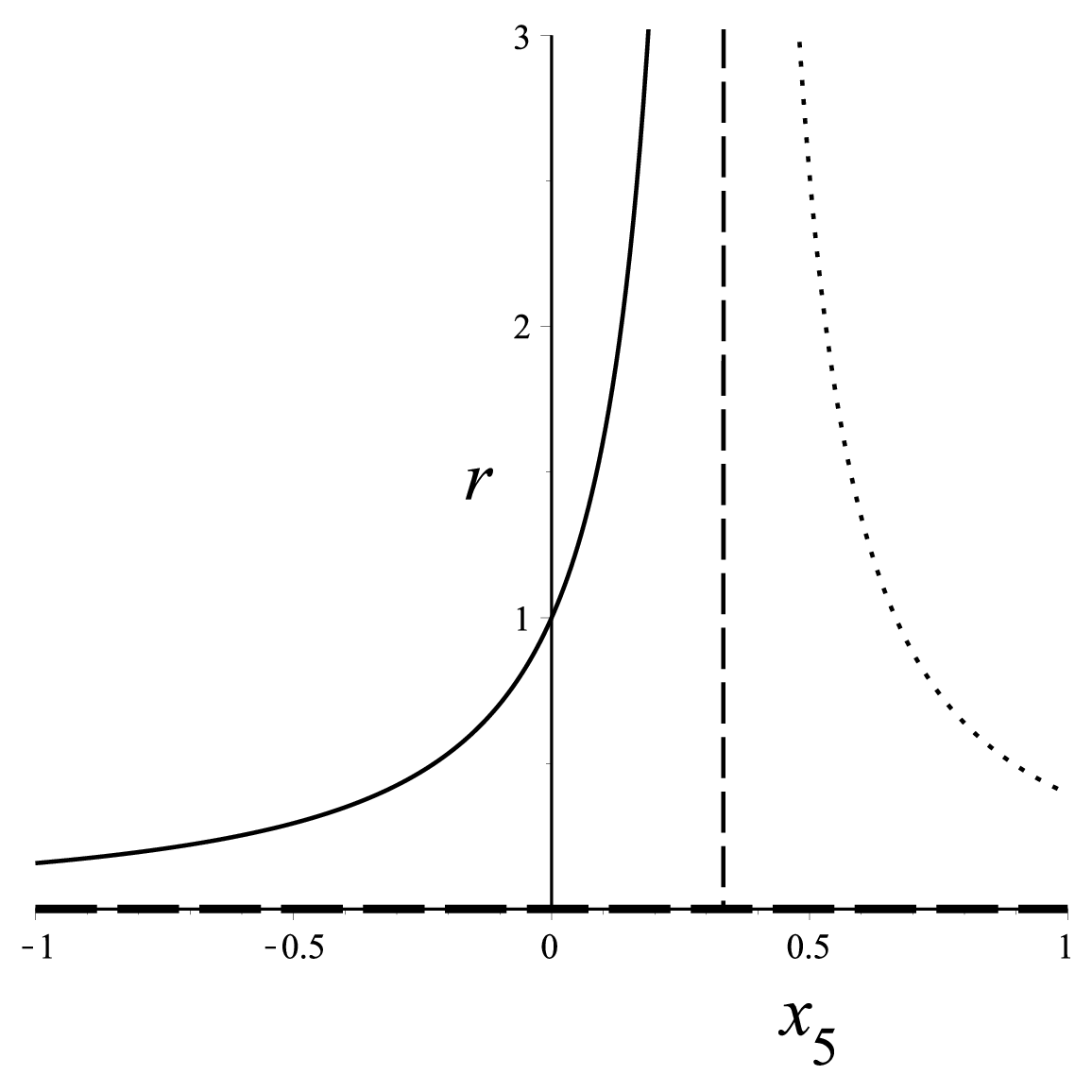}
  \caption{\hspace{-0.2cm} the graph of (\ref{Asymp_SU(2)_S4})}
 \end{center}
\end{figure}
%


\subsection{${\rm SO}(3) = {\rm SO}(3) \times \{ I_{2} \}$-action} \label{cohomo1 section SO3}

We give a proof of Theorem \ref{SO3 thm}.
Recall the notation in Section \ref{section SO3}. 
By Lemma \ref{orbit SO3}, 
an ${\rm SO}(3)$-orbit 
through $({}^t\! (x_{1}, 0, 0, x_{4}, x_{5}), {}^t\! (a_{1}, a_{2}, a_{3}))$ 
is 3-dimensional when 
$x_{1} > 0, (a_{2}, a_{3}) \neq 0$. 
Take a path $c: I \rightarrow \Lambda^{2}_{-} S^{4}$ given by 
\begin{align*}
c(t) = \left( {}^t\! (x_{1}(t), 0, 0, x_{4}(t), x_{5}(t)), {}^t\! (a_{1}(t), a_{2}(t), 0) \right), 
\end{align*}
where $x_{1}(t) > 0, a_{2}(t) > 0$. 
We assume that $a_{3} = 0$ so that $c(t)$ is transverse to the ${\rm SO}(3)$-orbits. 
We find a path $c$ satisfying $\varphi_{\lambda} |_{{\rm SO}(3) \cdot {\rm Image}(c)} = 0$, 
where $\varphi_{\lambda}$ is given by (\ref{def of G2 str S4}).  
We see that 
$\varphi_{\lambda} (\tilde{E_{1}^{*}}, \tilde{E_{2}^{*}}, \tilde{E_{3}^{*}})=0$ 
as in Section \ref{cohomo1 section SU2}.

\begin{lem}
The condition $\varphi_{\lambda} |_{{\rm SO}(3) \cdot {\rm Image}(c)} = 0$ 
is equivalent to
\begin{align}
4(2x_{1} \dot{x}_{1} a_{1} + \dot{a}_{1} x_{1}^{2}) - (1-x_{1}^{2}) a_{1} \frac{d}{dt} \log (\lambda + r^{2}) &= 0, 
\label{SO3 eq1}\\
4x_{1} \dot{x}_{1} 
- \frac{d}{dt} \log (\lambda + r^{2})
+
\frac{x_{1}^{2}}{1-x_{5}}
\left(
4 \dot{x}_{5}
+ 
\frac{d}{dt} \log (\lambda + r^{2})
\right)
&=0, \label{SO3 eq2}\\
4 \dot{x}_{4} 
+ \frac{x_{4}}{1-x_{5}}
\left(
4 \dot{x}_{5} 
+
\frac{d}{dt} \log (\lambda + r^{2})
\right)
&=0, \label{SO3 eq3}
\end{align}
where $r^{2} = a_{1}^{2} + a_{2}^{2}$.
\end{lem}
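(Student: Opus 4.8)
The plan is to realize $L := {\rm SO}(3) \cdot {\rm Image}(c)$ as a $4$-manifold whose tangent space at $c(t)$ is spanned by the orbit directions and $\dot c$, and to impose $\varphi_\lambda|_{TL} = 0$ pointwise. By Lemma \ref{orbit SO3}, under the standing assumptions $x_1(t) > 0$, $a_2(t) > 0$ the ${\rm SO}(3)$-orbit through $c(t)$ is $3$-dimensional, and the choice $a_3 = 0$ makes $c$ transverse to the orbits; hence $T_{c(t)} L = {\rm span}\{\tilde E_1^*, \tilde E_2^*, \tilde E_3^*, \dot c\}$. Since $\varphi_\lambda$ is a $3$-form, $\varphi_\lambda|_{TL} = 0$ is equivalent to the four scalar conditions $\varphi_\lambda(\tilde E_1^*, \tilde E_2^*, \tilde E_3^*) = 0$ and $\varphi_\lambda(\tilde E_i^*, \tilde E_j^*, \dot c) = 0$ for $(i,j) \in \{(1,2),(1,3),(2,3)\}$. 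The first holds identically, by the argument already used in Section \ref{cohomo1 section SU2}: ${\rm SO}(3)$-invariance of $\varphi_\lambda$ together with $d\varphi_\lambda = 0$, Cartan's formula and the bracket relations of $\mathfrak{so}(3)$ force $\varphi_\lambda(\tilde E_1^*, \tilde E_2^*, \tilde E_3^*)$ to be locally constant, and it vanishes wherever the orbit degenerates. So the content of the lemma is the translation of the remaining three conditions into (\ref{SO3 eq1}), (\ref{SO3 eq2}) and (\ref{SO3 eq3}).

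First I would decompose $\dot c$. The base curve $t \mapsto {}^t\! (x_1(t), 0, 0, x_4(t), x_5(t))$ stays in the slice $\{x_2 = x_3 = 0\}$ of $S^4$, so by the explicit frame of Section \ref{second local frame} its velocity at $c(t)$ lies in ${\rm span}\{e_3, e_4\}$; writing it as $\alpha e_3 + \beta e_4$ and matching the nonzero components (equivalently using $x_1 \dot x_1 + x_4 \dot x_4 + x_5 \dot x_5 = 0$) determines $\alpha, \beta$ explicitly in terms of $\dot x_1, \dot x_4, \dot x_5$. Thus $\dot c = \alpha e_3 + \beta e_4 + \dot a_1 \frac{\partial}{\partial a_1} + \dot a_2 \frac{\partial}{\partial a_2}$. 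Using the connection $1$-forms $\gamma_{ij}$ of Section \ref{second local frame} (which involve only $e^1, e^2, e^3$) and $a_3 = 0$, one computes $b_1(\dot c) = \dot a_1$, $b_2(\dot c) = \dot a_2$, and $b_3(\dot c)$ equal to a multiple of $\alpha\, a_2$; the $\pi^*\omega_i(\tilde E_j^*, \dot c)$ are obtained from the frame and the horizontal parts of $\tilde E_j^*$ recorded before Lemma \ref{data SO3}. Note that $\tilde E_3^*$ is purely vertical at $c(t)$, so $\pi^*\omega_i(\tilde E_3^*, \cdot) = 0$, which streamlines the pairings involving it.

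The final step is to substitute these, together with the values of $\pi^*\omega_i(\tilde E_j^*, \tilde E_k^*)$ and $b_i(\tilde E_j^*)$ from Lemma \ref{data SO3}, into $\varphi_\lambda = 2 s_\lambda \sum_{i=1}^3 b_i \wedge \pi^*\omega_i + s_\lambda^{-3} b_{123}$ and to expand $\varphi_\lambda(\tilde E_i^*, \tilde E_j^*, \dot c)$ as the alternating sum over its three arguments for each of the pairs $(1,2), (1,3), (2,3)$. After factoring out the positive quantities (powers of $s_\lambda = (\lambda + r^2)^{1/4}$, $x_1$, $a_2$) and using $2(a_1 \dot a_1 + a_2 \dot a_2) = (\lambda + r^2)\,\tfrac{d}{dt}\log(\lambda + r^2)$ with $r^2 = a_1^2 + a_2^2$, the three equations reduce precisely to (\ref{SO3 eq1}), (\ref{SO3 eq2}) and (\ref{SO3 eq3}); conversely a path $c$ satisfying these ODEs gives $\varphi_\lambda|_{TL} = 0$, which proves the equivalence. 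The only real difficulty is the bookkeeping in this last step — tracking which of the seven monomials of $\varphi_\lambda$ feeds into each pairing and arranging the cancellations so the result appears in the stated normalized form — but no conceptual obstacle arises once the frame data of Lemma \ref{data SO3} are in hand.
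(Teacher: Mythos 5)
Your proposal is correct and follows essentially the same route as the paper: decompose $\dot c$ into its $e_{3},e_{4}$ and vertical components, dispose of $\varphi_{\lambda}(\tilde E_{1}^{*},\tilde E_{2}^{*},\tilde E_{3}^{*})=0$ by the invariance/Cartan argument from the ${\rm SU}(2)$ case, and expand the three pairings $\varphi_{\lambda}(\tilde E_{i}^{*},\tilde E_{j}^{*},\dot c)$ using the data of Lemma \ref{data SO3} together with the constraint $x_{1}\dot x_{1}+x_{4}\dot x_{4}+x_{5}\dot x_{5}=0$ to reach the stated normalized form. The details you leave to bookkeeping (e.g.\ $b_{3}(\dot c)$ proportional to $\alpha a_{2}$, dividing by the positive factors $s_{\lambda}$ and $a_{2}$) are exactly what the paper carries out.
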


\begin{proof}

Since 
$
\dot{c} = 
\frac{-\dot{x}_{1} x_{4} + x_{1} \dot{x}_{4}}{\sqrt{1-x_{5}^{2}}} e_{3} 
+ \frac{\dot{x}_{5}}{\sqrt{1-x_{5}^{2}}} e_{4}
+ \dot{a}_{1} \frac{\partial}{\partial a_{1}} + \dot{a}_{2} \frac{\partial}{\partial a_{2}}, 
$
we have 
\begin{align*}
(\pi^{*} \omega_{i} (\tilde{E_{j}^{*}}, \dot{c})) 
= 
\left( 
 \begin{array}{ccc}
0                  & 0                     & 0  \\
x_{1} \dot{x}_{4} + \frac{x_{1} x_{4} \dot{x}_{5}}{1 - x_{5}} & x_{1} \dot{x}_{1} + \frac{x_{1}^{2} \dot{x}_{5}}{1 - x_{5}}   & 0  \\
x_{1} \dot{x}_{1} + \frac{x_{1}^{2} \dot{x}_{5}}{1 - x_{5}} & -x_{1} \dot{x}_{4} - \frac{x_{1} x_{4} \dot{x}_{5}}{1 - x_{5}}   & 0 
\end{array}
\right), 
(b_{i} (\dot{c})) 
= 
\left( 
 \begin{array}{c}
\dot{a}_{1}\\
\dot{a}_{2}\\
\frac{-\dot{x}_{1} x_{4} + x_{1} \dot{x}_{4}}{1-x_{5}} a_{2} 
\end{array}
\right).
\end{align*}
Then we compute 
\begin{align*}
\sum_{i=1}^{3} b_{i} \wedge \pi^{*} \omega_{i} (\tilde{E_{1}^{*}}, \tilde{E_{2}^{*}}, \dot{c})
=&
2 x_{1} a_{1} 
\left \{
- 
\frac{x_{1} x_{4} \dot{x}_{4}}{1-x_{5}}
+
\left(
1- \frac{x_{1}^{2}+x_{4}^{2}}{1-x_{5}}
\right)
\frac{x_{1} \dot{x}_{5}}{1-x_{5}} 
\right.
\\
&\left.
+\left( 
1- \frac{x_{1}^{2}}{1-x_{5}}
\right) \dot{x}_{1}
\right \}
+ \dot{a}_{1} x_{1}^{2}, \\
b_{123} (\tilde{E_{1}^{*}}, \tilde{E_{2}^{*}}, \dot{c}) 
=& - 
\left \{
\left( 
1 - \frac{x_{1}^{2}}{1-x_{5}}
\right)^{2}
+\frac{x_{1}^{2} x_{4}^{2}}{(1-x_{5})^{2}}
\right \} 
a_{1} (a_{1} \dot{a}_{1} + a_{2} \dot{a}_{2})
\end{align*}
Since 
$x_{1}^{2} + x_{4}^{2} + x_{5}^{2} =1, x_{1} \dot{x}_{1} + x_{4} \dot{x}_{4} + x_{5} \dot{x}_{5} = 0$, 
it follows that 
\begin{align*}
\sum_{i=1}^{3} b_{i} \wedge \pi^{*} \omega_{i} (\tilde{E_{1}^{*}}, \tilde{E_{2}^{*}}, \dot{c})
=
2 x_{1} \dot{x}_{1} a_{1} + \dot{a}_{1} x_{1}^{2}, \qquad
b_{123} (\tilde{E_{1}^{*}}, \tilde{E_{2}^{*}}, \dot{c}) 
= - \frac{(1-x_{1}^{2}) a_{1}}{2} \frac{d (r^{2})}{dt},
\end{align*}
which implies (\ref{SO3 eq1}).
In the same way, we compute
\begin{align*}
\sum_{i=1}^{3} b_{i} \wedge \pi^{*} \omega_{i} (\tilde{E_{1}^{*}}, \tilde{E_{3}^{*}}, \dot{c})
&=
a_{2} \left( x_{1} \dot{x}_{1} + \frac{x_{1}^{2} \dot{x}_{5}}{1 - x_{5}} \right), \\
b_{123} (\tilde{E_{1}^{*}}, \tilde{E_{3}^{*}}, \dot{c}) 
&= \left(-1 + \frac{x_{1}^{2}}{1-x_{5}} \right) \frac{a_{2}}{2} \frac{d (r^{2})}{dt},\\
\sum_{i=1}^{3} b_{i} \wedge \pi^{*} \omega_{i} (\tilde{E_{2}^{*}}, \tilde{E_{3}^{*}}, \dot{c})
&=
-a_{2} \left( x_{1} \dot{x}_{4} + \frac{x_{1} x_{4} \dot{x}_{5}}{1 - x_{5}} \right), \\
b_{123} (\tilde{E_{2}^{*}}, \tilde{E_{3}^{*}}, \dot{c}) 
&= - \frac{x_{1} x_{4} a_{2}}{2(1-x_{5})} \frac{d (r^{2})}{dt}, 
\end{align*}
and obtain (\ref{SO3 eq2}) and (\ref{SO3 eq3}).
\end{proof}

Next, we solve (\ref{SO3 eq1}), (\ref{SO3 eq2}), (\ref{SO3 eq3}). 
Calculating 
$(\ref{SO3 eq2}) + x_{4} \cdot (\ref{SO3 eq3})$, we have 
\begin{align} \label{SO3 eq4}
4 \dot{x}_{5} + x_{5} \frac{d}{dt} \log (\lambda + r^{2}) = 0.
\end{align}
Substitution of (\ref{SO3 eq4}) into (\ref{SO3 eq3}) gives 
\begin{align} \label{SO3 eq5}
4 \dot{x}_{4} + x_{4} \frac{d}{dt} \log (\lambda + r^{2}) = 0.
\end{align}
From (\ref{SO3 eq4}) and (\ref{SO3 eq5}), we have 
\begin{align*}
(1-x_{1}^{2}) \frac{d}{dt} \log (\lambda + r^{2}) 
&=
(x_{4}^{2} + x_{5}^{2}) \frac{d}{dt} \log (\lambda + r^{2}) \\
&=
-4 (x_{4} \dot{x}_{4} + x_{5} \dot{x}_{5}) \\
&= 4 x_{1} \dot{x}_{1}.
\end{align*}
which implies that (\ref{SO3 eq1}) is equivalent to 
\begin{align} \label{SO3 eq6}
x_{1} \frac{d}{dt} (a_{1} x_{1}) = 0.
\end{align}
Equations (\ref{SO3 eq4}), (\ref{SO3 eq5}), (\ref{SO3 eq6})
are solved easily and we obtain 
\begin{align*}
x_{4}^{4} (\lambda + r^{2}) = C, \qquad
x_{5}^{4} (\lambda + r^{2}) = D, \qquad
a_{1} x_{1} = E
\end{align*}
for $C,D \geq 0, E \in \mathbb{R}$. 
Thus 
\begin{align*}
M_{C, D, E} = 
{\rm SO}(3) \cdot 
\left \{ 
({}^t\! (x_{1}, 0, 0, x_{4}, x_{5}), {}^t\! (a_{1}, a_{2}, 0)); 
\begin{array}{c}
x_{4}^{4} (\lambda + r^{2}) = C,\\
x_{5}^{4} (\lambda + r^{2}) = D,\\
a_{1} x_{1} = E
\end{array}
\right \}
\end{align*}
is a coassociative submanifold for $C,D \geq 0, E \in \mathbb{R}$. 

Next, we consider the topology of $M_{C, D, E}$.

\begin{lem} \label{topology of M CDE}
Set 
$N= (\mathbb{R}_{\geq 0} \times {\rm SO}(3))/(\{ 0 \} \times {\rm SO}(3))$,
which is the cone over ${\rm SO}(3)$ with the apex. 
Then the topology of $M_{C,D,E}$ is given by the following. 

\begin{tabular}{|c|c|} 
\hline
condition & topology of $M_{C,D,E}$ \\
\hline \hline
$C>0, D>0, E=0, \sqrt{C} + \sqrt{D} \neq \sqrt{\lambda}$ 
& $TS^{2} \sqcup  TS^{2} \sqcup TS^{2} \sqcup TS^{2}$ \\ \hline
$C>0, D>0, E=0, \sqrt{C} + \sqrt{D} = \sqrt{\lambda}$ 
& $N \sqcup N \sqcup N \sqcup N$
 \\ \hline
$C>0, D>0, E \neq 0$ 
& $TS^{2} \sqcup  TS^{2} \sqcup TS^{2} \sqcup TS^{2}$ \\ \hline
$C=0, D=0$ 
& $TS^{2}$ \\ \hline
$C>0, D=0, E=0, \sqrt{C} \neq \sqrt{\lambda}$ 
& $TS^{2} \sqcup  TS^{2}$ \\ \hline
$C>0, D=0, E=0, \sqrt{C} = \sqrt{\lambda}$ 
& $N \sqcup N$ \\ \hline
$C>0, D=0, E \neq 0$ 
& $TS^{2} \sqcup  TS^{2}$ \\ \hline
$C=0, D>0, E=0, \sqrt{D} \neq \sqrt{\lambda}$ 
& $TS^{2} \sqcup  TS^{2}$ \\ \hline
$C=0, D>0, E=0, \sqrt{D} = \sqrt{\lambda}$ 
& $N \sqcup N$ \\ \hline
$C=0, D>0, E \neq 0$ 
& $TS^{2} \sqcup  TS^{2}$ \\
\hline
\end{tabular}
\end{lem}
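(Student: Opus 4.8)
The plan is to present $M_{C,D,E}$ explicitly as an ${\rm SO}(3)$-space fibred over a one-dimensional profile and then read off its homeomorphism type orbit by orbit from Lemma~\ref{orbit SO3}. Since $M_{C,D,E}={\rm SO}(3)\cdot\Sigma$, where $\Sigma$ is the set of $( {}^t\!(x_{1},0,0,x_{4},x_{5}), {}^t\!(a_{1},a_{2},0) )$ with $x_{1}^{2}+x_{4}^{2}+x_{5}^{2}=1$ satisfying $x_{4}^{4}(\lambda+r^{2})=C$, $x_{5}^{4}(\lambda+r^{2})=D$, $a_{1}x_{1}=E$ (with $r^{2}=a_{1}^{2}+a_{2}^{2}$), and since this slice was arranged in Section~\ref{cohomo1 section SO3} to be transverse to the ${\rm SO}(3)$-orbits, the first step is to parametrize $\Sigma$. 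Using $\rho:=\lambda+r^{2}\ge\lambda$ as parameter the equations give $x_{4}=\varepsilon_{4}(C/\rho)^{1/4}$, $x_{5}=\varepsilon_{5}(D/\rho)^{1/4}$ with signs $\varepsilon_{4},\varepsilon_{5}\in\{\pm1\}$ (the sign $\varepsilon_{4}$, resp.\ $\varepsilon_{5}$, occurring precisely when $C>0$, resp.\ $D>0$), then $x_{1}^{2}=1-(\sqrt{C}+\sqrt{D})/\sqrt{\rho}$, then $a_{1}=E/x_{1}$ and $a_{2}^{2}=(\rho-\lambda)-E^{2}/x_{1}^{2}$. The choices of $\varepsilon_{4},\varepsilon_{5}$ split $\Sigma$ into pieces lying in distinct ${\rm SO}(3)$-orbits, because ${\rm SO}(3)={\rm SO}(3)\times\{I_{2}\}$ fixes $x_{4}$ and $x_{5}$; this gives $4$ components when $C,D>0$, $2$ when exactly one of $C,D$ vanishes, and $1$ when $C=D=0$, matching the number of summands in each row of the table.

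Next I determine, for each piece, the admissible range of $\rho$ and the nature of its ends. The function $x_{1}^{2}(\rho)=1-(\sqrt{C}+\sqrt{D})/\sqrt{\rho}$ is strictly increasing, and when $E\ne0$ so is $a_{2}^{2}(\rho)=(\rho-\lambda)-E^{2}/x_{1}(\rho)^{2}$ on its domain; hence the admissible $\rho$ form a closed half-line $[\rho_{0},\infty)$. As $\rho\to\infty$ one has $x_{1}\to1$, $a_{2}\to\infty$, $x_{4},x_{5}\to0$, so the orbit stays ${\rm SO}(3)\cong\mathbb{RP}^{3}$ and the end is cylindrical; the only degeneration is at $\rho=\rho_{0}$. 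If $E\ne0$ the endpoint is forced by $a_{2}=0$ with $x_{1}>0$, so Lemma~\ref{orbit SO3} gives the boundary orbit $S^{2}$. If $E=0$ then $a_{1}\equiv0$, $a_{2}^{2}=\rho-\lambda$, and $\rho_{0}=\max\{\lambda,(\sqrt{C}+\sqrt{D})^{2}\}$: when $\lambda>(\sqrt{C}+\sqrt{D})^{2}$ the endpoint has $r=0$, $x_{1}>0$ (a $2$-sphere of the zero section); when $\lambda<(\sqrt{C}+\sqrt{D})^{2}$ it has $x_{1}=0$ and fibre point $\ne0$; Lemma~\ref{orbit SO3} gives $S^{2}$ in both cases. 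Only when $\sqrt{C}+\sqrt{D}=\sqrt{\lambda}$ do $r=0$ and $x_{1}=0$ occur simultaneously, and then the boundary orbit is a single point.

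Finally I assemble the topology. Each component is the family of ${\rm SO}(3)$-orbits over $[\rho_{0},\infty)$; contractibility of $[\rho_{0},\infty)$ reduces it to a collar $\mathbb{RP}^{3}\times(\rho_{0},\infty)$ with a tube around the boundary orbit glued on. In the generic case the boundary orbit is $S^{2}={\rm SO}(3)/{\rm SO}(2)$ with $2$-dimensional normal slice on which the isotropy ${\rm SO}(2)$ acts; since the principal orbits are ${\rm SO}(3)$ (not $S^{2}\times S^{1}$) the principal isotropy is trivial, which forces the slice representation to have weight $\pm1$, so the tube is the $\mathbb{R}^{2}$-bundle over $S^{2}$ whose unit-sphere bundle is ${\rm SO}(3)=\mathbb{RP}^{3}$, i.e.\ $TS^{2}$; gluing on the cylinder leaves $TS^{2}$. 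In the exceptional case $E=0$, $\sqrt{C}+\sqrt{D}=\sqrt{\lambda}$, the boundary orbit is a point onto which $\mathbb{RP}^{3}$-orbits limit, so by the explicit parametrization the component is $({\rm SO}(3)\times[\rho_{0},\infty))/({\rm SO}(3)\times\{\rho_{0}\})\cong N$, which fails to be a manifold at the apex since $\mathbb{RP}^{3}\ne S^{3}$. Combined with the component count this reproduces every row of the table. The step I expect to be the main obstacle is exactly this identification of the collapse at $\rho_{0}$: one must use Lemma~\ref{orbit SO3} to separate the $x_{1}>0,\ a_{2}=0$ endpoints (isotropy ${\rm SO}(2)$, giving $TS^{2}$ via the weight-one slice, rather than $S^{2}\times\mathbb{R}^{2}$) from the $x_{1}=0,\ r=0$ endpoint (isotropy all of ${\rm SO}(3)$, giving the singular cone $N$ directly), and to check that no component has a second, non-degenerating end; the careful bookkeeping of the signs $\varepsilon_{4},\varepsilon_{5}$ and of the sign of $E$, needed to count components correctly, is the other delicate point.
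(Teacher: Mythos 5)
Your argument is correct, but it is organized quite differently from the paper's. The paper first rewrites each connected component (after splitting by the signs of $x_{4}$ and $x_{5}$, exactly as you do with $\varepsilon_{4},\varepsilon_{5}$) as the explicit subset $N_{C,D,E}=\{(v,w)\in\mathbb{R}^{3}\times\mathbb{R}^{3}:\langle v,w\rangle=E,\ (1-|v|^{2})\sqrt{\lambda+|w|^{2}}=\sqrt{C}+\sqrt{D}\}$ and then produces concrete homeomorphisms to $TS^{2}$ or to the cone $N$ case by case: $(v,w)\mapsto(v/|v|,w)$ when $\sqrt{C}+\sqrt{D}<\sqrt{\lambda}$, $(v,w)\mapsto(w/|w|,v)$ when $\sqrt{C}+\sqrt{D}>\sqrt{\lambda}$, an explicit cone parametrization in the borderline case, and, in the hardest case $E\neq0$, the map $(v,w)\mapsto(v/|v|,\,w-Ev/|v|^{2})$ together with a monotonicity analysis of the profile functions $f$ and $g$. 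You instead exploit the cohomogeneity-one structure directly: parametrize the slice by $\rho=\lambda+r^{2}$, show the admissible set is a closed half-line with a single degenerating end, read off the boundary orbit from Lemma \ref{orbit SO3}, and identify the tube around it via the slice representation (trivial principal isotropy forces weight $\pm1$, hence the disc bundle with unit-sphere bundle ${\rm SO}(3)$, i.e.\ $TS^{2}$; a point orbit gives $N$). Your route is more uniform -- one argument covers all the $TS^{2}$ rows at once -- at the cost of invoking the slice theorem, whereas the paper's is elementary and fully explicit. Two small points to tighten: when $E=0$ and $\sqrt{C}+\sqrt{D}>\sqrt{\lambda}$ the endpoint has $x_{1}=0$, where $a_{1}x_{1}=0$ no longer forces $a_{1}=0$; the whole sphere $r^{2}=\rho_{0}-\lambda$ in the fibre is a single ${\rm SO}(3)$-orbit, so your conclusion stands but the phrase ``$a_{1}\equiv0$'' should be read as a choice of orbit representative. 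Also, for $C=0$, $D>\lambda$, $E=0$ the boundary orbit sits in the fibre over $x_{5}=\pm1$, where the local frame and fiber coordinates of Section \ref{second local frame} degenerate; your slice-representation argument is insensitive to this, but the explicit formulas are not, and the paper avoids the issue by applying an element of $\{I_{3}\}\times{\rm SO}(2)$ to interchange the roles of $x_{4}$ and $x_{5}$ and reduce to the $D=0$ case.
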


\begin{lem}\label{converge SO3 current}
For any convergent sequence $\{ (C_{j}, D_{j}) \} \subset (\mathbb{R}_{>0})^{2}$
satisfying $\sqrt{C_{j}}+\sqrt{D_{j}} < \sqrt{\lambda}$ for any $j$ 
(or $\sqrt{C_{j}}+\sqrt{D_{j}} > \sqrt{\lambda}$ for any $j$) 
and $\sqrt{C_{\infty}}+\sqrt{D_{\infty}} = \sqrt{\lambda}$, 
where $C_{\infty} = \lim_{j \to \infty} C_{j}, D_{\infty} = \lim_{j \to \infty} D_{j}$, 
$M_{C_{j}, D_{j}, 0}$ converges to $M_{C_{\infty}, D_{\infty}, 0}$
in the sense of currents. 

Similarly, for any convergent sequence $\{ C_{j}\} \subset \mathbb{R}_{>0}$
satisfying $\sqrt{C_{j}} < \sqrt{\lambda}$ for any $j$ 
(or $\sqrt{C_{j}} > \sqrt{\lambda}$ for any $j$) 
and $\sqrt{C_{\infty}} = \sqrt{\lambda}$, 
where $C_{\infty} = \lim_{j \to \infty} C_{j}$, 
$M_{C_{j}, 0, 0}$ converges to $M_{C_{\infty}, 0, 0}$ 
and 
$M_{0, C_{j}, 0}$ converges to $M_{0, C_{\infty}, 0}$ 
in the sense of currents. 
\end{lem}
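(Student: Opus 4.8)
The plan is to make the explicit description of $M_{C,D,E}$ underlying Lemma~\ref{topology of M CDE} quantitative, and then to control the single mechanism by which the family $\{M_{C,D,0}\}$ can fail to vary continuously: an ${\rm SO}(3)$-orbit collapsing to a point exactly when $\sqrt{C}+\sqrt{D}=\sqrt{\lambda}$ (resp.\ $\sqrt{C}=\sqrt{\lambda}$, $\sqrt{D}=\sqrt{\lambda}$). First I would record the profile curves. For $E=0$ the defining equations $a_{1}x_{1}=0$, $x_{4}^{4}(\lambda+r^{2})=C$, $x_{5}^{4}(\lambda+r^{2})=D$, $x_{1}^{2}+x_{4}^{2}+x_{5}^{2}=1$, $x_{1}\geq 0$ are solved on the branch $a_{1}=0$ (which is transverse to the orbits) by
\begin{align*}
x_{4}=\varepsilon_{4}\Big(\frac{C}{\lambda+s^{2}}\Big)^{1/4},\quad
x_{5}=\varepsilon_{5}\Big(\frac{D}{\lambda+s^{2}}\Big)^{1/4},\quad
x_{1}=\sqrt{\,1-\frac{\sqrt{C}+\sqrt{D}}{\sqrt{\lambda+s^{2}}}\,},\quad a_{1}=0,\ a_{2}=s,
\end{align*}
with $s$ ranging over $[\sigma,\infty)$, where $\sigma=\sigma(C,D):=\sqrt{\max\{0,(\sqrt{C}+\sqrt{D})^{2}-\lambda\}}$ and $(\varepsilon_{4},\varepsilon_{5})$ is a choice of signs (immaterial for whichever of $C,D$ vanishes); call the resulting point $c_{C,D}(s)=({}^{t}(x_{1},0,0,x_{4},x_{5}),{}^{t}(0,s,0))$. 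Thus $M_{C,D,0}$ is the union, over these sign choices, of ${\rm SO}(3)\cdot\{c_{C,D}(s):s\in[\sigma,\infty)\}$; the point $s=\sigma$ is the only one with $x_{1}=0$, it lies on the zero section precisely when $\sigma=0$, i.e.\ when $\sqrt{C}+\sqrt{D}\leq\sqrt{\lambda}$, and by Lemma~\ref{orbit SO3} the ${\rm SO}(3)$-orbit through it is a single point when $\sqrt{C}+\sqrt{D}=\sqrt{\lambda}$ and an $S^{2}$ otherwise.

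Now fix a sequence $(C_{j},D_{j})$ as in the statement, with $C_{\infty},D_{\infty}>0$ and $\sqrt{C_{\infty}}+\sqrt{D_{\infty}}=\sqrt{\lambda}$, and let $\mathcal{A}=\{q_{1},\dots,q_{4}\}$ be the four apex points (one per sign choice) of $M_{C_{\infty},D_{\infty},0}$. The curves $c_{C_{j},D_{j}}$ and all their derivatives depend continuously on $(C,D)$ away from $s=\sigma$, and $\sigma(C_{j},D_{j})\to\sigma(C_{\infty},D_{\infty})=0$; hence on every compact subset of $(\Lambda^{2}_{-}S^{4})\setminus\mathcal{A}$ the submanifolds $M_{C_{j},D_{j},0}$ converge to $M_{C_{\infty},D_{\infty},0}$ smoothly, so the associated integral currents, oriented by the calibration $*\varphi_{\lambda}$, converge weakly there.

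It remains to rule out loss or concentration of mass at $\mathcal{A}$, which is the heart of the matter. Here I would exploit that each $M_{C,D,0}$ is calibrated by $*\varphi_{\lambda}$, so $\mathbf{M}\big(M_{C,D,0}\cap B_{\rho}(\mathcal{A})\big)=\int_{M_{C,D,0}\cap B_{\rho}(\mathcal{A})}*\varphi_{\lambda}$ for geodesic balls $B_{\rho}(\mathcal{A})$ of $g_{\lambda}$. Pick a sequence $\rho_{k}\downarrow 0$ of common regular values of the distance to $\mathcal{A}$ on all the $M_{C_{j},D_{j},0}$ and on $M_{C_{\infty},D_{\infty},0}$. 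For fixed $\rho=\rho_{k}$ the slices $M_{C_{j},D_{j},0}\cap\partial B_{\rho}(\mathcal{A})$ converge to $M_{C_{\infty},D_{\infty},0}\cap\partial B_{\rho}(\mathcal{A})$ by the previous paragraph; since $d(*\varphi_{\lambda})=0$ and $B_{\rho}(\mathcal{A})$ is a union of balls, a small interpolating chain together with Stokes' theorem gives $\int_{M_{C_{j},D_{j},0}\cap B_{\rho}}*\varphi_{\lambda}\to\int_{M_{C_{\infty},D_{\infty},0}\cap B_{\rho}}*\varphi_{\lambda}$, i.e.\ the masses in $B_{\rho}(\mathcal{A})$ converge. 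Since $M_{C_{\infty},D_{\infty},0}$ is $4$-rectifiable of locally finite mass, $\mathbf{M}\big(M_{C_{\infty},D_{\infty},0}\cap B_{\rho_{k}}(\mathcal{A})\big)\to 0$, so for $k$ large and then $j$ large $\sup_{j}\mathbf{M}\big(M_{C_{j},D_{j},0}\cap B_{\rho_{k}}(\mathcal{A})\big)$ is arbitrarily small. Feeding this into the splitting $\langle [M_{C_{j},D_{j},0}],\alpha\rangle=\int_{M_{C_{j},D_{j},0}\setminus B_{\rho}}\alpha+\int_{M_{C_{j},D_{j},0}\cap B_{\rho}}\alpha$ for a compactly supported smooth $4$-form $\alpha$ (and the analogous splitting for $M_{\infty}$), using weak convergence on $(\Lambda^{2}_{-}S^{4})\setminus\overline{B_{\rho}(\mathcal{A})}$, yields $\langle [M_{C_{j},D_{j},0}],\alpha\rangle\to\langle[M_{C_{\infty},D_{\infty},0}],\alpha\rangle$, which is the desired convergence of currents. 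The second assertion is identical, with $\sqrt{C}+\sqrt{D}$ replaced by $\sqrt{C}$ (resp.\ $\sqrt{D}$) and two apex points in place of four.

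The step I expect to be the real obstacle is precisely this uniform control of mass near $\mathcal{A}$: one must verify that, as $\sigma_{j}\to 0$, the shrinking ${\rm SO}(3)$-orbits near the apex carry no uncontrolled $4$-volume. The calibration-plus-Stokes argument above reduces this to the already-established convergence of slices at the fixed radii $\rho_{k}$; alternatively one can estimate directly that near $s=\sigma_{j}$ the orbit ${\rm SO}(3)\cdot c_{C_{j},D_{j}}(s)$ has one direction collapsing at rate $x_{1}(s)$ in the metric $g_{\lambda}$, so that $\mathbf{M}\big(M_{C_{j},D_{j},0}\cap B_{\rho}(\mathcal{A})\big)$ is bounded by a fixed power of $\rho$ independently of $j$, but I would present the calibration route as the clean one.
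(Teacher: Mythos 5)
Your argument is correct, but it takes a genuinely different route from the paper's. The paper first reduces to the model sets $N_{C,D,0}$ of (\ref{def N CDE}) and then exploits the fact that the homeomorphisms $h_{C,D}\colon N_{C,D,0}\to TS^{2}$ (resp.\ $TS^{2}-\{0\}$ for the limiting parameters) already constructed in the proof of Lemma \ref{topology of M CDE} have completely explicit inverses $(v',w')\mapsto (f_{C,D}(|w'|)v',w')$ with $f_{C,D}(r)=\sqrt{1-(\sqrt{C}+\sqrt{D})/\sqrt{\lambda+r^{2}}}$. Since $f_{C_{j},D_{j}}$ converges to $f_{C_{\infty},D_{\infty}}$ together with its derivatives, locally uniformly, the pullbacks $(h_{C_{j},D_{j}}^{-1})^{*}\alpha$ of a compactly supported test form converge with domination on the \emph{fixed} domain $TS^{2}-\{0\}$, and the change-of-variables formula gives the convergence of currents in one step; possible concentration at the apex never has to be addressed separately, because the apex corresponds to the single removed point $0\in TS^{2}$, a set of measure zero. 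You instead prove locally smooth convergence away from the apex set $\mathcal{A}$ and then exclude loss or concentration of mass at $\mathcal{A}$ via the calibration identity $\mathbf{M}(M\cap B_{\rho})=\int_{M\cap B_{\rho}}*\varphi_{\lambda}$ combined with a slicing-and-Stokes interpolation at fixed radii. Both arguments are sound. Yours is heavier: it invokes Sard's theorem for common regular values, the filling of null-homologous $4$-cycles in a ball, and the calibrated character of the submanifolds, none of which the paper needs. What it buys is robustness: it only requires smooth convergence away from finitely many points together with local finiteness of the limit's mass near those points, so it would apply verbatim to families of calibrated submanifolds for which no common explicit parametrization such as $h_{C,D}$ is available. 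Your explicit profile curves $c_{C,D}(s)$ and the threshold $\sigma(C,D)$ also make transparent \emph{why} the degeneration happens only at $\sqrt{C}+\sqrt{D}=\sqrt{\lambda}$, which the paper leaves implicit in the case analysis of Lemma \ref{topology of M CDE}.
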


\begin{proof}[Proof of Lemma \ref{topology of M CDE}]
First, suppose that $M_{C,D,E}$ does not intersect 
with $\Lambda^{2}_{-} S^{4} |_{{}^t\! (0,0,0,0, \pm 1)}$. 
Then 
by (\ref{SO3 times 1 action}) we see that
\begin{align*}
M_{C,D,E} =
\left \{ \left(
\left( 
\begin{array}{c}
g_{11} x_{1}\\
g_{21} x_{1} \\
g_{31} x_{1} \\
x_{4} \\
x_{5}
\end{array}
\right), 
\left( 
\begin{array}{c}
a_{1} g_{11} + a_{2} g_{12}\\
a_{1} g_{21} + a_{2} g_{22}\\
-a_{1} g_{31} - a_{2} g_{32} 
\end{array}
\right)
\right)
;
\begin{array}{c}
x_{4}^{4} (\lambda + r^{2}) = C,\\
x_{5}^{4} (\lambda + r^{2}) = D,\\
a_{1} x_{1} = E, \\
(g_{ij}) \in {\rm SO}(3)
\end{array}
\right \} \\
=
\left \{ \left(
\left( 
\begin{array}{c}
x_{1}\\
x_{2} \\
x_{3} \\
x_{4} \\
x_{5}
\end{array}
\right), 
\left( 
\begin{array}{c}
a_{1}\\
a_{2}\\
a_{3}
\end{array}
\right)
\right)
\in S^{4} \times \mathbb{R}^{3};
\begin{array}{c}
x_{4}^{4} (\lambda + r^{2}) = C,\\
x_{5}^{4} (\lambda + r^{2}) = D,\\
(r^{2} = \sum_{i=1}^{3} a_{i}^{2})\\
a_{1} x_{1} + a_{2} x_{2} - a_{3} x_{3} = E \\
\end{array}
\right \}.
\end{align*}

We study the topology of $M_{C,D,E}$
in the following cases: 
\begin{enumerate}
\item $C>0, D>0$, 
\begin{multicols}{2}
\begin{enumerate}
\item $E=0, \sqrt{C} + \sqrt{D} < \sqrt{\lambda}$, 
\item $E=0, \sqrt{C} + \sqrt{D} > \sqrt{\lambda}$, 
\item $E=0, \sqrt{C} + \sqrt{D} = \sqrt{\lambda}$, 
\item $E \neq 0$, 
\end{enumerate}
\end{multicols}
\begin{multicols}{3}
\item $C=0,D=0$,
\item $C>0, D=0$,
\item $C=0,D>0$.
\end{multicols}
\end{enumerate}

Consider \underline{case 1}.
Then $M_{C,D,E}$ does not intersect 
with $\Lambda^{2}_{-} S^{4} |_{{}^t\! (0,0,0,0, \pm 1)}$.
Set 
\begin{align*}
M_{C,D,E}^{\pm, +} &= M_{C,D,E} \cap \{ \pm x_{4} > 0 \} \cap \{ x_{5} > 0\}, \\
M_{C,D,E}^{\pm, -} &= M_{C,D,E} \cap \{ \pm x_{4} > 0 \} \cap \{ x_{5} < 0\}. 
\end{align*}
Each $M_{C,D,E}^{\pm, \pm}$ is a connected component of $M_{C,D,E}$ 
and is homeomorphic to 
\begin{align} \label{def N CDE}
N_{C,D,E} = \left \{ (v, w) \in \mathbb{R}^{3} \times \mathbb{R}^{3}; \langle v, w \rangle = E, 
(1-|v|^{2}) \sqrt{\lambda + |w|^{2}} = \sqrt{C} + \sqrt{D} \right \}.
\end{align}
We only have to consider the topology of $N_{C,D,E}$. 

Consider \underline{case 1-(a)}.
We have 
$|v|^{2} = 1- (\sqrt{C} + \sqrt{D})/ \sqrt{\lambda + |w|^{2}} \geq  1- (\sqrt{C} + \sqrt{D})/ \sqrt{\lambda} > 0$.
Hence there is an homeomorphism 
$N_{C,D,0} \rightarrow \{ (v,w) \in S^{2} \times \mathbb{R}^{3}; \langle v, w \rangle = 0\} = TS^{2}$ 
via $(v, w) \mapsto (v/|v|, w)$.

Consider \underline{case 1-(b)}.
We have 
$|w|^{2} = (\sqrt{C} + \sqrt{D})^{2}/(1 - |v|^{2})^{2} - \lambda 
\geq (\sqrt{C} + \sqrt{D})^{2} - \lambda > 0$.
Hence there is an homeomorphism 
$N_{C,D,0} \rightarrow \{ (w,v) \in S^{2} \times \mathbb{R}^{3}; \langle w, v \rangle = 0, |v| < 1 \} \cong TS^{2}$ 
via $(v, w) \mapsto (w/|w|, v)$. 

Consider \underline{case 1-(c)}.
A map  
$N= (\mathbb{R}_{\geq 0} \times {\rm SO}(3))/(\{ 0 \} \times {\rm SO}(3)) \rightarrow N_{C,D,0}$ 
defined by 
$[(r, (g_{1}, g_{2}, g_{3})] \mapsto (f(r) g_{1}, r g_{2})$, 
where $g_{i} \in \mathbb{R}^{3}$, $\langle g_{i}, g_{j} \rangle = \delta_{i j}$, 
and $f(r) = \sqrt{1-(\sqrt{C} + \sqrt{D})/\sqrt{\lambda + r^{2}}}$, 
gives a homeomorphism.

Consider \underline{case 1-(d)}.
Since $N_{C,D,E} \cong N_{C,D, -E}$ via $(v,w) \mapsto (v, -w)$, 
we may assume that $E>0$. 
Since $E \neq 0$, we have $v, w \neq 0$ for any $(v, w) \in N_{C,D,E}.$
Define $c_{0} \in \mathbb{R}$ and a function $f: (c_{0}, \infty) \rightarrow (f(c_{0}), 1)$ by 
\begin{align*}
c_{0} &= 
\left\{ \begin{array}{ll}
0                                                      & \qquad \mbox{when }\ (\sqrt{C} + \sqrt{D})^{2} - \lambda \leq 0, \\
\sqrt{(\sqrt{C} + \sqrt{D})^{2} - \lambda} & \qquad \mbox{when }\ (\sqrt{C} + \sqrt{D})^{2} - \lambda \geq 0
\end{array} \right.,\\
f(r) &= \sqrt{1-\frac{\sqrt{C} + \sqrt{D}}{\sqrt{\lambda + r^{2}}}}.
\end{align*}
Then 
$f$ is bijective and monotonically increasing. 
Note that for $(v, w) \in N_{C,D,E}$, we have $f(|w|) = |v|$. 
Since $r f(r): (c_{0}, \infty) \rightarrow (0, \infty)$ is bijective and monotonically increasing, 
there exists a unique $d_{0} > c_{0} > 0$ such that $d_{0} f(d_{0}) = E$. 
Now define a function $g: [d_{0}, \infty) \rightarrow [0, \infty)$ by 
$g(r)=\sqrt{r^{2} - (E^{2}/f(r)^{2})}$. 
Note that 
for $(v, w) \in N_{C,D,E}$, we have $|w- (Ev/|v|^{2})| = g(|w|)$. 

Define a map 
$\Phi :N_{C,D,E} \rightarrow \{ (v', w') \in S^{2} \times \mathbb{R}^{3}; \langle v, w \rangle = 0 \} = TS^{2}$
by 
$\Phi(v, w) = (v/|v|, w- (Ev/|v|^{2}))$. 
Then $\Phi$ is a homeomorphism and the inverse map 
$\Phi^{-1}$ is given by $\Phi^{-1}(v', w') = (f(g^{-1}(|w'|))v', w'+ (Ev'/f(g^{-1}(|w'|))))$.

Consider \underline{case 2}.
By definition, we have $x_{4} = x_{5} = 0$. Then 
\begin{align*}
M_{0,0,E} =
\left \{ \left(
{}^t\! \left( 
x_{1}, x_{2}, x_{3}, 0, 0
\right), 
{}^t\!
\left( 
a_{1}, a_{2}, a_{3}
\right)
\right)
\in S^{4} \times \mathbb{R}^{3};
a_{1} x_{1} + a_{2} x_{2} - a_{3} x_{3} = E
\right \}, 
\end{align*}
which is obtained in (\ref{SO3SO2 case2 MC}) 
and is homeomorphic to $TS^{2}$.

Consider \underline{case 3}.
By definition, we have $x_{5} = 0$ and 
\begin{align*}
M_{C,0,E} =
\left \{ \left(
{}^t\! \left( 
x_{1}, x_{2}, x_{3}, x_{4}, 0
\right), 
{}^t\!
\left( 
a_{1}, a_{2}, a_{3}
\right)
\right)
\in S^{4} \times \mathbb{R}^{3};
\begin{array}{c}
x_{4}^{4} (\lambda + r^{2}) = C, \\
a_{1} x_{1} + a_{2} x_{2} - a_{3} x_{3} = E
\end{array}
\right \}. 
\end{align*}
Set 
$
M_{C,0,E}^{\pm} = M_{C,0,E} \cap \{ \pm x_{4} > 0 \}. 
$
Each $M_{C,0,E}^{\pm}$ is a connected component of $M_{C,0,E}$ 
and is homeomorphic to 
$N_{C,0,E}$ defined in (\ref{def N CDE}).

Consider \underline{case 4}.
By (\ref{1 times SO2 action}), 
$
\left(
\begin{array}{cc}
0 & 1 \\
-1&0
\end{array}
\right) \in {\rm SO}(2) = \{ I_{3} \} \times {\rm SO}(2) \subset {\rm SO}(5)$ 
gives a homeomorphism from $N_{0,D,E}$ to $N_{D,0,E}$. 
Hence this case is reduced to \underline{case 3}. 
\end{proof}

\begin{proof}[Proof of Lemma \ref{converge SO3 current}]
We only have to prove that 
$N_{C_{j}, D_{j}, 0}$ converges to $N_{C_{\infty}, D_{\infty}, 0} - \{ (0, 0) \}$ 
in the sense of currents. 
Note that sets differing only a set of measure zero are identified 
in the theory of currents.

Suppose that 
$\sqrt{C_{j}}+\sqrt{D_{j}} < \sqrt{\lambda}$ for any $j$. 
Then by the proof of Lemma \ref{topology of M CDE}, 
there is a homeomorphism 
$h_{C_{j}, D_{j}}: N_{C_{j},D_{j},0} \rightarrow \{ (v,w) \in S^{2} \times \mathbb{R}^{3}; \langle v, w \rangle = 0\} = TS^{2}$ 
via $(v, w) \mapsto (v/|v|, w)$.
Note that $h_{C_{j}, D_{j}}^{-1}$ is given by 
$(v', w') \mapsto (f_{C_{j}, D_{j}}(|w'|)v', w')$, 
where $f_{C_{j}, D_{j}} (r) = \sqrt{1-(\sqrt{C_{j}} + \sqrt{D_{j}})/\sqrt{\lambda + r^{2}}}$.

On the other hand, 
$N_{C_{\infty}, D_{\infty}, 0} - \{ (0, 0) \}$ is homeomorphic to 
$T S^{2} - \{ 0 \}$ 
via $h_{C_{\infty}, D_{\infty}}: (v, w) \mapsto (v/|v|, w)$ 
and $h_{C_{\infty}, D_{\infty}}^{-1}$ is given by 
$(v', w') \mapsto (f_{C_{\infty}, D_{\infty}}(|w'|)v', w')$. 
Then we see that for any compactly supported 4-form $\alpha$ on $\mathbb{R}^{3} \times \mathbb{R}^{3}$
\begin{align*}
\int_{N_{C_{j}, D_{j}, 0}} \alpha 
=
\int_{TS^{2}-\{ 0 \}} (h_{C_{j}, D_{j}}^{-1})^{*} \alpha
\rightarrow
\int_{TS^{2}-\{ 0 \}} (h_{C_{\infty}, D_{\infty}}^{-1})^{*} \alpha
=
\int_{N_{C_{\infty}, D_{\infty}, 0} - \{ (0, 0) \}} \alpha, 
\end{align*}
which implies that 
$N_{C_{j}, D_{j}, 0}$ converges to $N_{C_{\infty}, D_{\infty}, 0} - \{ (0, 0) \}$ 
in the sense of currents. 
We can prove the other cases similarly 
and obtain the statement. 
\end{proof}

\begin{rem}
Use the notation in \cite{Lotay3}. 
By Lemma \ref{topology of M CDE}, 
$M_{C,D,E}$ is a coassociative submanifold 
with conical singularities 
when 
(i) $C>0, D>0, E=0, \sqrt{C} + \sqrt{D} = \sqrt{\lambda}$, 
(ii)$C>0, D=0, E=0, \sqrt{C} = \sqrt{\lambda}$, or 
(iii)$C=0, D>0, E=0, \sqrt{D} = \sqrt{\lambda}$. 
In each case, 
the tangent cone is modeled on $C(L) = \mathbb{R}_{>0} \times L$, 
where $L$ is given by 
\begin{align*}
L = \left \{ {}^t\! (0, z_{1}, z_{2}, z_{3}) \in \mathbb{R} \oplus \mathbb{C}^{3}; 
z_{1}^{2} + z_{2}^{2} + \overline{z}_{3}^{2} = 0, |z_{1}|^{2} + |z_{2}|^{2} + |z_{3}|^{2} = 1 \right \} 
\cong {\rm SO}(3). 
\end{align*}

We calculate the rate at singular points as follows. 
For simplicity, we only consider the case of $M^{+}_{\lambda, 0 ,0}$ 
which is singular at $p_{0} = \left( {}^t\! (0, 0, 0, 1, 0), {}^t\! (0, 0, 0) \right)$. 

Let $B(0, r) \subset \mathbb{R}^{4}$ be an open ball of radius $r$.  
Set $D = \{x_{4} > 0 \} \subset S^{4}$ and $k=2^{-1/2} \lambda^{-1/4}$.
Define $\chi: B(0, 1/k) \times \mathbb{R}^{3} \rightarrow D \times \mathbb{R}^{3}$ by 
\begin{align*}
\left( {}^t\! (u_{1}, u_{2}, u_{3}, u_{4}), {}^t\! (a_{1},  a_{2}, a_{3}) \right) \mapsto
\left(
{}^t\! (-k u_{3}, k u_{2}, -k u_{1}, \sqrt{1-k^{2} |u|^{2}}, k u_{4}), 
\lambda^{1/4} {}^t\! (a_{1},  a_{2}, a_{3})
\right),
\end{align*}
where $|u|^{2}=\sum_{j=1}^{4} u_{j}^{2}$. 
Since $(d \chi)_{0} (\frac{\partial}{\partial u_{i}})_{0} = k (e_{i})_{p_{0}}, 
(d \chi)_{0} (\frac{\partial}{\partial a_{i}})_{0} = \lambda^{1/4} (\frac{\partial}{\partial a_{i}})_{p_{0}}$, 
we see that 
$(d \chi)_{0}^{*} (\varphi_{\lambda})_{p_{0}} = \varphi_{0}$, 
where $\varphi_{0}$ is a 3-form on $\mathbb{R}^{7}$ given by (\ref{def of G2 str}). 
Note that 
\begin{align*}
\chi^{-1} (M^{+}_{\lambda, 0 ,0})
=
\left \{
{}^t\! (u_{1}, u_{2}, u_{3}, 0, a_{1},  a_{2}, a_{3}); 
\begin{array}{l}
(1-k^{2}|u|^{2})^{2} \left(1+ \sum_{j=1}^{3} a_{j}^{2} \right) = 1, \\
-u_{3} a_{1} + u_{2} a_{2} - u_{1} a_{3} = 0
\end{array}
\right \}.
\end{align*}
Define $\Phi: \mathbb{R}_{>0} \times L \rightarrow \mathbb{R}^{7}$ by 
\begin{align*}
\left( r, {}^t\! (0, x_{1}+ i y_{1}, x_{2}+ i y_{2}, x_{3}+ i y_{3}) \right) 
\mapsto 
{}^t\! \left( f(r) x_{1}, f(r) x_{2}, f(r) x_{3}, 0, r y_{3},  r y_{2}, - r y_{1} \right),  
\end{align*}
where $f(r) = 2 \lambda^{1/4} \sqrt{1- \sqrt{\frac{2}{2+r^{2}}}}$. 
This gives the diffeomorphism $\chi \circ \Phi: \mathbb{R}_{>0} \times L \rightarrow 
M^{+}_{\lambda, 0 ,0} - \{ p_{0} \}$. 
Since we see that 
$f(r) = \lambda^{1/4} r + O(r^{3})$ as $r \rightarrow 0$, we see that 
the rate at $p_{0}$ is equal to 3 in these coordinates.
\end{rem}


\subsection{Irreducible ${\rm SO}(3)$-action} \label{cohomo1 section irr SO3}

We give a proof of Theorem \ref{irr SO3 thm}.
Recall the notation in Section \ref{section irr SO3}. 
By Lemma \ref{orbit irr SO3}, 
an ${\rm SO}(3)$-orbit 
through $({}^t\! (x_{1}, 0, 0, 0, x_{5}), {}^t\! (a_{1}, a_{2}, a_{3}))$ 
is 3-dimensional when 
\begin{enumerate}
\item
$-1/2 < x_{5} < 1/2$, 
\item
$x_{5} = 1/2, (a_{1}, a_{3}) \neq 0$, \mbox{ or }
\item
$x_{5} = -1/2, (a_{2}, a_{3}) \neq 0$.
\end{enumerate}

Consider \underline{case 1}. 
Take a path $c: I \rightarrow \Lambda^{2}_{-} S^{4}$ given by 
\begin{align*}
c(t) = \left( {}^t\! (x_{1}(t), 0, 0, 0, x_{5}(t)), {}^t\! (a_{1}(t), a_{2}(t), a_{3}(t)) \right), 
\end{align*}
where $x_{1}(t) > 0, |x_{5}(t)| < 1/2$. 
We find a path $c$ satisfying $\varphi_{\lambda} |_{{\rm SO}(3) \cdot {\rm Image} (c)} = 0$, 
where $\varphi_{\lambda}$ is given by (\ref{def of G2 str S4}).  
We see that 
$\varphi_{\lambda} (\tilde{E_{1}^{*}}, \tilde{E_{2}^{*}}, \tilde{E_{3}^{*}})=0$ 
as in Section \ref{cohomo1 section SU2}.

\begin{lem}
The condition $\varphi_{\lambda} |_{{\rm SO}(3) \cdot {\rm Image} (c)} = 0$ 
is equivalent to
\begin{align}
4 \left \{
(2 \sqrt{3} x_{1} + 4 x_{5} + 1) \dot{x}_{5} + \sqrt{3} (2 x_{5} -1) \dot{x}_{1} 
\right \} a_{1}
+ 8 x_{1} (- x_{1} + \sqrt{3} x_{5}) \dot{a}_{1}& \nonumber \\
- (\sqrt{3} x_{1} + x_{5} +1)(1- 2 x_{5}) a_{1} \frac{d}{dt} \log (\lambda + r^{2}) &= 0, 
\label{irr SO3 eq1}\\
4 \left \{
(2 \sqrt{3} x_{1} - 4 x_{5} - 1) \dot{x}_{5} + \sqrt{3} (2 x_{5} -1) \dot{x}_{1} 
\right \} a_{2}
+ 8 x_{1} (x_{1} + \sqrt{3} x_{5}) \dot{a}_{2}& \nonumber \\
+ (- \sqrt{3} x_{1} + x_{5} + 1)(1- 2 x_{5}) a_{2} \frac{d}{dt} \log (\lambda + r^{2}) &= 0, 
\label{irr SO3 eq2}\\
4 \left \{
 - (x_{5} + 1) \dot{x}_{5} + 3 x_{1} \dot{x}_{1} 
\right \} a_{3} 
+ 2 (x_{1}^{2} - 3 x_{5}^{2}) \dot{a}_{3}& \nonumber \\
+ (1 + x_{5})(1- 2 x_{5}) a_{3} \frac{d}{dt} \log (\lambda + r^{2}) &= 0, \label{irr SO3 eq3}
\end{align}
where $r^{2} = \sum_{j=1}^{3} a_{j}^{2}$.
\end{lem}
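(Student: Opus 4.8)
The plan is to use that $S:={\rm SO}(3)\cdot{\rm Image}(c)$ is four-dimensional and that at a point $c(t)$ of the transverse path its tangent space is spanned by $\tilde E_{1}^{*},\tilde E_{2}^{*},\tilde E_{3}^{*},\dot c$, so that $\varphi_{\lambda}|_{S}=0$ holds exactly when $\varphi_{\lambda}$ vanishes on every ordered triple of these four vectors. The triple $(\tilde E_{1}^{*},\tilde E_{2}^{*},\tilde E_{3}^{*})$ costs nothing: just as in Section \ref{cohomo1 section SU2}, $\varphi_{\lambda}(\tilde E_{1}^{*},\tilde E_{2}^{*},\tilde E_{3}^{*})$ is constant on $\Lambda^{2}_{-}S^{4}$ by ${\rm SO}(3)$-invariance and Cartan's formula, and it vanishes where the action is not locally free, in particular at a point of each orbit by Lemma \ref{orbit irr SO3}. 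Hence the coassociative condition reduces to $\varphi_{\lambda}(\tilde E_{i}^{*},\tilde E_{j}^{*},\dot c)=0$ for $(i,j)\in\{(1,2),(1,3),(2,3)\}$, and the task is to unwind these into (\ref{irr SO3 eq1})--(\ref{irr SO3 eq3}).

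First I would determine $\dot c$. From $x_{1}^{2}+x_{5}^{2}=1$ we have $\sqrt{1-x_{5}^{2}}=x_{1}$ and $x_{1}\dot x_{1}+x_{5}\dot x_{5}=0$, and the frame of Section \ref{second local frame} gives $e_{4}={}^{t}(-x_{5},0,0,0,x_{1})$ at $\underline{p_{0}}={}^{t}(x_{1},0,0,0,x_{5})$; therefore the horizontal part of $\dot c$ is $(\dot x_{5}/x_{1})e_{4}$, and $b_{i}(\dot c)=\dot a_{i}$ because the connection $1$-forms $\gamma_{ij}$ of Section \ref{second local frame} involve only $e^{1},e^{2},e^{3}$ and so annihilate $e_{4}$. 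Since the horizontal parts of $\tilde E_{1}^{*},\tilde E_{2}^{*},\tilde E_{3}^{*}$ are multiples of $e_{3},e_{1},e_{2}$, the definitions $\omega_{1}=e^{12}-e^{34}$, $\omega_{2}=e^{13}-e^{42}$, $\omega_{3}=e^{14}-e^{23}$ give immediately that the only nonzero pairings are $\pi^{*}\omega_{1}(\tilde E_{1}^{*},\dot c)=-(x_{1}+\sqrt{3}x_{5})\dot x_{5}/x_{1}$, $\pi^{*}\omega_{3}(\tilde E_{2}^{*},\dot c)=-2\dot x_{5}$ and $\pi^{*}\omega_{2}(\tilde E_{3}^{*},\dot c)=(x_{1}-\sqrt{3}x_{5})\dot x_{5}/x_{1}$.

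Then I would substitute into $\varphi_{\lambda}=2s_{\lambda}\sum_{i}b_{i}\wedge\pi^{*}\omega_{i}+s_{\lambda}^{-3}b_{123}$ of (\ref{def of G2 str S4}), using $(b_{k}\wedge\pi^{*}\omega_{k})(\tilde E_{i}^{*},\tilde E_{j}^{*},\dot c)=b_{k}(\tilde E_{i}^{*})\pi^{*}\omega_{k}(\tilde E_{j}^{*},\dot c)-b_{k}(\tilde E_{j}^{*})\pi^{*}\omega_{k}(\tilde E_{i}^{*},\dot c)+\dot a_{k}\,\pi^{*}\omega_{k}(\tilde E_{i}^{*},\tilde E_{j}^{*})$ and writing $b_{123}(\tilde E_{i}^{*},\tilde E_{j}^{*},\dot c)$ as the $3\times 3$ determinant whose rows are $\bigl(b_{1},b_{2},b_{3}\bigr)(\tilde E_{i}^{*})$, $\bigl(b_{1},b_{2},b_{3}\bigr)(\tilde E_{j}^{*})$ and $(\dot a_{1},\dot a_{2},\dot a_{3})$, with all entries taken from Lemma \ref{data irrSO3}. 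Using $s_{\lambda}=(\lambda+r^{2})^{1/4}$, $r^{2}=\sum_{j}a_{j}^{2}$, $\frac{d}{dt}(r^{2})=2\sum_{j}a_{j}\dot a_{j}$ and $x_{1}\dot x_{1}=-x_{5}\dot x_{5}$, a direct check should show that $\varphi_{\lambda}(\tilde E_{2}^{*},\tilde E_{3}^{*},\dot c)$, $\varphi_{\lambda}(\tilde E_{1}^{*},\tilde E_{2}^{*},\dot c)$ and $\varphi_{\lambda}(\tilde E_{1}^{*},\tilde E_{3}^{*},\dot c)$ are equal, up to fixed nonzero constant multiples, to $s_{\lambda}$ times the left-hand sides of (\ref{irr SO3 eq1}), (\ref{irr SO3 eq2}) and (\ref{irr SO3 eq3}) respectively; since $s_{\lambda}>0$, their simultaneous vanishing is precisely the stated system.

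The main obstacle is exactly this last computation. The delicate point is that $s_{\lambda}^{-3}b_{123}$ contributes the combination $\sum_{j}a_{j}\dot a_{j}=\frac12\frac{d}{dt}(r^{2})$ weighted by a polynomial in $x_{1},x_{5}$, and one has to check that after adding the $2s_{\lambda}$-terms and clearing the common power of $s_{\lambda}$, the identity $\frac{d}{dt}(r^{2})=(\lambda+r^{2})\frac{d}{dt}\log(\lambda+r^{2})$ reorganizes everything into the displayed coefficients such as $-(\sqrt{3}x_{1}+x_{5}+1)(1-2x_{5})$; tracking signs in the wedge expansions, and the contrast between the $s_{\lambda}$- and $s_{\lambda}^{-3}$-weights---the source of the factors $1-2x_{5}$---is where errors are easiest. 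One also needs $x_{1}\dot x_{1}+x_{5}\dot x_{5}=0$ to recast the $\dot x_{5}/x_{1}$ terms produced by the $\pi^{*}\omega(\tilde E_{i}^{*},\dot c)$ into the combinations of $\dot x_{1},\dot x_{5}$ that appear in (\ref{irr SO3 eq1})--(\ref{irr SO3 eq3}).
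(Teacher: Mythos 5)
Your proposal is correct and follows essentially the same route as the paper: reduce to the vanishing of $\varphi_{\lambda}$ on the triples $(\tilde E_{i}^{*},\tilde E_{j}^{*},\dot c)$ after disposing of $\varphi_{\lambda}(\tilde E_{1}^{*},\tilde E_{2}^{*},\tilde E_{3}^{*})$ by invariance and Cartan's formula, compute the pairings $\pi^{*}\omega_{i}(\tilde E_{j}^{*},\dot c)$ and $b_{i}(\dot c)=\dot a_{i}$, and expand using Lemma \ref{data irrSO3}; your pairings agree with the paper's after substituting $x_{1}\dot x_{1}+x_{5}\dot x_{5}=0$, and your matching of the three triples to equations (\ref{irr SO3 eq1})--(\ref{irr SO3 eq3}) is the same as the paper's. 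The only cosmetic slip is the claim that the non--locally-free locus meets ``each orbit'' (generic orbits are principal); the argument only needs one such point in the connected total space, which exists over the Veronese surface.
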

This lemma implies Theorem \ref{irr SO3 thm}.
In general, it is hard to solve 
the equations (\ref{irr SO3 eq1}), (\ref{irr SO3 eq2}), (\ref{irr SO3 eq3}) explicitly.

\begin{proof}
Since 
$
\dot{c} = 
(-\dot{x}_{1} x_{5} + x_{1} \dot{x}_{5}) e_{4} 
+ \sum_{j=1}^{3} \dot{a}_{j} \frac{\partial}{\partial a_{j}},  
$
we have 
\begin{align*}
(\pi^{*} \omega_{i} (\tilde{E_{j}^{*}}, \dot{c})) 
&= 
\left( 
 \begin{array}{ccc}
-\dot{x}_{5} + \sqrt{3} \dot{x}_{1}   & 0                     & 0  \\
0                                            & 0  & \dot{x}_{5} + \sqrt{3} \dot{x}_{1} \\
0                                            & -2\dot{x}_{5}  & 0 
\end{array}
\right), \\
b_{j} (\dot{c})
&= a_{j} \qquad \mbox{ for } j=1,2,3. 
\end{align*}
Then we compute 
\begin{align*}
\sum_{i=1}^{3} b_{i} \wedge \pi^{*} \omega_{i} (\tilde{E_{1}^{*}}, \tilde{E_{2}^{*}}, \dot{c})
=&
\left \{
(2 \sqrt{3} x_{1} - 4 x_{5} - 1) \dot{x}_{5} + \sqrt{3} (2 x_{5} -1) \dot{x}_{1} 
\right \} a_{2}\\
&+ 2 x_{1} (x_{1} + \sqrt{3} x_{5}) \dot{a}_{2}, \\
b_{123} (\tilde{E_{1}^{*}}, \tilde{E_{2}^{*}}, \dot{c}) 
=& - (\sqrt{3} x_{1} - x_{5} -1)(1- 2 x_{5}) \frac{a_{2}}{2} \frac{d(r^{2})}{dt}, 
\end{align*}
which implies (\ref{irr SO3 eq2}). In the same way, we compute 
\begin{align*}
\sum_{i=1}^{3} b_{i} \wedge \pi^{*} \omega_{i} (\tilde{E_{1}^{*}}, \tilde{E_{3}^{*}}, \dot{c})
=&
\left \{
-2 (x_{5} + 1) \dot{x}_{5} + 6 x_{1} \dot{x}_{1} 
\right \} a_{3}
+ (x_{1}^{2} - 3 x_{5}^{2}) \dot{a}_{3}, \\
b_{123} (\tilde{E_{1}^{*}}, \tilde{E_{3}^{*}}, \dot{c}) 
=& (1 + x_{5})(1- 2 x_{5}) a_{3} \frac{d(r^{2})}{dt}, \\
\sum_{i=1}^{3} b_{i} \wedge \pi^{*} \omega_{i} (\tilde{E_{2}^{*}}, \tilde{E_{3}^{*}}, \dot{c})
=&
\left \{
(2 \sqrt{3} x_{1} + 4 x_{5} + 1) \dot{x}_{5} + \sqrt{3} (2 x_{5} -1) \dot{x}_{1} 
\right \} a_{1}\\
&+ 2 x_{1} (\sqrt{3} x_{5} - x_{1}) \dot{a}_{1}, \\
b_{123} (\tilde{E_{1}^{*}}, \tilde{E_{2}^{*}}, \dot{c}) 
=& - (\sqrt{3} x_{1} + x_{5} + 1)(1- 2 x_{5}) \frac{a_{1}}{2} \frac{d(r^{2})}{dt}, 
\end{align*}
and obtain (\ref{irr SO3 eq1}) and  (\ref{irr SO3 eq3}).
\end{proof}

Consider \underline{case 2}.
Take a path $c: I \rightarrow \Lambda^{2}_{-} S^{4}$ given by 
\begin{align*}
c(t) = \left( {}^t\! (\sqrt{3}/2, 0, 0, 0, 1/2), {}^t\! (a_{1}(t), a_{2} (t), a_{3} (t)) \right). 
\end{align*}
We may assume that $a_{3}=0$ so that 
$c(t)$ is transverse to the ${\rm SO}(3)$-orbit. 
We find a path $c$ satisfying $\varphi_{\lambda}|_{{\rm SO}(3) \cdot {\rm Image} (c)} = 0$, 
where $\varphi_{\lambda}$ is given by (\ref{def of G2 str S4}).  
Since 
$\dot{c} = \sum_{i=1}^{2} \dot{a}_{i} \frac{\partial}{\partial a_{i}}$, 
we have at $c(t)$
\begin{align*}
(\pi^{*} \omega_{i} (\tilde{E_{j}^{*}}, \dot{c})) 
&= 0, \\
(b_{i} (\dot{c})) 
&= 
\left( 
\dot{a}_{1}, \dot{a}_{2}, 0
\right), \\
\varphi_{\lambda} (\tilde{E_{1}^{*}}, \tilde{E_{2}^{*}}, \dot{c}) 
&= 6 s_{\lambda} \dot{a}_{2}, \\
\varphi_{\lambda} (\tilde{E_{p}^{*}}, \tilde{E_{q}^{*}}, \dot{c}) &= 0
\qquad \mbox{ for }\ (p,q) \neq (1,2),(2,1). 
\end{align*}
Thus 
the condition $\varphi_{\lambda} |_{{\rm SO}(3) \cdot {\rm Image} (c)} = 0$ is equivalent to 
$a_{2}=C$ for $C \in \mathbb{R}$. 
Then as Remark \ref{relation Kari_Min}, we see that 
$M_{C} = {\rm SO}(3) \cdot \{ ( {}^t\! (\sqrt{3}/2, 0, 0, 0, 1/2), {}^t\! (r, C, 0)); r \in \mathbb{R} \}$, 
where $C \in \mathbb{R}$,  
is a coassociative submanifold 
described as 
\begin{align*}
M_{C} = C \tau + (\mathbb{R} \tau)^{\perp}, 
\end{align*}
where 
$\tau = {\rm vol}_{\Sigma} - * {\rm vol}_{\Sigma}$ 
and $\Sigma = {\rm SO}(3) \cdot  {}^t\! (\sqrt{3}/2, 0, 0, 0, 1/2) \subset S^{4}$
is a Veronese surface. 
In \underline{Case 3}, we obtain the similar coassociative submanifold, 
and hence we cannot obtain new examples in \underline{Case 2} and \underline{Case 3}.


\subsection{Cohomogeneity two coassociative submanifolds}
When $\lambda \rightarrow 0$, 
$\varphi_{0} = \varphi_{\lambda}|_{\lambda=0}$ defines a 
$G_{2}$-structure on $\Lambda^{2}_{-} S^{4} - \{ \mbox{zero section} \}  \cong 
\mathbb{C} P^{3} \times \mathbb{R}_{>0}$ by Remark \ref{cone_of_CP3}. 
On $\Lambda^{2}_{-} S^{4} - \{ \mbox{zero section} \}$, 
$\mathbb{R}_{>0}$ acts by dilations preserving $\varphi_{0}$ 
up to scalar multiplication. 
Thus by using the $\mathbb{R}_{>0}$-action, 
we can apply the same method as the cohomogeneity one case 
and we derive some systems of O.D.E.s. 
However, we can find no explicit solutions which 
give new coassociative examples. 
In some cases, 
we obtain some explicit solutions, 
all of which turn out to be congruent to examples in Section \ref{cohomo one section} 
up to the ${\rm SO}(5)$-action.

\appendix
\section{Real irreducible representations} \label{real irr rep}

We give a summary about real irreducible representations in 
\cite{GotoGrosshans, Mashimo minimal}. 

\begin{definition}
Let $G$ be a compact Lie group and 
$(V, \rho)$ be a $\mathbb{C}$-irreducible representation of $G$. 
We call $(V, \rho)$ {\bf self-conjugate} if 
$V$ has a conjugate linear map $J$ on $V$ satisfying 
\begin{align*}
J^{2} = \pm 1, \qquad 
J \circ \rho(g) = \rho(g) \circ J \ \mbox{ for } g \in G. 
\end{align*}
This map is called a {\bf structure map}. 
A self-conjugate representation $(V, \rho)$ is said to be of 
{\bf index} $\pm 1$
if $J^{2}= \pm 1$.
\end{definition}

\begin{prop} \label{C-irr rep classification}
Let $(V, \rho)$ be a $\mathbb{C}$-irreducible representation of $G$. 
Then 
one of the following is satisfied. 
\begin{enumerate}
\item 
$(V, \rho)$ is a self-conjugate representation of index $1$. 
In this case, $(V, \rho)$ is a complexification of 
a real representation. 
\item 
$(V, \rho)$ is a self-conjugate representation of index $-1$. 
In this case, $(V, \rho)$ is a quaternionic representation. 
\item 
$(V, \rho)$ is not a self-conjugate representation. 
\end{enumerate}
\end{prop}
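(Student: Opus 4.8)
The plan is to reduce the whole statement to Schur's lemma applied to $V$ and its conjugate representation $\bar V$ (the same underlying real vector space, with $\alpha\in\mathbb{C}$ acting the way $\bar\alpha$ acts on $V$). By definition $(V,\rho)$ is self-conjugate precisely when $V\cong\bar V$ as $G$-representations, and since both $V$ and $\bar V$ are $\mathbb{C}$-irreducible, Schur's lemma shows $\mathrm{Hom}_{G}(\bar V,V)$ has complex dimension $0$ or $1$. If the dimension is $0$ we are in case (3). The work is therefore to analyse the dimension $1$ case and to see that it splits further according to an intrinsic sign, producing cases (1) and (2).

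So assume $\mathrm{Hom}_{G}(\bar V,V)$ is one-dimensional, spanned by a map which, read as an endomorphism of $V$, is a nonzero conjugate-linear $G$-equivariant map $J$. First I would note that $\ker J$ is a $\mathbb{C}$-subspace (since $J(\alpha v)=\bar\alpha Jv$, it is still closed under scalars) and is $G$-invariant, so irreducibility together with $J\neq0$ forces $\ker J=0$, and likewise $\mathrm{im}\,J=V$; thus $J$ is bijective. Next, $J^{2}$ is a $\mathbb{C}$-linear $G$-equivariant endomorphism of the irreducible $V$, so $J^{2}=c\cdot\mathrm{id}$ for some $c\in\mathbb{C}^{\times}$. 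Evaluating $J^{3}$ two ways, $J(J^{2}v)=\bar c\,Jv$ while $J^{2}(Jv)=c\,Jv$, and using that $J$ is onto, gives $\bar c=c$, so $c\in\mathbb{R}^{\times}$. Rescaling $J$ by $1/\sqrt{|c|}$ makes $J^{2}=\pm1$; and since replacing $J$ by $tJ$ $(t\in\mathbb{C}^{\times})$ changes $c$ to $|t|^{2}c$, the sign of $c$ is an invariant of $(V,\rho)$, which is exactly the index.

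To finish, I would exhibit the structure. If $J^{2}=+1$, set $W=\{v\in V:Jv=v\}$; one checks $V=W\oplus iW$ as real vector spaces and that $W$ is $G$-invariant because $\rho(g)J=J\rho(g)$, so $(V,\rho)\cong W\otimes_{\mathbb{R}}\mathbb{C}$ is the complexification of the real representation $W$, which is case (1). If $J^{2}=-1$, put $j:=J$ and $k:=ij$; from $J(iv)=-iJv$ one gets $ji=-ij$, and a direct check yields $i^{2}=j^{2}=k^{2}=-1$, $ij=k$, $jk=i$, $ki=j$, so $V$ acquires an $\mathbb{H}$-module structure commuting with $G$ (as $i$, $j=J$ and hence $k$ all commute with every $\rho(g)$), i.e. $(V,\rho)$ is quaternionic, which is case (2). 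The genuinely essential point is the Schur-lemma dichotomy combined with the reality and intrinsic sign of $c=J^{2}$; the bijectivity of $J$, the splitting $V=W\oplus iW$, and the quaternion relations are routine, so no real obstacle arises --- this is the classical Frobenius--Schur classification, organized around the structure map as in the cited references.
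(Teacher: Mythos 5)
Your proof is correct. The paper itself gives no argument for this proposition --- Appendix A is explicitly a summary of standard facts quoted from \cite{GotoGrosshans} and \cite{Mashimo minimal} --- and what you have written is precisely the classical Frobenius--Schur proof that those references would supply: Schur's lemma applied to $\mathrm{Hom}_{G}(\bar V,V)$, the observation that $J^{2}=c\cdot\mathrm{id}$ with $c$ real and of invariant sign, and the explicit real form $W$ (resp.\ quaternion relations) in the two self-conjugate cases. All the individual steps (bijectivity of $J$, reality of $c$, the splitting $V=W\oplus iW$, and the relations $i^{2}=j^{2}=k^{2}=ijk=-1$) check out, so there is nothing to correct.
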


\begin{prop}
Let $(V, \rho)$ be a $\mathbb{C}$-irreducible representation of $G$. 
As a real representation, 
$\rho$ is reducible (resp. irreducible)
if and only if 
1. (resp. 2. or 3.) in Proposition \ref{C-irr rep classification} is satisfied.
\end{prop}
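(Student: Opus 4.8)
The plan is to reduce the statement to the single equivalence: the representation $V$, regarded as a real representation $V_{\mathbb{R}}$, is reducible if and only if it is self-conjugate of index $+1$, i.e.\ case 1 of Proposition \ref{C-irr rep classification} holds. Granting this, since the three cases of Proposition \ref{C-irr rep classification} are mutually exclusive and exhaustive, $V_{\mathbb{R}}$ is irreducible precisely when case 2 or case 3 holds, which is the remaining assertion. Throughout I write $I$ for multiplication by $\sqrt{-1}$ viewed as an $\mathbb{R}$-linear (and $G$-equivariant) endomorphism of $V_{\mathbb{R}}$; a conjugate-linear endomorphism is exactly an $\mathbb{R}$-linear endomorphism $J$ with $JI = -IJ$.

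First I would prove that if $V_{\mathbb{R}}$ is reducible then there is a structure map of index $+1$. Let $W \subset V_{\mathbb{R}}$ be a nonzero proper $G$-invariant real subspace. The key observation is that both $W \cap IW$ and $W + IW$ are $I$-stable --- using $I(IW) = -W = W$ as sets --- hence are complex $G$-subrepresentations of $V$; by the $\mathbb{C}$-irreducibility of $V$ each of them is $0$ or all of $V$. Since $0 \neq W \neq V_{\mathbb{R}}$, this forces $W \cap IW = 0$ and $W + IW = V_{\mathbb{R}}$, so $V_{\mathbb{R}} = W \oplus IW$ as real $G$-modules. Define $J$ to be the $\mathbb{R}$-linear involution equal to $+1$ on $W$ and $-1$ on $IW$. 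It is $G$-equivariant because $W$ is $G$-stable, satisfies $J^2 = \mathrm{id}$, and a direct computation on a general vector $v = w_1 + Iw_2$ (with $w_1, w_2 \in W$) gives $J(Iv) = -IJ(v)$, so $J$ is conjugate-linear. Hence $J$ is a structure map with $J^2 = \mathrm{id}$, i.e.\ of index $+1$.

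Conversely, I would show that a structure map $J$ with $J^2 = \mathrm{id}$ forces $V_{\mathbb{R}}$ to be reducible. Put $W = \ker(J - \mathrm{id})$, an $\mathbb{R}$-linear $G$-invariant subspace. For any $v$ one has $J(v + Jv) = Jv + v$, so $v + Jv \in W$; and $J \neq -\mathrm{id}$, since a conjugate-linear map cannot equal a nonzero complex scalar on $V \neq 0$, so $W \neq 0$. On the other hand, if $v \in W$ then $J(Iv) = -IJv = -Iv$, hence $Iv \notin W$ unless $v = 0$, so $W \neq V_{\mathbb{R}}$. Thus $V_{\mathbb{R}}$ has a nonzero proper $G$-subrepresentation and is reducible. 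For orientation, one can note that $\mathrm{End}_G(V_{\mathbb{R}})$ is the $\mathbb{R}$-algebra generated by $I$ and $J$, isomorphic to the split quaternions $M_2(\mathbb{R})$ when $J^2 = \mathrm{id}$ and to $\mathbb{H}$ when $J^2 = -\mathrm{id}$; irreducibility corresponds exactly to the division-algebra case.

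The arguments are short; the only place calling for real care is the bookkeeping between $\mathbb{R}$-linearity and $\mathbb{C}$-conjugate-linearity --- namely verifying that the reflection $J$ attached to a real subrepresentation is genuinely conjugate-linear, and that the $\pm 1$-eigenspaces of a conjugate-linear involution are automatically nonzero and proper. This is where a stray sign kills the argument, so I would run the computation on an explicit general vector $v = w_1 + Iw_2$ rather than on a chosen basis.
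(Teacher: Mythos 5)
Your proof is correct and complete. Note that the paper itself gives no proof of this proposition: it is stated in Appendix \ref{real irr rep} as a summary of standard facts cited from the references, so there is no argument of the author's to compare against. Your two directions are both sound: the passage from a proper nonzero real $G$-subspace $W$ to the decomposition $V_{\mathbb{R}} = W \oplus IW$ correctly exploits that $W \cap IW$ and $W + IW$ are $I$-stable and $G$-stable, hence $\mathbb{C}$-subrepresentations killed by irreducibility, and the sign check that the reflection across $W$ anticommutes with $I$ is done on a general vector as it should be; conversely the $+1$-eigenspace of a conjugate-linear involution is correctly shown to be nonzero (via $v + Jv$, and $J \neq -\mathrm{id}$ since a nonzero conjugate-linear map cannot be complex-linear) and proper (since $I$ swaps the $\pm 1$-eigenspaces). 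The only ingredient you import without proof is that the three cases of Proposition \ref{C-irr rep classification} are mutually exclusive --- equivalently, that the sign of $J^{2}$ is an invariant, which follows from Schur's lemma since any two conjugate-linear intertwiners differ by a complex scalar --- but this is legitimately part of the cited classification and of the definition of the index, so relying on it is appropriate. The closing remark identifying $\mathrm{End}_{G}(V_{\mathbb{R}})$ with $M_{2}(\mathbb{R})$ or $\mathbb{H}$ is not needed for the argument and is correctly flagged as orientation only.
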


\begin{prop} \label{R-rep C-rep}
All $\mathbb{R}$-irreducible representations of $G$ are given as follows. 
\begin{itemize}
\item
A $\mathbb{R}$-irreducible component of 
a $\mathbb{C}$-irreducible representation which is reducible as a $\mathbb{R}$-representation. 
This is
an eigenspace of $1$ or $-1$ of the structure map $J$ in 1. of Proposition \ref{C-irr rep classification}. 
(Note that an eigenspace of $1$ and that of $-1$ are mutually equivalent real irreducible representations of $G$.)
\item
A $\mathbb{C}$-irreducible representation which is also irreducible as a $\mathbb{R}$-representation. 
This corresponds 2. or 3. in Proposition \ref{C-irr rep classification}.
\end{itemize}
\end{prop}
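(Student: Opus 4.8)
The plan is to reduce the classification entirely to the two preceding propositions by complexifying and tracking the conjugation. Let $(W,\rho_W)$ be an arbitrary $\mathbb{R}$-irreducible representation of $G$, and form $W^{\mathbb{C}} = W \otimes_{\mathbb{R}} \mathbb{C}$. It carries the conjugate-linear $G$-equivariant involution $\sigma = \mathrm{id}_W \otimes (\text{complex conjugation})$, whose fixed-point set is the real form $W \otimes 1 \cong W$. The basic dictionary I would record first: a complex $G$-subrepresentation $U \subseteq W^{\mathbb{C}}$ satisfies $\sigma(U)=U$ if and only if $U = (U \cap W)\otimes_{\mathbb{R}} \mathbb{C}$, because then $\sigma|_U$ is a conjugation of $U$ with real form $U \cap W$. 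Since $U \cap W$ is a real subrepresentation of $W$, the $\mathbb{R}$-irreducibility of $W$ forces it to be $0$ or $W$; and a nonzero $\sigma$-invariant $U$ meets $W$ nontrivially (for $0\neq u\in U$ one of $u+\sigma u$, $i(u-\sigma u)$ lies in $U\cap W$ and is nonzero), so any nonzero $\sigma$-invariant complex subrepresentation of $W^{\mathbb{C}}$ is all of $W^{\mathbb{C}}$.

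Next I would pick a $\mathbb{C}$-irreducible subrepresentation $V \subseteq W^{\mathbb{C}}$ and apply $\sigma$: the space $\sigma(V)$ is again $\mathbb{C}$-irreducible, $V+\sigma(V)$ is nonzero and $\sigma$-invariant, hence $V+\sigma(V)=W^{\mathbb{C}}$. Two cases. If $\sigma(V)=V$, then $V$ is itself nonzero and $\sigma$-invariant, so $V=W^{\mathbb{C}}$; thus $W^{\mathbb{C}}$ is $\mathbb{C}$-irreducible but $\mathbb{R}$-reducible (as a real representation it is $W\oplus W$), so by the proposition preceding this one it falls under case 1 of Proposition \ref{C-irr rep classification}, and $\sigma$ is a structure map of index $1$ whose $(+1)$-eigenspace is exactly $W$; this places $W$ in the first family of the list. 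If instead $\sigma(V)\neq V$, then $V\cap\sigma(V)$ is a proper subrepresentation of the $\mathbb{C}$-irreducible $V$, hence $0$, so $W^{\mathbb{C}}=V\oplus\sigma(V)$. Comparing real dimensions gives $\dim_{\mathbb{R}}V=\dim_{\mathbb{R}}W$, and the composite $W \hookrightarrow W^{\mathbb{C}} \twoheadrightarrow V$ is an $\mathbb{R}$-linear $G$-map whose kernel $W\cap\sigma(V)$ vanishes by a short check, hence an isomorphism $W\cong V$. Therefore $W$ is a $\mathbb{C}$-irreducible representation that is also $\mathbb{R}$-irreducible, i.e.\ case 2 or 3 of Proposition \ref{C-irr rep classification} by the preceding proposition, placing $W$ in the second family.

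For the converse inclusion I would simply note that every representation appearing in the two families is genuinely $\mathbb{R}$-irreducible, which is exactly the content of the preceding proposition: a $\pm1$-eigenspace of the structure map of a case-1 $\mathbb{C}$-irreducible representation for the first family, and a case-2 or case-3 $\mathbb{C}$-irreducible representation for the second. Combining the two directions yields the stated description of all $\mathbb{R}$-irreducible representations of $G$.

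The main obstacle I expect is the bookkeeping identifying the concrete conjugation $\sigma$ with the abstract structure map $J$ of Proposition \ref{C-irr rep classification} — in particular checking that in the self-conjugate index-$1$ case the real form $W$ we started from really is one of the two mutually equivalent $\pm1$-eigenspaces of a structure map — together with the routine but slightly fiddly dimension count and kernel computation in the case $\sigma(V)\neq V$. Everything else is formal once the previous two propositions are available.
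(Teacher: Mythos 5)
The paper does not prove this proposition at all: it is stated as a summary of standard facts and attributed to \cite{GotoGrosshans} and \cite{Mashimo minimal}, so there is no internal proof to compare against. Your argument is the standard complexification proof and it is correct and complete: the dictionary between $\sigma$-invariant complex subrepresentations of $W^{\mathbb{C}}$ and real subrepresentations of $W$, the dichotomy $\sigma(V)=V$ versus $\sigma(V)\neq V$ for a $\mathbb{C}$-irreducible $V\subseteq W^{\mathbb{C}}$, and the resulting identifications ($W^{\mathbb{C}}$ itself $\mathbb{C}$-irreducible with $\sigma$ an index-$1$ structure map in the first case; $W\cong V$ a $\mathbb{C}$-irreducible that is $\mathbb{R}$-irreducible in the second) all check out, including the kernel computation $W\cap\sigma(V)=0$.

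One small inaccuracy in your closing paragraph: the converse inclusion for the first family is not ``exactly the content of the preceding proposition.'' Proposition \ref{C-irr rep classification} and the proposition following it tell you when a $\mathbb{C}$-irreducible representation is $\mathbb{R}$-reducible, but they do not literally assert that the $\pm 1$-eigenspaces of the structure map are $\mathbb{R}$-irreducible. You need the one-line argument that a proper nonzero real $G$-subspace $U$ of the real form $V_{+}$ would complexify to a proper nonzero complex $G$-subspace $U\oplus iU$ of $V$, contradicting $\mathbb{C}$-irreducibility (and that multiplication by $i$ intertwines $V_{+}$ with $V_{-}$, giving the claimed equivalence of the two eigenspaces). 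This is routine, but it is the step you should make explicit rather than delegate to the earlier proposition. With that supplied, your proof is a valid substitute for the omitted one.
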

In many cases, we know $\mathbb{C}$-irreducible representations, 
from which 
we can deduce $\mathbb{R}$-irreducible representations by Proposition \ref{R-rep C-rep}. 

All equivalence classes of finite dimensional 
$\mathbb{C}$-irreducible representations of ${\rm SU}(2)$  
are represented by $\{ (V_{n}, \rho_{n}) \}_{n \geq 0}$, 
where $V_{n}$ is a $\mathbb{C}$-vector space of all complex 
homogeneous polynomials with two variables $z_{1}, z_{2}$ of degree $n$ 
and $\rho_{n}$ is the induced action from the standard action of ${\rm SU}(2)$ on $\mathbb{C}^{2}$. 
By Proposition \ref{R-rep C-rep}, we deduce the following. 

\begin{lem}[\cite{Mashimo minimal}] \label{R-rep su2} 
Let $V$ be a $\mathbb{R}$-irreducible representation of ${\rm SU}(2)$. 
Then $\dim_{\mathbb{R}}V = 4m$ or $2n-1$, where  $m,n \geq 1.$
\end{lem}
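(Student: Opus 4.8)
The plan is to combine the classification of the complex irreducible representations of ${\rm SU}(2)$ with Proposition \ref{R-rep C-rep}. Recall that every $\mathbb{C}$-irreducible representation of ${\rm SU}(2)$ is one of the $(V_n, \rho_n)$ with $\dim_{\mathbb{C}} V_n = n+1$ for $n \geq 0$. First I would check that each $V_n$ is self-conjugate and compute its index. The standard representation $V_1 = \mathbb{C}^2$ carries the conjugate-linear map $J_0(z_1,z_2) = (-\overline{z_2}, \overline{z_1})$, which commutes with the ${\rm SU}(2)$-action and satisfies $J_0^2 = -1$; identifying $V_n$ with the $n$-th symmetric power ${\rm Sym}^n(\mathbb{C}^2)$, the induced map $J = {\rm Sym}^n(J_0)$ is again conjugate-linear and ${\rm SU}(2)$-equivariant, with $J^2 = (-1)^n\,{\rm id}$. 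Hence $(V_n, \rho_n)$ is self-conjugate of index $(-1)^n$: it is of real type (index $1$) when $n$ is even and quaternionic (index $-1$) when $n$ is odd, so case 3 of Proposition \ref{C-irr rep classification} never occurs for ${\rm SU}(2)$.

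Next I would feed this into Proposition \ref{R-rep C-rep}. When $n$ is even, $V_n$ is reducible as a real representation and splits into the $\pm 1$-eigenspaces of $J$, each a real irreducible representation of real dimension $\dim_{\mathbb{C}} V_n = n+1$; as $n$ runs over the nonnegative even integers, $n+1$ runs over the odd integers $\geq 1$, i.e.\ over $\{\,2m-1 : m \geq 1\,\}$. When $n$ is odd, $V_n$ is itself irreducible as a real representation, of real dimension $2\dim_{\mathbb{C}} V_n = 2(n+1)$; as $n$ runs over the positive odd integers, $n+1$ runs over the even integers $\geq 2$, so $2(n+1)$ runs over $\{\,4m : m \geq 1\,\}$. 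Since by Proposition \ref{R-rep C-rep} every $\mathbb{R}$-irreducible representation of ${\rm SU}(2)$ arises in one of these two ways, its real dimension is either $2m-1$ for some $m \geq 1$ or $4m$ for some $m \geq 1$, which is the assertion of the lemma.

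The only real content is the index computation for $V_n$; everything else is bookkeeping with the two propositions quoted above. The step I expect to require the most care is verifying that $J = {\rm Sym}^n(J_0)$ is genuinely conjugate-linear, intertwines $\rho_n$, and satisfies $J^2 = (-1)^n$ — but this is a short direct check on the monomial basis $z_1^{a} z_2^{b}$ with $a+b = n$, since $J_0$ permutes these (up to sign) and contributes a factor $-1$ from each of the $n$ symmetric factors. Alternatively one may bypass the explicit structure map and instead evaluate the Frobenius–Schur indicator $\int_{{\rm SU}(2)} \chi_{V_n}(g^2)\,dg$, which equals $(-1)^n$; either route yields the same dichotomy and hence the stated dimensions.
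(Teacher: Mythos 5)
Your proposal is correct and follows exactly the route the paper intends: list the complex irreducibles $(V_n,\rho_n)$ with $\dim_{\mathbb{C}}V_n=n+1$, determine that each is self-conjugate of index $(-1)^n$, and apply Proposition \ref{R-rep C-rep} to read off the real dimensions $n+1$ (for $n$ even, giving the odd values $2m-1$) and $2(n+1)$ (for $n$ odd, giving $4m$). The paper itself only says ``by Proposition \ref{R-rep C-rep} we deduce the following'' and cites Mashimo; you have correctly supplied the one substantive ingredient it leaves implicit, namely the index computation (equivalently the Frobenius--Schur indicator $(-1)^n$), which is also what rules out case 3 of Proposition \ref{C-irr rep classification} and hence the forbidden dimensions $\equiv 2 \pmod 4$.
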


For compact Lie groups $H_{1}$ and $H_{2}$, 
any $\mathbb{C}$-irreducible representation of $H_{1} \times H_{2}$ 
is given by $\sigma_{1} \otimes \sigma_{2}$, 
where $\sigma_{i}$ is a irreducible $\mathbb{C}$-representation of $H_{i}$. 
Thus in the same way, we obtain the following.

\begin{lem} \label{R-rep su2 su2}
Let $V$ be a $\mathbb{R}$-irreducible representation of ${\rm SU}(2) \times {\rm SU}(2)$. 
Then 
\begin{align*}
\dim_{\mathbb{R}}V = 
\left\{
\begin{array}{ll}
2(k+1)(l+1) & \mbox{ when } k,l \geq 0, k+l \mbox{: odd},\\
(k+1)(l+1) &   \mbox{ when } k,l \geq 0, k+l \mbox{: even}.
\end{array}
\right.
\end{align*}
If $k=0$ or $l=0$, the representation reduces to that of ${\rm SU}(2)$. 
\end{lem}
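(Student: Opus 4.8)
The plan is to reduce the classification to the external tensor product structure of irreducible representations of a product group, and then to read off the real dimension from the Frobenius--Schur type (the index) of each factor by means of Proposition~\ref{R-rep C-rep}. First I would invoke the fact recalled just before the statement: every $\mathbb{C}$-irreducible representation of ${\rm SU}(2)\times{\rm SU}(2)$ is an external tensor product $V_{k}\otimes V_{l}$ with $k,l\ge 0$, where $V_{n}$ is the degree-$n$ piece with $\dim_{\mathbb{C}}V_{n}=n+1$; hence $\dim_{\mathbb{C}}(V_{k}\otimes V_{l})=(k+1)(l+1)$. Since every ${\rm SU}(2)$-representation is self-conjugate, each factor is self-conjugate, and therefore so is the product; thus case~3 of Proposition~\ref{C-irr rep classification} never occurs, which is exactly why only the two cases in the statement can arise.

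The decisive step is to compute the index of $V_{k}\otimes V_{l}$. On the standard representation $V_{1}=\mathbb{C}^{2}$ the identification ${\rm SU}(2)={\rm Sp}(1)$ provides a structure map $J$ with $J^{2}=-1$, so $V_{1}$ has index $-1$. Writing $V_{n}={\rm Sym}^{n}V_{1}$ and taking the induced conjugate-linear map, one finds that its square equals $(-1)^{n}$, so $V_{n}$ has index $(-1)^{n}$. On an external tensor product the structure map is $J_{k}\otimes J_{l}$, which is again conjugate-linear, commutes with the ${\rm SU}(2)\times{\rm SU}(2)$-action, and satisfies $(J_{k}\otimes J_{l})^{2}=(-1)^{k+l}\,{\rm id}$. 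Hence $V_{k}\otimes V_{l}$ has index $(-1)^{k+l}$: it is of real type precisely when $k+l$ is even and of quaternionic type precisely when $k+l$ is odd.

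It then remains to apply Proposition~\ref{R-rep C-rep}. In the real-type case the $\mathbb{C}$-irreducible representation is reducible over $\mathbb{R}$ and splits into two equivalent real irreducible summands, each of real dimension equal to $\dim_{\mathbb{C}}=(k+1)(l+1)$; in the quaternionic case it is already $\mathbb{R}$-irreducible, of real dimension $2\dim_{\mathbb{C}}=2(k+1)(l+1)$. Sorting by the parity of $k+l$ produces the two dimension formulas of the statement. For the degenerate cases $k=0$ or $l=0$ one has $V_{0}$ trivial, so $V_{k}\otimes V_{0}\cong V_{k}$ as an ${\rm SU}(2)$-representation and the computation collapses to that for a single ${\rm SU}(2)$.

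The main obstacle is getting the index right, and I expect the decisive verification to be internal consistency with Lemma~\ref{R-rep su2}: restricting to $l=0$ must reproduce the single-${\rm SU}(2)$ dichotomy $\dim_{\mathbb{R}}V\in\{\,2n-1\,\}\cup\{\,4m\,\}$, with the values $4m$ arising from the quaternionic (odd-$k$) representations and the odd values $2n-1$ from the real-type (even-$k$) ones. Matching this degenerate case fixes which parity class of $k+l$ carries which dimension formula, and it is precisely here that care with the convention $J^{2}=\pm 1$ of Proposition~\ref{C-irr rep classification} and with the meaning of \emph{index} is essential.
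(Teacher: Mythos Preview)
Your approach is exactly what the paper intends: the paper offers no separate proof but says the lemma follows ``in the same way'' as Lemma~\ref{R-rep su2} via the tensor-product decomposition $\sigma_1\otimes\sigma_2$, and your argument supplies precisely those details.

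However, there is a mismatch you did not flag. Your (correct) computation shows that $V_k\otimes V_l$ has index $(-1)^{k+l}$, hence is of real type when $k+l$ is even (giving $\dim_{\mathbb{R}}=(k+1)(l+1)$) and of quaternionic type when $k+l$ is odd (giving $\dim_{\mathbb{R}}=2(k+1)(l+1)$). This is the \emph{opposite} parity--to--formula assignment from the one printed in the lemma. Your own proposed consistency check exposes this: at $l=0$ the statement as printed would give $\dim_{\mathbb{R}}=2(k+1)$ for $k$ even, contradicting $\dim_{\mathbb{R}}V_0=1$, and $\dim_{\mathbb{R}}=k+1$ for $k$ odd, contradicting $\dim_{\mathbb{R}}V_1=4$; your formulas, by contrast, reproduce Lemma~\ref{R-rep su2} exactly. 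So your argument is sound and the printed statement has the two cases swapped. You should have actually carried out the check you described and noted the discrepancy, rather than asserting that ``sorting by the parity of $k+l$ produces the two dimension formulas of the statement.'' (The swap is harmless for the only use of the lemma in Appendix~\ref{proof of classification subgrp SO(5)}, where one merely needs that the smallest faithful real dimension with $k,l\ge 1$ is $4$, attained at $(k,l)=(1,1)$.)
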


\begin{lem} \label{R-rep su2 su2 su2}
Let $V$ be a $\mathbb{R}$-irreducible representation of 
${\rm SU}(2) \times {\rm SU}(2) \times {\rm SU}(2)$. 
Then 
\begin{align*}
\dim_{\mathbb{R}}V = 
\left\{
\begin{array}{ll}
2(k+1)(l+1)(m+1) & \mbox{ when } k,l,m \geq 0, k+l+m \mbox{: odd},\\
(k+1)(l+1)(m+1)   & \mbox{ when } k,l,m \geq 0, k+l+m \mbox{: even}.
\end{array}
\right.
\end{align*}
If one of $\{ k, l, m \}$ is equal to 0, the representation reduces to that of ${\rm SU}(2) \times {\rm SU}(2)$. 
If two of $\{ k, l, m \}$ are equal to 0, the representation reduces to that of ${\rm SU}(2)$. 
\end{lem}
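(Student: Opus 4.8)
The plan is to follow verbatim the strategy used for Lemma \ref{R-rep su2} and Lemma \ref{R-rep su2 su2}: enumerate the complex irreducibles, determine their self-conjugacy type, and then invoke Proposition \ref{R-rep C-rep}. First I would recall that every $\mathbb{C}$-irreducible representation of ${\rm SU}(2)^3$ is of the form $(V_k \otimes V_l \otimes V_m,\, \rho_k \otimes \rho_l \otimes \rho_m)$ with $k,l,m \geq 0$, of complex dimension $(k+1)(l+1)(m+1)$; and that each factor $(V_n,\rho_n)$ is self-conjugate with a structure map $J_n$ satisfying $J_n^2 = (-1)^n\,{\rm id}$, so $\rho_n$ has index $+1$ if $n$ is even and index $-1$ if $n$ is odd (this is exactly what is used in the proof of Lemma \ref{R-rep su2}).

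Next I would check that $J := J_k \otimes J_l \otimes J_m$ is a structure map for $\rho_k \otimes \rho_l \otimes \rho_m$. A tensor product of conjugate-linear maps is again conjugate-linear, so $J$ is conjugate-linear; it commutes with $\rho_k(g_1) \otimes \rho_l(g_2) \otimes \rho_m(g_3)$ because each $J_n$ commutes with the corresponding factor; and $J^2 = J_k^2 \otimes J_l^2 \otimes J_m^2 = (-1)^{k+l+m}\,{\rm id}$. Hence $\rho_k \otimes \rho_l \otimes \rho_m$ is self-conjugate of index $+1$ when $k+l+m$ is even and of index $-1$ when $k+l+m$ is odd; in particular case 3 of Proposition \ref{C-irr rep classification} never occurs for ${\rm SU}(2)^3$. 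Applying Proposition \ref{R-rep C-rep}: when $k+l+m$ is even the representation is the complexification of a real one, and its $\mathbb{R}$-irreducible constituents (the $\pm 1$-eigenspaces of $J$, which are mutually equivalent) have real dimension $(k+1)(l+1)(m+1)$; when $k+l+m$ is odd the representation is already $\mathbb{R}$-irreducible, of real dimension $2(k+1)(l+1)(m+1)$. Since Proposition \ref{R-rep C-rep} asserts that these exhaust all $\mathbb{R}$-irreducible representations, the dimension formula follows.

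Finally, the reduction statements hold because $\rho_0$ is the trivial representation: if $m = 0$ then in $\rho_k \otimes \rho_l \otimes \rho_0$ the third ${\rm SU}(2)$-factor acts trivially, so the representation factors through the projection ${\rm SU}(2)^3 \to {\rm SU}(2)^2$ onto the first two factors and agrees with $\rho_k \otimes \rho_l$ as a real (and complex) representation, which is covered by Lemma \ref{R-rep su2 su2}; likewise, if two of $k,l,m$ vanish it reduces to a representation of a single ${\rm SU}(2)$. I do not expect a genuine obstacle here: the argument is short Frobenius--Schur bookkeeping, and the only step that deserves explicit verification is the multiplicativity of the index, i.e. $J^2 = (-1)^{k+l+m}\,{\rm id}$, which one reads off from the standard description of $J_n$ on $V_n = {\rm Sym}^n(\mathbb{C}^2)$ induced by $(z_1,z_2) \mapsto (-\bar z_2,\bar z_1)$.
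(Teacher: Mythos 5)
Your proof is correct and follows exactly the route the paper intends (the paper omits the details, saying only that the result follows ``in the same way'' from the classification of complex irreducibles of a product group together with Proposition \ref{R-rep C-rep}). Your explicit verification that $J_k \otimes J_l \otimes J_m$ is a well-defined conjugate-linear structure map with $J^2 = (-1)^{k+l+m}\,\mathrm{id}$ is precisely the step the paper leaves implicit, and it is carried out correctly.
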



\section{Proof of Lemma \ref{Lie subgrp of SO5}} \label{proof of classification subgrp SO(5)}

First, we prove the following.

\begin{lem} \label{Lie subalg of so5}
Let $\mathfrak{g} \subset \mathfrak{so}(5)$ be a compact Lie subalgebra with 
$\dim_{\mathbb{R}} \mathfrak{g} \geq 3$. 
Then $\mathfrak{g}$ is isomorphic to one of the following Lie algebras: 
\begin{align*}
\mathfrak{so}(5), \qquad \mathfrak{so}(4), \qquad 
\mathfrak{su}(2) \oplus \mathbb{R}, \qquad \mathfrak{su}(2). 
\end{align*}
\end{lem}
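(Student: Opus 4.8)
The plan is to combine the structure theory of compact Lie algebras with the rank and dimension constraints coming from the ambient $\mathfrak{so}(5)$, together with one representation-theoretic exclusion.

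First I recall that a compact Lie algebra $\mathfrak{g}$ decomposes as $\mathfrak{g} = \mathfrak{z}(\mathfrak{g}) \oplus [\mathfrak{g},\mathfrak{g}]$, where $\mathfrak{z}(\mathfrak{g})$ is abelian, $[\mathfrak{g},\mathfrak{g}]$ is semisimple, hence a direct sum of compact simple ideals, and $\operatorname{rank}\mathfrak{g} = \dim\mathfrak{z}(\mathfrak{g}) + \operatorname{rank}[\mathfrak{g},\mathfrak{g}]$. A maximal abelian subalgebra of $\mathfrak{g}$ is an abelian subalgebra of $\mathfrak{so}(5)$, hence sits inside a maximal abelian subalgebra of $\mathfrak{so}(5)$; therefore $\operatorname{rank}\mathfrak{g} \le \operatorname{rank}\mathfrak{so}(5) = 2$. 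The compact simple Lie algebras of rank at most $2$ are $\mathfrak{su}(2)$ (rank $1$) and $\mathfrak{su}(3)$, $\mathfrak{so}(5)\cong\mathfrak{sp}(2)$, $\mathfrak{g}_2$ (rank $2$), so the compact semisimple Lie algebras of rank at most $2$ are $0$, $\mathfrak{su}(2)$, $\mathfrak{su}(2)\oplus\mathfrak{su}(2)\cong\mathfrak{so}(4)$, $\mathfrak{su}(3)$, $\mathfrak{so}(5)$, and $\mathfrak{g}_2$.

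Next I eliminate the semisimple parts that cannot embed in $\mathfrak{so}(5)$. The algebra $\mathfrak{g}_2$ has dimension $14 > 10 = \dim\mathfrak{so}(5)$, so it is excluded. For $\mathfrak{su}(3)$: a simple ideal of $\mathfrak{g}$ acts on $\mathbb{R}^5$ through $\mathfrak{g}\hookrightarrow\mathfrak{so}(5)$, and the action is faithful (its kernel is a proper ideal of a simple algebra, hence zero, and it cannot be the whole ideal since the ideal is nonzero); but the smallest nontrivial real representation of $\mathfrak{su}(3)$ has real dimension $6$ — its nontrivial complex irreducibles have complex dimension at least $3$, the two $3$-dimensional ones being mutually dual and hence not self-conjugate, so their underlying real representations have dimension $6$, while all larger ones are even bigger — contradicting the existence of a $5$-dimensional faithful representation. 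Hence $[\mathfrak{g},\mathfrak{g}]$ is one of $0$, $\mathfrak{su}(2)$, $\mathfrak{so}(4)$, $\mathfrak{so}(5)$.

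Finally I determine the center in each of the four remaining cases, using $\operatorname{rank}\mathfrak{g}\le 2$ and $\dim\mathfrak{g}\ge 3$. If $[\mathfrak{g},\mathfrak{g}] = 0$ then $\mathfrak{g}$ is abelian of dimension $\operatorname{rank}\mathfrak{g}\le 2 < 3$, excluded. If $[\mathfrak{g},\mathfrak{g}]\cong\mathfrak{so}(4)$ or $\mathfrak{so}(5)$, its rank is already $2$, forcing $\dim\mathfrak{z}(\mathfrak{g}) = 0$ and $\mathfrak{g}\cong\mathfrak{so}(4)$ or $\mathfrak{so}(5)$. If $[\mathfrak{g},\mathfrak{g}]\cong\mathfrak{su}(2)$, then $\dim\mathfrak{z}(\mathfrak{g})\le 1$, so $\mathfrak{g}\cong\mathfrak{su}(2)$ or $\mathfrak{g}\cong\mathfrak{su}(2)\oplus\mathbb{R}$. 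Conversely all four algebras do occur as subalgebras of $\mathfrak{so}(5)$, via the block inclusion $\mathfrak{so}(4)\subset\mathfrak{so}(5)$ and, identifying $\mathbb{R}^4\cong\mathbb{C}^2$, the inclusions $\mathfrak{su}(2)\subset\mathfrak{u}(2)\cong\mathfrak{su}(2)\oplus\mathbb{R}\subset\mathfrak{so}(4)$. I expect the only genuinely non-formal step to be the exclusion of $\mathfrak{su}(3)$: it hinges on the fact that $\mathfrak{su}(3)$ admits no faithful real representation of dimension below $6$, which must be read off from its representation theory rather than obtained by a crude dimension count.
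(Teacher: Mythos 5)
Your proof is correct, and it takes a genuinely different route from the paper's. The paper first lists all compact Lie algebras of each dimension $3\le k\le 10$ and then excludes the impossible ones case by case: it kills $\mathfrak{su}(3)$, $\mathfrak{su}(2)^{3}$ and the extra $\mathfrak{su}(2)^{2}$'s by lower bounds on the dimension of a faithful real representation (the material in Appendix A on real irreducible representations of ${\rm SU}(2)$, ${\rm SU}(2)^{2}$, ${\rm SU}(2)^{3}$), and it kills the abelian summands $\mathbb{R}\oplus\mathfrak{su}(2)^{2}$, $\mathbb{R}^{2}\oplus\mathfrak{su}(2)^{2}$, $\mathbb{R}^{2}\oplus\mathfrak{su}(2)$ by explicitly computing the centralizer of each conjugacy class of embedded $\mathfrak{so}(4)$ and $\mathfrak{su}(2)$ inside $\mathfrak{so}(5)$. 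Your rank bound $\operatorname{rank}\mathfrak{g}\le\operatorname{rank}\mathfrak{so}(5)=2$ replaces all of the centralizer computations and also disposes of $\mathfrak{su}(2)^{3}$ at no cost, leaving the exclusion of $\mathfrak{su}(3)$ (which both arguments handle the same way, via the non-self-conjugacy of the two $3$-dimensional complex irreducibles) as the only representation-theoretic input; this is shorter and cleaner for the lemma as stated. What the paper's longer route buys is a by-product your argument does not produce: its case analysis identifies the actual embeddings up to conjugacy (the three inclusions (B.1)--(B.3) of $\mathfrak{su}(2)$, with explicit bases (\ref{lie alg of diag su2}) and (\ref{lie alg of irr su2})), and it is this extra information --- not just the isomorphism types --- that is invoked to deduce Lemma \ref{Lie subgrp of SO5} and that feeds into the orbit computations of Sections 4 and 5. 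So your proof fully establishes the lemma, but if you wanted to use it in place of the paper's you would still need to supplement it with the classification of the embeddings themselves.
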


For the proof of Lemma \ref{Lie subalg of so5}, 
we need the $\mathbb{R}$-irreducible representations of compact Lie groups in Appendix \ref{real irr rep}. 
By Lemma \ref{Lie subalg of so5} and its proof, 
we obtain Lemma \ref{Lie subgrp of SO5}.

\begin{proof}
By the classification of compact Lie algebras, 
the possible $k$-dimensional Lie subalgebra $\mathfrak{g}$ of  $\mathfrak{so}(5)$, 
where $3\leq k \leq 10$, is isomorphic to one of the following: 
\begin{align*}
\begin{array}{ll}
\mathfrak{so}(5) & \mbox{ for } k=10,\\
\mathbb{R} \oplus \mathfrak{su}(3), \ \mathfrak{su}(2)^{3}     & \mbox{ for } k=9,\\
\mathfrak{su}(3), \ \mathbb{R}^{2} \oplus \mathfrak{su}(2)^{2} & \mbox{ for } k=8,\\
\mathbb{R} \oplus \mathfrak{su}(2)^{2}                             & \mbox{ for } k=7,
\end{array}
\qquad
\begin{array}{ll}
\mathfrak{su}(2)^{2}                            & \mbox{ for } k=6,\\
\mathbb{R}^{2} \oplus \mathfrak{su}(2)  & \mbox{ for } k=5,\\
\mathbb{R} \oplus \mathfrak{su}(2)      & \mbox{ for } k=4,\\
\mathfrak{su}(2)                                & \mbox{ for } k=3.
\end{array}
\end{align*}
We check whether the Lie subalgebras in this list 
are actually contained in $\mathfrak{so}(5)$.

First, we show that  $\mathfrak{su}(3), \mathbb{R} \oplus \mathfrak{su}(3) \not\subset \mathfrak{so}(5)$.
By Theorem 5.10 of \cite{Hall}, the dimension of the $\mathbb{C}$-irreducible representation  
of $\mathfrak{su}(3)$ is of the form 
\begin{align*}
\frac{1}{2} 
(m_{1} + 1)(m_{2} + 1)(m_{1} + m_{2} + 2), 
\end{align*}
where $m_{j} \in \mathbb{Z}_{\geq 0}.$
Since any representation of the compact Lie algebra $\mathfrak{su}(3)$ is 
completely reducible, 
we see that $\mathfrak{su}(3) \not\subset \mathfrak{so}(5)$ by Proposition \ref{R-rep C-rep}, 
which implies that $\mathbb{R} \oplus \mathfrak{su}(3) \not\subset \mathfrak{so}(5)$. 

Similarly, by Lemma \ref{R-rep su2 su2 su2}, 
we see that $\mathfrak{su}(2)^{3} \not\subset \mathfrak{so}(5)$. 
By Lemma \ref{R-rep su2 su2}, 
the only inclusion $\mathfrak{su}(2)^{2} \hookrightarrow \mathfrak{so}(5)$ is 
the standard inclusion $\mathfrak{su}(2)^{2} = \mathfrak{so}(4) \hookrightarrow \mathfrak{so}(5)$. 
We may assume that 
$\mathfrak{so}(4) = 
\left(
\begin{array}{cc}
\mathfrak{so}(4) & \\
                       & 0
\end{array} 
\right) 
\hookrightarrow \mathfrak{so}(5)$.
Since 
\begin{align*}
\left\{ 
Y \in \mathfrak{so}(5); 
[X, Y] = 0 \mbox{ for any } X \in \mathfrak{so}(4)
\right \}
=
\{ 0 \},
\end{align*}
we see that 
$\mathbb{R}^{2} \oplus \mathfrak{su}(2)^{2}, \mathbb{R} \oplus \mathfrak{su}(2)^{2} 
\not\subset \mathfrak{so}(5)$.

By Lemma \ref{R-rep su2}, we have 3 types of inclusions 
$\mathfrak{su}(2) \hookrightarrow \mathfrak{so}(5)$ given by 
\begin{align}
&\mathfrak{su}(2) = \mathfrak{so}(3) \hookrightarrow \mathfrak{so}(5), \label{incl su2=so3}\\
&\mathfrak{su}(2) \hookrightarrow \mathfrak{so}(4) \hookrightarrow \mathfrak{so}(5), \label{incl diag su2}\\
&\mathfrak{su}(2) \hookrightarrow \mathfrak{so}(5) \mbox {: irreducibly}. \label{incl irr su2}
\end{align}
Note that the basis of $\mathfrak{su}(2)$ of (\ref{incl su2=so3}) is given by 
$
\left \{
\left(
\begin{array}{cc}
E_{i} &       \\
      & O_{2}
\end{array} 
\right)
\right \}_{i=1,2,3},
$
where $E_{i}$ is defined in (\ref{basis of so3}).
The basis of $\mathfrak{su}(2)$ of (\ref{incl diag su2}) is given by 
(\ref{lie alg of diag su2}), 
and that of  (\ref{incl irr su2}) is given by 
(\ref{lie alg of irr su2}). 
We easily see that 
$Z:= \{ Y \in \mathfrak{so}(5); [X, Y] =0 \mbox{ for any } X \in \mathfrak{su}(2) \}$ 
is spanned by 
\begin{align*}
\left(
\begin{array}{cc}
O_{3} &     \\
       & J   
\end{array} 
\right) 
\mbox{ for (\ref{incl su2=so3}), } 
O_{5}
\mbox{ for (\ref{incl irr su2}), } \\
\left(
\begin{array}{ccc}
           &  I'   & \\
    -I' &         & \\
           &          &0
\end{array} 
\right), 
\left(
\begin{array}{ccc}
       &  -J' & \\
    J' &     & \\
       &     &0
\end{array} 
\right), 
\left(
\begin{array}{ccc}
J &      &  \\
   & J &   \\
   &     & 0
\end{array} 
\right) 
\mbox{ for (\ref{incl diag su2}), } 
\end{align*}
where  
$J = 
\left(
\begin{array}{cc}
   & -1 \\
 1&     
\end{array}
\right)
$, 
$I' = 
\left(
\begin{array}{cc}
 1&    \\
   & -1    
\end{array}
\right)
$
and 
 $J' = 
\left(
\begin{array}{cc}
 &   1 \\
 1&     
\end{array}
\right).
$

From these computations, we see that $\mathbb{R}^{2} \oplus \mathfrak{su}(2) \not\subset \mathfrak{so}(5)$.
In fact, for (\ref{incl su2=so3}) and (\ref{incl irr su2}), 
we have $\dim_{\mathbb{R}} Z \leq 1$, 
which implies that 
$\mathbb{R}^{2} \oplus \mathfrak{su}(2) \not\subset \mathfrak{so}(5)$. 
For (\ref{incl diag su2}), 
we have $Z \cong \mathfrak{su}(2)$, 
which has no nontrivial commutative Lie subalgebras. 
\end{proof}

\address{Graduate School of Mathematical Sciences, University of Tokyo, 
3-8-1, Komaba, Meguro, Tokyo 153-8914, Japan}
{kkawai@ms.u-tokyo.ac.jp}

\end{document}